\newcommand{\plim}{\varprojlim}
\newcommand{\leg}[2]{\left(\frac{#1}{#2}\right)}      
\newcommand{\Spin}{\mathrm{Spin}}   
\newcommand{\xysurj}{ \ar@{->>}[r]{&} }
\newcommand{\xyinj}{ \ar@{^{(}->}[r]{&} }
\DeclareSymbolFont{cyrletters}{OT2}{wncyr}{m}{n}
\DeclareMathSymbol{\Sha}{\mathalpha}{cyrletters}{"58}
\def\@@@nomenclature[#1]#2#3{%
\def\@tempa{#2}\def\@tempb{#3}%
  \protected@write\@nomenclaturefile{}%
    {\string\nomenclatureentry{#1\nom@verb\@tempa @[{\nom@verb\@tempa}]%
      |nompageref{\begingroup\nom@verb\@tempb\protect\nomeqref{\theequation}}}%
      {\thepage}}%
  \endgroup 
  \@esphack}
\newcommand{\p}{\mathfrak{p}}        
\newcommand{\Q}{\mathbb{Q}}                 
\newcommand{\N}{\Bbb N}                 
\newcommand{\Z}{\Bbb Z}                 
\newcommand{\R}{\Bbb R}                 
\newcommand{\C}{\Bbb C}                 
\newcommand{\A}{\Bbb A}                  
\newcommand{\F}{\Bbb F}                  
\renewcommand{\O}{\mathcal{O}}     
\newcommand{\x}{\vec{x}}
\newcommand{\y}{\vec{y}}
\renewcommand{\v}{\vec{v}}
\newcommand{\w}{\vec{w}}
\newcommand{\bs}{\backslash}
\renewcommand{\(}{\left(}
\renewcommand{\)}{\right)}
\renewcommand{\[}{\left[}
\renewcommand{\]}{\right]}
\newcommand{\Char}{\mathrm{char}}
\renewcommand{\Im}{\mathrm{Im}}
\renewcommand{\Re}{\mathrm{Re}}
\renewcommand{\H}{\mathcal{H}}
\newcommand{\SL}{\mathrm{SL}}
\newcommand{\Cls}{\mathrm{Cls}}
\newcommand{\Gen}{\mathrm{Gen}}
\newcommand{\Spn}{\mathrm{Spn}}
\newcommand{\Aut}{\mathrm{Aut}}
\newcommand{\Stab}{\mathrm{Stab}}
\newcommand{\Mass}{\mathrm{Mass}}
\newcommand{\vol}{\mathrm{Vol}}
\newcommand{\scale}{\mathrm{Scale}}                       
\newcommand{\norm}{\mathrm{Norm}}                        
\newcommand{\B}{\mathcal{B}}                       
\newcommand{\pderiv}[1]{\frac{\partial}{\partial #1}}
\newcommand{\GL}{\mathrm{GL}}
\newcommand{\Sp}{\mathrm{Sp}}
\newcommand{\al}{\alpha}
\newcommand{\ra}{\rightarrow}
\newcommand{\ve}{\varepsilon}
\newcommand{\Proj}{\Bbb P}
\renewcommand{\a}{\mathfrak{a}}        
\newcommand{\n}{\mathfrak{n}}        
\newcommand{\id}{\mathrm{id}}
\newcommand{\Fcal}{\mathcal{F}}                  
\newcommand{\Ker}{\mathrm{Ker}}
\newtheorem{thm}{Theorem}[section]
\newtheorem{cor}[thm]{Corollary}
\newtheorem{lem}[thm]{Lemma}
\newtheorem{defn}[thm]{Definiton}
\theoremstyle{remark}
\newtheorem{rem}[thm]{Remark}
\newcommand{\K}{F}
\renewcommand{\a}{{\infty}}
\newcommand{\f}{{\bf f}}
\newcommand{\ord}{\text{ord}}
\newcommand{\levi}[1]{\begin{bmatrix} {#1} & 0 \\ 0 & {#1}^{-1}\end{bmatrix}}
\newcommand{\uni}[1]{\begin{bmatrix} 1 & {#1} \\ 0 & 1\end{bmatrix}}
\newcommand{\weyl}{\begin{bmatrix} 0 & 1 \\ -1 & 0\end{bmatrix}}
\newcommand{\W}{\mathcal{W}}
\newcommand{\quatalg}[3]{\(\frac{#1, #2}{#3}\)}
\renewcommand{\Char}{\mathrm{Char}}
\newcommand{\Sym}{\mathrm{Sym}}
\title{Quadratic Forms and Automorphic Forms}
\author{Jonathan Hanke}
\begin{document}
\maketitle

\tableofcontents

\part*{Revised Notes from the 2009 Arizona Winter School -- ``Quadratic Forms and Automorphic Forms''}


\printnomenclature[0.9in]  

{\bf Notation and Conventions}

\medskip
We let $\Z$, $\Q$, $\R$, $\C$ denote the usual integers, rational numbers, real numbers, and complex numbers, and also denote the natural numbers as $\N := \Z_{> 0} := \{1, 2, \cdots \}$.
We say that an $n \times n$ matrix $A = (a_{ij}) \in M_n(R)$ over a ring $R$ is {\bf even} if $a_{ii} \in 2R$, and {\bf symmetric} if $a_{ij} = a_{ji}$ for all $1\leq i,j \leq n$.  The symmetric matrices in $M_n(R)$ are denoted by $\Sym_n(R)$.
We denote the trivial (mod 1) Dirichlet character sending all integers to $1$ by $\mathbf{1}$. In analytic estimates, it is common to use the notation $X >> Y$ to mean that $X > C\cdot Y$ for some (implied) constant $C\in \R>0$.

Suppose $R$ is an integral domain and $V$ is a (finite dmensional) vectorspace over its field of fractions $F$.  
By a {\bf lattice} or {\bf $R$-lattice} in $V$ we will mean a finitely generated $R$-module over $R$ that spans $V$.  In particular, notice that we will always assume that our lattices have full rank in $V$.

If $F$ is a number field (i.e. a finite field extension of $\Q$), then we define a {\bf place} or {\bf normalized valuation} of $F$ to be an equivalence class of metrics $|\cdot |_v$ on $F$ that induce the same topology on $F$.  We implicitly identify $v$ with the distinguished metric in each class agreeing with the usual absolute value when $F_v = \R$ or $\C$, and giving $|\p|_v = |\O_v/\p\O_v|$ when $v$ is non-archimedean and $F_v$ has valuation ring $\O_v$ with maximal ideal $\p$.  If $p \in \N$ with $(p)=\p \cap \Z$ then we have an associated {\bf valuation} on $F_v^\times$ given by $\ord_v(\cdot) := -\log_p(\cdot) = \ord_\p(\cdot)$.
We define a {\bf (non-zero) squareclass} of a field $K$ to be an element of $K^\times/(K^\times)^2$, and when $K$ is a non-archimedean local field then the valuation $\ord_v$ descends to give a $(\Z/2\Z)$-valued valuation on squareclasses.

We say that an $R$-valued quadratic form over a (commutative) ring $R$ is {\bf primitive} if the ideal generated by its values $(Q(R))$ is $R$ and we say that {\bf $Q$ represents $m$} if $m\in Q(R)$.
Given a quadratic space $(V,Q)$ of dimension $n$ over a field $F$, we define the orthogonal group 
$O(V) := O_Q(V)$ to be the set of invertible linear transformations $L:V \ra V$ so that $Q(L(\v)) = Q(\v)$ for all $\v \in V$.  Given a basis of $V$, $O(V)$ can be realized as a subset of $\GL_n(F)$.  We also define the {\bf special orthogonal group} $SO(V) := O^+(V)$ as the (orientation-preserving) subgroup of $O(V)$ having determinant 1.


\bigskip
\bigskip


{\bf Dedication}  

\medskip
I would like to extend a special thanks to my advisor Goro Shimura, without whom I would not have become involved with this beautiful subject, and these notes would have not been possible.  His dedication to careful exposition and referencing have been a major influence on these notes, and hopefully this attention to detail will make it easier for the reader seeking to learn this material.  Having said this, despite my best efforts I am sure that these notes contain mistakes, and any corrections are very welcome.  Please feel free to send them to {\tt jonhanke@gmail.com}.

\medskip
I dedicate these notes to the many excellent expositors whose efforts have helped me to learn new areas of mathematics, and 
%
including (but not limited to) Tony Knapp, Steve Gelbart, Bill Duke, Henryk Iwaniec, and Serge Lang.

\chapter*{Preface}

These notes are an extension of the rough notes provided for my four lecture graduate level course on ``Quadratic Forms and Automorphic Forms'' at the March 2009 Arizona Winter School on Quadratic Forms.  They are meant to give a survey of some aspects of the classical theory of quadratic forms over number fields and their rings of integers (e.g. over $\Q$ and $\Z$), and their connection with modular and automorphic forms.  

Originally I had hoped to expand these notes to include many other interesting topics related to Clifford algebras and various automorphic ``liftings'' that are a natural outgrowth of this chain of ideas.  Due to practical time deadlines these notes are essentially a written version of the talks which have been filled out to include precise references for all theorems and details of less well-known arguments with the hope of enabling an eager graduate student to gain a working knowledge of the basic ideas and arguments for each of the topics covered.

I would like to thank the organizers of the 2009 Arizona Winter School (David Savitt, Fernando Rodriguez-Villegas, Matt Papanikolas, William Stein, and Dinesh Thakur) for the opportunity to give these lectures, as well as all of the students who worked very hard in the evenings to make progress on the research projects associated to these lectures.  Special thanks go to John Voight for his many hours helping these students, and also for all of the work he put in to helping to write up notes for Professor Conway's lectures.  Also several helpful comments made by Pete L. Clark, Danny Krashen, Rishikesh,  Robert Varley and the anonymous referee when proofreading of these notes.

I am grateful to MSRI for their hospitality and 24-hour library access during the Spring 2011 semester, during which the final version of these notes were written and many of the references were added.  I also acknowledge the support of NSF Grant DMS-0903401 during the Winter School and while these notes were being written.

For a more lively (but perhaps less precise) introduction to this material, the reader is encouraged to view videos of the lectures online at the Arizona Winter School webpage
$$
\text{\tt http://swc.math.arizona.edu/aws/09/}
$$
Any errata to the published version of these notes will be posted on my website
$$
\text{\tt http://jonhanke.com}
$$

%

%





\chapter{Background on Quadratic Forms}\label{Chapter:Background}

\section{Notation and Conventions}

In these notes we will study aspects of the theory of quadratic forms over rings $R$ 
\nomenclature{$R$}{a ring of characteristic $\neq 2$}
and fields $F$ 
\nomenclature{$F$}{a field of characteristic $\neq 2$}
of characteristic $\Char(\cdot)$
\nomenclature{$\Char(R)$}{the characteristic of the ring $R$}
$\neq 2$ (i.e. where $1 + 1 \neq 0$).  
%
%
%
While one can discuss quadratic forms in characteristic 2, we can no longer equate them with symmetric bilinear forms, so the theory there is more complicated.  For references valid in characteristic 2, we refer the reader to \cite{Kaplansky:2003kx, Knus:1991qa, Elman:2008fk, Bak:1981uq, Sah:1960kx}.  

Our main interest is in quadratic forms over the field $\Q$\nomenclature{$\Q$}{the rational numbers}, the ring $\Z$\nomenclature{$\Z$}{the rational integers}, and their completions, though we may consider a more general setting (e.g. a number field and its ring of integers) when there are no additional complications in doing so.

\section{Definitions of Quadratic Forms}

In this chapter we give some basic definitions and ideas used to understand quadratic forms and the numbers they represent.  We define a {\bf quadratic form $Q(\x)$ over a ring $R$} to be a degree 2 homogeneous polynomial 
$$
Q(\x) := Q(x_1, \cdots, x_n) := \sum_{1\leq i \leq j \leq n} c_{ij} x_i x_j
$$
\nomenclature{$\x, \y, \v$}{$n$-tuples of elements of a ring $R$}
in $n$ variables with coefficients $c_{ij}$ in $R$.

\medskip

When division by 2 is allowed (either in $R$ or in some ring containing $R$)
we can also
consider the quadratic form $Q(\x)$ as coming from the symmetric {\bf Gram bilinear form}
\begin{equation}
B(\x, \y) := \sum_{1 \leq i,j \leq n} b_{ij} x_i y_j = {}^t\x \, B \, \y 
\end{equation}
\nomenclature{${}^tA$}{the transpose of a matrix $A$}
via the formula $Q(\x) = B(\x, \x) = {}^t\x \, B \, \x$, where the matrix $B := (b_{ij})$ and $b_{ij} = \frac12(c_{ij} + c_{ji})$ (with the convention that $c_{ij} = 0$ if $i>j$).
We refer to the symmetric matrix $B = (b_{ij})$ as the {\bf Gram matrix} of $Q$.  It is common to relate a quadratic form $Q(\x)$ to its Gram bilinear form $B(\x,\y)$ by the {\bf polarization identity} 
\begin{align}
Q(\x + \y) 
&= B(\x + \y, \x + \y) \notag\\
&= B(\x, \x) + 2B(\x, \y) + B(\y, \y) \label{Eq:Polarization_id} \\
&= Q(\x) + 2B(\x, \y) + Q(\y).\notag
\end{align}
From either of these formulas for $B(\x,\y)$, we see that the matrix $B \in \frac{1}{2}\Sym_n(R)$.
\nomenclature{$\Sym_n(R)$}{the symmetric $n \times n$ matrices with coefficients in $R$}

Since often $\frac{1}{2} \notin R$, it is somewhat unnatural to consider the Gram bilinear form since it is not an object defined over $R$.  However the {\bf Hessian bilinear form} $H(\x, \y) := 2B(\x, \y)$ is defined over $R$ and can be seen to be very naturally associated to the quadratic form $Q(\x)$ by the polarization identity.   This definition is motivated by the fact that the matrix $H := 2B \in \Sym_n(R)$ of the Hessian bilinear form is the {\bf Hessian matrix} of second order partial derivatives of $Q(\x)$ (i.e. $H = (H_{ij})$ where $a_{ij} = \pderiv{x_i}\pderiv{x_j} Q(\x)$).  
For this reason, it is often preferable to use the Hessian formulation when working over rings, and in particular when $R  = \Z$.
Notice that the diagonal coefficients $h_{ii}$ of the Hessian matrix are even, so $H$ is actually an even symmetric matrix.  In the geometric theory of quadratic forms the Hessian bilinear form is often referred to as the {\bf polar form} of $Q$ because of its close connection with the polarization identity (see \cite[p39]{Elman:2008fk}).



\medskip
Another perspective on quadratic forms is to think of them as {\bf free quadratic $R$-modules}, by which we mean an $R$-module $M \cong R^n$ equipped with a ``quadratic function'' 
$$
Q: M \ra R
$$
which is a function satisfying 
\begin{enumerate}
\item $Q(a\x) = a^2 Q(\x)$ for all $a \in R$, and for which 
\item$H(\x, \y) := Q(\x + \y) - Q(\x) - Q(\y)$ is a bilinear form.  
\end{enumerate}
This perspective is equivalent to that of a quadratic form because one can recover the quadratic form coefficients from $c_{ii} = Q(\vec e_i)$, and when $i < j$ we have $c_{ij} = H(\vec e_i, \vec e_j)$.

In the case when $R = \Z$, this perspective allows us to think of $Q$ as a ``quadratic lattice'' which 
naturally sits isometrically in a vector space over $\Q$ that is also equipped with a quadratic form $Q$.
%
%
More precisely, we define a {\bf quadratic space} to be a pair $(V, Q)$ 
\nomenclature{$(V, Q)$}{a quadratic space, meaning a vector space $V$ over $F$ equipped with a quadratic form $Q$}
where $V$ is a finite-dimensional vector space over $F$ and $Q$ is a quadratic form on $V$.  We say that a module $M$ is a {\bf quadratic lattice} if $M$ is a (full rank) $R$-lattice in a quadratic space $(V,Q)$ over $F$, where $F$ is the field of fractions of $R$.  Notice that we can always realize a quadratic form $Q(\x)$ over an integral domain $R$ as being induced from a free quadratic lattice $R^n$ in some quadratic space $(V,Q)$, by thinking of $Q$ as a function on $V = F^n$ where $F$ is the fraction field of $R$.
%
%
%
%
{\it 
(Note: However if $R$ is not a principal ideal domain then there will exist non-free quadratic lattices, which cannot be described as quadratic forms!)
}

We say that a quadratic form $Q(\x)$ is {\bf $R$-valued} if $Q(R^n) \subseteq R$.  From the polarization identity we see that $Q(\x)$ is $R$-valued iff $Q(\x)$ is defined over $R$ (i.e. all coefficients $c_{ij} \in R$) because $c_{ij} = H(\vec e_i,\vec e_j)$ 
\nomenclature{$\vec e_i$}{the vector $(0, \cdots, 0, 1, 0, \cdots, 0)$ with 1 in the $i^\text{th}$-component}
when $i \neq j$ and $c_{ii} = Q(\vec e_i)$.

\section{Equivalence of Quadratic Forms}

Informally, we would like to consider two quadratic forms as being ``the same'' if we can rewrite one in terms of the other by a change of variables.  
%
More precisely, we say that two quadratic forms $Q_1$ and $Q_2$ are {\bf equivalent over $R$}, and write $Q_1 \sim_R Q_2$, if there is an invertible linear change of variables $\phi(\x)$ with coefficients in $R$ so that $Q_2(\x) = Q_1(\phi(\x))$.
We can represent $\phi(\x)$ (with respect to the generators $\vec e_i$ of the free module $R^n$) as left-multiplication by an invertible matrix $M$ over $R$ -- i.e. $M \in M_n(R)$ 
\nomenclature{$M_n(R)$}{the $n\times n$ matrices with coefficients in $R$}%
and
$$
\phi(\x) = M\x.
$$
Expressing this equivalence in terms of the associated Hessian and Gram matrices, we see that composition with $\phi$ gives the equivalence relations
\begin{equation} \label{eq:change_of_vars}
H_1 \sim_R
{}^tM H_1 M = H_2 
\qquad \text{ and } \qquad 
B_1 
\sim_R
{}^tM B_1 M  = B_2
\end{equation}
where
$$
Q_i(\x) = \tfrac{1}{2}{}^t\x H_i \x = {}^t\x B_i \x.
$$
For quadratic lattices, the corresponding notion is to say that  
two quadratic $R$-modules are equivalent if there is a $R$-module isomorphism between the modules commuting with the quadratic functions (i.e. preserving all values of the the quadratic function).  Because this $R$-module isomorphism preserves all values of the the quadratic function, it is often referred to as an {\bf isometry}.  We will see later that this idea of {\it thinking of equivalent quadratic lattices as isometric lattices in a quadratic space is very fruitful}, and can be used to give a very geometric flavor to questions about the arithmetic of quadratic forms.



\section{Direct Sums and Scaling}

Two important constructions for making new quadratic forms from known ones are the operations of scaling and direct sum.  Given $a\in R$ and a quadratic form $Q(\x)$ defined over $R$, we can define a new {\bf scaled quadratic form} $a\cdot Q(\x)$ which is also defined over $R$.  We can also try to detect if a quadratic form is a scaled version of some other quadratic form by looking at its values, generated either as a bilinear form or as a quadratic form.  We therefore define the {\bf (Hessian and Gram) scale} and {\bf norm} of $Q$ by 
\begin{align}
\scale_H(Q) &:= \{H(\x, \y) \mid \x, \y\in R^{n} \}\\
\scale_G(Q) &:= \{B(\x, \y) \mid \x, \y\in R^{n} \}\\
\norm(Q) &:= \{Q(\x) \mid \x\in R^{n} \}
\end{align}
and notice that $\scale_{H/G}(a\cdot Q) = a\cdot \scale_{H/G}(Q)$ and $\norm(a\cdot Q) = a^2\cdot \norm(Q)$.

Another useful construction for making new quadratic forms is to take the direct sum of two given quadratic forms.  Given $Q_{1}(\x_{1})$ and $Q_{2}(\x_{2})$ in $n_{1}$ and $n_{2}$ distinct variables over $R$ respectively, we define their {\bf (orthogonal) direct sum} $Q_{1} \oplus Q_{2}$ as the quadratic form 
$$
(Q_{1} \oplus Q_{2})(\x) := Q_{1}(\x_{1}) + Q_{2}(\x_{2}) 
$$
in $n_{1} + n_{2}$ variables, where $\x := (\x_{1}, \x_{2})$.



\section{The Geometry of Quadratic Spaces}

Quadratic forms become much simpler to study when the base ring $R$ is a field (which we denote by $F$) .   In that case we know that all finite-dimensional $R$-modules are free (i.e. every finite-dimensional vector space has a basis), and that there is exactly one of each dimension $n$.  This simplification allows us to replace commutative algebra with linear algebra when studying quadratic forms, and motivates our previous definition of a {\bf quadratic space} as pair $(V, Q)$ consisting of a quadratic form $Q$ on a finite-dimensional vector space $V$ over $F$.  We will often refer to any quadratic form (in $n$ variables) over a field as a quadratic space, with the implicit understanding that we are considering the vector space $V := F^n$.  We also often refer to an equivalence  of quadratic spaces (over $F$) as an {\bf isometry}.

We now present some very useful geometric classification theorems about quadratic spaces.  To do this, we define the {\bf (Gram) inner product} of $\v_1, \v_2 \in V$ as the value of the Gram symmetric bilinear form $B(\v_1, \v_2)$.
\footnote{We could also have defined an inner product using the Hessian bilinear form, but this choice is less standard in the literature and the difference will not matter for our purposes in this section.}
We say that two vectors $\v_1, \v_2 \in V$ are {\bf perpendicular} or {\bf orthogonal} and write $\v_1 \perp \v_2$ if their inner product $B(\v_1, \v_2) = 0$.  Similarly we say that a vector $\v$ is {\bf perpendicular to a subspace $W \subseteq V$} if $\v\perp \w$ 
for all $\w \in W$, 
and we say that two subspaces $W_1, W_2 \subseteq V$ are perpendicular if $\w_1 \perp \w_2$ for all $\w_i \in W_i$.

Our first theorem says we can always find an orthogonal basis for $V$, which we can see puts the (Gram/Hessian) matrices associated to $Q$ in diagonal form:

\begin{thm}\label{Thm:orthogonal_splitting}
[Orthogonal splitting/diagonalization]
Every quadratic space $V$ admits an orthogonal basis.
\end{thm}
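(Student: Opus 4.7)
The plan is to prove this by induction on $n := \dim_F V$, with the base case $n = 1$ being immediate since any non-zero vector gives an orthogonal basis.

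For the inductive step, I would split into two cases based on whether the quadratic form $Q$ is identically zero on $V$. If $Q \equiv 0$, then the polarization identity~\eqref{Eq:Polarization_id} together with $\Char(F) \neq 2$ forces $B \equiv 0$ as well, so \emph{any} basis of $V$ is automatically orthogonal and we are done. Otherwise, there exists some $\v_1 \in V$ with $Q(\v_1) \neq 0$; the key observation is that this also gives $B(\v_1, \v_1) \neq 0$, so the linear functional $\w \mapsto B(\v_1, \w)$ on $V$ is non-zero. Its kernel $W := \v_1^\perp$ is therefore a subspace of codimension $1$, and since $\v_1 \not\in W$ (as $B(\v_1, \v_1) \neq 0$) we get an orthogonal decomposition $V = F\v_1 \oplus W$.

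Now I restrict $Q$ to $W$ to obtain a quadratic space $(W, Q|_W)$ of dimension $n - 1$. Applying the inductive hypothesis produces an orthogonal basis $\v_2, \ldots, \v_n$ of $W$, and by construction each $\v_i$ (for $i \geq 2$) is already perpendicular to $\v_1$. Therefore $\v_1, \v_2, \ldots, \v_n$ is the desired orthogonal basis of $V$.

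The only genuinely delicate point is finding a vector $\v_1$ with $B(\v_1, \v_1) \neq 0$ when $Q \not\equiv 0$ --- naively one might worry that $Q$ could be non-zero while $B(\v, \v) = Q(\v)$ is still zero on all of $V$, but this is of course impossible since $Q$ and $\v \mapsto B(\v,\v)$ agree. The assumption $\Char(F) \neq 2$ is essential in two places: once to convert $Q \equiv 0$ into $B \equiv 0$ via polarization, and again implicitly so that halving is permitted when extracting the Gram form from the Hessian. No other obstacles arise, and the argument is essentially the classical Gram--Schmidt procedure adapted to handle the possibly degenerate Gram form.
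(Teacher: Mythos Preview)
Your argument is correct and is exactly the standard inductive proof (find an anisotropic vector, split off its orthogonal complement, recurse), which is the content of the reference to Cassels that the paper cites in lieu of a proof. There is nothing to add.
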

\begin{proof}
This is proved in Cassels's book \cite[Lemma 1.4 on p13-14]{Cassels:1978aa}, where he refers to this as a ``normal basis''.  
\end{proof}

Given a quadratic space $(V,Q)$, to any a choice of basis $\B := \{\v_1, \dots, \v_n\}$ for $V$ we can associate a quadratic form $Q_\B(\x)$ by expressing elements of $V$ in the coordinates of $\B$:
$$
\B \quad\longmapsto\quad Q_\B(\x) := Q(x_1 \v_1 + \cdots x_n \v_n).
$$
We can use this association to define the {\bf determinant} $\det(Q)$ of $(V,Q)$ as the determinant $\det(B)$ of the Gram matrix $B$ of the associated quadratic form $Q_\B(\x)$ for some basis $\B$.  However since changing the basis $\B$ induces the equivalence relation (\ref{eq:change_of_vars}), we see that $\det(Q)$ is only well-defined up to multiplication by $\det(M)^2 \in K^\times$, and so $\det(Q)$ gives a well-defined square-class in $K/(K^\times)^2$.
%
%
We say that $Q$ is {\bf degenerate} if $\det(Q) = 0$, otherwise we say that $Q$ is {\bf non-degenerate} (or {\bf  regular}).  By convention, the zero-dimensional quadratic space has $\det(Q) = 1$ and is non-degenerate.

\begin{lem}
If a quadratic space $(V,Q)$ is degenerate, then there is some non-zero vector $\v \in V$ perpendicular to $V$ (i.e., $\v \perp V$).
\end{lem}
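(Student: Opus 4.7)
The plan is to reduce immediately to the diagonal case using Theorem~\ref{Thm:orthogonal_splitting}, and then read off the perpendicular vector from a zero diagonal entry of the Gram matrix.

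First I would apply the orthogonal splitting theorem to choose an orthogonal basis $\B = \{\v_1, \dots, \v_n\}$ of $V$. With respect to $\B$, the Gram matrix $B$ of $Q$ is diagonal, say $B = \operatorname{diag}(d_1, \dots, d_n)$ where $d_i = B(\v_i, \v_i)$. The determinant of $Q$ in this basis is then $d_1 d_2 \cdots d_n$, which is a representative of the squareclass $\det(Q) \in F/(F^\times)^2$.

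Next I would use the degeneracy hypothesis: since $\det(Q) = 0$, we must have $d_1 d_2 \cdots d_n = 0$, so at least one diagonal entry vanishes, say $d_k = 0$. I claim $\v_k$ is the desired vector. Indeed, for any $\w = \sum_j a_j \v_j \in V$, bilinearity of $B$ and orthogonality of the basis give
\[
B(\v_k, \w) = \sum_j a_j B(\v_k, \v_j) = a_k B(\v_k, \v_k) = a_k d_k = 0,
\]
so $\v_k \perp V$. Since $\v_k$ is a basis vector it is non-zero, which completes the argument.

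There is no real obstacle here once Theorem~\ref{Thm:orthogonal_splitting} is in hand; the only minor thing to double-check is the consistency of the definition of $\det(Q)$ as a squareclass with the observation that ``some $d_i = 0$'' -- but this is immediate since $0$ is a well-defined element of $F/(F^\times)^2$, and a product of elements in $F$ is zero iff one of its factors is zero, independent of which representative of the squareclass we pick.
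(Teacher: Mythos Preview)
Your proof is correct. The paper actually states this lemma without proof, so there is nothing to compare against directly.

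That said, your route through Theorem~\ref{Thm:orthogonal_splitting} is a bit more than is needed. A more direct argument: pick any basis $\B$ of $V$ and let $B$ be the Gram matrix. Since $\det(Q) = 0$ means $\det(B) = 0$, the matrix $B$ has a non-trivial kernel, so there is a non-zero $\v \in V$ with $B\v = 0$ as a column vector; then $B(\v, \w) = {}^t\w B \v = 0$ for every $\w \in V$. This avoids invoking the existence of an orthogonal basis and is the standard linear-algebra justification. Your diagonalization approach is perfectly valid too, and has the mild advantage of exhibiting the perpendicular vector as an explicit basis element rather than as an abstract kernel vector.
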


The next theorem states that we can always reduce a degenerate quadratic space to a non-degenerate space by (orthogonally) splitting off a {\bf zero space} $(V,Q)$, which we define to be a vector space $V$ equipped with the identically zero quadratic form $Q(\x)=0$.  Notice that a zero space has an inner product that is identically zero, so it is always perpendicular to itself.  (In the literature a zero space is often referred to as a {\bf totally isotropic space}.)  We define {\bf the radical of a quadratic space $(V,Q)$} as the maximal (quadratic) subspace of $V$ perpendicular to all $\v \in V$, which is just the set of vectors perpendicular to all of $V$.

\begin{lem}[Radical Splitting]
Every quadratic space can be written as a orthogonal direct sum of a zero space (the radical of the quadratic space) and a non-degenerate space.
\end{lem}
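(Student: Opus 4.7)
The plan is to invoke Theorem~\ref{Thm:orthogonal_splitting} to produce an orthogonal basis $\{\v_1, \ldots, \v_n\}$ of $V$, and then to partition this basis according to whether or not $Q(\v_i)$ vanishes. After reordering, I may assume $Q(\v_1) = \cdots = Q(\v_k) = 0$ while $Q(\v_{k+1}), \ldots, Q(\v_n) \in F^\times$. Setting $Z := F\v_1 + \cdots + F\v_k$ and $W := F\v_{k+1} + \cdots + F\v_n$, I obtain a vector space decomposition $V = Z \oplus W$ in which $Z \perp W$ is automatic from the orthogonality of the basis. What remains is to verify that $Z$ is the radical of $(V,Q)$ (hence a zero space) and that $W$ is non-degenerate.

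Two of these three verifications are essentially immediate. The subspace $Z$ is a zero space because for $z = \sum_{i \le k} a_i \v_i$ the polarization identity combined with orthogonality and $Q(\v_i)=0$ gives $Q(z) = 0$. The subspace $W$ is non-degenerate because in the basis $\{\v_{k+1}, \ldots, \v_n\}$ its Gram matrix is diagonal with entries $Q(\v_{k+1}), \ldots, Q(\v_n) \in F^\times$, so $\det(Q|_W) \neq 0$.

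The step I expect to require the most care is the identification $Z = \mathrm{rad}(V,Q)$. The inclusion $Z \subseteq \mathrm{rad}(V,Q)$ is straightforward: each $\v_i$ with $i \le k$ is perpendicular to every other basis vector by orthogonality and to itself because $Q(\v_i) = 0$, so $\v_i \perp V$. For the reverse inclusion, I would write an arbitrary radical vector as $v = \sum_i a_i \v_i$ and pair against $\v_j$ to deduce $0 = B(v,\v_j) = a_j Q(\v_j)$; since $Q(\v_j) \in F^\times$ for $j > k$, this forces $a_j = 0$ for every such $j$, placing $v$ in $Z$. This is the point in the argument where the partition by the vanishing of $Q(\v_i)$ really earns its keep — the non-vanishing of $Q$ on the $W$-part is precisely what pins the radical down to $Z$ rather than some larger subspace, and it is also what guarantees uniqueness of the decomposition in the sense that the non-degenerate complement must be a complement to the full radical.
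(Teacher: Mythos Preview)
Your argument is correct. The paper itself does not give a proof but only cites Cassels, so there is no detailed approach to compare against. That said, a slightly more direct route---and likely the one Cassels takes---avoids invoking the orthogonal basis theorem altogether: simply choose any linear complement $W$ to the radical $R := \mathrm{rad}(V,Q)$; since $R \perp V$ the decomposition $V = R \oplus W$ is automatically orthogonal, $R$ is a zero space because $B$ vanishes on it, and $W$ is non-degenerate because any $w \in W$ with $w \perp W$ also has $w \perp R$, hence $w \in R \cap W = 0$. Your diagonalization approach is equally valid and has the advantage of being entirely explicit; the alternative has the advantage of showing directly that the zero summand is canonically the radical without first building a basis.
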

\begin{proof}
This is given in Cassels's book \cite[Lemma 6.1 on p28]{Cassels:1978aa}.
\end{proof}

We say that a non-zero vector $\v \in V$ is {\bf isotropic} if $Q(\v)=0$ and say that $\v$ is {\bf anisotropic} otherwise.  Extending this definition to subspaces, we say that a subspace $U \subseteq (V,Q)$ is isotropic if it contains an isotropic vector, and anisotropic otherwise.  Notice that $U$ is a totally isotropic subspace $\iff$ every non-zero vector in $U$ is isotropic.  

For non-degenerate quadratic spaces, isotropic vectors play a key role because of their close relation to the {\bf hyperbolic plane} $H_2$, which is defined as the two-dimensional quadratic space (say with coordinates $x$ and $y$) endowed with the quadratic form $Q(\x) = Q(x, y) = xy$.  We also refer to the orthogonal direct sum of $r$ hyperbolic planes as the {\bf hyperbolic space} $H_{2r}$, which has dimension $2r$.



\begin{thm}[Totally Isotropic Splitting]
Suppose $(V,Q)$ is a non-degenerate quadratic space.  Then for every $r$-dimensional zero subspace $U \subseteq V$ we can find a complementary $r$-dimensional subspace $U' \subseteq V$ so that $V = U \oplus U' \oplus W$ as vector spaces, and (as quadratic subspaces) $W$ is non-degenerate and $U \oplus U'$ is equivalent to the hyperbolic space $H_{2r}$. 
\end{thm}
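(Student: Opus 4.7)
The plan is to induct on $r$, the dimension of the totally isotropic subspace $U$. The case $r=0$ is trivial (take $U'=0$ and $W=V$), so the task reduces to peeling off one hyperbolic plane at a time. The heart of the matter is the case $r=1$: showing that any isotropic vector in a non-degenerate quadratic space can be extended to a hyperbolic plane inside $V$.

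For the base case $r=1$, write $U = F u_1$ with $Q(u_1) = 0$. Non-degeneracy of $V$ supplies some $v \in V$ with $B(u_1, v) \neq 0$, which after rescaling we may take to satisfy $B(u_1, v) = \tfrac{1}{2}$. Setting $u_1' := v - Q(v) u_1$, a short calculation using the polarization identity together with $Q(u_1) = 0$ gives $Q(u_1') = 0$ and $B(u_1, u_1') = \tfrac{1}{2}$. Hence the plane $H := F u_1 \oplus F u_1'$ has Gram matrix $\bigl(\begin{smallmatrix}0 & 1/2 \\ 1/2 & 0\end{smallmatrix}\bigr)$ of determinant $-\tfrac{1}{4}$, so $H$ is non-degenerate and isometric to $H_2$. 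Since $H$ is non-degenerate, $V = H \oplus H^\perp$ with $H^\perp$ also non-degenerate (its determinant is the ratio of two non-zero square-classes), and we may take $W := H^\perp$.

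For the inductive step $r \geq 2$, pick any nonzero $u_1 \in U$ and build $u_1'$ and $H \cong H_2$ as above. By polarizing the identity $Q|_U \equiv 0$, totally isotropic subspaces are automatically self-orthogonal, so every $u \in U$ satisfies $B(u, u_1) = 0$. Decomposing $u = a u_1 + b u_1' + u''$ along $V = H \oplus H^\perp$, this orthogonality forces $b = 0$, so $U = F u_1 \oplus (U \cap H^\perp)$ and $\tilde U := U \cap H^\perp$ is an $(r-1)$-dimensional totally isotropic subspace of the non-degenerate space $H^\perp$. Applying the induction hypothesis to $(H^\perp, \tilde U)$ yields $H^\perp = \tilde U \oplus \tilde U' \oplus W$ with $\tilde U \oplus \tilde U' \cong H_{2(r-1)}$ and $W$ non-degenerate; setting $U' := F u_1' \oplus \tilde U'$ then produces the desired decomposition $V = U \oplus U' \oplus W$ with $U \oplus U' \cong H_2 \oplus H_{2(r-1)} \cong H_{2r}$.

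I expect the main obstacle to be the base case: extracting a genuine isotropic partner $u_1'$ (as opposed to merely a vector with $B(u_1, u_1') \neq 0$) requires the explicit modification $v \mapsto v - Q(v) u_1$, and then one must check both that the resulting plane is non-degenerate and that its orthogonal complement is non-degenerate inside $V$. Once this geometric lemma is in hand, the inductive step is clean bookkeeping, its essential ingredient being self-orthogonality of $U$, which lets orthogonality with the single vector $u_1$ push all of $U$ (modulo $F u_1$) into $H^\perp$.
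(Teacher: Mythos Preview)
Your proof is correct and follows precisely the route the paper indicates: the ``repeated application of \cite[Cor~1, p15]{Cassels:1978aa}'' is exactly your inductive peeling-off of one hyperbolic plane at a time, with that corollary being your $r=1$ base case (isotropic vector $\Rightarrow$ hyperbolic plane). Your explicit construction $u_1' := v - Q(v)u_1$ and the verification that $U$ descends to $H^\perp$ via self-orthogonality are the standard moves here, carried out cleanly.
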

\begin{proof}
This is shown in Lam's Book \cite[Thrm 3.4(1--2), p10]{Lam:2005kl}, or by repeated application of \cite[Cor 1, p15]{Cassels:1978aa}.
\end{proof}

In the case that $W$ is isotropic, we can repeatedly apply this to split off additional hyperbolic spaces until $W$ is anisotropic.  So we could have initially taken $U$ to be a totally isotropic subspace of maximal dimension in $(V,Q)$, called a {\bf maximal totally isotropic subspace}, and then concluded that $W$ was anisotropic.


\medskip


Another particularly useful result about quadratic spaces is that there is a large group of $F$-linear isometries of $(V,Q)$, called the  {\bf orthogonal group} and denoted as $O_Q(V)$ or $O(V)$, that acts on $(V,Q)$.  We will see throughout these lectures that the orthogonal group is very closely connected to the arithmetic of quadratic forms, partly because of the following important structural theorem of Witt which classifies isometric quadratic subspaces within a given quadratic space in terms of the orbits of $O(V)$.

\begin{thm}[Witt's Theorem]
Suppose that $U$ and $U'$ are non-degenerate isometric (quadratic) subspaces of a quadratic space $V$.  Then any isometry $\al:U \ra U'$ extends to an isometry $\al:V\ra V$. 
\end{thm}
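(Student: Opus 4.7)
The plan is to prove Witt's Theorem by induction on $d := \dim U$, with the workhorse being reflections through anisotropic vectors.

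\textbf{Base case} $d = 1$: Write $U = F\v$ and $U' = F\v'$, where $Q(\v) = Q(\v') \ne 0$ (nonzero because $U$ is non-degenerate). For any anisotropic $\w \in V$, define the reflection
\[
\tau_\w(x) \;:=\; x - \tfrac{H(x,\w)}{Q(\w)}\,\w,
\]
which is an isometry of $V$ sending $\w \mapsto -\w$ and fixing $\w^\perp$ pointwise. The polarization identity (\ref{Eq:Polarization_id}) gives $Q(\v+\v') + Q(\v-\v') = 2Q(\v) + 2Q(\v') = 4Q(\v) \ne 0$, so at least one of $\v \pm \v'$ is anisotropic. A direct calculation using $Q(\v) = Q(\v')$ shows that $\tau_{\v-\v'}(\v) = \v'$ whenever $Q(\v-\v') \ne 0$, while $\tau_{\v+\v'}(\v) = -\v'$ whenever $Q(\v+\v') \ne 0$; in the latter case, composing with the extra reflection $\tau_{\v'}$ (available since $Q(\v') \ne 0$) corrects the sign.

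\textbf{Inductive step} $d > 1$: Apply Theorem \ref{Thm:orthogonal_splitting} to $U$ to decompose $U = F\v \perp U_1$ with $Q(\v) \ne 0$ (possible because $U$ is non-degenerate and hence contains an anisotropic vector). Setting $\v' := \al(\v)$ and $U_1' := \al(U_1)$, we obtain $U' = F\v' \perp U_1'$. By the base case applied to the one-dimensional non-degenerate subspaces $F\v, F\v' \subseteq V$, there exists an isometry $\sigma : V \to V$ with $\sigma(\v) = \v'$. Then $\sigma^{-1} \circ \al$ fixes $\v$ and restricts to an isometry from $U_1$ onto $\sigma^{-1}(U_1')$, both of which lie in $\v^\perp$ (since $U_1 \perp \v$ in $U$, and $U_1' \perp \v'$ in $U'$ pulls back to $\sigma^{-1}(U_1') \perp \v$). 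Now $U_1$ and $\sigma^{-1}(U_1')$ are non-degenerate subspaces of dimension $d-1$, so by the inductive hypothesis applied inside the quadratic space $\v^\perp$ we extend this to an isometry $\tau$ of $\v^\perp$; extending $\tau$ to all of $V$ by $\tau(\v) := \v$, the composite $\sigma \circ \tau$ is an isometry of $V$ restricting to $\al$ on $U$.

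The main obstacle is the base case, and specifically the inconvenient possibility that $\v - \v'$ is itself isotropic so that $\tau_{\v-\v'}$ is not defined; the polarization identity forces $\v+\v'$ to compensate, but then the resulting reflection maps $\v \mapsto -\v'$ and an extra reflection $\tau_{\v'}$ is needed to reverse the sign. A minor reduction worth noting at the outset is that if $V$ is itself degenerate, one may first apply the radical splitting lemma to write $V = V_0 \perp V_1$ with $V_1$ non-degenerate, observe that the non-degenerate subspaces $U, U'$ inject into $V_1$, extend inside $V_1$, and act as the identity on $V_0$.
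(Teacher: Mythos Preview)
Your proof is correct and follows the standard reflection-based argument found in the references the paper cites (Cassels, Lam, Shimura); the paper itself gives no proof beyond those citations, so there is no in-text argument to compare against.

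One minor remark on your closing paragraph: the reduction to non-degenerate $V$ via the radical splitting is actually unnecessary, since your main induction already goes through verbatim for degenerate $V$ --- reflections $\tau_{\w}$ are well-defined isometries of any quadratic space once $\w$ is anisotropic, and the inductive hypothesis is applied inside $\v^{\perp}$, which may itself be degenerate without causing any trouble. Moreover, the reduction as sketched has a small gap: a non-degenerate subspace $U \subseteq V$ need not literally lie inside the chosen complement $V_1$ of the radical (it only \emph{projects} isometrically onto a subspace of $V_1$), so extending inside $V_1$ and acting as the identity on $V_0$ will not in general restrict to $\al$ on $U$. This is easily repaired by allowing the extended isometry to include a suitable linear correction $V_1 \to V_0$, but it is cleanest simply to drop the paragraph and let the main induction carry the degenerate case.
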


\begin{proof}
This is proved in almost every quadratic forms book, e.g.
Cassels's book \cite[Thrm 4.1 on p21]{Cassels:1978aa}, Shimura's book \cite[Thrm 22.2 on p116-7]{Shimura:2010vn}, and Lam's book \cite[Thrm 4.2 and 4.7 on pp 12-15]{Lam:2005kl}.
\end{proof}

Notice that Witt's Theorem shows that the dimension of a maximal isotropic subspace of a quadratic space $(V,Q)$ is a well-defined number (independent of the particular maximal isotropic subspace of $V$ that we choose), since otherwise we could find an isometry of $V$ that puts the smaller subspace properly inside the larger one, violating the assumption of maximality.







\section{Quadratic Forms over Local Fields} \label{Sec:Local_quadratic_spaces}

We now suppose that $(V,Q)$ is a non-degenerate quadratic space over one of the local fields $F=\R, \, \C$ or $\Q_p$ where $p$ is a positive prime number.  In this setting we can successfully classify quadratic spaces in terms of certain invariants associated to them.  The major result along these lines is that in addition to the dimension and determinant, at most one additional invariant is needed to classify non-degenerate quadratic spaces up to equivalence.

\begin{thm} 
There is exactly one non-degenerate quadratic space over $\C$ of each dimension $n$.  
\end{thm}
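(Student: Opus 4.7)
The plan is to use the orthogonal diagonalization theorem (Theorem \ref{Thm:orthogonal_splitting}) together with the fact that $\mathbb{C}$ is algebraically closed, so that every nonzero complex number is a square. This reduces the classification to showing that any non-degenerate diagonal form over $\mathbb{C}$ can be transformed into the standard sum-of-squares form by rescaling the coordinates.

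First, I would fix $n \geq 0$ and a non-degenerate quadratic space $(V,Q)$ over $\mathbb{C}$ of dimension $n$. Applying Theorem \ref{Thm:orthogonal_splitting}, I pick an orthogonal basis $\{\mathbf{v}_1, \dots, \mathbf{v}_n\}$ of $V$, so that with respect to this basis $Q$ has Gram matrix $\mathrm{diag}(a_1, \dots, a_n)$ with $a_i = Q(\mathbf{v}_i) \in \mathbb{C}$. Non-degeneracy means $\det(Q) = a_1 \cdots a_n \neq 0$ (as a squareclass), hence each $a_i \neq 0$.

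Next, since $\mathbb{C}$ is algebraically closed, for each $i$ I can choose $b_i \in \mathbb{C}^\times$ with $b_i^2 = a_i$. Setting $\mathbf{w}_i := b_i^{-1} \mathbf{v}_i$ yields a new basis, and under the corresponding invertible linear change of variables the Gram matrix becomes ${}^tM \, \mathrm{diag}(a_1,\dots,a_n)\, M = I_n$ where $M = \mathrm{diag}(b_i^{-1})$, so in the new basis $Q(\mathbf{x}) = x_1^2 + \cdots + x_n^2$. This shows every non-degenerate $n$-dimensional quadratic space over $\mathbb{C}$ is equivalent to the standard form $\sum x_i^2$.

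Finally, to establish uniqueness (i.e.\ there is \emph{exactly} one class), I note that existence of the standard form gives at least one class, and the previous paragraph shows any other non-degenerate $n$-dimensional space falls into the same class. There is no real obstacle in this proof; the only subtle point is remembering that non-degeneracy is needed to ensure each $a_i \neq 0$ (so that square roots may be inverted), and that $\det(Q)$ being a squareclass is consistent with all non-degenerate forms having trivial determinant squareclass over $\mathbb{C}$ (since $\mathbb{C}^\times / (\mathbb{C}^\times)^2$ is trivial).
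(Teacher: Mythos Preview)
Your proof is correct and follows essentially the same approach as the paper: diagonalize via Theorem \ref{Thm:orthogonal_splitting}, use non-degeneracy to ensure the diagonal entries are nonzero, and then rescale each coordinate by a square root (using $\C^\times = (\C^\times)^2$) to reach the standard form $\sum_{i=1}^n x_i^2$. The paper's proof is just a terser version of exactly this argument.
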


\begin{proof}
From the orthogonal splitting theorem \ref{Thm:orthogonal_splitting} we see that any such $Q(\x) \sim_\C \sum_{i=1}^n c_i x_i^2$ with $c_i \in \C^\times$. However since $\C^\times = (\C^\times)^2$ every $c_i$ can be written as some $a_i^2$, so we see that 
$Q(\x) = \sum_{i=1}^n (a_i x_i)^2 \sim_\C \sum_{i=1}^n x_i^2$.
\end{proof}

\begin{thm}
The non-degenerate quadratic spaces over $\R$ are in 1-1 correspondence with the pairs $(n, p_1)$ where $0\leq p_1 \leq n$.
\end{thm}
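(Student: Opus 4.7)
The plan is to prove this is the real case of Sylvester's Law of Inertia. The correspondence will send a non-degenerate quadratic space $(V,Q)$ of dimension $n$ to the pair $(n, p_1)$, where $p_1$ is the number of positive coefficients appearing in any diagonalization of $Q$ over $\R$.

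For existence and surjectivity, I would invoke the orthogonal splitting theorem \ref{Thm:orthogonal_splitting} to write $Q \sim_\R \sum_{i=1}^n c_i x_i^2$ with $c_i \in \R^\times$ (non-degeneracy ensures all $c_i \neq 0$). Since $\R^\times/(\R^\times)^2 = \{\pm 1\}$, we can write each $c_i = \pm a_i^2$ for some $a_i \in \R^\times$, and after rescaling variables $x_i \mapsto x_i/a_i$ and reordering so the positive terms come first, we obtain a canonical representative
$$
Q \sim_\R \underbrace{x_1^2 + \cdots + x_{p_1}^2}_{p_1 \text{ terms}} - \underbrace{x_{p_1+1}^2 - \cdots - x_n^2}_{n-p_1 \text{ terms}}.
$$
This shows every pair $(n,p_1)$ with $0 \leq p_1 \leq n$ is realized.

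The main obstacle is injectivity, i.e., showing $p_1$ is an invariant of the equivalence class (not merely of the diagonalization). I would give it an intrinsic geometric description: define
$$
p_1(V,Q) := \max\{\dim U : U \subseteq V \text{ a subspace on which } Q|_U \text{ is positive definite}\}.
$$
This quantity is manifestly preserved by isometry. To check it agrees with the number of positive diagonal entries, suppose $Q$ is given in the normalized form above with $p_1$ positive squares, and suppose $U$ is a subspace on which $Q$ is positive definite. Let $W = \mathrm{span}(\vec e_{p_1+1}, \ldots, \vec e_n)$, on which $Q$ is negative definite. Any $\v \in U \cap W$ satisfies $Q(\v) \geq 0$ and $Q(\v) \leq 0$, so $\v = 0$; thus $\dim U + \dim W \leq n$, giving $\dim U \leq p_1$. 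Conversely, $\mathrm{span}(\vec e_1, \ldots, \vec e_{p_1})$ realizes the bound $p_1$. Hence the number of positive diagonal entries equals the intrinsic invariant $p_1(V,Q)$, independent of the diagonalization.

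Combining these two directions, the map $(V,Q) \mapsto (\dim V, p_1(V,Q))$ is well-defined on equivalence classes, surjective onto pairs with $0 \leq p_1 \leq n$, and injective because any two spaces with the same $(n,p_1)$ are both equivalent to the same canonical form above. The main subtlety, as indicated, is the transversality argument $U \cap W = 0$ that pins down $p_1$ as a genuine invariant; everything else reduces to the orthogonal splitting theorem and the fact that $\R^\times/(\R^\times)^2$ has order $2$.
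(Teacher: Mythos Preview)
Your proof is correct, but the paper takes a somewhat different route to the invariance of $p_1$. Both arguments begin identically, diagonalizing via Theorem~\ref{Thm:orthogonal_splitting} and using $\R^\times/(\R^\times)^2 = \{\pm 1\}$ to reduce to a form with $p_1$ plus signs and $n-p_1$ minus signs. For the injectivity step, however, the paper appeals to the isotropic structure theory developed just above: the dimension $r$ of a maximal totally isotropic subspace is a well-defined isometry invariant (a consequence of Witt's Theorem), and $r$ counts the number of hyperbolic $(+1,-1)$ pairs in the diagonalization, i.e.\ $r = \min(p_1, n-p_1)$. The orthogonal complement of the hyperbolic part is then anisotropic, hence definite, and its sign recovers which of $p_1$ or $n-p_1$ equals $r$. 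Your argument instead gives $p_1$ the intrinsic description as the maximal dimension of a positive-definite subspace and proves it matches the diagonal count via the elementary transversality $U \cap W = 0$. Your approach is the classical Sylvester's Law of Inertia argument and is more self-contained (it avoids Witt's Theorem entirely); the paper's approach has the virtue of exercising the machinery of isotropic splitting and Witt cancellation just introduced, tying the real classification into that general framework.
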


\begin{proof}
Since $\R^{\times}$ has two squareclasses $\pm (\R^{\times})^{2}$, we see that the diagonal elements can be chosen to be either $1$ or $-1$.  Since the dimension of a maximal totally isotropic subspace is a well-defined isometry invariant, this characterizes the number of $(1,-1)$ pairs on the diagonal, and then the remaining diagonal entries all have the same sign.  Its orthogonal complement is anisotropic, and is either $1_{n-2r}$ or $-1_{n-2r}$ depending on the sign of the values it represents.
\end{proof}

In practice it 
is more standard to use the {\bf signature invariant} $p :=p_{1} - p_{2}$ 
instead of $p_1$, however they are equivalent for our purpose of giving a complete set of invariants for quadratic spaces over $\R$.


\begin{thm}
The non-degenerate quadratic spaces over  $\Q_p$ are in 1-1 correspondence with the triples $(n, d, c)$ where $n = \dim(Q) \in \Z\geq0$, $d = \det(Q) \in \Q_p^\times/(\Q_p^\times)^2$, and $c \in \{\pm 1\}$ is the Hasse invariant of $Q$, under the restrictions that 
\begin{enumerate}
\item$c = 1$ if either $n=1$ or $(n, d) =  (2, -1)$, and also 
\item $(n,d,c) = (0,1,1)$ if $n=0$.
\end{enumerate}
\end{thm}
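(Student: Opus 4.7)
The plan is to exploit the orthogonal splitting Theorem~\ref{Thm:orthogonal_splitting} to diagonalize $Q$ and then attach to each diagonalization $Q \sim_{\Q_p} \langle a_1, \ldots, a_n \rangle$ the Hasse invariant $c(Q) := \prod_{i<j}(a_i, a_j)_p$, where $(\cdot, \cdot)_p$ denotes the Hilbert symbol on $\Q_p^\times/(\Q_p^\times)^2$. The first task is to verify that $c(Q)$ is independent of the chosen diagonal basis. Since any two orthogonal bases can be connected by a sequence of elementary moves (either rescaling a single basis vector, or replacing two basis vectors by another orthogonal pair spanning the same $2$-plane), this reduces to checking invariance in rank $2$, which follows from the bilinearity and symmetry of the Hilbert symbol combined with the identity $(a, ab^2)_p = (a,a)_p$.

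Next I would verify that the listed restrictions on $(n,d,c)$ are automatic. For $n = 1$ the product over pairs is empty, so $c = 1$ tautologically. For $(n,d) = (2,-1)$ a form $\langle a, b \rangle$ with $ab \equiv -1$ in squareclass is equivalent to $\langle a, -a \rangle$, which contains the isotropic vector $(1,1)$ and is therefore the hyperbolic plane $H_2$; the standard identity $(a,-a)_p = 1$ then forces $c = 1$. The $n = 0$ case is vacuous.

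The main content is to show that $(n,d,c)$ is a \emph{complete} invariant, which I would prove by induction on $n$. The base case $n = 1$ is immediate since $\langle a \rangle \sim_{\Q_p} \langle b \rangle$ iff $a$ and $b$ lie in the same squareclass. For $n = 2$ I would use the fact that a binary non-degenerate form $\langle a, b \rangle$ represents $1$ if and only if $(a,b)_p = 1$, together with the classical observation that two binary forms of the same determinant that share a common represented nonzero value are equivalent (both being isometric to $\langle t, d/t \rangle$ by Witt's Theorem). For the inductive step $n \geq 3$, given two forms $Q, Q'$ with identical $(n,d,c)$, I would show they represent a common nonzero value $t$, apply Witt cancellation (a consequence of Witt's Theorem) to peel off a common one-dimensional summand $\langle t \rangle$, and reduce to a pair of $(n-1)$-dimensional forms whose determinants and Hasse invariants agree, invoking the inductive hypothesis.

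The hardest step, and the one requiring the most care, is showing that $Q$ and $Q'$ represent a common value; this hinges on the non-degeneracy of the Hilbert pairing on the finite group $\Q_p^\times/(\Q_p^\times)^2$ (of order $4$ for odd $p$ and $8$ for $p = 2$) and a counting argument over squareclasses showing that the two finite sets of nonzero values represented by $Q$ and $Q'$ must intersect. Existence of a form realizing each admissible triple is then handled constructively: starting from $\langle 1, 1, \ldots, 1, d \rangle$, which has \emph{some} Hasse invariant $c_0$, I would replace a suitable pair $\langle 1, 1 \rangle$ by an equivalent pair $\langle u, u \rangle$ chosen so that $(u,u)_p \neq (1,1)_p$, thereby toggling $c$ without altering $d$. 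This maneuver is available precisely when $n \geq 3$, or when $n = 2$ and $d \neq -1$ in squareclass, matching the stated constraints exactly.
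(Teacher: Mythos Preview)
The paper does not actually prove this theorem; its entire proof is a one-line pointer to Cassels \cite[Thrm 1.1, p55]{Cassels:1978aa}. Your outline is essentially the standard argument found there (and in Serre's \emph{A Course in Arithmetic}), so in spirit you are reconstructing precisely what the citation would unpack: diagonalize, define the Hasse invariant via Hilbert symbols, and run induction on $n$ using Witt cancellation.

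That said, your existence step has a real gap. You propose to toggle the Hasse invariant by replacing a $\langle 1,1\rangle$ block with $\langle u,u\rangle$ chosen so that $(u,u)_p=-1$. But $(u,u)_p=(u,-1)_p$, and when $-1\in(\Q_p^\times)^2$ (e.g.\ $p\equiv 1\pmod 4$) this equals $1$ for every $u$, so no such $u$ exists. (The word ``equivalent'' there is also a slip: if $\langle u,u\rangle$ were genuinely equivalent to $\langle 1,1\rangle$ the form would not change at all; you mean a block with the same determinant squareclass.) The repair is not hard---for instance, replace $\langle 1,d\rangle$ by $\langle a,ad\rangle$, which shifts the Hasse invariant by $(a,-d)_p$, and non-degeneracy of the Hilbert pairing supplies such an $a$ whenever $-d$ is a nontrivial squareclass; the residual case ($d=-1$ with $n\geq 3$) is then handled by modifying a ternary block instead. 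Your claim that the toggle is available exactly when $n\geq 3$ or $(n,d)=(2,d\neq -1)$ is the correct target, but the mechanism you wrote down does not reach it for all $p$.

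A smaller point: your well-definedness argument for $c(Q)$ via ``any two orthogonal bases can be connected by elementary moves'' is really Witt's chain-equivalence lemma, which is a theorem in its own right and should be cited rather than asserted.
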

\begin{proof}
This is proved in Cassels's book \cite[Thrm 1.1 on p55]{Cassels:1978aa}.
\end{proof}

Another area in which we can extract more information about quadratic forms over local fields is in terms of the {\bf maximal anisotropic dimension of $K$}, which is defined to be the largest dimension of an anisotropic subspace of any quadratic space over $K$.  This is sometimes called the {\bf $u$-invariant} of $K$.

\begin{thm}
The maximal anisotropic dimensions of $\C, \R$ and $\Q_p$ are $1, \infty$, and 4. 
\end{thm}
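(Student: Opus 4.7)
The plan is to handle the three fields separately, as each case draws on different ingredients.

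For $\C$, I would invoke the preceding classification theorem, which gives that every non-degenerate quadratic space over $\C$ of dimension $n$ is isometric to $\sum_{i=1}^n x_i^2$. A single square $x_1^2$ vanishes only at $0$, so $u(\C) \geq 1$. Conversely, the factorization $x_1^2 + x_2^2 = (x_1+ix_2)(x_1-ix_2)$ exhibits an isotropic vector in any $n \geq 2$ form, so $u(\C) = 1$. For $\R$, the form $\sum_{i=1}^n x_i^2$ is positive definite, hence anisotropic, for every $n$, giving $u(\R) = \infty$ immediately.

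The bulk of the work is the case $F = \Q_p$, where I would prove the two inequalities $u(\Q_p) \geq 4$ and $u(\Q_p) \leq 4$ separately. For the lower bound, I would exhibit an explicit $4$-dimensional anisotropic form. Pick a non-square unit $\varepsilon \in \Z_p^\times$ and a uniformizer $\pi$, and consider $Q = \langle 1, -\varepsilon, -\pi, \varepsilon\pi\rangle$ (the reduced norm form of the unique quaternion division algebra over $\Q_p$). To verify anisotropy of $Q$, I would suppose $Q(\x) = 0$ for some primitive $\x \in \Z_p^4$ and write the equation as $(x_1^2 - \varepsilon x_2^2) = \pi(x_3^2 - \varepsilon x_4^2)$; since the binary form $u^2 - \varepsilon v^2$ represents only values in $\Z_p^\times \cdot (\Q_p^\times)^2$ when $(u,v)$ is primitive (because $\varepsilon$ is a non-square unit), a valuation comparison forces both sides to vanish, contradicting primitivity.

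For the upper bound $u(\Q_p) \leq 4$, I would show that every non-degenerate form $Q$ of dimension $n \geq 5$ over $\Q_p$ is isotropic. The cleanest route is via the classification by $(n, d, c)$ stated earlier: diagonalize $Q = \langle a_1, \dots, a_n\rangle$ and analyze which triples $(n, d, c)$ are possible. The key structural fact I would use is that a $2$-dimensional non-degenerate form over $\Q_p$ represents every square class except at most one, which follows from counting: $|\Q_p^\times/(\Q_p^\times)^2|$ is $4$ (for odd $p$) or $8$ (for $p=2$), while the values of $\langle a_1, a_2\rangle$ fill out all but the square class of $-a_1 a_2 \cdot \det$-obstructions. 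Thus, choosing the splitting $Q = \langle a_1, a_2, a_3\rangle \oplus \langle a_4, a_5, \ldots, a_n\rangle$, I would argue that the image of the ternary part already exhausts all squareclasses (since a ternary form over $\Q_p$ is universal except for an anisotropic obstruction, which disappears once the complement has dimension $\geq 2$), so some common nonzero value is represented both by $\langle a_1, a_2, a_3\rangle$ and by $\langle -a_4, \ldots, -a_n\rangle$, producing an isotropic vector for $Q$.

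The main obstacle is the upper bound for $\Q_p$: the counting of square classes represented by binary and ternary forms is where the proof really lives. This step can alternatively be extracted directly from Cassels \cite[Ch.~6]{Cassels:1978aa} or from the classification triple $(n,d,c)$, but in either case one must verify that the anisotropic $4$-dimensional form above is unique up to isometry (its invariants are fully constrained) and that no fifth orthogonal summand can be appended without introducing an isotropic vector.
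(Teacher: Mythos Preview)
Your treatment of $\C$ and $\R$ is correct and matches the paper's brief argument. For $\Q_p$, the paper simply cites Lam for both directions (the quaternion norm form gives the lower bound, and the isotropy of all $5$-dimensional forms gives the upper bound), so your attempt to supply details goes beyond what the paper does. The lower-bound construction via $\langle 1,-\varepsilon,-\pi,\varepsilon\pi\rangle$ is the right idea and is exactly the quaternion norm form the paper invokes; just be aware that your valuation argument for anisotropy, as written, is sensitive to the choice of $\varepsilon$ when $p=2$ (for instance $\varepsilon=3$ gives $u^2-3v^2\equiv -2\pmod 8$ for $u,v$ odd, an odd valuation), so you would need to specify $\varepsilon\equiv 5\pmod 8$ or handle $p=2$ separately.

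There is, however, a genuine error in your upper-bound sketch. The assertion that a non-degenerate binary form over $\Q_p$ ``represents every square class except at most one'' is false: an anisotropic binary form $\langle a,b\rangle$ is, up to scaling, the norm form of the quadratic extension $\Q_p(\sqrt{-ab})$, and its value group has index exactly $2$ in $\Q_p^\times$. Thus it misses \emph{half} the square classes (two of four for $p$ odd, four of eight for $p=2$), not one. This breaks the counting you propose, and the subsequent claim that a ternary form ``exhausts all squareclasses'' is also not true in general (anisotropic ternary forms exist and are not universal). The correct and clean route---which you gesture toward at the end---is to use the classification: the unique anisotropic $4$-dimensional form over $\Q_p$ is the quaternion norm form, which is \emph{universal} (it represents every element of $\Q_p^\times$); hence adjoining any fifth summand $\langle a_5\rangle$ produces an isotropic form, since the $4$-dimensional piece already represents $-a_5$. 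You should replace the binary/ternary counting with this universality argument, or else cite the result directly as the paper does.
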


\begin{proof}
Over $\C$ any form in $n\geq 2$ variables is isotropic, and any non-zero form of dimension 1 is anisotropic.  Over $\R$ we see that $Q(\x) = \sum_{i=1}^n x_i^2$ is anisotropic for any $n \in \N$.  Over $\Q_p$ any form in $\n\geq 5$ variables isotropic, and there is always an anisotropic form in four variables arising as the norm form from the unique ramified quaternion algebra over $\Q_p$ (see \cite[Thrm 2.12 and Cor 2.11, p158]{Lam:2005kl}).
\end{proof}

%
In particular, over $\R$ says that it is possible for quadratic forms of any dimension to be anisotropic, which means that either $Q$ represents only positive or only negative values (using a non-zero vector), and in these cases we say $Q$ is {\bf positive definite} or {\bf negative definite} respectively.  When a non-degenerate $Q$ is isotropic over $\R$ then it must represent both positive and negative values, in which case we say that $Q$ is {\bf indefinite}. 
%
%
For $\Q_p$ there is no notion of positive and negative, but one can concretely understand the $u$-invariant $u(\Q_{p}) = 4$ from the existence of certain (non-split) quaternion algebras at every prime $p$, which will be discussed in more detail in Chapter \ref{Chapter:Clifford}.  The norm forms of these quaternion algebras assume all values of $F$ and do not represent zero non-trivially.





\section{The Geometry of Quadratic Lattices -- Dual Lattices}

Quadratic lattices also have a kind of geometry associated to them that is a little more subtle than the ``perpendicular'' geometry of subspaces of quadratic spaces.  Given a (full rank) quadratic lattice $L$ (over $R$) in a quadratic space $(V,Q)$ (over the fraction field $F$ of $R$), we can consider the elements of $V$ where the linear form $H(\cdot, L)$ is in any fixed ideal $I$ of $R$.  
\footnote{For any subset $S \subseteq V$ the set $H(S, L)$ is an $R$-module, and so it is natural to consider maximal subsets of $V$ where $H(S, L)$ is a fixed $R$-module.  From the bilinearity of $H$, we see that these maximal sets $S$ are also $R$-modules.}
When $R=F$ is a field,
the only ideals are $F$ and $(0)$; 
taking $I = F$ imposes no condition,
and taking $I =(0)$ recovers the notion of the orthogonal complement $L^\perp$ of $L$.  However when $R \neq F$ 
then taking $I=R$ gives 
an
interesting integral notion of ``orthogonality'' which is very useful for making other lattices that are closely related to $L$.


We define the 
{\bf (Hessian) dual lattice} $L^{\#}$ of $L$ to be the set of vectors in $V$ that have integral inner product with all $\w\in L$, i.e.
$$
L^{\#} := \{\v \in V \mid H(\v, \w) \in R \text{ for all } \w \in L\}.
$$
Notice that if $H(\x, \y)$ is $R$-valued for all $\x,\y\in L$ then we have $L \subseteq L^{\#}$.  When an $R$-valued lattice $L$ is free as an $R$-module, 
we also know that 
the matrix of $H$ in any basis of $L$ is symmetric with coefficients in $R$ and even diagonal (i.e. all $a_{ii}\in 2R$), 
so $H$ is an even symmetric matrix.  
We then define the {\bf level} of $L$ to be the smallest (non-zero) ideal $\n \subseteq R$
so that the matrices in $\n H^{-1}$ are also even.  The level is a very useful invariant of $L$ which appears when we take dual lattices (because $H^{-1}$ is the matrix of basis vectors for the dual basis of the given basis $\B$ of $L$ in the coordinates of $\B$), and in particular it plays an important role in the theory of theta functions (see Chapter \ref{Chapter:ThetaFunctions}).

In the special case where $R = \Z$ the level $\n$ can be written as $\n = (N)$ for some $N \in\N$, and this (minimal) $N$ is what is usually referred to as the level of the quadratic lattice (which is also a quadratic form since all $\Z$-lattices are free).

We say that a quadratic form over a ring $R$ is  {\bf (Hessian) unimodular} if its Hessian bilinear form has unit determinant (i.e. $\det(H) \in R^\times$).  In terms of quadratic lattices, this is equivalent to saying that the associated quadratic $R$-lattice $(L, Q)= (R^n, Q)$ is {\bf (Hessian) self-dual} (i.e. $L^\# = L$).

\begin{rem}
It is somewhat more customary for authors to define the dual lattice \cite[\S82F, p230]{OMeara:1971zr} as the {\bf Gram dual lattice},
$$
L_G^{\#} := \{\v \in V \mid B(\v, \w) \in R \text{ for all } \w \in L\}.
$$
and for the analogous notion of unimodular and self-dual to be {\bf Gram unimodular} (i.e. $\det(B) \in R^\times$) and  {\bf Gram self-dual} (i.e. $L_G^\# = L$).  
While either definition will suffice for a Jordan splitting theorem (Theorem \ref{Thm:Jordan_splitting}) in terms of unimodular lattices (because Gram and Hessian unimodular lattices are simply scaled versions of each other), the Hessian definitions are more natural from an arithmetic perspective (e.g. in the definition of level, the proof of Theorem \ref{Thm:Jordan_splitting}, and our discussion of neighboring lattices in Section \ref{Sec:Neighboring_lattices}).  If $2$ is invertible in $R$, then there is no distinction between the Hessian and Gram formulations, so over local ($p$-adic) rings this only makes a difference over the 2-adic integers $\Z_2$.
\end{rem}


\section{Quadratic Forms over Local ($p$-adic) Rings of Integers}

If we consider quadratic forms over the ring of integers $\Z_p$ of the $p$-adic field $\Q_p$, then the classification theorem is more involved because the valuation and units will both play a role. The main result along these lines involves the notion of a ``Jordan splitting'', which breaks $Q$ into a sum of pieces scaled by powers of $p$ which are as simple as possible. 



\begin{lem} \label{Lem:Splitting_lemma}
Suppose that $R$ is a principal ideal domain, $L$ is a quadratic $R$-lattice in the quadratic space $(V,Q)$, and $W$ is a non-degenerate subspace of $(V,Q)$.  If $\scale(L\cap W) = \scale(L)$, then we can write $L = (L\cap W) \oplus (L \cap W^{\perp})$.
\end{lem}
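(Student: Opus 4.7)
The plan is to show that every $\v \in L$ admits a decomposition $\v = \w + \w'$ with $\w \in L \cap W$ and $\w' \in L \cap W^\perp$. Since $W$ is non-degenerate, $V = W \oplus W^\perp$, so $\v$ has a unique such decomposition in $V$ with $\w \in W$ and $\w' \in W^\perp$; the issue is to show $\w \in L$, which then forces $\w' = \v - \w \in L \cap W^\perp$ and yields $L \subseteq (L \cap W) \oplus (L \cap W^\perp)$ (the reverse inclusion being automatic).

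I would exploit the PID hypothesis to choose a convenient basis. Applying Smith normal form (the stacked basis theorem) to the inclusion of free $R$-modules $L \cap W \hookrightarrow L$, one obtains an $R$-basis $\mathbf{f}_1,\ldots,\mathbf{f}_n$ of $L$ together with scalars $d_1 \mid d_2 \mid \cdots \mid d_k$ (where $k = \dim W$) such that $d_1\mathbf{f}_1,\ldots,d_k\mathbf{f}_k$ is an $R$-basis of $L\cap W$.  Since each $d_i \mathbf{f}_i$ lies in $W$ and $W$ is a vector space, each $\mathbf{f}_i$ with $i \le k$ also lies in $W$ and hence in $L \cap W$; comparing this containment with the basis above forces every $d_i$ to be a unit, so $\mathbf{f}_1,\ldots,\mathbf{f}_k$ is itself an $R$-basis of $L \cap W$.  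It therefore suffices to decompose each $\mathbf{f}_j$ with $j > k$.

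For such $j$, write $\mathbf{f}_j = \w_j + \w_j'$ as above and expand $\w_j = \sum_{i \le k} \alpha_{ji}\mathbf{f}_i$ with unknown $\alpha_{ji} \in F$. Pairing with $\mathbf{f}_l$ for $l \le k$ and using $\w_j' \perp \mathbf{f}_l$ gives the linear system
\[
H(\mathbf{f}_j, \mathbf{f}_l) \;=\; \sum_{i \le k} \alpha_{ji}\,H(\mathbf{f}_i,\mathbf{f}_l) \qquad (l = 1,\ldots,k),
\]
whose coefficient matrix is the Gram matrix $G_W$ of $L \cap W$.  Non-degeneracy of $W$ makes $G_W$ invertible over $F$, uniquely determining the $\alpha_{ji}$.

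The hard part is upgrading $\alpha_{ji} \in F$ to $\alpha_{ji} \in R$, and this is exactly where the hypothesis $\scale(L) = \scale(L \cap W)$ is essential. Letting $s \in R$ generate this common principal ideal, every entry of $G_W$ and every right-hand side $H(\mathbf{f}_j,\mathbf{f}_l)$ lies in $(s)$. Cramer's rule then writes each $\alpha_{ji}$ as a ratio whose numerator lies in $(s^k)$ (cofactors of $G_W$ live in $(s^{k-1})$, multiplied by scale values in $(s)$ and summed) and whose denominator $\det G_W$ also lies in $(s^k)$; the equality of scales is what must force the cancellation of $s^k$ and leave the quotient in $R$.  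Once $\alpha_{ji} \in R$ is established, $\w_j \in L \cap W$ and $\w_j' = \mathbf{f}_j - \w_j \in L \cap W^\perp$, completing the orthogonal splitting on every basis vector of $L$ (trivially for $\mathbf{f}_i$ with $i \le k$).
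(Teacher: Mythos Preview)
Your overall strategy mirrors the paper's proof quite closely: choose a basis of $L$ extending one of $L\cap W$ (you correctly observe via the stacked-basis theorem that the elementary divisors $d_i$ must be units, so $L\cap W$ is a direct summand of $L$), and then solve a linear system with coefficient matrix $G_W = (H(\mathbf{f}_i,\mathbf{f}_l))_{i,l\le k}$ to project each remaining basis vector orthogonally into $W$.  The paper does exactly the same thing, phrasing the last step as ``realize the vector $(H(\w_i,\v_j))_i$ as an $R$-linear combination of the columns of $G_W$.''

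The gap in your argument is the Cramer's-rule step, and you essentially flag it yourself with the phrase ``must force the cancellation.''  Knowing only that the numerator and the denominator $\det G_W$ both lie in $(s^k)$ does \emph{not} imply their ratio lies in $R$.  Take $R=\Z$, $L=\Z^3$, Hessian matrix
\[
H=\begin{pmatrix}2&1&0\\1&2&1\\0&1&2\end{pmatrix},\qquad W=\mathrm{span}(\mathbf{e}_1,\mathbf{e}_2).
\]
Then $L\cap W=\Z\mathbf{e}_1\oplus\Z\mathbf{e}_2$, and $\scale_H(L\cap W)=\scale_H(L)=\Z$ since $H(\mathbf{e}_1,\mathbf{e}_2)=1$.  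But $G_W=\left(\begin{smallmatrix}2&1\\1&2\end{smallmatrix}\right)$ has determinant $3$, and solving your system for $\mathbf{f}_3=\mathbf{e}_3$ gives $\alpha_{31}=-\tfrac13$, $\alpha_{32}=\tfrac23$.  Indeed $L\cap W^\perp=\Z\cdot(1,-2,3)$ and $(L\cap W)\oplus(L\cap W^\perp)$ has index $3$ in $L$, so the conclusion of the lemma actually fails here.

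The paper's proof has the identical gap: it asserts that after scaling so that $\scale_H(L)=R$ one has $G_W\in\GL_k(R)$, but the scale condition only says the \emph{entries} of $G_W$ generate $R$, not that $\det G_W$ is a unit.  The hypothesis that makes both arguments go through is that $L\cap W$ is \emph{unimodular} (equivalently $\det G_W\in R^\times$ after normalizing scale), which is the hypothesis actually used in the application to the Jordan decomposition and is the standard hypothesis in the literature (e.g.\ O'Meara \S82:15).  With that stronger hypothesis your linear system is solvable over $R$ immediately, and no Cramer's-rule divisibility juggling is needed.
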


\begin{proof}
Since $R$ is a principal ideal domain, we know that all finitely generated $R$-modules are free \cite[Thrm 7.1, p146]{Lang:1995kx}, giving $L \equiv R^n$ and $L\cap W\equiv R^k$ where $V$ and $W$ have dimensions $n$ and $k$ respectively.  The structure theorem for finitely-generated $R$-modules \cite[Thrm 7.8(i), p153]{Lang:1995kx} says that 
%
%
we can find a set of $n$ vectors $\{\w_1, \dots, \w_k, \v_{k+1}, \dots \v_n\}$ that generate $L$ whose first $k$ elements generate $L\cap W$.  Without loss of generality we can scale the quadratic form $Q$ so that $\scale_H(L) = R$, in which case the Hessian matrix of $L\cap W$ in this basis is in $\GL_n(R)$.  Therefore for each generator $\v_j$ of $L$, we can realize the vector $(H(\w_i, \v_j))_{1\leq i \leq k}$ as an $R$-linear combination of its columns, and this linear combination can be used to adjust each $\v_j$ to be orthogonal to all $\w_i$, completing the proof.
%
%
%
%
\end{proof}

\begin{thm}[Jordan Decomposition] \label{Thm:Jordan_splitting}
A non-degenerate quadratic form over $\Z_{p}$ can be written as a direct sum 
$$
Q(\x) = \bigoplus_{j \in \Z} p^{j}Q_{j}(\x_{j})
$$
where the $Q_{j}(\x_{j})$ are unimodular.  
More explicitly, if $p>2$ then each $Q_{j}$ is a direct sum of quadratic forms $u_{i}x^{2}$ for some $p$-adic units $u_{i} \in \Z_p^\times$, and if $p=2$ then each $Q_{j}$ is a direct sum of some collection of the unimodular quadratic forms $u_{i}x^{2}$, $xy$, and $x^2 + xy + y^2$.
\end{thm}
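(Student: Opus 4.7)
The plan is to proceed by induction on $n = \dim L$, where $L$ is the $\Z_p$-lattice corresponding to $Q$, peeling off an orthogonal summand of rank $1$ (or rank $2$ in the $p=2$ case) at each step via the splitting lemma (Lemma \ref{Lem:Splitting_lemma}); collecting the resulting summands by the $p$-adic valuation of their scale then yields the stated Jordan decomposition. By rescaling $Q$ by $p^{-j_0}$, where $j_0 := \ord_p(\scale_H(L))$, I may assume $\scale_H(L) = \Z_p$ at each stage, so the task reduces to finding a small unimodular sublattice $M$ with $\scale_H(M) = \scale_H(L)$, at which point Lemma \ref{Lem:Splitting_lemma} gives $L = M \oplus M^{\perp}$ and the inductive hypothesis applies to $M^{\perp}$.

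\textbf{The case $p$ odd.} Since $2 \in \Z_p^\times$, for any $\v_1, \v_2 \in L$ with $H(\v_1, \v_2) \in \Z_p^\times$, the polarization identity gives $H(\v_1+\v_2, \v_1+\v_2) - H(\v_1-\v_2, \v_1-\v_2) = 4 H(\v_1, \v_2) \in \Z_p^\times$, so some diagonal value $H(\v, \v)$ is a unit. Taking $M = \Z_p \v$ yields $\scale_H(M) = \Z_p = \scale_H(L)$, and iterating produces the desired orthogonal splitting of $L$ into rank-$1$ summands which, after rescaling, are of the form $u_i x^2$ with $u_i \in \Z_p^\times$.

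\textbf{The case $p = 2$.} Now $H(\v, \v) = 2 Q(\v) \in 2 \Z_2$ always, so the condition $\scale_H(L) = \Z_2$ forces some off-diagonal value $H(\v_1, \v_2) \in \Z_2^\times$ (with $\v_1, \v_2$ automatically linearly independent, since $H(\v, c\v) = 2cQ(\v) \in 2\Z_2$). The rank-$2$ sublattice $M = \Z_2 \v_1 + \Z_2 \v_2$ has Hessian determinant $4 Q(\v_1) Q(\v_2) - H(\v_1, \v_2)^2 \in \Z_2^\times$, is therefore unimodular, and satisfies $\scale_H(M) = \Z_2$; I apply Lemma \ref{Lem:Splitting_lemma} and induct. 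To identify $M$ as $xy$ or $x^2 + xy + y^2$, I classify the binary forms $a x^2 + u xy + b y^2$ with $u \in \Z_2^\times$, $a, b \in \Z_2$ by completing the square modulo appropriate powers of $2$: the isotropic ones (those representing zero non-trivially) are equivalent to $xy$ and the anisotropic ones to $x^2 + xy + y^2$. The rank-$1$ blocks $u x^2$ arise at scale-levels where the minimum Hessian scale is attained diagonally by some $H(\v,\v) = 2u$ with $u$ a unit, corresponding to the so-called ``odd'' Jordan components.

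The main obstacle is the delicate $2$-adic analysis: first, ensuring that the rank-$2$ sublattice $M$ is saturated inside $L \cap \mathrm{span}(M)$ (which follows because its unit determinant forces any overlattice of the same rank to coincide with it), and second, carrying out the classification of binary Hessian-unimodular $\Z_2$-forms into exactly the two types $xy$ and $x^2 + xy + y^2$ using the explicit structure of $\Z_2^{\times}/(\Z_2^\times)^2$. The rescaling reduction at the start of each inductive step also needs to be handled carefully so that the accounting of the Jordan indices $j \in \Z$ agrees across different levels.
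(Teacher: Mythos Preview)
Your approach is essentially the same as the paper's: both argue by repeated application of Lemma~\ref{Lem:Splitting_lemma}, splitting off a rank-$1$ summand achieving the minimal scale when $p$ is odd and a rank-$2$ summand when $p=2$, and then checking the explicit classification of binary unimodular $\Z_2$-forms. You supply more detail than the paper's one-line sketch (the polarization trick for $p$ odd, the Hessian-determinant computation for $p=2$).

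One small wrinkle worth tightening at $p=2$: your normalization ``rescale $Q$ by $p^{-j_0}$ so that $\scale_H(L)=\Z_p$'' can destroy $\Z_2$-integrality of $Q$ (for instance $Q=x^2$ has $\scale_H=2\Z_2$, and $\tfrac12 x^2$ is not $\Z_2$-valued), after which the assertion $H(\v,\v)=2Q(\v)\in 2\Z_2$ is no longer available. This is precisely where the rank-$1$ blocks $u_ix^2$ enter: rather than always rescaling to unit Hessian scale, it is cleaner at each step to ask whether the current minimal scale is witnessed diagonally (then split off rank~$1$) or only off-diagonally (then split off rank~$2$). Your closing paragraph shows you see this; just merge that case distinction into the main induction rather than treating it as an afterthought.
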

\begin{proof}
This follows from successively applying Lemma \ref{Lem:Splitting_lemma}, and noticing that the minimal scale for a sublattice of $L$ can always be attained by a rank $1$ sublattice when $p\neq 2$, and be a rank $2$ sublattice when $p=2$.  The explicit statement for $p=2$ follows from checking equivalences between the rank 2 unimodular lattices.

Most authors state this result in terms of Gram unimodular lattices.
When $p>2$ this is given in Cassels's book \cite[Lemma 3.4 on p115]{Cassels:1978aa}, while $p=2$ is given in the very explicit form stated here as \cite[Lemma 4.1 on p117]{Cassels:1978aa}.  See also \cite[Thrm 8.1, p162 and Thrm 8.9, p168]{Gerstein:2008jh} and \cite[Thrm 93:29 on pp277]{OMeara:1971zr}.
For the general classification of integral quadratic forms over number fields at primes over $p=2$ see O'Meara's book \cite[Theorem 93:28 on pp267-276]{OMeara:1971zr}, though there only invariants (and not explicit representatives) are given.
%
\end{proof}

\begin{rem}
As a convention, we consider the ring of integers of $\R$ and $\C$ to be just $\R$ and $\C$ again, so there is nothing new to say in that situation.
\end{rem}



\section{Local-Global Results for Quadratic forms} \label{Sec:QF_local_global}

A useful idea for studying quadratic forms over either $\Q$ or $\Z$ is to consider them locally over all completions (by thinking of their coefficients in the associated local field $\Q_v$ or ring $\Z_v$), and then try to use information about these ``local'' quadratic forms to answer questions about the original ``global'' quadratic form.  While it is easy to pass from $Q$ to a local quadratic form $Q_v$ defined over its completion at the valuation $v$, it is more difficult to reverse this process to glue together a set of local forms $Q_v$ for all $v$ to obtain some ``global'' quadratic form $Q$.  

We now examine the extent to which this can be done.
Our first theorem tells us that for quadratic spaces over $Q$ this ``local-global'' procedure works flawlessly, and we can check the (rational) equivalence of forms using only local information.
%

\begin{thm}[Hasse-Minkowski Theorem]
Given two quadratic forms $Q_1$ and $Q_2$ defined over $\Q$, we have
$$
Q_{1} \sim_{\Q} Q_{2} \iff Q_{1} \sim_{\Q_{v}} Q_{2} \text{ for all places $v$ of $\Q$}.
$$
\end{thm}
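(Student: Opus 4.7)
The forward direction is immediate: any $M \in \GL_n(\Q)$ witnessing $Q_1 \sim_\Q Q_2$ also lies in $\GL_n(\Q_v)$ for every place $v$, so gives a local equivalence. The content of the theorem is the converse, and my plan is to reduce it in stages to the Hasse principle for isotropy.

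The first reduction is from equivalence to representation. I claim the theorem follows from the \emph{local-global principle for representation}: for any non-degenerate rational quadratic form $Q$ and any $a \in \Q^\times$, if $Q$ represents $a$ over $\Q_v$ for every place $v$, then $Q$ represents $a$ over $\Q$. Granting this, I proceed by induction on $n = \dim Q_1 = \dim Q_2$. By orthogonal diagonalization (Theorem \ref{Thm:orthogonal_splitting}) write $Q_1 \sim_\Q \langle a_1\rangle \oplus Q_1'$ for some $a_1 \in \Q^\times$. Since $Q_1 \sim_{\Q_v} Q_2$ for every $v$, the form $Q_2$ represents $a_1$ over each $\Q_v$, hence by the representation principle $Q_2$ represents $a_1$ over $\Q$. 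So $Q_2 \sim_\Q \langle a_1 \rangle \oplus Q_2'$ for some $(n-1)$-dimensional $Q_2'$. Applying Witt cancellation at every place $v$ gives $Q_1' \sim_{\Q_v} Q_2'$ for all $v$, and the inductive hypothesis finishes the step.

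The second reduction is from representation to isotropy. The form $Q$ represents $a \in \Q^\times$ over a field $K \supseteq \Q$ precisely when $\tilde Q := Q \oplus \langle -a \rangle$ is isotropic over $K$ (via the vector whose last coordinate is $1$, using a standard Witt-style argument to rule out the degenerate case when $Q$ is itself isotropic over $K$). Thus the representation principle follows from the \emph{Hasse principle for isotropy}: a non-degenerate rational form is isotropic over $\Q$ iff it is isotropic over every $\Q_v$.

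It remains to prove the Hasse principle for isotropy, which I would treat by induction on $\dim Q = n$, splitting into the classical cases. For $n = 1$ the statement is vacuous. For $n = 2$, after scaling one reduces to showing that $-d$ being a square in every $\Q_v$ forces it to be a square in $\Q$, which follows from unique factorization and the product formula for quadratic characters. For $n = 3$ this is Legendre's three-squares theorem (equivalently the reciprocity-flavored statement $ax^2 + by^2 + cz^2 = 0$ having a rational zero iff it has one everywhere), proved by a descent argument using Minkowski's convex body theorem or Hensel-lifting plus Dirichlet on primes in arithmetic progressions. For $n = 4$ one reduces to the $n=3$ case by showing that the two binary forms $\langle a_1,a_2\rangle$ and $-\langle a_3,a_4\rangle$ represent a common rational value, using quaternion-algebra reciprocity (the product formula $\prod_v (\alpha,\beta)_v = 1$ for the Hilbert symbol). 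For $n \geq 5$, local isotropy at the real place forces $Q$ to be indefinite over $\R$, and then Meyer's theorem (itself provable by Minkowski geometry-of-numbers or by induction using the $n=4$ case) gives a rational zero.

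The main obstacle is clearly the $n=3$ and $n=4$ cases, which contain the genuine arithmetic content and ultimately rest on quadratic reciprocity via the Hilbert symbol product formula; the higher-dimensional cases are then propagated inductively, and the equivalence statement itself falls out essentially formally from Witt cancellation once representation is controlled.
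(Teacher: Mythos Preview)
Your sketch is correct and follows the standard route: reduce equivalence to representation via Witt cancellation, reduce representation to isotropy via $Q \oplus \langle -a\rangle$, and then establish the Hasse principle for isotropy by the classical case analysis on $n$, with the arithmetic heart in the $n=3$ and $n=4$ cases resting on Hilbert reciprocity.

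The paper itself does not give a proof at all --- it simply cites Cassels \cite[Thrm 1.3 on p77 and \S6.7, p85--86]{Cassels:1978aa}. What you have outlined \emph{is} precisely Cassels's argument, so there is no meaningful difference in approach to compare. Your write-up goes considerably further than the paper in actually indicating how the proof runs; the one place where more care would be needed in a full proof is the $n=4$ step, where showing that $\langle a_1,a_2\rangle$ and $-\langle a_3,a_4\rangle$ share a \emph{global} represented value (given that they share one locally at each place) requires a careful choice using Dirichlet on primes in arithmetic progressions together with the product formula --- but you correctly flag this as the crux, so the plan is sound.
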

\begin{proof}
This is stated as the ``Weak Hasse principle'' in Cassels's book \cite[Thrm 1.3 on p77]{Cassels:1978aa}, but proved in \cite[\S6.7, p85--86]{Cassels:1978aa}.
\end{proof}

\begin{rem}
The same result holds if we replace $\Q$ with any number field $K$, and replace the $\Q_v$ with all of the completions $K_v$ at all places of $K$.
\end{rem}



\medskip
We denote the $\Z$-equivalence class of $Q$ by $\Cls(Q)$, and refer to it as the {\bf class} of $Q$.
Given two quadratic forms $Q_1$ and $Q_2$ over $\Z$, we always have that
$$
Q_1 \sim_\Z Q_2 \implies Q_1 \sim_{\Z_v} Q_2 \quad \text{for all places $v$}
$$
since the linear transformation giving the $\Z$-equivalence is also defined over each completion $\Z_v$.  (Recall that $\Z_\infty := \R$ by convention.)  However unlike with quadratic forms over $\Q$, we are not guaranteed that local equivalence over all $\Z_v$ will ensure equivalence over $\Z$.
The number of distinct $\Z$-equivalence classes of quadratic forms that are locally $\Z_v$-equivalent to $Q$ at all places is called the {\bf class number} $h_Q$ of the quadratic form $Q$, and the set of all forms with the same localization as $Q$ is called the {\bf genus of $Q$}, so $h_Q = |\Gen(Q)|$.


It is a major result of Siegel from the reduction theory of (either definite or indefinite) quadratic forms over $\R$ that $h_Q < \infty$.  The class number of an indefinite quadratic form of dimension $n\geq 3$ is particularly simple to compute, and can be found in terms of a few local computations, but the class number of a definite form is considerably more complicated to understand exactly.

\begin{thm}
The class number $h_{Q}$ is finite.
\end{thm}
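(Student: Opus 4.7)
The plan is to split the proof into two regimes according to the signature of $Q$ at $\R$, since the nature of reduction theory differs sharply between them. In both cases the starting point is that the determinant (and more generally each local invariant) is constant on $\Gen(Q)$, because local equivalence at $\Z_v$ for every $v$ preserves $\det(Q) \in \Q_v^\times / (\Q_v^\times)^2$ at the finite places and the signature at $\R$. So every form in $\Gen(Q)$ has integer coefficients and a fixed determinant $d$.

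For $Q$ definite (say positive definite) I would invoke Hermite–Minkowski reduction. The key fact is that every class in $\Cls$ contains a Gram matrix $A = (a_{ij})$ satisfying $0 < a_{11} \leq \gamma_n |d|^{1/n}$, together with an inductive bound on $a_{22}, \dots, a_{nn}$ coming from applying the same reduction to the orthogonal complement of the short vector, and finally $|a_{ij}| \leq \frac{1}{2} a_{ii}$ (or some similar normalization) for the off-diagonal entries. Since $a_{ii}$ is a positive integer bounded in terms of $d$ and each $a_{ij}$ is then an integer in a bounded interval, there are only finitely many reduced Gram matrices of determinant $d$, hence only finitely many classes in $\Gen(Q)$.

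For $Q$ indefinite (dimension $n \geq 2$) the same coefficient-bounding argument fails, because the arithmetic group $O_Q(\Z)$ is infinite and the orbit of any given lattice basis under $O_Q(\Z)$ is unbounded in $O_Q(\R)$. The standard workaround is to pass to the adelic description of the genus: one identifies
$$
\Gen(Q) \;\longleftrightarrow\; O_Q(\Q) \,\backslash\, O_Q(\A_f) \,/\, \prod_{p} K_p
$$
where $K_p \subseteq O_Q(\Q_p)$ is the stabilizer of $L \otimes \Z_p$, an open compact subgroup. Finiteness of this double coset space is then a consequence of Borel–Harish-Chandra's general finiteness theorem for class numbers of reductive groups over number fields, or equivalently of Siegel's reduction theory for $O_Q$, which produces a Siegel set of finite volume surjecting onto $O_Q(\Q) \backslash O_Q(\A)$. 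The main obstacle is precisely this indefinite case: one has to replace the naive coefficient bounds of the definite case by a genuine construction of a fundamental domain for an arithmetic group acting on a non-compact symmetric space, which is the substantive content of Siegel's work and of the general reduction theory that underlies the Borel–Harish-Chandra theorem.
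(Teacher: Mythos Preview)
Your argument in the definite case is exactly what the paper does: reduction theory produces a representative in each class with coefficients lying in a bounded region, and since the coefficients are integers there are only finitely many such forms of a given determinant.

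Where you diverge from the paper is in the indefinite case. The paper (following Cassels, Ch.~9, Thm~1.1 and Lemma~3.1) treats the definite and indefinite cases \emph{uniformly} via classical Hermite reduction: even for indefinite integral forms one can show that every $\Z$-class contains a representative whose Gram matrix has entries bounded in terms of $\det(Q)$ alone, and the same discreteness argument finishes the proof. Your assertion that ``the same coefficient-bounding argument fails'' in the indefinite case is therefore a misconception --- the orbit of a basis under $O_Q(\Z)$ is indeed unbounded, but that does not preclude the existence of a bounded fundamental domain for the action on the space of forms, which is precisely what Hermite (and later Siegel, more quantitatively) established long before Borel--Harish-Chandra. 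Your adelic route via the double-coset description and the general finiteness theorem for class numbers of reductive groups is correct and conceptually clean, but it is substantially heavier machinery than the problem requires; the classical argument is both more elementary and more explicit (yielding effective bounds on the number of classes), whereas Borel--Harish-Chandra gives only abstract finiteness.
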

\begin{proof}
This follows from the reduction theory of quadratic forms, which shows that every class of quadratic forms over $\Z$ has some representative (of the same determinant) whose coefficients lie in a compact set.  This together with the discreteness of the (integer) coefficients of $Q$ gives that there are only finitely many classes of quadratic forms of bounded discriminant.  A proof can be found in \cite[Thrm 1.1, p128 and Lemma 3.1, p135]{Cassels:1978aa}
\end{proof}

\begin{rem}
It is also useful to discuss the {\bf proper class} of $Q$, denoted $\Cls^+(Q)$ which is the set of all $Q'\in\Cls(Q)$ where $Q'(\x) = Q(M\x)$ with $\det(M)=1$.  Since $\det(M)\in\{\pm1\}$, we see that there are at most two proper classes in a class, and so there are also finitely many proper classes in a given genus. The notion of proper classes is only meaningful when $n$ is even (because when $n$ is odd the $n \times n$ scalar matrix $M=-1_n$ has $\det(-1_n)= -1$, so $\Cls(Q) = \Cls^+(Q)$), and is important for formulating the connection between proper classes of binary quadratic forms and ideal classes in quadratic number fields.  This connection is discussed further in the Bhargava's notes \cite{BHARGAVA}.
\end{rem}

There is also a somewhat more geometrical notion of the {\bf class and genus of a quadratic lattice} $L \subset (V,Q)$, by considering the orbit of $L$ under the action of the rational or adelic orthogonal group.  In the language of quadratic forms, this says that two (free) quadratic lattices are in the same class or genus iff any associated quadratic forms (by choosing bases for the lattices) are in the same class or genus (respectively).  This gives rise to a class number $h(L)$ which is the number of classes in the genus of $L$ (and is again finite), and this agrees with the class number of the associated quadratic form when $L$ is free.
This notion of class and genus of a lattice is discussed in \cite[Defn 9.7, p180--181]{Gerstein:2008jh} and will be revisited in section \ref{Sec:Auto_forms_on_Orthog}.

\medskip
Interestingly, while indefinite forms appear more complicated on the surface, their arithmetic is usually {\it easier} to understand than that of definite forms, as can be seen from the following theorems.  The main idea is due to Eichler who discovered that the arithmetic of a certain simply connected algebraic group called the {\it spin group}, which is a double covering of $\mathrm{SO}(Q)$ and is very easy to understand via a property called ``strong approximation''.  This naturally leads to a notion of {\bf (proper) spinor equivalence}, and we call the orbit of $L$ under this equivalence the {\bf (proper) spinor genus} $\Spn^+(L)$ of $L$.  
We will discuss these notions briefly in section \ref{Sec:Spinor_equivalence}.   Some references for further reading about this topic are \cite[pp186-191]{Cassels:1978aa}, \cite[pp177-8, 192]{Shimura:2010vn}, \cite[pp315-321]{OMeara:1971zr}, \cite[\S7.4, pp427-433]{Platonov:1994ve}, \cite{Kneser:1966ly}.

\section{The Neighbor Method}  \label{Sec:Neighboring_lattices}

In this section we describe the method of neighboring lattices due to Kneser, which gives a useful construction for enumerating all classes in a given (spinor) genus of quadratic forms.  The idea is that one can perform explicit operations on a given quadratic lattice $L$ to produce different lattices that are obviously locally integrally equivalent to $L$.  By doing this carefully, one can find representatives of all classes in the genus $\Gen(L)$.


\begin{defn}
Given two integer-valued quadratic lattices $L, L'  \subset (V, Q)$ and some prime $p\in \N$, 
we say $L$ and $L'$ are {\bf $p$-neighbors} 
if $[L : L\cap L'] =  [L' : L\cap L'] = p$ and $H(L, L')\not\subseteq\Z$.  
\end{defn}
%


\subsection{Constructing $p$-neighbors}

Given a  quadratic lattice $L$ in a non-degenerate quadratic space $(V,Q)$, we now explain how to construct all of its $p$-neighboring lattices $L'$ explicitly in terms of certain vectors in $L$.  
%





\begin{thm}
If $p\in\N$ is prime, then every $p$-neighboring lattice $L'$ of a given $\Z$-valued primitive 
quadratic lattice $L$ with $\scale_H(L) = \Z$ has the form 
$$
L' =  \tfrac{1}{p}\w + L_{\w, p, \perp}
$$
where 
$$
L_{\w, p, \perp} := \{\v \in L \mid H(\v,\w) \equiv 0 \,\,\,(\mathrm{mod}\, p) \},
$$
for some primitive vector $\w \in L$ with $p^2\mid Q(\w)$.
\end{thm}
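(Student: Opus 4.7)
The plan is to produce the vector $\w$ by starting from a generator of $L'$ modulo the index-$p$ sublattice $M := L \cap L'$, then to verify the three claimed properties (membership $\tfrac{1}{p}\w \in L'$, primitivity of $\w$, and the divisibility $p^2 \mid Q(\w)$), and finally to identify $M$ with the perpendicular lattice $L_{\w,p,\perp}$.

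First I would set up the structure of $L'$ modulo $M$. Since $[L:M] = [L':M] = p$, the quotient $L'/M$ is cyclic of order $p$, so I can pick any $\vec{u} \in L' \setminus M$; equivalently, any $\vec{u} \in L' \setminus L$, noting $L \not\subseteq L'$ (otherwise $L = M$, contradicting index $p$). Then $p\vec{u} \in pL' \subseteq M \subseteq L$, so $p\vec{u} = \w$ for some $\w \in L$, giving $\vec{u} = \tfrac{1}{p}\w$ and a presentation $L' = \Z\vec{u} + M$.

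Next, I would upgrade $\w$ to a primitive vector. Write $\w = k\w_0$ with $\w_0 \in L$ primitive (using that $L$ is a $\Z$-lattice, so a PID module). Since $\vec{u} = \tfrac{k}{p}\w_0 \notin L$, we must have $\gcd(k,p) = 1$. From $k\w_0 = \w \in M$ and the fact that $L/M$ has order $p$, the image of $\w_0$ in $L/M$ has order dividing both $k$ and $p$, hence order $1$, so $\w_0 \in M \subseteq L'$. Using Bezout $ak + bp = 1$, I get $\tfrac{1}{p}\w_0 = a\vec{u} + b\w_0 \in L'$, so after replacing $\vec{u}$ by $\tfrac{1}{p}\w_0$ I may assume $\w$ itself is primitive in $L$.

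For the divisibility, I would use that $L'$ is $\Z$-valued: $Q(\tfrac{1}{p}\w) = \tfrac{1}{p^2}Q(\w) \in \Z$, so $p^2 \mid Q(\w)$. For the identification $M = L_{\w,p,\perp}$, I would first observe the easy containment: for $\v \in M \subseteq L'$, the element $H(\tfrac{1}{p}\w, \v) = \tfrac{1}{p}H(\w,\v)$ lies in $\Z$ since $L'$ is $\Z$-valued, forcing $H(\w,\v) \equiv 0 \pmod p$. Both $M$ and $L_{\w,p,\perp}$ are subgroups of $L$; the latter is the kernel of the linear functional $\phi:L \to \F_p$ defined by $\phi(\v) = H(\v,\w) \bmod p$. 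The step I expect to be the key obstacle is showing $\phi \not\equiv 0$ (so that $[L:L_{\w,p,\perp}] = p$ and the two index-$p$ sublattices coincide); here I would exploit the hypothesis $H(L,L') \not\subseteq \Z$ by picking $\v \in L$, $\v' \in L'$ with $H(\v,\v') \notin \Z$, writing $\v' = a\vec{u} + m$ with $m \in M$, so that $H(\v,\v') = \tfrac{a}{p}H(\v,\w) + H(\v,m)$; since $H(\v,m) \in \Z$, the fractional part forces $p \nmid H(\v,\w)$, giving $\phi(\v) \ne 0$.

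Finally, assembling everything: $L' = \Z\vec{u} + M = \Z\cdot \tfrac{1}{p}\w + L_{\w,p,\perp}$, which is exactly the asserted description (noting that $\w = p \cdot \tfrac{1}{p}\w$ already lies in $L_{\w,p,\perp}$ since $H(\w,\w) = 2Q(\w)$ is divisible by $p^2$, hence by $p$).
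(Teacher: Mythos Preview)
Your proof is correct and follows essentially the same route as the paper's: produce $\w$ from a generator of $L'$ over $L'' := L\cap L'$, use $\Z$-valuedness of $L'$ to get $p^2\mid Q(\w)$ and $L''\subseteq L_{\w,p,\perp}$, then compare indices. The paper obtains $\w$ primitive in $L''$ directly from the elementary divisors theorem rather than by your Bezout reduction, but the two constructions yield the same $\w$ up to units.

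Where your argument actually \emph{improves} on the paper's is the identification $L'' = L_{\w,p,\perp}$. The paper simply asserts that $\v\mapsto H(\v,\w)\pmod p$ is surjective onto $\Z/p\Z$; you correctly recognize that this requires justification and supply it by invoking the $p$-neighbor hypothesis $H(L,L')\not\subseteq\Z$. This is the right move: without that hypothesis the map can genuinely be zero (e.g.\ if $\w$ lies in $pL^{\#}$), in which case $L_{\w,p,\perp}=L$ and the construction would give $L\subset L'$ rather than a $p$-neighbor. So your ``key obstacle'' was real, and your resolution of it is exactly what the paper's proof is missing.
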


\begin{proof}
Any index $p$ superlattice $L'$ of $L''$ must be of the form $L' = L'' + \frac{1}{p}\w$ for some primitive vector $\w$ in $L''$, because by the structure theorem \cite[Thrm 7.8(i), p153]{Lang:1995kx} one can choose a basis for $L'$ starting with some $\w$ so that replacing $\w$ by $p\w$ gives a basis for $L''$.  For such an $L'$ to be $\Z$-valued we must at least have $Q(\frac{1}{p}\w) \in \Z$, which is equivalent to $p^2 \mid Q(\w)$.  Further since every $\x \in L'$ can be written as $\x = \y + \frac{a}{p} \w$ for some $\y \in L_{\w, p, \perp}$ and some $a\in \Z$, we have 
\begin{align*}
Q(\x) 
&= Q(\y + \tfrac{a}{p} \w) \\
&= Q(\y) + H(\y, \tfrac{a}{p} \w) + Q(\tfrac{a}{p} \w) \\
&=  \underbrace{Q(\y)}_{\in\Z} 
+  {\tfrac{a}{p}H(\y, \w)}
+ \underbrace{aQ(\tfrac{1}{p} \w)}_{\in\Z} 
\in \Z,
\end{align*}
and so we must have $H(\y, \w)\in p\Z$ for all $\y \in L''$.  However this condition defines an index $p$ sublattice of $L$, since it is the kernel of the surjective homomorphism $L \ra \Z/p\Z$ defined by $\v \mapsto H(\v, \w)$.  By reversing our reasoning, we see that all such $L'$ are $p$-neighbors of $L$.
\end{proof}

An important fact about $p$-neighbors $L'$ of $L$ is that they are all in the same genus $\Gen(L)$.  It is interesting to ask how many classes in the genus of $L$ can be created by taking repeated $p$-neighbors starting from $L$.  If one is allowed to vary the prime $p$, then this $p$-neighbor procedure gives all (proper) classes in $\Gen(Q)$.

\begin{thm}
\begin{enumerate}
\item If $L'$ is a $p$-neighbor of $L$ then $L'\in\Gen(L)$.  

\item If $p\nmid 2\det(L)$ and $n \geq 3$, then any $L'' \in \Spn^+(L)$ can be obtained by taking repeated $p$-neighbors of $L$. 

\item If the prime $p$ is allowed to vary, then we can obtain all proper classes $\Cls^+(L)$ in $\Gen(L)$ by taking repeated $p$-neighbors of $L$.
\end{enumerate}
\end{thm}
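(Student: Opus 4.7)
The plan is to handle the three parts in order, with each building on the previous.

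For part (1), I would establish local equivalence at every place. At any finite prime $q \neq p$, the index $[L : L \cap L']$ is a power of $p$, which is a unit in $\Z_q$, so the localizations satisfy $L_q = (L \cap L')_q = L'_q$ and equivalence is immediate. At the archimedean place $L_\infty$ and $L'_\infty$ are the whole quadratic space $V_\R$ (or agree up to trivial scaling), so again there is nothing to do. The only real content is at $p$, where I would argue that $L_p$ and $L'_p$ are isometric $\Z_p$-lattices. The index relations $[L : L \cap L'] = [L' : L \cap L'] = p$ force $\det(L_p) = \det(L'_p)$ as square-classes in $\Z_p^\times/(\Z_p^\times)^2$. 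A short computation using the explicit formula $L' = \tfrac{1}{p}\w + L_{\w,p,\perp}$ together with $p^2 \mid Q(\w)$ shows that $\scale_H(L'_p) = \Z_p$, matching $\scale_H(L_p) = \Z_p$. Since the Jordan decomposition (Theorem \ref{Thm:Jordan_splitting}) is determined by the dimension, determinant, and scale data (at odd primes; at $p = 2$ a few more invariants are checked directly), this forces $L_p \cong L'_p$.

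For part (2), the approach is strong approximation for the spin group. My plan is to show that each individual $p$-neighbor step corresponds to the action at $p$ of an element of $O^+_Q(V_p)$ whose spinor norm is computed (up to squares) from the data $(Q(\w)/p^2, \w)$; concretely one realizes the neighbor move as a product of two reflections in vectors of $V_p$. Then I would show that as $\w$ runs over primitive vectors in $L$ with $p^2 \mid Q(\w)$, the resulting spinor norms generate the local image that, via strong approximation for $\Spin_Q$, controls the proper spinor genus. The hypothesis $p \nmid 2\det(L)$ is what makes this sweep possible: it guarantees that $L_p$ is unimodular and in particular isotropic over $\Q_p$ (since a unimodular $\Z_p$-lattice of rank $\geq 3$ represents every square class), so the local spin group at $p$ is large enough to see the full spinor quotient. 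Strong approximation (valid since $\Spin_Q$ is simply connected and isotropic at $p$) then says that any lattice in $\Spn^+(L)$ is reachable by iterating these local moves.

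For part (3), I would combine part (2) with the well-known fact that $\Gen(L)/\Spn^+(L)$ is a finite elementary abelian 2-group, identified via the adelic spinor norm with a quotient of $\prod_v F_v^\times/(F_v^\times)^2$ supported at a finite set of "bad" primes. It suffices to exhibit, for each coset in this quotient, some prime $p$ and some $p$-neighbor move whose spinor norm realizes that coset. Allowing $p$ to vary freely removes the unimodularity restriction of part (2) and, together with part (2) itself to move inside a single spinor genus, gives every proper class in $\Gen(L)$.

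The main obstacle is part (2): pinning down precisely the spinor norm of the neighbor operation and proving surjectivity onto the relevant local quotient at $p$. Concretely, I expect the cleanest route is to realize $L \mapsto L'$ as multiplication by a suitable two-reflection element in $O^+_Q(V_p)$ whose spinor norm is (up to $(\Q_p^\times)^2$) the class of $Q(\w)/p^2$, and then to check that this class ranges over all of $\Z_p^\times/(\Z_p^\times)^2$ as $\w$ varies, which again uses that $L_p$ is unimodular of rank $\geq 3$. Once that computation is in hand, strong approximation for $\Spin_Q$ delivers part (2), and part (3) follows as described.
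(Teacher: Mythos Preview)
The paper does not actually prove this theorem; it simply cites Tornaria's thesis for part (1) and Benham--Hsia for parts (2)--(3). So there is no detailed argument in the paper to compare against, but your outline has two concrete gaps worth flagging.

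\medskip
\textbf{Part (1).} Your invariants argument does not work as stated. It is \emph{not} true that a $\Z_p$-lattice is determined up to isometry by its dimension, determinant square-class, and scale ideal. For instance, over $\Z_3$ the lattices $\langle 1,1,9\rangle$ and $\langle 1,3,3\rangle$ share all three of these invariants but have different Jordan splittings and are not isometric. What you would actually need to verify is that \emph{all} Jordan invariants of $L_p$ and $L'_p$ agree, and nothing in your sketch does that when $p\mid\det(L)$. The standard route (as in the cited references) is instead to exhibit an explicit element of $O^+(V_p)$ carrying $L_p$ onto $L'_p$, built from the data of the primitive vector $\w$; once you have that, genus membership is immediate.

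\medskip
\textbf{Part (2).} Your spinor-norm computation is off. The paper states just after this theorem that the $p$-neighbor move is realized by an element of spinor norm $p(\Q^\times)^2$, i.e.\ the spinor norm has \emph{odd} $p$-adic valuation. Your claim that it equals $Q(\w)/p^2$ and hence ranges over all of $\Z_p^\times/(\Z_p^\times)^2$ as $\w$ varies is therefore the wrong picture: the spinor norm of a single neighbor step is essentially the fixed class $p$, not a varying unit class. Connectivity of $\Spn^+(L)$ under repeated $p$-neighbors does not come from sweeping out unit spinor norms at $p$; it comes from strong approximation for $\Spin(V)$ (which you correctly invoke) together with the fact that, for $p\nmid 2\det(L)$ and $n\ge 3$, the local orthogonal group at $p$ acts transitively enough on lattices in the genus that any adelic spin-kernel element can be realized by a chain of neighbor moves. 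Your overall architecture (strong approximation plus a local transitivity statement) matches Benham--Hsia, but the specific mechanism you propose for the local step needs to be corrected.

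\medskip
\textbf{Part (3).} Your plan here is essentially the right one and matches the literature: once (2) is in hand, varying $p$ lets you hit every coset of the finite group $\Gen(L)/\Spn^+(L)$ via the spinor-norm map.
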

\begin{proof}
This definition of $p$-neighbor and local equivalence of $p$-neighbors is proved in  \cite[\S3.1, pp31-35]{Tornaria:2005sy}.  The spanning of the spinor genus is proved in \cite[Prop1, p339]{Benham:1983fb}, and the spanning of the genus is proved in \cite[Thrm 2, p340]{Benham:1983fb}.
\end{proof}
In fact, one can make more precise statements about exactly which spinor genera appear by taking $p$ neighbors because the $p$-neighboring operation can always be realized by an element of spinor norm $p(\Q^\times)^2$.  The image of this squareclass in the finite set of $\Q^\times$-squareclasses modulo spinor norms $\mathrm{sn}(O_\Q^+(V))$ and modulo the adelic spinor norms of the stabilizer of $O_\A^+(L)$ of $L$ determines exactly which of the (at most two) spinor genera can be reached by taking repeated $p$-neighbors of $L$.

%

\medskip
There is also a nice characterization of the $p$-neighbors of $L$ in terms of the non-singular points of the associated hypersurface $Q(\x) = 0$ over $\F_{p}$.
\begin{thm}
The $p$-neighbors of $L$ are in bijective correspondence with the non-singular points of $Q(\x) = 0$ in $\Proj^{n-1}(\F_{p})$.
\end{thm}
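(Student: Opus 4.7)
The plan is to use the explicit parametrization from the previous theorem: every $p$-neighbor $L'$ has the form $L' = \frac{1}{p}\w + L_{\w, p, \perp}$ for some primitive $\w \in L$ with $p^2 \mid Q(\w)$. I would define the forward map $\Phi\colon L' \mapsto [\bar\w] \in \Proj^{n-1}(\F_p)$, where $\bar\w$ is the reduction of $\w$ modulo $p$. Since $\w$ is primitive and $p \mid Q(\w)$, the point $[\bar\w]$ is well-defined and lies on $\{Q = 0\}$. To check independence of the choice of $\w$, I would verify that any two eligible vectors $\w_1, \w_2$ producing the same $L'$ must satisfy $\w_2 = a\w_1 + p\v$ for some unit $a \pmod p$ and some $\v \in L_{\w_1,p,\perp}$; bilinearity of $H$ then shows both $L_{\w,p,\perp}$ and the coset $\frac{1}{p}\w + L_{\w,p,\perp}$ are preserved, while $[\bar\w]$ is manifestly preserved.

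Next, I would translate the ``genuine neighbor'' condition $H(L, L') \not\subseteq \Z$ into a condition on $\bar\w$. From $L' = \frac{1}{p}\w + L_{\w,p,\perp}$ one computes $H(L, L') = \Z + \frac{1}{p}H(L,\w)$, which fails to lie in $\Z$ precisely when there exists $\u \in L$ with $H(\u,\w) \not\equiv 0 \pmod p$, i.e., when $H\w \not\equiv 0 \pmod p$ as an element of $L/pL$. Since the gradient of $Q$ at $\w$ is exactly $H\w$, this is the classical criterion that $[\bar\w]$ be a non-singular point of $\{Q=0\}\subset \Proj^{n-1}(\F_p)$. Hence $\Phi$ lands in the non-singular locus.

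For the inverse $\Psi$, I would start with a non-singular point $[\bar\w] \in \{Q=0\}(\F_p)$, choose a primitive lift $\w_0 \in L$ (available since $\bar\w \neq 0$), and adjust it by setting $\w := \w_0 + p\u$ with $\u \in L$ to be determined so that $p^2 \mid Q(\w)$. Expanding $Q(\w_0 + p\u) = Q(\w_0) + pH(\w_0,\u) + p^2 Q(\u)$ and dividing by $p$, the condition becomes the $\F_p$-linear equation $H(\w_0,\u) \equiv -Q(\w_0)/p \pmod p$; non-singularity makes the functional $\u \mapsto H(\w_0,\u) \pmod p$ non-zero and therefore surjective onto $\F_p$, so a solution $\u$ exists. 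Setting $L' := \frac{1}{p}\w + L_{\w,p,\perp}$, the primitivity of $\w$ gives $L \cap L' = L_{\w,p,\perp}$ (an index $p$ sublattice of $L$), yielding both $[L : L \cap L'] = p$ and $[L' : L\cap L'] = p$, and $H(L,L')\not\subseteq\Z$ holds by the calculation of the previous paragraph. Verifying $\Phi\circ\Psi = \Psi\circ\Phi = \mathrm{id}$ is then immediate from the constructions.

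The main obstacle, and the conceptual heart of the argument, is that non-singularity must serve double duty: it both rules out singular points as images of $\Phi$ (through the $H(L,L')\not\subseteq\Z$ computation) and guarantees the existence of a suitable lift $\w$ in the definition of $\Psi$. Both uses flow from the single observation that the $\F_p$-linear functional $H(\w,\cdot)\pmod p$ on $L$ is non-zero exactly at non-singular points of $\{Q=0\}$. The bookkeeping around the equivalence classes of eligible lifts $\w$ must be handled carefully so that the two maps are honestly mutual inverses, but once the forward map is shown to be well-defined this is essentially formal.
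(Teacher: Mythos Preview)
Your argument is correct and complete in all essentials. The paper itself does not prove this theorem; it simply cites \cite[Thrm 3.5, p34]{Tornaria:2005sy} and \cite[Prop 2.2, p739]{Scharlau:1998jo}. Your approach is the standard one and is essentially what appears in those references: parametrize $p$-neighbors via the previous theorem, and identify the non-singularity condition with the non-vanishing of the linear functional $H(\w,\cdot)\pmod p$.

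Two small points deserve a sentence each when you write it up. First, the well-definedness of $\Psi$: you should check that two admissible lifts $\w,\w'$ with $[\bar\w]=[\bar\w']$ and $p^2\mid Q(\w), Q(\w')$ produce the same $L'$. After rescaling by a unit mod $p$ you may assume $\w'=\w+p\v$; expanding $Q(\w+p\v)=Q(\w)+pH(\w,\v)+p^2Q(\v)$ and using $p^2\mid Q(\w),Q(\w')$ forces $p\mid H(\w,\v)$, i.e.\ $\v\in L_{\w,p,\perp}$, so indeed $\frac{1}{p}\w'+L_{\w',p,\perp}=\frac{1}{p}\w+L_{\w,p,\perp}$. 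Second, your lift $\w=\w_0+p\u$ need not be primitive (e.g.\ another prime could divide it), but this is harmless: the construction of $L'$ and the index computations only require $\w\notin pL$, which holds since $\bar\w=\bar\w_0\neq 0$. Alternatively, replace $\w$ by its primitive part $\w/d$; since $p\nmid d$ this preserves both $[\bar\w]$ and the divisibility $p^2\mid Q(\w)$, and gives the same $L'$.
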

\begin{proof}
See \cite[Thrm 3.5, p34]{Tornaria:2005sy} or \cite[Prop 2.2, p739]{Scharlau:1998jo}.
\end{proof}

The $p$-neighbors can be organized into a weighted {\bf $p$-neighbor graph} whose vertices are the classes in $\Gen(L)$, where two vertices are connected by an edge iff they are $p$-neighbors, and where the multiplicity of each edge is the number of distinct $p$-neighboring lattices of $L$ which are equivalent $\sim_{\Z}$ to $L'$.  From the above theorem, we see that the $p$-neighbor graph is regular and that if $p\nmid 2\det(L)$ then the it is $p^{n-2}$-regular (i.e. every class has exactly $p^{n-2}$ neighbors for the prime $p$).

\chapter{Theta functions} \label{Chapter:ThetaFunctions}

\section{Definitions and convergence} 
We say that $m \in \Z$ is {\bf represented} by an integer-valued quadratic form $Q$ in $n$ variables if there is a solution $\x \in \Z^n$ to the equation $Q(\x) = m$.  Similarly we say that $m$ is {\bf locally represented} by $Q$ if 
there is a solution of $Q(\x) = m$ with $\x \in \R^n$ and also a solution $\x \in (\Z/M\Z)^n$ for every $M\in \N$.
%
Our main purpose 
in this chapter
will be to study the {\bf representation numbers}
$$
r_Q(m) := \#\{\x \in \Z^n \mid Q(\x) = m\}
$$
of a positive definite quadratic form $Q$ over $\Z$,
in order to understand
something about 
which numbers $m \in \Z \geq0$ are represented by $Q$.  
Our assumption here that $Q$ is positive definite ensures that $r_Q(m) < \infty$, since there are only finitely many lattice points (in $\Z^n$) in the compact solid ellipsoid $\mathcal{E}_m: Q(\x) \leq m$ when $\x \in \R^n$.  

It will also be important to consider the {\bf (integral) automorphism group of $Q$}, which is defined as
the set of invertible integral linear transformations preserving $Q$, i.e.
$$
\Aut(Q) := \{ M \in M_n(\Z) \mid Q(M\x) = Q(\x) \text{ for all } \x \in \Z^n\}.
$$
Our previous compactness observation  also tells us that $\#\Aut(Q) < \infty$, since any automorphism of $Q$ is determined by its action on a basis of $\Z^n$ and by taking $m$ large enough we can arrange that the (finitely many) integral vectors 
in $\mathcal{E}_m$ 
span $\Z^n$.  Because automorphisms preserve the values $Q(\x)$ of all vectors, they preserve the set of integral vectors inside the ellipsoid $\mathcal{E}_m$, and so there are only finitely many possible images of any specified spanning set.
%

In this setting it makes sense to define the {\bf theta series of $Q$} as the Fourier series generating function for the representation numbers $r_Q(m)$ given by
$$
\Theta_Q(z) := \sum_{m=0}^\infty r_Q(m) e^{2 \pi i m z}.
$$
From this perspective, our main goal will be to understand the symmetries of this generating function very well, and to use them to obtain information about the 
representation numbers $r_Q(m)$.

In order to make $\Theta_Q(z)$ more than just a formal object, we should try to establish a some convergence properties so it can be regarded as an honest function.  For this series to converge absolutely we need the exponentials in the sum to be decaying, which happens for $z\in\C$ when $\Im(z)>0$.  For convenience, we denote by $\H$ the complex upper half-plane
$$
\H := \{ z \in \C \mid \Im(z) > 0\}.
$$
The following theorem shows that any Fourier series with moderately (i.e. polynomially) growing coefficients will converge absolutely on $\H$.
\begin{lem}[Convergence of Fourier series]
The Fourier series 
$$
f(z) := \sum_{m=0}^\infty a(m) e^{2 \pi i m z}.
$$
converges absolutely and uniformly on compact subsets of $\H$ to a holomorphic function $f:\H\ra \C$ if all of its coefficients $a(m)\in \C$ satisfy
$|a(m)| \leq Cm^{r}$ for some constant $C>0$.
\end{lem}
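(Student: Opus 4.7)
The plan is to use the Weierstrass $M$-test together with the classical fact that a locally uniform limit of holomorphic functions is holomorphic. The key input is that each term $a(m) e^{2\pi i m z}$ decays exponentially in $m$ once we bound $\Im(z)$ away from $0$, while the coefficients $a(m)$ only grow polynomially — and exponential decay beats polynomial growth.

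First I would fix an arbitrary compact subset $K \subset \H$. Since the map $z \mapsto \Im(z)$ is continuous and strictly positive on $\H$, it attains a positive minimum $y_0 := \min_{z\in K} \Im(z) > 0$. For $z \in K$ and $m \geq 0$ we have
\[
\left| a(m)\, e^{2\pi i m z}\right| \;=\; |a(m)|\, e^{-2\pi m \Im(z)} \;\leq\; C\, m^r\, e^{-2\pi m y_0}.
\]
Call the right-hand side $M_m$. Because $e^{-2\pi y_0} < 1$ and polynomial factors are absorbed by any geometric decay (e.g. by the ratio test, $M_{m+1}/M_m \to e^{-2\pi y_0} < 1$), the series $\sum_{m\geq 0} M_m$ converges.

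Next, the Weierstrass $M$-test immediately gives that $\sum_{m=0}^\infty a(m) e^{2\pi i m z}$ converges absolutely and uniformly on $K$. Since $K$ was an arbitrary compact subset of $\H$, the series converges absolutely and locally uniformly on $\H$. Each partial sum $\sum_{m=0}^N a(m) e^{2\pi i m z}$ is an entire function of $z$, hence holomorphic on $\H$, so by the standard theorem that a locally uniform limit of holomorphic functions is holomorphic (Weierstrass), the limit $f$ is holomorphic on $\H$.

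The argument is essentially routine; there is no real obstacle, only the observation that the exponential factor $e^{-2\pi m y_0}$ dominates the polynomial growth $m^r$ uniformly on $K$. The only care needed is to choose $y_0$ depending on the compact set $K$ rather than on a single point, so that the bound $M_m$ is independent of $z \in K$ and the $M$-test applies.
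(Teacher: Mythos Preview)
Your argument is correct and is the standard proof: bound $\Im(z)$ below on a compact set, apply the Weierstrass $M$-test using that exponential decay dominates polynomial growth, and conclude holomorphy via the Weierstrass convergence theorem. The paper does not actually give a proof of this lemma; it simply cites \cite[Lemma 4.3.3, p117]{Miyake:2006hf}, so there is nothing to compare beyond noting that your write-up is exactly the routine argument one would expect to find behind that citation.
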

\begin{proof}
See \cite[Lemma 4.3.3, p117]{Miyake:2006hf}.
\end{proof}

\noindent
Because the number of lattice points in a smooth bounded region $\mathcal{R} \subset \R^n$ is approximately $\vol(\mathcal{R})$, we see that $\sum_{i=0}^M r_Q(m) < C M^n$ for some constant $C$.  Therefore for each $m$ individually we must have that $r_Q(m) < C_1 M^{n-1}$ for some constant $C_1$,  so the previous lemma shows that the theta function $\Theta_Q(z)$ converges (absolutely and uniformly) to a holomorphic function when $z \in \C$ and $\Im(z) > 0$.   This gives the following important result:

\begin{thm}
The theta series $\Theta_Q(z)$ of a positive definite integer-valued quadratic form $Q$ converges absolutely and uniformly to a holomorphic function $\H \ra \C$.
\end{thm}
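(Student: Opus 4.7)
The plan is to string together the two ingredients already assembled in the paragraphs just before the theorem: a polynomial upper bound on $r_Q(m)$ coming from lattice-point counting in the ellipsoid $\mathcal{E}_m$, and the general Fourier-series convergence lemma. No new machinery is needed; the work is in justifying the polynomial bound cleanly.

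First I would use positive definiteness of $Q$ to produce the bound $r_Q(m) = O(m^{r})$ for some $r \geq 0$. Since $Q$ is positive definite over $\R$, after diagonalizing via Theorem \ref{Thm:orthogonal_splitting} there is a constant $c > 0$ so that $Q(\x) \geq c\|\x\|^{2}$ for all $\x \in \R^{n}$. Thus every $\x \in \Z^{n}$ with $Q(\x) = m$ lies in the ball of radius $\sqrt{m/c}$, so the partial sum $\sum_{k=0}^{m} r_Q(k)$ is bounded by the number of lattice points in the solid ellipsoid $\mathcal{E}_m = \{\x \in \R^{n} : Q(\x) \leq m\}$. A standard lattice-point estimate (comparing with $\mathrm{vol}(\mathcal{E}_m)$ via a $1$-neighborhood) gives $\sum_{k=0}^{m} r_Q(k) \leq C m^{n/2}$ for some $C$ depending only on $Q$, and dropping all but the top term yields $r_Q(m) \leq C m^{n/2}$. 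In particular $r_Q(m)$ has polynomial growth in $m$.

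With this polynomial bound in hand, the Fourier-series convergence lemma applies directly with $a(m) = r_Q(m)$ and exponent $r = n/2$. It furnishes absolute and uniform convergence of $\Theta_Q(z) = \sum_{m \geq 0} r_Q(m) e^{2 \pi i m z}$ on every compact subset of $\H$, and hence also holomorphicity of the limit on $\H$ (as a uniform limit of holomorphic partial sums on compacta, by Morera or Weierstrass).

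There is no real obstacle here; the only point to be a bit careful about is the passage from a volume estimate on the cumulative count to a bound on the individual coefficient $r_Q(m)$, which is why I would prefer to quote the fattened-ellipsoid argument rather than the looser bound $\sum_{k \leq m} r_Q(k) \leq C m^{n}$ stated informally in the preceding paragraph. Either way, any polynomial bound suffices to invoke the lemma and conclude.
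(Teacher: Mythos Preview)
Your proposal is correct and follows essentially the same route as the paper: bound the cumulative count $\sum_{k\leq m} r_Q(k)$ by a lattice-point/volume estimate for the ellipsoid $\mathcal{E}_m$, extract a polynomial bound on the individual coefficient $r_Q(m)$, and then invoke the Fourier-series convergence lemma. The only difference is cosmetic---you obtain the sharper exponent $n/2$ via the correct volume scaling, whereas the paper's informal paragraph quotes the looser $M^n$ bound and passes to $m^{n-1}$---but either polynomial bound suffices for the lemma, so the arguments are the same in substance.
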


\section{Symmetries of the theta function}

While it is not obvious at first glance, $\Theta_Q(z)$ has a surprisingly large number of symmetries.  From its definition as a Fourier series, it is clearly invariant under the transformation $z \mapsto z+1$, but this is not particularly special since this holds for any Fourier series.  However there is
an additional symmetry provided to us by Fourier analysis because we can also view the theta function as a sum of a quadratic exponential function over a lattice $\Z^n$, i.e. 
$$
\Theta_Q(z) = \sum_{m=0}^\infty r_Q(m) e^{2 \pi i m z} = \sum_{\x \in \Z^n} e^{2 \pi i Q(\x) z}.
$$
This additional lattice symmetry is realized through the Poisson summation theorem:
\begin{thm}[Poisson Summation Formula]
Suppose that $f(\x)$ is a function on $\R^{n}$ which decays faster than any polynomial as $|\x| \ra \infty$ (i.e. for all $r\geq0$ we know that $|\x|^{r} f(\x) \ra 0$ as $|\x| \ra \infty$).  Then
the equality 
$$
\sum_{\x\in \Z^{n}} f(\x) = \sum_{\x\in \Z^{n}} \hat{f}(\x)
$$
holds and the sums on both sides are absolutely convergent, where 
$$
\hat{f}(\x) := \int_{\y\in \R^{n}} f(\y) e^{-2\pi i \x\cdot\y} \, dy
$$
is the Fourier transform of $f(\x)$.
\end{thm}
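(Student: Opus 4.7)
The plan is the standard Fourier-analytic periodization argument. First I would introduce the $\Z^n$-periodic function
$$F(\x) := \sum_{\vec k \in \Z^n} f(\x + \vec k)$$
on $\R^n$. The rapid-decay hypothesis on $f$ (applied with any exponent $r > n$) ensures that this sum converges absolutely and uniformly on compact subsets of $\R^n$, so $F$ is a well-defined continuous $\Z^n$-periodic function.

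Next I would compute the Fourier coefficients of $F$ on the fundamental domain $[0,1]^n$. The $\vec m$-th coefficient is
$$c_{\vec m} := \int_{[0,1]^n} F(\x) e^{-2\pi i \vec m \cdot \x} d\x = \sum_{\vec k \in \Z^n} \int_{[0,1]^n} f(\x + \vec k) e^{-2\pi i \vec m \cdot \x} d\x,$$
where interchanging sum and integral is justified by the absolute convergence above. In each summand I would translate the integration variable by $\vec k$; since $e^{-2\pi i \vec m \cdot \vec k} = 1$ (because $\vec m \cdot \vec k \in \Z$), the translates $\vec k + [0,1]^n$ tile $\R^n$ and the sum reassembles into a single integral over $\R^n$, giving $c_{\vec m} = \hat{f}(\vec m)$.

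Finally, evaluating both $F$ and its Fourier series at $\x = \vec 0$ should yield
$$\sum_{\vec k \in \Z^n} f(\vec k) = F(\vec 0) = \sum_{\vec m \in \Z^n} \hat{f}(\vec m),$$
which is the formula. The main obstacle here is justifying this last pointwise identity: the Fourier series of a merely continuous periodic function need not converge pointwise, so one cannot simply quote Fourier inversion on the torus. The essential input is that rapid decay of $f$ forces rapid decay of $\hat{f}$ (via repeated integration by parts in the Fourier transform integral, which implicitly requires enough smoothness of $f$; this is the point at which the hypothesis as stated would need to be strengthened slightly, e.g.\ to Schwartz class). Once rapid decay of $\hat{f}$ is in hand, the Fourier series $\sum_{\vec m} \hat{f}(\vec m) e^{2\pi i \vec m \cdot \x}$ converges absolutely and uniformly to a continuous periodic function, and by uniqueness of Fourier coefficients of continuous periodic functions this limit must coincide with $F(\x)$ at every point, including $\x = \vec 0$.
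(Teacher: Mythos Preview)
The paper does not actually prove this theorem; its entire proof reads ``See \cite[pp249-250]{Lang:1994fu} for a proof of this.'' Your periodization argument is precisely the standard proof one finds in such references (including Lang), so there is no divergence of approach to discuss.

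You have also correctly flagged a genuine issue with the statement as written: the hypothesis only assumes rapid decay of $f$, not any smoothness, and rapid decay alone does not force rapid decay of $\hat f$. Without that, neither the absolute convergence of $\sum \hat f(\vec m)$ nor the pointwise evaluation of the Fourier series at $\vec 0$ is justified. In the paper's actual applications (Gaussians, and later Schwartz functions in the Weil representation) the functions are Schwartz, so the gap is harmless in context, but your observation that the hypothesis should be strengthened (e.g.\ to $f$ Schwartz, or at least smooth with all derivatives rapidly decaying) is well taken.
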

\begin{proof}
See \cite[pp249-250]{Lang:1994fu} for a proof of this.
\end{proof}

The important point here is that the Gaussian function $f(x) = e^{-\pi \al x^2}$ transforms into a multiple of itself under the Fourier transform (which follows essentially from checking that $e^{-\pi x^2}$ is its own Fourier transform).  Writing $z=x+iy \in \H$ in $\Theta_Q(z)$, we see that the $y$-dependence of each term will look like a decaying Gaussian (while the $x$-dependence will just oscillate), so Poisson summation allows us to transform each term into itself after a little rescaling.  This allows us to establish an additional symmetry for the theta function under the transformation $z\mapsto \frac{-1}{Nz}$ for some $N\in\N$.  In the special case where $Q(\x) = x^2$ we can take $N=4$ and have the two identities  
\begin{align}
\Theta_{x^2}(-1/4z) = \sqrt{-2iz} \,\, \Theta_{x^2}(z)
\qquad\text{and}\qquad 
\Theta_{x^2}(z+1) = \Theta_{x^2}(z).
\end{align}
By extending these to the group generated by the transformations $z \mapsto \frac{-1}{4z}$ and $z \mapsto z+1$, we obtain following prototypical theorem.

\begin{thm}  \label{Thrm:Theta_one_var}
For all $\[\begin{smallmatrix}a & b \\ c & d\end{smallmatrix}\] \in \SL_2(\Z)$ with $4 \mid c$, we have that 
$$
\Theta_{x^{2}}\(\frac{az+b}{cz+d}\) = \ve_d^{-1}\(\frac{c}{d}\) \sqrt{cz+d} \,\, \Theta_{x^{2}}(z)
$$
where $\frac{-\pi}{2} < \arg(\sqrt{z}) \leq \frac{\pi}{2}$, 
$$
\ve_d := 
\begin{cases}
1 & \text{if $d \equiv 1 \mod 4$,} \\
i & \text{if $d \equiv 3 \mod 4$,} \\
\end{cases}
\qquad
and 
\qquad
\leg{c}{d} :=
\begin{cases}
\leg{c}{|d|}  & \text{if $c>0$ or $d>0$,} \\
-\leg{c}{|d|}  & \text{if both $c, d < 0$.} \\
\end{cases}
$$
Here when $d>0$ the symbol $\leg{c}{d}$ agrees with the usual quadratic character mod $d$.

\end{thm}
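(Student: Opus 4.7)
The plan is to reduce the full $\Gamma_0(4)$ transformation law to the two prototype identities
\[ \Theta_{x^2}(z+1) = \Theta_{x^2}(z) \qquad\text{and}\qquad \Theta_{x^2}(-1/(4z)) = \sqrt{-2iz}\,\Theta_{x^2}(z) \]
foreshadowed just before the theorem statement.  The translation identity is immediate from the Fourier expansion.  For the inversion identity I would apply the Poisson Summation Formula (quoted in the excerpt) to $f(x) = e^{2\pi i x^2 z}$ for fixed $z \in \H$.  When $z = it$ with $t > 0$, $f$ is a rapidly-decaying Gaussian with Fourier transform $\hat f(\xi) = (2t)^{-1/2} e^{-\pi \xi^2/(2t)}$, so Poisson summation yields the inversion identity at $z = it$, with the positive real branch of $\sqrt{-2iz} = \sqrt{2t}$.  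Since both sides are holomorphic on $\H$ and the branch convention $-\pi/2 < \arg\sqrt{\cdot} \leq \pi/2$ agrees with this choice along the positive imaginary axis, analytic continuation in $z$ propagates the identity to all of $\H$.

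To extend to a general $\gamma = \left(\begin{smallmatrix} a & b \\ c & d \end{smallmatrix}\right) \in \Gamma_0(4)$ with $c \neq 0$, I would perform a direct Poisson-summation computation on $\Theta_{x^2}(\gamma z)$.  Using $\gamma z = a/c - 1/(c(cz+d))$ and breaking the sum $\sum_{n\in\Z} e^{2\pi i n^2 \gamma z}$ by residue class $n \equiv m \pmod{c}$, one obtains
\[ \Theta_{x^2}(\gamma z) = \sum_{m=0}^{c-1} e^{2\pi i a m^2/c} \sum_{k \in \Z} e^{-2\pi i (m + kc)^2/(c(cz+d))}. \]
Poisson summation applied to the inner $k$-sum (essentially the prototype calculation with a shift) produces a Gaussian factor involving $\sqrt{cz+d}$ and converts the remaining sum back into a theta-like expression in $z$.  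After rearranging, the outer sum reduces to the quadratic Gauss sum $G(a,c) := \sum_{m=0}^{c-1} e^{2\pi i a m^2/c}$; its classical evaluation for $\gcd(a,c)=1$ and $4 \mid c$ (combined with $ad \equiv 1 \pmod c$ from $\det\gamma = 1$, which forces $d$ odd) produces precisely the multiplier $\ve_d^{-1}\leg{c}{d}$.  The case $c < 0$ reduces to $c > 0$ by replacing $\gamma$ with $-\gamma$ (noting $(-\gamma) z = \gamma z$) and checking that the claimed multiplier is invariant under this sign change, which is where the sign convention built into the definition of $\leg{c}{d}$ when both $c, d < 0$ enters.

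The principal technical obstacle is the Gauss-sum evaluation and the bookkeeping of the square-root branch.  Evaluating $G(a,c)$ for $c$ divisible by $4$ ultimately rests on quadratic reciprocity, most cleanly in its Landsberg--Schaar form, and the hypothesis $4 \mid c$ is essential here: without it the Gauss sum would not collapse to a Jacobi symbol times a simple fourth root of unity, and the multiplier would be more intricate.  In parallel I would offer a cleaner alternative route: define the putative multiplier $j(\gamma, z) := \ve_d^{-1}\leg{c}{d}\sqrt{cz+d}$ and verify the cocycle relation $j(\gamma_1\gamma_2, z) = j(\gamma_1, \gamma_2 z)\,j(\gamma_2, z)$ on $\Gamma_0(4)$ directly using the supplementary laws of quadratic reciprocity; the full theorem then follows from checking the transformation on a generating set of $\Gamma_0(4)$, for which the two prototype identities (the second applied via the conjugation $z \mapsto S_4^{-1}T^{-1}S_4 z = z/(4z+1)$, which is the action of $\left(\begin{smallmatrix}1&0\\4&1\end{smallmatrix}\right)$) already suffice.
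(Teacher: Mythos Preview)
Your outline is correct and is essentially the standard argument found in the references the paper cites (Shimura, Iwaniec, Knopp, Miyake, Andrianov--Zhuravlev); the paper itself does not give a proof but only points to these sources after sketching the two prototype identities. Your direct Poisson-summation route---splitting the sum modulo $c$, applying Poisson to the inner sum, and collapsing the resulting quadratic Gauss sum---is exactly the computation carried out in those references, and you have correctly identified that the hypothesis $4\mid c$ is what makes the Gauss-sum evaluation clean.

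One small caution on your alternative ``cocycle plus generators'' route: the inversion $z\mapsto -1/(4z)$ is not given by an element of $\Gamma_0(4)$ (the matrix $\left(\begin{smallmatrix}0&-1\\4&0\end{smallmatrix}\right)$ has determinant $4$), so the phrase ``group generated by $z\mapsto z+1$ and $z\mapsto -1/(4z)$'' in the paper's lead-in is informal. You handle this correctly by passing to the conjugate $\left(\begin{smallmatrix}1&0\\4&1\end{smallmatrix}\right)$, which together with $T$ and $-I$ does generate $\Gamma_0(4)$; just be sure that when you derive the transformation under $\left(\begin{smallmatrix}1&0\\4&1\end{smallmatrix}\right)$ from the inversion identity you track the square-root branch through the two applications of $z\mapsto -1/(4z)$, since this is where sign errors typically creep in. The direct Gauss-sum computation avoids this bookkeeping entirely and is the safer of your two routes.
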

\begin{proof}
This is stated in \cite[(1.10), p440]{Shimura:1973aa} and proved in Prop 2, p457.  See also Iwaniec \cite[Thrm 10.10 with $\chi = \mathbf{1}$, pp177-8]{Iwaniec:1997ph}, Knopp \cite[Thrm 13, p46 and Thrm 3, p51]{Knopp:1970ul}, \cite[Cor 4.9.7, p194]{Miyake:2006hf} and Andrianov-Zhuralev \cite[Prop 4.15, p42]{Andrianov:1995kc} for proofs. 
\end{proof}

For a diagonal quadratic form $Q(\x) = \sum_{i=1}^n a_i x_i^2$ of level $N$, this formula is enough to see that $\Theta_Q(z)$ transforms into a multiple of itself under the element $z\mapsto  \frac{-1}{Nz}$.  However to obtain a transformation formula for a general theta series $\Theta_Q(z)$ similar to Theorem \ref{Thrm:Theta_one_var}, a more general strategy is needed.  
One approach is to compute the transformation formulae for more general theta series involving a linear term, and then to specialize this term to zero.
%
Another approach is to
obtain identities for how a related generalized higher dimensional theta function (similar to $\Theta_{x^2}(z)$) transforms with respect to a special subgroup of $\Sp_{2n}(\Z)$ called the {\bf theta group}.   
In the case where $n=1$, the theta group consists the elements $\[\begin{smallmatrix}a & b \\ c & d\end{smallmatrix}\]$ of $\Sp_2(\Z) = \SL_2(\Z)$ where both products $ab$ and $cd$ are even.  Either approach allows one to show the following important transformation formula:



\begin{thm}
Suppose $Q$ is a non-degenerate positive definite quadratic form over $\Z$ in $n$ variables with level $N$.  Then for all $\[\begin{smallmatrix}a & b \\ c & d\end{smallmatrix}\] \in \SL_2(\Z)$ with $N \mid c$, we have that 
$$
\Theta_{Q}\(\frac{az+b}{cz+d}\) = \(\frac{\det(Q)}{d}\) \[ \ve_d^{-1}\leg{c}{d} \sqrt{cz+d}\]^n \,\, \Theta_{Q}(z),
$$
where $\sqrt{z}$, $\ve_d$, and $\leg{c}{d}$ are defined in Theorem \ref{Thrm:Theta_one_var}.
\end{thm}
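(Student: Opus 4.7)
I would prove this transformation formula by direct Poisson summation applied to $\Theta_Q(\gamma z)$, combined with the evaluation of a generalized quadratic Gauss sum. The level hypothesis $N \mid c$ enters precisely to guarantee that the lattice appearing after Poisson summation is (after the appropriate rescaling) again $\Z^n$ equipped with the original form $Q$, so that one recovers $\Theta_Q(z)$ on the right-hand side rather than a theta series attached to the dual lattice.

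For $c=0$ one has $\gamma = \pm T^b$ and the identity is immediate from the Fourier expansion, since $r_Q(m) \in \Z$. For $c>0$ (which one may assume by replacing $\gamma$ with $-\gamma$), I would write $\gamma z = a/c - 1/[c(cz+d)]$ and split $\Z^n$ into cosets via $\x = c\y + \vec{r}$, with $\y \in \Z^n$ and $\vec{r}$ ranging over $(\Z/c\Z)^n$. The polarization identity gives $Q(c\y+\vec{r}) = c^2 Q(\y) + c H(\y,\vec{r}) + Q(\vec{r})$, and since $H$ is integer-valued this forces $e^{2\pi i a Q(\x)/c}$ to depend only on $\vec{r} \bmod c$, yielding
$$\Theta_Q(\gamma z) = \sum_{\vec{r} \in (\Z/c\Z)^n} e^{2\pi i a Q(\vec{r})/c} \sum_{\y \in \Z^n} \exp\!\left(\frac{-2\pi i \, Q(c\y + \vec{r})}{c(cz+d)}\right).$$

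Poisson summation applied to the inner Gaussian sum, using the fact that the Fourier transform of $\x \mapsto e^{-\pi \x^{T} A \x}$ is $(\det A)^{-1/2} e^{-\pi \xi^{T} A^{-1} \xi}$ for positive-definite $A$ (extended by analytic continuation in $z$ from the imaginary axis into $\H$), converts the inner sum into a sum over the dual lattice $(1/c)\Z^n$ twisted by a character in $\vec{r}$. Collecting terms produces a factor $(cz+d)^{n/2}/\sqrt{\det H}$ multiplied by the generalized quadratic Gauss sum
$$G(a,c) := \sum_{\vec{r} \in (\Z/c\Z)^n} e^{2\pi i a Q(\vec{r})/c}$$
and a residual lattice sum. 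The level hypothesis $N \mid c$ ensures that the rescaled dual Hessian $cH^{-1}$ has integer even-diagonal entries, so that the residual sum reassembles into exactly $\Theta_Q(z)$.

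The main obstacle is the evaluation of $G(a,c)$ and the identification of the total constant with $\leg{\det Q}{d}[\varepsilon_d^{-1}\leg{c}{d}\sqrt{cz+d}]^n$. I would localize the Gauss sum at each prime $p \mid c$ via the Chinese remainder theorem and then apply the Jordan decomposition (Theorem \ref{Thm:Jordan_splitting}) of $Q$ over $\Z_p$: for $p>2$ this diagonalizes $Q$ locally, so $G(a,c)$ factors as a product of classical one-variable Gauss sums $\sum_{r \bmod p^k} e^{2\pi i a u r^2 / p^k}$, each evaluated by the standard formula; for $p=2$ one must additionally handle the binary blocks $xy$ and $x^2+xy+y^2$, whose Gauss sums are elementary. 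Assembling the resulting Legendre symbols via quadratic reciprocity produces the Jacobi symbol $\leg{c}{d}^n$ together with the global character $\leg{\det Q}{d}$, while the factor $\varepsilon_d^{-n}$ emerges from the $n$-fold product of square-root branches tracked exactly as in the one-variable case (Theorem \ref{Thrm:Theta_one_var}). This delicate bookkeeping of square-root branches combined with the reciprocity gymnastics is by far the most intricate step of the argument.
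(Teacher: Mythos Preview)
The paper gives no detailed proof here, only references to two approaches. Your direct Poisson-summation argument is essentially the first of these (Eichler's appendix for even $n$, Iwaniec Ch.~10 in general), carried out without the auxiliary linear parameter: Iwaniec works with a theta series $\sum_{\x} e^{2\pi i Q(\x)z}e^{2\pi i H(\x,\uvec)}$ throughout and specializes $\uvec=0$ only at the end, which has the advantage that your coset shift $\vec r$ is absorbed into $\uvec$ and the ``twisted by a character in $\vec r$'' coupling after Poisson summation is already built into the notation. Working directly as you do is correct, but you should make explicit the completion of the square that disentangles the dual variable $\xi$ from $\vec r$: the sum over $\vec r$ after Poisson summation is $\sum_{\vec r} e^{2\pi i(aQ(\vec r)+H(\xi,\vec r))/c}$, and completing the square (using that $a$ is a unit mod $c$ and that $N\mid c$ makes the relevant inverse integral) separates off your $G(a,c)$ times $e^{-2\pi i \bar a Q^\#(\xi)/c}$; since $ad\equiv 1\pmod c$ this $\bar a$ is $d$, which is how the Jacobi symbol in $d$ actually appears. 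Your sketch jumps over this between ``twisted by a character'' and ``produces $G(a,c)$ times a residual lattice sum''. The paper's second reference (Andrianov--Zhuravlev) takes a genuinely different route through the theta group in $\Sp_{2n}(\Z)$, trading your explicit Gauss-sum arithmetic for a structural argument that generalizes more cleanly to Siegel theta series but is heavier machinery for the scalar case.
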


\begin{proof}
A nice discussion of theta series and their transformation formulas (by the first approach) can be found in \cite[Ch. 10]{Iwaniec:1997ph}, and a somewhat simpler discussion of transformation formulas for theta series in an even number of variables along these lines is given in the appendix to Chapter 1 of Eichler's book \cite[pp44-52]{Eichler:1966rp}.  
The second approach described above can be found (in much greater generality) in \cite[Ch1, \S3-4, pp11-37]{Andrianov:1995kc}, especially Thrm 3.13 on p22.
\end{proof}

Here the $2\times 2$ matrices that give symmetries of $\Theta_Q(z)$ form a subgroup of $\SL_{2}(\Z)$ called the {\bf level $N$ congruence group}, which is usually denoted as 
$$
\Gamma_{0}(N) := \left\{\begin{bmatrix} a & b \\ c & d \end{bmatrix} \in \SL_{2}(\Z) \biggm | c \equiv 0 \pmod{N}\right\}.
$$

%
%
%
%

\section{Modular Forms}

It is useful to understand theta series in the context of all functions that have symmetries with respect to the action of congruence subgroups $\Gamma_0(N)$ by linear fractional transformations on $\H$.
%
%
%
%
%
%
This idea leads us to 
define a very important class of functions called {\bf modular forms}, whose symmetry properties essentially depend on three parameters: the {\bf weight} $k \in \frac12 \Z$, the {\bf level} $N \in \N$, and the {\bf character} $\chi:(\Z/N\Z)^\times \ra \C^\times$.  If the weight $k \notin \Z$, then we must specify an additional function $\ve := \ve(\gamma, k)$ called a {\bf multiplier system}.  For theta series this is called the ``theta multiplier'', but we will not be concerned with its exact form here.

\begin{defn} \label{defn:modular_form}
We define a {\bf modular form} of weight $k$, level $N$, Dirichlet character $\chi$ and multiplier system $\ve$ to be a holomorphic function $f:\H \ra \C$ which transforms with respect to 
$\Gamma_{0}(N)$ 
under the rule 
\begin{equation} \label{Eq:modular_form_tranformation_law}
f\(\frac{az+b}{cz+d}\) =\ve(\gamma, k) \chi(d) (cz+d)^{k} f(z)  
\end{equation}
for all $\gamma:= \begin{bmatrix} a & b \\ c & d \end{bmatrix} \in \Gamma_{0}(N)$, and satisfies the additional technical condition that $f(z)$ is also ``holomorphic'' at the boundary values $\Proj^{1}(\Q) := \Q \cup \{\infty\}$ of the quotient $\Gamma_{0}(N) \bs \H$.  
\end{defn}

\noindent
It is standard notation to let $M_k(N,\chi)$ denote the $\C$-vector space of all modular forms of weight $k \in \frac{1}{2}\Z$, level $N$ and character $\chi$, 
where we assume the the {\bf trivial multiplier system} $\ve(\gamma, k) := 1$ if $k \in \Z$ and the {\bf theta multiplier system} $\ve(\gamma, k) := \ve_d^{-1}\leg{c}{d}$ if $k \not\in \Z$.

\medskip


%

We can now rephrase the symmetries of the theta function $\Theta_Q(z)$ using the language of modular forms.  Good references for the general theory of modular forms are \cite{Iwaniec:1997ph, Diamond:2005mw, Miyake:2006hf, Koblitz:1993vc, Shimura:1994ab}, and the theta multiplier is described in detail in \cite[Ch 4]{Knopp:1970ul} and \cite[Ch 10]{Iwaniec:1997ph}.
One important observation to make about modular forms $f(z)$ is that the element $\[\begin{smallmatrix} 1 & 1 \\ 0 & 1 \end{smallmatrix}\] \in \Gamma_0(N)$, and so the transformation formula above shows that $f(z+1) = f(z)$.
\footnote{To justify this, notice that both the trivial and theta multiplier systems have value $1$ on this element.}
This periodicity together with the holomorphy of $f(z)$ shows that any modular form can be written as a complex {\bf Fourier series}
\begin{equation} \label{Eq:Fourier_expansion}
f(z) = \sum_{m=0}^\infty a(m) e^{2\pi i mz} = \sum_{m=0}^\infty a(m) q^m 
\qquad \text{where $q := e^{2\pi i z}$ }
\end{equation}
and the {\bf Fourier coefficients} $a(m) \in \C$.


\begin{cor}
Suppose $Q$ is a non-degenerate positive definite quadratic form over $\Z$ in $n$ variables with level $N$.  Then $\Theta_Q(z) \in M_{\frac{n}{2}}(N, \chi)$ is a modular form 
of weight $\frac{n}{2}$, level $N$ and character $\chi(\cdot) = \(\frac{(-1)^{\lfloor\frac{n}{2}\rfloor}\det(Q)}{\cdot}\)$ (and multiplier system $\ve(\gamma, k)$ specified above).
\end{cor}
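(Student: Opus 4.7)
The corollary is essentially a bookkeeping exercise: the transformation formula in the preceding theorem almost literally matches the transformation law (\ref{Eq:modular_form_tranformation_law}) in the definition of a modular form, and what remains is to read off the correct weight, level, character, and multiplier, then handle the cusp condition. Holomorphy on $\H$ is already established by the earlier convergence theorem for $\Theta_Q(z)$, so the core of the argument is algebraic identification of the character, followed by a short check at the cusps.

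The plan is to fix $\gamma = \bigl[\begin{smallmatrix}a&b\\c&d\end{smallmatrix}\bigr]\in\Gamma_0(N)$ and compare
$$
\Theta_Q\!\left(\tfrac{az+b}{cz+d}\right) = \leg{\det(Q)}{d}\bigl[\varepsilon_d^{-1}\leg{c}{d}\bigr]^n (cz+d)^{n/2}\,\Theta_Q(z)
$$
with the required form $\varepsilon(\gamma,k)\chi(d)(cz+d)^k f(z)$ where $k=n/2$. I would split into the two cases that correspond to the definition of the multiplier system. When $n=2m$ is even, $k\in\Z$ and $\varepsilon(\gamma,k)=1$, so the needed identity reduces to $\chi(d)=\left(\frac{\det(Q)}{d}\right)\varepsilon_d^{-2m}\left(\frac{c}{d}\right)^{2m}$; since $\left(\frac{c}{d}\right)=\pm1$ the second factor is $1$, and using the standard identity $\varepsilon_d^{2}=\left(\frac{-1}{d}\right)$ (immediate from the case split $d\equiv 1,3\pmod 4$) we get $\varepsilon_d^{-2m}=\left(\frac{(-1)^m}{d}\right)$, yielding $\chi(d)=\left(\frac{(-1)^m\det(Q)}{d}\right)$ with $m=\lfloor n/2\rfloor$. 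When $n=2m+1$ is odd, $k\notin\Z$ and $\varepsilon(\gamma,k)=\varepsilon_d^{-1}\left(\frac{c}{d}\right)$, which cancels one factor of $\varepsilon_d^{-1}\left(\frac{c}{d}\right)$ on the right-hand side of the theorem and leaves the same computation as in the even case, now with $m=\lfloor n/2\rfloor=(n-1)/2$. Either way, $\chi(d)=\left(\frac{(-1)^{\lfloor n/2\rfloor}\det(Q)}{d}\right)$, exactly matching the claimed character.

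For the cusp condition, the Fourier expansion $\Theta_Q(z)=\sum_{m\geq 0} r_Q(m)q^m$ has only nonnegative-index terms, so $\Theta_Q$ is holomorphic at the cusp $\infty$; in particular $\Theta_Q(z)\to r_Q(0)=1$ as $\Im(z)\to\infty$. For any other cusp $s\in\Proj^1(\Q)$ one picks $\gamma_0\in\SL_2(\Z)$ with $\gamma_0(\infty)=s$ and examines the slash-translate $(\Theta_Q\vert_k\gamma_0)(z)=(c_0z+d_0)^{-k}\Theta_Q(\gamma_0 z)$ (with the appropriate multiplier). Since $\Theta_Q$ is a sum of positive Gaussians, it is bounded on any horizontal strip $\Im(z)\geq y_0>0$ away from the real axis, so the slash-translate remains of polynomial growth as $\Im(z)\to\infty$ and its $q$-expansion contains no negative powers; this is the standard ``moderate growth'' check at each cusp. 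The main conceptual obstacle is purely notational — being careful with the two conventions for half-integer weight and with the fact that $\left(\frac{c}{d}\right)^{n}$ and $\varepsilon_d^{-n}$ absorb into either the multiplier or the character depending on the parity of $n$ — and once the parity bookkeeping is done correctly, the corollary follows immediately from the preceding theorem.
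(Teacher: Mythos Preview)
Your computation of the transformation law --- splitting into parity cases and using $\varepsilon_d^2=\leg{-1}{d}$ to reduce $\bigl[\varepsilon_d^{-1}\leg{c}{d}\bigr]^n$ to $\leg{-1}{d}^{\lfloor n/2\rfloor}$ times the residual theta multiplier when $n$ is odd --- is correct and is exactly the paper's proof, which consists of nothing more than this identity.

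Your cusp argument, however, has a gap. You claim that boundedness of $\Theta_Q$ on $\{\Im(z)\geq y_0\}$ forces the slash-translate $(\Theta_Q|_k\gamma_0)(z)$ to have polynomial growth as $\Im(z)\to\infty$. But when $c_0\neq 0$ and $\Im(z)\to\infty$, the point $\gamma_0 z$ approaches the real axis, since $\Im(\gamma_0 z)=\Im(z)/|c_0z+d_0|^2\to 0$; your boundedness estimate on upper horizontal strips says nothing about this regime, and in fact $\Theta_Q(w)$ blows up like $(\Im w)^{-n/2}$ as $\Im w\to 0^+$ (already visible for $Q=x^2$ via the Jacobi inversion). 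The correct argument applies Poisson summation (equivalently, the $z\mapsto -1/Nz$ symmetry already established) to convert $\Theta_Q$ near a rational cusp into a shifted theta series near $\infty$, whose Fourier expansion then visibly has no negative terms. The paper does not spell this out either, implicitly deferring the cusp condition to the references cited for the preceding transformation theorem.
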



\begin{proof}
This follows because $\ve_d^2 = \leg{-1}{d}$, and so 
$
\[ \ve_d^{-1} \leg{c}{d} \]^n = \leg{-1}{d}^{\lfloor\frac{n}{2}\rfloor} 
\cdot
\begin{cases}
\ve_d^{-1} \leg{c}{d} & \text{ if $n$ is odd,} \\
1 & \text{ if $n$ is even.} \\
\end{cases}
$
\end{proof}

\begin{rem}
Note that here the ``level'' $N$ refers both the level of the quadratic form as well as the level of the modular form (i.e. we use symmetries from $\Gamma_0(N)$).
\end{rem}

To understand modular forms structurally, it is important to understand the action of $\Gamma_0(N)$ on $\H$ by linear fractional transformations $z \mapsto \frac{az+b}{cz+d}$.  
When $N=1$, then $\Gamma_0(N)$ is all of $\SL_2(\Z)$ and there is a well-known fundamental domain $\mathcal{F}$ for this action given by 
$$
\mathcal{F} := \{z \in \H \mid |z| \geq 1 \text{ and } |\Re(z)| \leq \tfrac12\},
$$
together with some identifications of its boundary.  After these identifications have been made, the resulting fundamental domain $\mathcal{F}$ is not compact.   However $\mathcal{F}$ {\it can be  naturally extended to a compact surface} by adding one point (usually called $\infty$ or $i\infty$) which we imagine to be at the topmost end of the $y$-axis.  This point is called a {\bf cusp of $\SL_2(\Z)$} due to the apparent pointyness of $\mathcal{F}$ as we move along the $y$-axis towards $i\infty$.
In general, $\Gamma_0(N)$ has finite index in $\SL_2(\Z)$ and so its fundamental domain will be a union of finitely many translates of $\mathcal{F}$ (with slightly different boundary identifications).  This larger fundamental domain is again not compact, but here it can be made compact by the addition of finitely many ``boundary'' points which we call {\bf cusps of $\Gamma_0(N)$}.  These cusps can always be represented by elements of $\Proj^1(\Q)$ since they will be the image of the cusp $i\infty$ under some element of $\Gamma_0(N) \subset \SL_2(\Q)$, and we have the identification $i\infty = \infty \in \Proj^1(\Q) \subset \Proj^1(\C)$.

These cusps play a very important role in the theory of modular forms.  For example, they can be used to define a natural subspace of modular forms which vanish at all cusps, called the {\bf cusp forms} $S_k(N, \chi) \subseteq M_k(N, \chi)$.  Also, for each cusp $\mathcal{C}$ of $\Gamma_0(N)$ we can usually construct a special modular form $E_{\mathcal{C}}(z)$ associated to $\mathcal{C}$ which has value 1 at $\mathcal{C}$ and vanishes at all other cusps.  We call the space spanned by all of these functions associated to cusps the space of {\bf Eisenstein series} $E_k(N, \chi) \subseteq M_k(N, \chi)$.  

The Eisenstein series associated to cusps can be understood very explicitly, and is usually considered to be the ``easier'' part of $M_k(N, \chi)$.  For example, for the cusp $\mathcal{C} = i\infty$ of $\SL_2(\Z)$, the associated Eisenstein series $E_{\mathcal{C}}(z)$ of weight $k \in 2\Z >2$ is given by
\begin{equation} \label{Eq:Eis_series}
G_k(z) 
:= \tfrac{1}{2} \sum_{\substack{(c, d) \in \Z^2 \\ \gcd(c,d) = 1}}  \frac{1}{(cz+d)^k}
= 1 - \frac{2k}{B_{2k}} \sum_{m \geq 1}  \sigma_{k-1}(m) q^m  
\in M_k(N=1, \chi = \mathbf{1})
\end{equation}
where $B_{2k}$ is the $(2k)^\text{th}$ Bernoulli number, $\sigma_{k-1}(m) := \sum_{0 < d \mid m} d^{k-1}$ is the usual divisor function and $q := e^{2\pi i z}$. (See \cite[Lemma 4.1.6, p100 and Thrm 3.2.3, p90]{Miyake:2006hf}.)  We can also interpret the Fourier expansion in (\ref{Eq:Fourier_expansion}) as being associated with the cusp $i\infty$, since we can view $q$ as a local parameter in the neighborhood of $i\infty$.

\medskip
{\bf Facts about Modular forms:}
%
We now state several fundamental structural results in the theory of modular forms that are useful for understanding theta series:
\begin{enumerate}
\item The space of modular forms $M_{k}(N, \chi)$ with fixed invariants $(k, N, \chi)$ is a finite dimensional vector space over $\C$. \cite[\S2.5, pp57-61]{Miyake:2006hf}
\item The space $M_{k}(N, \chi)$ can be decomposed uniquely as a direct sum of cusp forms (of functions vanishing at all cusps) and Eisenstein series (spanned by the Eisenstein series associated to the cusps of $\Gamma_{0}(N)$). \cite[Thrm 2.1.7, p44 and Thrm 4.7.2, p179]{Miyake:2006hf}
\item Any Eisenstein series has Fourier coefficients $a_{E}(m)$ which can be as large as $c_\ve m^{k-1 + \ve}$ for any $\ve > 0$ and some constant $c_\ve\in \R > 0$. 
\cite[Thrm 4.7.3, p181]{Miyake:2006hf}
\item Any cusp form has Fourier coefficients $a_{f}(m)$ which are (trivially) no larger than $c_\ve m^{\frac{k}{2} + \ve}$ for any $\ve >0$ and some constant $c_\ve\in \R > 0$.
\cite[Cor 2.1.6, p43]{Miyake:2006hf}
\end{enumerate}

For our purposes, it is important to note that the upper bound on Eisenstein coefficients is not far from the truth, and is best possible when $k >2$.  When $k \in 2\Z > 2$, this bound is attained by the Eisenstein series (\ref{Eq:Eis_series}).

\section{Asymptotic Statements about $r_Q(m)$}
To apply our knowledge of modular forms to study the numbers $m$ represented by $Q$, we write the theta series as 
$$
\Theta_Q(z) = E(z) + C(z)
$$ 
where $E(z)$ is an Eisenstein series and $C(z)$ is a cusp form.  Looking at the $m^\text{th}$ Fourier coefficient  of this equation gives a decomposition of the representation numbers as  
$$
r_Q(m) = a_E(m) + a_C(m).
$$

From our informal discussion of modular forms above we know that the Eisenstein Fourier coefficients $a_E(m)$ are about as large as $m^{k-1}$ as $m\ra\infty$, and when $n=5$ one can show using (\ref{Eq:Siegel_product})  that 
$$
|a_E(m)| >> m^{\frac{3}{2}}
$$
when they are non-zero, and when $n \geq 4$ this occurs $\iff m$ is locally represented by $Q$.   Similarly we know that the cusp form Fourier coefficients satisfy
$$
|a_C(m)| << m^{\frac54 + \ve},
$$
so if the Eisenstein coefficients are non-zero, then we know that $r_Q(m)$ is non-zero and so $m$ is represented by $Q$ if $m$ is sufficiently large.  This asymptotic estimate only improves when $Q$ has more variables, giving the following theorem originally due to Tartakowski:

\begin{thm}[Tartakowski, \cite{Tartakowsky:1929dq}]
If $Q$ is a positive definite quadratic form over $\Z$ in $n \geq 5$ variables, then every sufficiently large number $m \in \N$ that is locally represented by $Q$ is represented by $Q$.
\end{thm}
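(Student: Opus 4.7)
The plan is to leverage the structural decomposition of $M_{n/2}(N,\chi)$ into Eisenstein series and cusp forms (facts 1 and 2 in the preceding list) and then compare the sizes of the two pieces of the Fourier expansion of $\Theta_Q(z)$. Setting $k := n/2 \geq 5/2$, I would first write
$$
\Theta_Q(z) = E(z) + C(z), \qquad E \in E_k(N,\chi), \quad C \in S_k(N,\chi),
$$
and read off the $m$-th Fourier coefficient to get $r_Q(m) = a_E(m) + a_C(m)$. The theorem then reduces to showing that $a_E(m) > |a_C(m)|$ for all sufficiently large $m$ that are locally represented by $Q$.

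The upper bound on the cuspidal contribution is immediate from fact 4: $|a_C(m)| \ll_\varepsilon m^{k/2+\varepsilon}$ for every $\varepsilon > 0$. The substantive step is the matching lower bound on the Eisenstein piece. For this, I would identify $a_E(m)$ with the Siegel-weighted average representation number $r_{\mathrm{Gen}(Q)}(m)$ (divided by the mass of the genus) and expand it via the Siegel product formula (\ref{Eq:Siegel_product}) as a product of local densities $\beta_\infty(Q,m) \cdot \prod_p \beta_p(Q,m)$. When $m$ is locally represented by $Q$, every local density is strictly positive; the archimedean density grows like $m^{k-1}$, the $p$-adic densities at primes $p \nmid 2\det(Q)$ are uniformly bounded away from $0$ (and take value $1$ outside a finite set depending on $m$ only through the convergence factors in the Euler product), and the remaining finitely many "bad" densities are uniformly bounded below on the set of locally represented $m$. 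Combining these gives the desired bound $|a_E(m)| \gg m^{k-1}$ stated in the excerpt.

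Finally, since $n \geq 5$ is equivalent to $k > 2$, we have $k - 1 > k/2$, so choosing $\varepsilon$ small enough makes $m^{k-1}$ strictly dominate $m^{k/2+\varepsilon}$ as $m \to \infty$. Hence for every locally represented $m$ beyond an effectively computable threshold, $r_Q(m) \geq a_E(m) - |a_C(m)| > 0$, and $Q$ globally represents $m$.

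The principal obstacle is the Eisenstein lower bound: one must (a) identify the Eisenstein projection of $\Theta_Q$ with the genus theta series so that its coefficients are given by the Siegel product, and (b) control the local densities $\beta_p(Q,m)$ uniformly in $m$ at the finitely many bad primes. Point (a) is a classical but nontrivial identification (essentially Siegel's main theorem on quadratic forms), while (b) is where the hypothesis $n \geq 5$ plays its more delicate role, since stability of the bad local densities as $m$ varies over locally represented integers must be verified rather than merely asserted.
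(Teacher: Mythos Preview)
Your proposal is correct and follows essentially the same route as the paper: decompose $\Theta_Q = E + C$, bound the cuspidal coefficients by $m^{k/2+\ve}$ via fact~4, bound the Eisenstein coefficients below by $m^{k-1}$ via Siegel's product formula (\ref{Eq:Siegel_product}) when $m$ is locally represented, and compare exponents using $k-1 > k/2$ for $n\geq 5$. Your discussion is in fact somewhat more explicit than the paper's sketch about the two genuine obstacles---identifying the Eisenstein projection with the genus-averaged theta series (Siegel's first formula (\ref{Eq:Siegel_avg})) and establishing the uniform lower bound on the bad local densities---both of which the paper defers to the references.
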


%
%
%

For $n\leq 4$, the above results are not enough to show that the Eisenstein coefficients are asymptotically larger than the cusp form coefficients, so more care is needed.  The case $n=4$ was first handled by Kloosterman by a clever refinement of the Circle Method (described briefly below), and has since been absorbed into the theory of modular forms as a consequence of the Ramanujan bound $|a_{f}(p)| \leq 2\sqrt{p}$ for prime coefficients of weight 2 cusp forms.  This case also involves additional local considerations at finitely many primes $p$ where  $Q$ is anisotropic over $\Q_{p}$.

The case $n=3$ is even more delicate, and involves additional arithmetic and analytic tools to understand (e.g. spinor genera, the Shimura lifting of half-integral weight forms, analytic bounds on square-free coefficients of half-integral weight forms).  In particular it was handled by Duke and Schulze-Pillot, and then by Schulze-Pillot in the papers \cite{Duke:1990ay, Schulze-Pillot:2000fm}  For more details on asymptotic results, see the survey papers \cite{Hanke:2004xa, Duke:1997ko, Iwaniec:1987tt, Schulze-Pillot:2004ir}.  

The case $n=2$ of binary forms is a genuinely arithmetic problem (since for weight $k=1$ both cusp forms and Eisenstein series coefficients 
satisfy $a(m) << m^\ve$ for any $\ve > 0$ \cite[\S5.2(c), p220]{Serre:1977cr})
and it exhibits a much closer connection to explicit class field theory for quadratic extensions than the asymptotic results described here.

\section{The circle method and Siegel's Formula}

The origins of the many of the modern analytic techniques in the theory of quadratic forms have their origins in the famous ``circle method'' of Hardy, Littlewood and Ramanujan.  The idea of this method is that one can express the number of representations $r_{Q}(m)$ for $Q = a_{1}x_{1}^{2} + \cdots + a_{n}x_{n}^{2}$ as an integral over the unit circle which can be well-approximated by taking small intervals about angles which are rational multiples of $2\pi$ (where small here means small relative to the overall denominator of the rational multiples one considers).  These rational angle contributions can be thought of locally (in terms of Gauss sums), and so we learn that local considerations give a good approximation of the number of representations $r_{Q}(m)$ for $Q = a_{1}x_{1}^{2} + \cdots + a_{n}x_{n}^{2}$ when $n$ is large enough.  In the language of modular forms this method produces an Eisenstein series $E_{Q}(z)$ (called a ``singular series'') with multiplicative Fourier coefficients that agrees with the theta series $\theta_{Q}(z)$ at all rational points (and at $\infty$) so the difference $\theta_{Q}(z) -E_{Q}(z)$ is a cusp form and so must have asymptotically smaller Fourier coefficients than $E(z)$.  This cusp form can be analyzed to various degrees, but the most naive bound for its Fourier coefficients gives non-trivial asymptotic information for the asymptotic behavior of $r_{Q}(m)$ for $m\geq 5$.  (See \cite[Ch 6, pp ???]{Moreno:2006qf} and \cite[Ch 5, pp63--87]{Knopp:1970ul} for details.)  The case $n=4$ can also be handled, but requires an essential refinement of Kloosterman to obtain additional cancellation.  (See \cite[\S11.4--5, pp190--199]{Iwaniec:1997ph} and \cite[\S20.3-5, pp467--486]{Iwaniec:2004la} for more details.)

Siegel used these ideas to give quantitative meaning to the Fourier coefficients in the singular series both in terms of the underlying space of modular forms (as an Eisenstein series), but also in terms of the ``local densities'' associated to the quadratic form $Q$.  In particular he proved the following theorem:

\begin{thm}[Siegel]  \label{Thm:Siegel_formulas}
Suppose $Q(\x)$ is a positive definite integer-valued quadratic form in $n\geq 5$ variables, whose theta series $\Theta_{Q}(z)$ is written (uniquely) as a sum of an Eisenstein series $E(z)$ and a cusp form $C(z)$.  Then the Eisenstein series 
$$E(z) = \sum_{m\geq 0} a_{E}(m) e^{2 \pi i m z}$$ 
can be expressed in two different ways:

Firstly, $E(z)$ can be recovered as a weighted sum of theta series over the genus of $Q$:
\begin{equation} \label{Eq:Siegel_avg}
E(z) = \frac{\sum_{Q'\in \Gen(Q)} \frac{\Theta_{Q'}(z)}{|\Aut(Q')|}}{\sum_{Q'\in \Gen(Q)} \frac{1}{|\Aut(Q')|}},
\end{equation}
showing that $E(z)$ is a genus invariant. (I.e., The theta series of any $Q' \in \Gen(Q)$ will have the same Eisenstein series $E(z)$.)

Secondly, the Fourier coefficients $a_{E}(m)$ can be expressed as an infinite local product
\begin{equation} \label{Eq:Siegel_product}
a_{E}(m) = \prod_{\text{places}\,  v} \beta_{Q,v}(m)
\end{equation}
where the numbers $\beta_{Q,v}(m)$ are the {\bf local representation densities of $Q$ at $m$}, defined by the limit
\begin{equation} \label{Eq:Local_density_defn}
\beta_{Q,v}(m) := 
\lim_{U \ra \{m\}}
\frac{\vol_{\Z_v^n}({Q^{-1}(U)})}{\vol_{\Z_v}(U)}
\end{equation}
where $U$ runs over a sequence of open subsets of $\Z_v$ containing $m$ with common intersection $\{m\}$, and the volumes appearing are
the natural translation-invariant volumes
on $n$-dimensional and 1-dimensional affine space over $\Z_v$ of total volume one.
\end{thm}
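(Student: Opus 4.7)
The plan is to establish (\ref{Eq:Siegel_avg}) first and then deduce (\ref{Eq:Siegel_product}) by extracting Fourier coefficients and invoking the Minkowski--Siegel mass formula.

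I would begin by defining $\Theta_{\Gen(Q)}(z)$ to be the right-hand side of (\ref{Eq:Siegel_avg}). Since all $Q' \in \Gen(Q)$ share the same dimension, level $N$, and determinant square-class, the preceding corollary places each $\Theta_{Q'}$, and hence their weighted average $\Theta_{\Gen(Q)}$, in $M_{n/2}(N,\chi)$. To identify this average with the Eisenstein part $E(z)$ of $\Theta_Q$, it suffices to show that $\Theta_Q - \Theta_{\Gen(Q)}$ is a cusp form, i.e.\ that its Fourier expansion at every cusp of $\Gamma_0(N)$ has vanishing constant term.

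To analyze the behavior at a cusp $\mathcal{C}$, I would act by some $\gamma \in \SL_2(\Z)$ sending $i\infty$ to $\mathcal{C}$. The transformation law for theta series (combined with Poisson-summation identities on sublattices of $L$) expresses $\Theta_{Q'}|_\gamma$ as a combination of theta-type series attached to sublattices of $Q'$, with coefficients built from Gauss sums encoding local data at the primes dividing $N$. Averaging over the genus with weights $1/|\Aut(Q')|$ symmetrizes these contributions into purely genus-theoretic quantities, so the constant term at $\mathcal{C}$ of $\Theta_{\Gen(Q)}$ depends only on local invariants of $Q$ --- precisely matching the constant term of the Eisenstein series $E(z)$ assembled from the same local data. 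A more conceptual route uses the adelic Siegel--Weil identity: the weighted genus sum equals a theta integral over $O_Q(V)(\Q)\backslash O_Q(V)(\mathbb{A})$, which Siegel--Weil identifies explicitly with an Eisenstein series. Uniqueness of the Eisenstein/cusp-form decomposition in $M_{n/2}(N,\chi)$ then forces $\Theta_{\Gen(Q)} = E(z)$.

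For (\ref{Eq:Siegel_product}), I would read off the $m$-th Fourier coefficient of (\ref{Eq:Siegel_avg}),
\begin{equation*}
a_E(m) \;=\; \frac{\sum_{Q' \in \Gen(Q)} r_{Q'}(m)/|\Aut(Q')|}{\sum_{Q' \in \Gen(Q)} 1/|\Aut(Q')|},
\end{equation*}
and factor the ratio as $\prod_v \beta_{Q,v}(m)$ using the Minkowski--Siegel mass formula. The cleanest derivation is again adelic: interpret the numerator as an integral of the indicator of $\{Q=m\}$ on $O_Q(V)(\Q)\backslash O_Q(V)(\mathbb{A})$ against a properly normalized Tamagawa measure (orbit-decomposing the set of representations into genus classes); then apply Fubini to factor the integral over places. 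At each finite place one recognizes the resulting local integral as the limit (\ref{Eq:Local_density_defn}) obtained by counting solutions of $Q(\x) \equiv m \pmod{p^k}$ and normalizing by $p^k$, while at the archimedean place one obtains the surface measure of the ellipsoid $\{Q=m\}$ in $\R^n$. Genus-invariance of each $\beta_{Q,v}(m)$ is immediate, since any $Q' \in \Gen(Q)$ is $\Z_v$-equivalent to $Q$ by definition of the genus.

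The principal obstacles are twofold. The hard step is identifying $\Theta_{\Gen(Q)}$ with $E(z)$: either one explicitly evaluates constant terms at every cusp via delicate Gauss-sum computations attached to the local Jordan decompositions at each prime dividing $N$, or else one invokes the Siegel--Weil formula together with Weil's convergence theorem for the theta integral --- both substantial undertakings. The second obstacle is absolute convergence of $\prod_v \beta_{Q,v}(m)$, which for $n \geq 5$ follows from a Hensel-type estimate $\beta_{Q,p}(m) = 1 + O(p^{1 - n/2})$ at primes $p \nmid 2m\det(Q)$ and is the reason the hypothesis $n \geq 5$ appears.
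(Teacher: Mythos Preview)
The paper does not prove this theorem; its entire proof environment consists of citations to Siegel's lecture notes and his original 1935--1937 papers. There is therefore no argument in the paper to compare your proposal against.

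Your outline is a reasonable sketch of the standard modern route (genus averaging plus Siegel--Weil, then Tamagawa-measure factorization for the local product), and it correctly identifies the two substantial difficulties. One logical slip is worth flagging: showing that $\Theta_Q - \Theta_{\Gen(Q)}$ is a cusp form does \emph{not} by itself give $\Theta_{\Gen(Q)} = E(z)$; it only shows that $\Theta_{\Gen(Q)}$ and $\Theta_Q$ share the same Eisenstein projection, while $\Theta_{\Gen(Q)}$ could a priori still carry a nonzero cusp-form component. What you actually need is that $\Theta_{\Gen(Q)}$ itself lies in the Eisenstein subspace (equivalently, is orthogonal to all cusp forms under the Petersson inner product, or is literally a value of an Eisenstein series). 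Your Siegel--Weil alternative does exactly this by identifying the genus-averaged theta integral with a specific Eisenstein series, so that branch of your argument is the one that genuinely closes the gap; the ``match constant terms at every cusp'' branch as you have stated it does not.
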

\begin{proof}
See Siegel's Lecture notes \cite{Siegel:1963yt} or his original series of papers \cite{Siegel:1935qf, Siegel:1936qo, Siegel:1937jw}.
\end{proof}
These formulas are extremely important for the analytical theory of quadratic forms, and can be used to provide precise asymptotics for $r_{Q}(m)$ as $m\ra \infty$.
Extensions of this technique led Siegel to prove analogous results for more general kinds of theta functions which count representations of a quadratic form by another quadratic form.  These are examples of  ``Siegel modular forms'' which have analogous symmetries for the symplectic group $\Sp_{2r}$. (See \cite{Andrianov:1995kc} for more details.)

The formulas of Siegel were later generalized by Weil to a more representation-theoretic context by means of a certain very simple representation of a symplectic group called the ``Weil representation'' that we will meet later.  This representation can be used to give a proof of Siegel's formulas in the case where $Q$ is a positive definite quadratic form in $n\geq 5$ variables, and has been extended by Kudla and Rallis \cite{Kudla:1988aa, Kudla:1988ab} to cover many more cases, including $n \geq 3$.  It is interesting to see the progression of ideas from the circle method to modular forms to the Weil representation, and to notice that while the language used to obtain these results changes to suit our deepening perspective and context, the essential features (and technical difficulties) of the result remain very much the same.  

These structural results about theta series and modular forms can also be generalized to understand theta series of totally definite $\O_F$-valued quadratic forms over totally real number fields $F$.
These theta series are
then
Hilbert modular forms for a congruence subgroup of the group $\GL_2(\O_F)$ where $\O_F$ is the ring of integers of $F$.  They can also be generalized to understand the number of representations of a smaller quadratic form $Q'$ by $Q$, where this can be viewed in the lattice picture as counting the number of isometric embeddings of the quadratic lattice $L'$ into $L$ (which are quadratic lattices associated to $Q'$ and $Q$ respectively).  In this context, the resulting theta series is a Siegel modular form for some congruence subgroup of the symplectic group $\Sp_{2n'}(\Z)$, where $Q'$ is a quadratic form in $n'$ variables.  (Notice that in the special case where $n' = 1$ we have $\Sp_2 = \SL_2$.)  In both of these settings, Siegel's formulas remain essentially unchanged.

%
%
%
%

\section{Mass Formulas}

One useful application of the generalizations of Siegel's formula to representing quadratic forms $Q'$ by a quadratic form $Q$  is when we take $Q' = Q$.  In this case, a generalization of Siegel's first formula (\ref{Eq:Siegel_avg}) applied to the $Q^\text{th}$-Fourier coefficient of the associated Siegel modular form gives
\begin{align}
a_E(Q) 
= \frac{\sum_{Q''\in \Gen(Q)} \frac{r_{Q''}(Q)}{|\Aut(Q'')|}}{\sum_{Q''\in \Gen(Q)} \frac{1}{|\Aut(Q'')|}}
= \frac{1}{\sum_{Q''\in \Gen(Q)} \frac{1}{|\Aut(Q'')|}}
\end{align}
because the number of representations $r_Q(Q'')$ of any quadratic form $Q''\in \Gen(Q)$ by $Q$ is given by
$$
r_Q(Q'') = 
\begin{cases}
\#\Aut(Q) & \quad \text{if $Q'' \sim_\Z Q$,} \\
0 & \quad \text{if $Q'' \not\sim_\Z Q$.}
\end{cases}
$$
From an extension of Siegel's second formula (\ref{Eq:Siegel_product}), we also see that $a_E(Q)$ can be written as a product of local densities (though in this case an extra factor of 2 is needed).  This motivates the definition of the {\bf mass of a quadratic form $Q$}, denoted by $\Mass(Q)$, as 
$$
\Mass(Q) := \sum_{Q''\in \Gen(Q)} \frac{1}{|\Aut(Q'')|}.
$$
By Siegel's theorems we see that the mass is a local quantity, and can be computed from local knowledge about $Q$ over $\Z_v$ at all places $v$.  

Explicit computations of the mass are simple in principle, but often a bit painful to make explicit.  These are known as ``exact mass formulas'', and they provide very useful information about the class number $h_Q$ of a genus $\Gen(Q)$.  As an example of this, using the fact that every quadratic form has at least two automorphisms (e.g. $\x \mapsto \pm \x$) we can see that 
$$
\Mass(Q) 
= \sum_{Q''\in \Gen(Q)} \frac{1}{|\Aut(Q'')|} 
\leq \frac{h(Q)}{2}.
$$
Therefore if the $\Mass(Q)$ is large then we know that the genus must contain many distinct classes.  However the size of the mass of a positive definite form can be shown by local considerations to grow as we vary $Q$ in an infinite family (e.g. if $n$ grows, or if $\det(Q)$ grows and $n\geq 2$), so the class number can also be shown to get very large in these situations.  One interesting application of this is the following result of Pfeuffer  and Watson:
\begin{thm}
There are only finitely many (classes of) primitive positive definite quadratic forms $Q$ over $\Z$ in $n\geq 2$ variables with class number $h_Q=1$.
\end{thm}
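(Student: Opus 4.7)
The plan is to leverage the mass inequality $\Mass(Q) \leq h_Q/2$ derived immediately before the theorem: since $h_Q = 1$ forces $\Mass(Q) \leq \tfrac{1}{2}$, it suffices to show that the mass of a primitive positive definite form must grow without bound as we let either the number of variables $n$ or the determinant $\det(Q)$ increase, confining the candidates to a finite region of parameter space.

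The first step is to expand $\Mass(Q)$ using Siegel's product formula (a specialization of (\ref{Eq:Siegel_product}) to the diagonal Fourier coefficient $a_E(Q)$, as sketched in the mass discussion). This presents the mass as an archimedean factor $\beta_{Q,\infty}$ — essentially the reciprocal of the volume of $O_Q(\R)$, which has the shape $c_n \cdot \det(Q)^{(n+1)/2} \prod_{i=1}^n \Gamma(i/2) \cdot \pi^{-i/2}$ up to elementary constants — times a product of $p$-adic densities $\beta_{Q,p}(Q)$ which converges to a non-zero limit. The gamma factor grows super-exponentially with $n$, so for any fixed upper bound on $\det(Q)$ and a uniform lower bound for $\prod_p \beta_{Q,p}(Q)$, there is an $n_0$ beyond which $\Mass(Q) > \tfrac{1}{2}$. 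This bounds $n$.

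Next, fixing $n \geq 2$, I would argue that $\Mass(Q) \to \infty$ as $\det(Q) \to \infty$ through primitive forms. Each prime $p$ dividing $\det(Q)$ contributes a local density $\beta_{Q,p}(Q)$ which, for primitive $Q$, is bounded below by an explicit quantity strictly greater than its unimodular value. This comes from the Jordan decomposition (Theorem \ref{Thm:Jordan_splitting}): a primitive form with $p \mid \det(Q)$ must have a nontrivial $p$-scaled Jordan block, which shrinks the $p$-adic stabilizer and correspondingly inflates $\beta_{Q,p}(Q)$. Moreover the archimedean factor grows like $\det(Q)^{(n+1)/2}$. Combining these, the total mass exceeds $\tfrac{1}{2}$ once $\det(Q)$ is large enough, so $\det(Q)$ is also bounded, say by $d_0$.

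Finally, for each fixed pair $(n, d)$ with $n \leq n_0$ and $\det(Q) \leq d_0$ there are only finitely many genera of primitive positive definite $\Z$-valued quadratic forms: the genus is determined by its completions at the finitely many primes dividing $2d$, and over each $\Z_p$ the Jordan decomposition supplies only finitely many equivalence classes of a given determinant. Each genus contains only finitely many classes by Siegel's reduction theorem ($h_Q < \infty$, as cited earlier), so finitely many classes in total remain to be tested against $h_Q = 1$. The substantive obstacle is the explicit, uniform lower bound on $\beta_{Q,p}(Q)$ for primitive $Q$ at primes $p \mid \det(Q)$; carrying this out by case analysis on Jordan types at each $p$ (including the extra complications at $p=2$) is where the actual technical work of Pfeuffer and Watson lives, and everything else in the argument is a packaging of the mass inequality together with these local estimates.
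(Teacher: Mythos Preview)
Your proposal is correct and follows exactly the approach the paper sketches: the paper does not supply a formal proof environment for this theorem, but the preceding paragraph outlines precisely your strategy---use $\Mass(Q) \leq h_Q/2$ together with the fact that the mass grows without bound when either $n$ or $\det(Q)$ grows (with $n \geq 2$), leaving only finitely many candidates. Your elaboration of the archimedean gamma-factor growth and the local-density contributions at ramified primes is a faithful expansion of what the paper calls ``local considerations,'' and you have correctly flagged that the uniform local estimates constitute the genuine technical content deferred to Pfeuffer and Watson.
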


\noindent
In a long series of papers \cite{Watson:1963aa}--\cite{Watson:1984aa}, Watson enumerated many of these class number one forms.
More generally, Pfeuffer showed that there are finitely many totally definite primitive integer-valued quadratic forms $Q$ in $n \geq 2$ variables with $h_Q = 1$  as we vary over all totally real number fields.  (See \cite{Pfeuffer:1971pd, Pfeuffer:1977ve} for details.)

%
%
%
%
%
%
%

\medskip 
It should also be noted that this is not the end of the story for mass formulas.  There are many other connections (e.g. to Tamagawa numbers, Eisenstein series on orthogonal groups, and computational enumeration of classes in a genus) that we do not have space to mention here.  As an example of one continuation of the story, in the past few years Shimura has defined a somewhat different notion of ``mass'' and ``mass formula'' for quadratic forms 
which instead of dividing the number of representations by the number of automorphisms, it
counts the number of {\it equivalence classes} of representations 
 in a genus {\it under the action of the automorphism group}.  For a nice discussion of these see \cite{Shimura:2006tg, Shimura:2006ai} and \cite[\S37]{Shimura:2010vn}, as well as the more detailed \cite[\S12--13]{Shimura:2004qe}. These are very interesting, but do not fit within the framework we are describing here.  They are also a good example of how a well-established theory is still evolving in new ways, and that there are many avenues left for future researchers to explore!

\section{An Example: The sum of 4 squares} \label{Sec:local_densities_for_4_squares}

We conclude with a concrete example of how Siegel's formulas can be used to understand how many ways we can represent certain numbers as a sum of four squares.  This question can be treated in many different ways, but the most definitive result is the following exact formula of Jacobi which he derived via the theory of elliptic functions.

\begin{thm}[Jacobi \cite{Jacobi.:1829hc}]  \label{Thm:Sum_of_four_squares}
For $m \in \N$, we have 
$$
r_{x^2 + y^2 + z^2 + w^2}(m) = 8 \cdot \hspace{-0.1in}\sum_{\substack{0 < d \mid m \\ 4\nmid d}} d.
$$
\end{thm}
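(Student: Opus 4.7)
The plan is to interpret $r_{Q}(m)$ as the $m$-th Fourier coefficient of $\Theta_{Q}(z)$ for $Q = x^{2}+y^{2}+z^{2}+w^{2}$ and to exploit the tight constraints that modular form theory places on this series. First I would apply the theta transformation formula (the theorem preceding the Corollary on $\Theta_{Q}$) to pin down the symmetry group. The Hessian matrix of $Q$ is $H = 2I_{4}$, so $4 H^{-1} = 2I_{4}$ is the smallest even integer multiple of $H^{-1}$ and hence the level is $N = 4$. Since $\det(Q) = 1$ and $n = 4$ is even, the associated character $\chi(\cdot) = \leg{(-1)^{2}\det(Q)}{\cdot}$ is trivial, so $\Theta_{Q} \in M_{2}(\Gamma_{0}(4), \mathbf{1})$.

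Next, I would invoke the structure of this space. Standard dimension formulas give $\dim M_{2}(\Gamma_{0}(4)) = 2$, and because $X_{0}(4)$ has genus $0$ we have $S_{2}(\Gamma_{0}(4)) = 0$. Thus $\Theta_{Q}$ is purely Eisenstein and coincides with the component $E(z)$ in the decomposition $\Theta_{Q} = E + C$ from the previous section. Independently, the sum of four squares is known to be the unique class in its genus, so Siegel's first formula \eqref{Eq:Siegel_avg} already gives $\Theta_{Q} = E(z)$ with no averaging necessary.

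At this point one may proceed in two essentially equivalent ways. The \emph{local density route} uses Siegel's second formula \eqref{Eq:Siegel_product} and directly computes $\beta_{Q,v}(m)$ at each place from the defining limit \eqref{Eq:Local_density_defn}, using the Jordan decomposition of $Q$ over $\Z_{p}$ (trivial at odd $p$, nontrivial at $p=2$ where $Q$ is anisotropic). Assembling the Euler product yields the claimed divisor sum. The \emph{basis route} identifies $\Theta_{Q}$ inside the explicit $2$-dimensional space: let $E_{2}(z) := 1 - 24\sum_{m\geq 1}\sigma_{1}(m)q^{m}$ be the quasi-Eisenstein series of weight $2$, whose non-modularity cancels in the combinations $E_{2}(z) - 2E_{2}(2z)$ and $E_{2}(z) - 4E_{2}(4z)$, producing a basis of $M_{2}(\Gamma_{0}(4))$ via the Hecke trick. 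One checks by direct comparison of Fourier expansions that
$$
F(z) := 1 + 8\sum_{m\geq 1}\Biggl(\,\sum_{\substack{0<d\mid m \\ 4\nmid d}} d\Biggr)q^{m} \;=\; \tfrac{1}{3}\bigl(E_{2}(z) - 4E_{2}(4z)\bigr)
$$
lies in $M_{2}(\Gamma_{0}(4))$; matching the coefficients of $q^{0}$ and $q^{1}$ against $r_{Q}(0) = 1$ and $r_{Q}(1) = 8$ then forces $\Theta_{Q} = F$, giving Jacobi's formula.

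The main obstacle is the explicit bookkeeping that produces the restriction $4 \nmid d$. In the local density approach this materializes in the $2$-adic density $\beta_{Q,2}(m)$, which is the delicate computation because $Q$ is the norm form of the Hamilton quaternions and hence anisotropic over $\Q_{2}$; tracking the solution counts mod $2^{k}$ as $k \to \infty$ is where the interesting structure enters. In the basis approach it is encoded cleanly in the difference $E_{2}(z) - 4E_{2}(4z)$: divisors $d$ with $4\mid d$ are precisely those supplied by the $E_{2}(4z)$ piece and get canceled. I expect the basis approach to be the quicker route, with the main verification being the transformation property of $E_{2}(z) - 4E_{2}(4z)$ under $\Gamma_{0}(4)$ via the Hecke trick.
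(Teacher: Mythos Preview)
Your proposal is sound and in fact more complete than what the paper carries out. The paper takes your first route---the local density computation via Siegel's formulas \eqref{Eq:Siegel_avg} and \eqref{Eq:Siegel_product}---but only works it through for odd squarefree $m$, explicitly remarking that the cases $p=2$ and $p^2\mid m$ are ``somewhat more involved'' and leaving them to the reader. Your preferred basis route is genuinely different and cleaner for the full statement: once $\dim M_2(\Gamma_0(4))=2$ and $S_2(\Gamma_0(4))=0$ are in hand, matching two Fourier coefficients settles the identity uniformly in $m$, with no case analysis at the prime $2$. The local density route, by contrast, makes the arithmetic content (the anisotropy of $Q$ over $\Q_2$, the Euler product structure) more visible and illustrates the Siegel machinery that the paper is emphasizing, which is why it is chosen there despite being heavier for this particular formula.

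One small correction to your displayed identity: the constant term of $\tfrac{1}{3}\bigl(E_2(z)-4E_2(4z)\bigr)$ is $\tfrac{1}{3}(1-4)=-1$, not $1$. The correct identification is
\[
\Theta_Q(z) \;=\; \tfrac{1}{3}\bigl(4E_2(4z)-E_2(z)\bigr),
\]
whose $q^m$-coefficient for $m\geq 1$ is $8\sigma_1(m)-32\sigma_1(m/4) = 8\sum_{d\mid m,\,4\nmid d} d$, as desired. This does not affect the logic of your argument.
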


We will now derive some special cases of this result for certain $m$ by using Siegel's formulas in Theorem \ref{Thm:Siegel_formulas}.  To do this, we first note that $Q(\x) = x^2 + y^2 + z^2 + w^2$ has class number $h_Q = 1$, so Siegel's first formula gives
$$
a_E(m) 
= \frac{\sum_{Q''\in \Gen(Q)} \frac{r_{Q''}(m)}{|\Aut(Q'')|}}{\sum_{Q''\in \Gen(Q)} \frac{1}{|\Aut(Q'')|}}
= \frac{\frac{r_{Q}(m)}{\cancel{|\Aut(Q)|}}}{\frac{1}{\cancel{|\Aut(Q)|}}}
= r_Q(m).
$$
Now we can apply Siegel's second formula to give the purely local formula
$$
r_Q(m) = a_E(m) = \prod_v \beta_{Q,v}(m)
$$
for $r_Q(m)$ in terms of local densities $\beta_{Q,v}(m)$ defined in (\ref{Eq:Local_density_defn}).  We now compute this infinite product to evaluate $r_Q(m)$ for some $m \in \N$.  For convenience of notation, from now on we use the abbreviation $\beta_{v}(m) := \beta_{x^2 + y^2 + z^2 + w^2\!,\, v}(m)$.

\subsection{Canonical measures for local densities}
%
%
%
To compute the local densities $\beta_v(m)$ defined in (\ref{Eq:Local_density_defn}) we use the
``canonical'' Haar measures $\mu$ on $\Z_v$ (i.e. additively invariant) uniquely defined by the normalizations
$$
\mu_{\Z_p}(\Z_p) = 1,
\hspace{.3in}
\mu_{\R}([0,1]) = 1.
$$
Even if one is unfamiliar with the measure $\mu_{\Z_p}$, the important thing is that we can easily compute the measure of any set we are interested in.  In particular, because we can write $\Z_p$ as the disjoint union 
$$
\Z_p = \bigsqcup_{a\in \Z/p^i\Z} a + p^i\Z_p
$$
and each of these cosets has the same measure (by the additive invariance), we see that $\mu_{\Z_p}(p^i\Z_p) = \frac{1}{p^i}$ and also $\mu_{\Z_p^n}(p^i\Z_p^n) = \frac{1}{p^{n\cdot i}}$.

%
%

\subsection{Computing $\beta_\infty(m)$}

When $v=\infty$ we have $\Z_v = \R$, so we see that the local density $\beta_\infty(m)$ is the volume of a thin ``shell'' around the ellipsoid $x^2 + y^2 + z^2 + w^2 = m$ divided by the ``thickness'' of the shell (in $m$-space), which is some measure of the ``surface area'' of the 4-sphere of radius $r= \sqrt{m}$.  To compute $\beta_\infty(m)$ we need to know the ``volume'' of the 4-ball $B_{4,r}: x^2 + y^2 + z^2 + w^2 \leq r^2$ is given by the well-known formula 
$$\vol(B_{4,r}) = \tfrac{\pi^2}{2} r^4.$$
%
%
(There are many ways to see this, for example as a consequence of Pappus's Centroid Theorem \cite[\S6.18, p166]{Eves:1976bs} once the volume of the $3$-ball $B_{3,r}$ is known to be $\frac{4}{3}\pi r^3$.)
%

We now compute $\beta_\infty(m)$ 
using the 
open sets
$U = U_\ve := (m-\ve, m+\ve)$, giving
\begin{equation}\label{Eq:local_density_at_infty}
\begin{aligned}
\beta_\infty(m) 
&:= \lim_{U \supset \{m\}, U\rightarrow \{m\}}
\frac{\vol_{\R^n} (Q^{-1}(U))}{\vol_{\R}(U)} \\
&= \lim_{\ve\rightarrow 0}
\frac{\vol_{\R^n} (Q^{-1}(U_\ve))}{\vol_{\R}(U_\ve)} \\
&= \lim_{\ve\rightarrow 0}
\frac{\frac{\pi^2}{2}(\sqrt{m+\ve}^4 - \sqrt{m-\ve}^4)}{2\ve} \\
&= \lim_{\ve\rightarrow 0}
\frac{\frac{\pi^2}{2}((m+\ve)^2 - (m-\ve)^2)}{2\ve} \\
&= \lim_{\ve\rightarrow 0}
\frac{\frac{\pi^2}{2}((\cancel{m^2} +2m\ve + \cancel{\ve^2}) - (\cancel{m^2} -2m\ve + \cancel{\ve^2}))}{2\ve} \\
&= \lim_{\ve\rightarrow 0} \frac{\pi^2}{\cancel2} \frac{\cancel{4}m\cancel\ve}{\cancel2\cancel\ve} \\
&=  \pi^2 m. 
\end{aligned}
\end{equation}


\subsection{Understanding $\beta_p(m)$ by counting}

When $v = p$,  we can think of $\Z_p$ as coming from the quotients $\Z/p^i\Z$ where $i$ is very large (i.e. $\Z_p = \plim \Z/p^i\Z$).  Because of this we can interpret the local density $\beta_p(m)$ as a statement about the number of solutions of $Q(\x) \equiv m \pmod{p^i}$ for sufficiently large powers $p^i$.  More precisely, we have 

\begin{lem} \label{Lemma:local_density_by_counting}When $v = p$ is a prime number and $Q(x)$ is a quadratic form in $n$ variables, then we may write $\beta_p(m)$ as 
$$
\beta_p(m) = \lim_{i \rightarrow\infty} 
\frac{\#\{\x \in (\Z/p^i\Z)^n \mid Q(\x)\equiv m \pmod{p^i}\}}{p^{(n-1)i}}.
$$
\end{lem}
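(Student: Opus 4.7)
The plan is to realize the limit in the definition of $\beta_p(m)$ using a particularly convenient sequence of shrinking open sets, and then to translate the resulting $p$-adic volumes into a count of solutions in finite rings.

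First, I would choose the sequence of open neighborhoods $U_i := m + p^i \Z_p$ of $m$ in $\Z_p$. These are open (since $p^i\Z_p$ is open in $\Z_p$), nested, and $\bigcap_{i \geq 1} U_i = \{m\}$, so they constitute a valid choice in the definition (\ref{Eq:Local_density_defn}). By additive invariance of the canonical Haar measure $\mu_{\Z_p}$ together with the normalization $\mu_{\Z_p}(\Z_p) = 1$, writing $\Z_p = \bigsqcup_{a \in \Z/p^i\Z}(a + p^i\Z_p)$ forces $\mu_{\Z_p}(p^i\Z_p) = p^{-i}$, so that $\vol_{\Z_p}(U_i) = p^{-i}$.

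Second, I would identify the numerator. Since $Q$ has integer coefficients, the value $Q(\x) \bmod p^i$ depends only on $\x \bmod p^i$; hence $Q^{-1}(U_i) \subset \Z_p^n$ is precisely the preimage under the reduction map $\Z_p^n \twoheadrightarrow (\Z/p^i\Z)^n$ of the finite set
$$
S_i := \{\bar\x \in (\Z/p^i\Z)^n \mid Q(\bar\x) \equiv m \pmod{p^i}\}.
$$
Writing $N_i := |S_i|$, this preimage is a disjoint union of $N_i$ cosets of $(p^i\Z_p)^n$ in $\Z_p^n$, each of which has measure $\mu_{\Z_p^n}((p^i\Z_p)^n) = p^{-ni}$ by the product measure and the computation above. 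Therefore $\vol_{\Z_p^n}(Q^{-1}(U_i)) = N_i \cdot p^{-ni}$.

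Third, I would take the quotient and pass to the limit:
$$
\frac{\vol_{\Z_p^n}(Q^{-1}(U_i))}{\vol_{\Z_p}(U_i)} = \frac{N_i \, p^{-ni}}{p^{-i}} = \frac{N_i}{p^{(n-1)i}},
$$
which upon letting $i \to \infty$ gives exactly the formula in the lemma statement. There is no real obstacle here, since the existence of the limit is built into the definition of $\beta_p(m)$; the only thing one might want to double-check is that the chosen sequence $\{U_i\}$ is cofinal among the admissible shrinking families so that the limit computed along it genuinely equals $\beta_p(m)$. This follows from the fact that any open set containing $m$ in the $p$-adic topology eventually contains some $U_i$ and is eventually contained in some $U_j$, so the ratios along any legitimate shrinking sequence are pinched by the ratios along $\{U_i\}$.
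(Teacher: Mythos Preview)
Your proof is correct and follows essentially the same approach as the paper: choose the neighborhoods $U_i = m + p^i\Z_p$, compute $\vol_{\Z_p}(U_i) = p^{-i}$, recognize $Q^{-1}(U_i)$ as a disjoint union of $N_i$ cosets of $(p^i\Z_p)^n$ each of volume $p^{-ni}$, and simplify the ratio. Your closing remark on cofinality is a nice extra justification that the paper omits, but otherwise the arguments are the same.
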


\begin{proof}
This follows from the definition by choosing open sets $U_i := p^i \Z_p$.  Then 
$$
\vol_{\Z_p}(U_i) = \frac{1}{p^i}
$$
and each solution $\x$ of $Q(\x)\equiv m \pmod{p^i}$ gives a $p$-adic coset $\x + p^i\Z_p^n$ of solutions in $Q^{-1}(U_i)$.  Therefore since $\vol (p^i\Z_p^n) = \frac{1}{p^{ni}}$, we have
$$
\vol_{\Z_p^n}(Q^{-1}(U_i)) = \frac{1}{p^{ni}}\cdot \#\{\x \in (\Z/p^i\Z)^n \mid Q(\x)\equiv m \pmod{p^i}\}
$$
and so 
$$
\begin{aligned}
\beta_p(m) 
&= \lim_{i \rightarrow\infty}  
\frac{\vol_{\Z_p^n}(Q^{-1}(U_i))}{\vol_{\Z_p}(U_i) }\\
&= \lim_{i \rightarrow\infty} 
\frac{\frac{1}{p^{ni}} \cdot \#\{\x \in (\Z/p^i\Z)^n \mid Q(\x)\equiv m \pmod{p^i}\}}{\frac{1}{p^i}} \\
&= \lim_{i \rightarrow\infty} 
\frac{\#\{\x \in (\Z/p^i\Z)^n \mid Q(\x)\equiv m \pmod{p^i}\}}{p^{(n-1)i}}.
\end{aligned}
$$
\end{proof}
Philosophically we should think of this formula as the number of solutions $\pmod{p^i}$ divided by the ``expected number'' of solutions (based solely on knowing the dimension of $Q(\x)=m$ is $n-1$).
%
%
To see that this limit actually exists, we need to invoke Hensel's lemma which (as a consequence) says that if $i$ is sufficiently large then the sequence defining $\beta_p(m)$ is constant.
%
%
%
%
%
%
%
%
%
%
In particular, for $Q(\x) = x_1^2 + \cdots + x_n^2$ it is enough to compute the (non-zero) solutions $\pmod p$ if $p>2$ and $\pmod 8$ if $p=2$.  In the next few sections we compute the local densities  $\beta_p(m)$ by counting these numbers of solutions.

\subsection{Computing $\beta_p(m)$ for all primes $p$}

Counting solutions to a polynomial equation over finite fields $\Z/p\Z$ can be done explicitly by the method of ``exponential sums'' (sometimes called Gauss sums or Jacobi sums), and this gives particularly simple formulas for degree 2 equations.  One such formula is

\begin{lem} Suppose $p\in \N$ is a prime $>2$, then
$$
r_{x^2+y^2 + z^2 + w^2\!,\, p}(m) = 
\begin{cases}
p^3 - p &\text{ if } p\nmid m,\\
p^3 + p(p-1) &\text{ if } p\mid m.\\
\end{cases}
$$
\end{lem}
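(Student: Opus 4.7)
The plan is to evaluate the counting function
$$
N(m) := \#\{(x_1,x_2,x_3,x_4) \in (\Z/p\Z)^4 \mid x_1^2+x_2^2+x_3^2+x_4^2 \equiv m \pmod{p}\}
$$
by expanding the indicator of the condition $Q(\x) \equiv m$ via additive character orthogonality. Writing $e_p(a) := e^{2\pi i a/p}$, we have $\mathbf{1}_{Q(\x) \equiv m} = \frac{1}{p}\sum_{t \in \Z/p\Z} e_p(t(Q(\x)-m))$. Summing over $\x$ and using the fact that the exponential factors across coordinates yields
$$
N(m) = \frac{1}{p}\sum_{t \in \Z/p\Z} e_p(-tm)\, g(t)^4, \qquad g(t) := \sum_{x \in \Z/p\Z} e_p(tx^2).
$$

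The first step is to separate the $t = 0$ term, which contributes $\frac{1}{p} \cdot p^4 = p^3$. For the remaining terms I would invoke the standard evaluation of the quadratic Gauss sum: for $t \not\equiv 0$, $g(t) = \leg{t}{p} g(1)$, and $g(1)^2 = \leg{-1}{p} p$. Since $\leg{t}{p}^4 = 1$, we get $g(t)^4 = g(1)^4 = p^2$ for every $t \neq 0$. This is the one input where I would cite the standard theory (see e.g. any reference on Gauss sums) rather than recompute; it is the main technical ingredient and the only place requiring $p \neq 2$.

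Substituting back yields
$$
N(m) = p^3 + \frac{p^2}{p}\sum_{t \neq 0} e_p(-tm) = p^3 + p\Bigl(\sum_{t \in \Z/p\Z} e_p(-tm) - 1\Bigr).
$$
The inner character sum equals $0$ when $p \nmid m$ and equals $p$ when $p \mid m$, by the standard orthogonality of additive characters on $\Z/p\Z$. Therefore
$$
N(m) = \begin{cases} p^3 + p(0-1) = p^3 - p & \text{if } p \nmid m, \\ p^3 + p(p-1) & \text{if } p \mid m, \end{cases}
$$
which is the claimed formula. The only non-routine step is the Gauss sum evaluation $g(1)^4 = p^2$; everything else is character orthogonality and bookkeeping.
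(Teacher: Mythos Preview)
Your proof is correct and follows exactly the method the paper indicates: the paper does not give a detailed argument but simply remarks that such counts are obtained ``by the method of exponential sums (sometimes called Gauss sums or Jacobi sums),'' which is precisely the computation you carry out. Your write-up in fact supplies the details the paper omits.
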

\noindent
which gives the following explicit local density formulas:
\begin{lem} \label{Lemma:local_densities_at_p} Suppose $p\in \N$ is a prime $>2$, then
$$
\beta_{x^2+y^2 + z^2 + w^2\!,\, p}(m) = 
\begin{cases}
1 - \frac{1}{p^2} &\text{ if } p\nmid m,\\
\(1 - \frac{1}{p^2}\)\(1 + \frac{1}{p}\) &\text{ if $p\mid m$ but $p^2 \nmid m$}.\\
\end{cases}
$$
\end{lem}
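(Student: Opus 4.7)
The plan is to apply Lemma \ref{Lemma:local_density_by_counting}, which writes $\beta_p(m)$ as a limit of normalized counts of mod-$p^i$ solutions, and to evaluate that limit by a Hensel-lifting argument. For $Q(\x) = x_1^2 + \cdots + x_4^2$ with $p > 2$ we have $\nabla Q(\x) = 2\x$, so a mod-$p^i$ solution of $Q(\x) \equiv m \pmod{p^i}$ is non-singular (in the sense required by Hensel's lemma) exactly when $\x \not\equiv 0 \pmod p$. Any such non-singular solution lifts to exactly $p^{n-1} = p^3$ solutions modulo $p^{i+1}$, and this is the key input for both cases.

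In the case $p \nmid m$, the vector $\x \equiv 0 \pmod p$ does not satisfy $Q(\x) \equiv m \pmod p$, so \emph{every} mod-$p$ solution is non-singular. Iterated Hensel lifting then makes the ratio $\#\{\x \bmod p^i : Q(\x) \equiv m\}/p^{3i}$ independent of $i$, so by the preceding lemma $\beta_p(m) = (p^3 - p)/p^3 = 1 - p^{-2}$, matching the claim.

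In the case $p \| m$, I would work at level $i = 2$ and split the mod-$p^2$ solutions according to their reduction mod $p$. The $r_{Q,p}(m) - 1 = p^3 + p^2 - p - 1$ mod-$p$ solutions with $\x \not\equiv 0$ each lift to $p^3$ solutions mod $p^2$ by Hensel. The lone singular mod-$p$ solution $\x \equiv 0$ would need a lift of the form $\x = p\y$, giving $Q(\x) = p^2 Q(\y) \equiv 0 \pmod{p^2}$; since $p^2 \nmid m$ this class contributes nothing. Beyond $i = 2$ every contributing solution is still non-zero mod $p$ and hence non-singular, so the normalized count stabilizes at $(p^3 + p^2 - p - 1)/p^3$. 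A short factorization $p^3 + p^2 - p - 1 = (p^2-1)(p+1)$ converts this into the claimed $(1 - p^{-2})(1 + p^{-1})$.

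The only delicate step is in the second case: one must carefully isolate the singular reduction $\x \equiv 0$ and use the assumption $p^2 \nmid m$ to kill its contribution, and also verify that no new singular solutions appear at level $i = 2$ or higher (which is automatic here since non-singularity is detected mod $p$). Everything else is direct bookkeeping from Hensel's lemma and the preceding counting lemma, so I do not anticipate a genuine obstacle.
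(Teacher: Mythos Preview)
Your proposal is correct and follows essentially the same approach as the paper: use the mod-$p$ counts from the preceding lemma, invoke Hensel's lemma for the non-singular solutions, and discard the singular solution $\x \equiv 0$ in the case $p\mid m$, $p^2\nmid m$ because it fails to lift. The paper's own justification is a one-line remark to this effect, so your write-up is actually more detailed and careful than what appears there (in particular, your verification that the normalized count stabilizes at level $i=2$ makes explicit what the paper leaves implicit).
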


\begin{rem}  The formula when $p\mid m$ follows by counting all solutions except $\x = \vec 0$, since that solution will not lift by Hensel's lemma to a solution of $Q(\x) = m \pmod {p^2}$.
\end{rem}

When $p=2$ we need to understand the local densities $\pmod 8$, which we do by explicitly enumerating the values $Q(\x)$ of all vectors $\x \in (\Z/8\Z)^4$, giving

\begin{lem} \label{Lemma:local_densities_at_2} Suppose $p=2$, then
$$
\beta_{x^2+y^2 + z^2 + w^2\!,\, 2}(m) = 
\begin{cases}
1 &\text{ if } p\nmid m,\\
\frac{3}{2} &\text{ if } p\mid m \text{ but } p^2\nmid m.\\
\end{cases}
$$
\end{lem}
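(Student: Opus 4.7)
The plan is to apply Lemma \ref{Lemma:local_density_by_counting} at $p=2$, using the assertion stated in the text that for $Q = x^2+y^2+z^2+w^2$ it suffices to count solutions modulo $8$. So I will compute, for each relevant residue class of $m$ mod $8$, the number
$$
N(m) := \#\{\x \in (\Z/8\Z)^4 \mid x_1^2+x_2^2+x_3^2+x_4^2 \equiv m \pmod{8}\},
$$
and then $\beta_2(m) = N(m)/2^{3\cdot 3} = N(m)/512$.

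The first step is to tabulate the distribution of $x^2 \pmod 8$ as $x$ ranges over $\Z/8\Z$: the value $0$ is attained at $x=0,4$, the value $1$ at $x=1,3,5,7$, and the value $4$ at $x=2,6$. Encoding this in the generating polynomial
$$
P(t) := \sum_{x \in \Z/8\Z} t^{x^2 \bmod 8} = 2 + 4t + 2t^4 \in \Z[t]/(t^8 - 1),
$$
one has $N(m) = [t^{m \bmod 8}]\, P(t)^4$ in $\Z[t]/(t^8-1)$. Computing $P(t)^2 = 4 + 16t + 16t^2 + 8t^4 + 16 t^5 + 4 t^8$ and then squaring (or just enumerating the admissible 4-tuples $(j_1,j_2,j_3,j_4) \in \{0,1,4\}^4$ with $j_1+j_2+j_3+j_4 \equiv m \pmod 8$, weighted by the multiplicities $2, 4, 2$), one finds $N(m) = 512$ for each odd residue $m \equiv 1,3,5,7 \pmod 8$, and $N(m) = 768$ for each residue $m \equiv 2,6 \pmod 8$. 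Dividing by $512$ gives the claimed values $\beta_2(m) = 1$ and $\beta_2(m) = 3/2$ respectively.

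The one step that needs genuine justification (as opposed to routine computation) is the claim that the limiting value of the sequence in Lemma \ref{Lemma:local_density_by_counting} is already attained at $i=3$. For odd primes and $p \nmid m$ this is immediate from the standard Hensel's lemma, but at $p=2$ the Hessian $2I_4$ vanishes mod $2$, so naive Hensel fails. The remedy is the refined form of Hensel's lemma for quadratic forms, which says that if $Q(\x_0) \equiv m \pmod{2^{2k+1}}$ and some partial derivative $2x_{0,i}$ has $2$-adic valuation exactly $k$, then $\x_0$ lifts uniquely to $\x \in \Z_2^4$ with $Q(\x) = m$; the lifts are parametrized by $\x_0 + 2^{k+1}\Z_2^4$. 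When $m$ is odd or $m \equiv 2 \pmod 4$, any solution mod $8$ must have $\max_i \ord_2(x_{0,i}) \leq 1$ (otherwise $Q(\x_0) \equiv 0 \pmod 4$), so taking $k=1$ shows the count modulo $2^i$ for $i \geq 3$ is $2^{3(i-3)}$ times the count modulo $8$, giving exactly $N(m)/2^9$ in the limit.

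The main obstacle is therefore this Hensel-lifting argument at $p=2$; once it is in place, the rest is just the finite enumeration above. In writing this out I would carry out the lifting justification in a remark (or cite a standard reference such as \cite[\S63A]{OMeara:1971zr}), and then present the mod-$8$ count as a short table organized by the two cases $m \equiv 1 \pmod 2$ and $m \equiv 2 \pmod 4$.
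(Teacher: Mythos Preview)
Your approach is exactly the one the paper takes: explicit enumeration of $Q(\x)$ over $(\Z/8\Z)^4$, followed by dividing by $2^{9}=512$. The paper states only that this enumeration is performed and then records the result, so your write-up is actually more detailed than the paper's own argument, both in carrying out the count via the generating polynomial $P(t)=2+4t+2t^4$ and in supplying the Hensel justification for why the sequence in Lemma~\ref{Lemma:local_density_by_counting} stabilises at $i=3$.

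One small correction: in your Hensel paragraph the condition ``$\max_i \ord_2(x_{0,i}) \leq 1$'' is not what you want (and is not what your parenthetical ``otherwise $Q(\x_0)\equiv 0\pmod 4$'' proves). What you need---and what you actually use---is that \emph{some} coordinate $x_{0,i}$ is a $2$-adic unit, i.e.\ $\min_i \ord_2(x_{0,i}) = 0$, so that the relevant partial derivative $2x_{0,i}$ has valuation exactly $1$. With that fix the argument is clean.
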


\subsection{Computing $r_Q(m)$ for certain $m$}

We are now in a position to compute the number of representations $r_Q(m)$ for some simple numbers $m$.  To warm up, we see that when $m=1$ we have 
\begin{align}
r_Q(1) 
&= \prod_v \beta_v(1)
= \beta_\infty(1) \, \beta_2(1) \, \prod_{p>2} \beta_p(1) \\
&= \(\pi^2 \cdot 1\) \,(1)\, \prod_{p>2} \(1-\frac{1}{p^2}\) \\
&= \pi^2\, \(\frac{1}{1-\frac{1}{2^2}}\) \, \prod_{p} \(1-\frac{1}{p^2}\) \\
&= \frac{4\pi^2}{3} \, \prod_{p} \(1-\frac{1}{p^2}\) \\
&= \frac{4\pi^2}{3} \, \frac{1}{\zeta(2)} \\
&= \frac{4\cancel{\pi^2}}{3} \, \frac{6}{\cancel{\pi^2}} \\
&= 8
\end{align}
which we could also have worked out (perhaps more quickly) by observing that if $Q(\x) = \sum_{i=1}^4 x_i^2 = 1$, then we must have $|x_i| \leq 1$ and at all but one $x_i$ is zero.  

Now suppose that $m = p > 2$ is prime.  Then our computation at of $r_Q(p)$ looks almost the same as when $m=1$ with the exception that the factors at $v=\infty$ and $v=p$ have changed.  This gives
\begin{align}
r_Q(p) 
&= r_Q(1) \cdot \frac{\beta_\infty(p)}{\beta_\infty(1)} \cdot \frac{\beta_p(p)}{\beta_p(1)} \\
&= r_Q(1) 
\cdot \frac{\cancel{\pi^2} p}{\cancel{\pi^2}} 
\cdot \frac{\cancel{\(1 - \frac{1}{p^2}\)}\(1 + \frac{1}{p}\)}{\cancel{\(1 - \frac{1}{p^2}\)}} \\
&= r_Q(1) \cdot p \cdot \(1 + \tfrac{1}{p}\) \\
&= 8 (p+1)
\end{align}

Finally, we suppose that $m$ is an odd squarefree number $t$.  Then the computation changes at $v=\infty$ 
and at all primes $p\mid t$,
giving
\begin{align}
r_Q(t) 
&= r_Q(1) \cdot \frac{\beta_\infty(t)}{\beta_\infty(1)} \cdot \prod_{p \mid t} \frac{\beta_p(t)}{\beta_p(1)} \\
&= r_Q(1) \cdot t \cdot \prod_{p \mid t} \tfrac{p+1}{p} \\
&= 8 \prod_{p \mid t} (p+1).
\end{align}
We see that this agrees with Jacobi's divisor sum formula for $r_Q(m)$ in Theorem \ref{Thm:Sum_of_four_squares} since the positive divisors of $t$ are exactly the terms appearing when the product $\prod_{p \mid t} (p+1)$ is fully expanded.
One could continue to prove Jacobi's formula for $r_Q(m)$ for any $m\in\N$ by extending this computation, though the computations of the local densities $\beta_p(m)$ when $p=2$ and at primes where $p^2\mid m$ become somewhat more involved.



%



\chapter{Quaternions and Clifford Algebras} \label{Chapter:Clifford}

In this chapter, we describe some important algebraic structures naturally associated with quadratic forms.  One of them is the Clifford algebra, which one can think of an algebra that enhances a quadratic space with a multiplication law.  The other is the Spin group, which is an  algebraic group that is the ``double cover'' of the special orthogonal group and can be constructed naturally in terms of the Clifford algebra.




\section{Definitions}
Quadratic forms are closely connected with quadratic extensions, both those which are commutative (quadratic fields and their rings of integers) and also non-commutative (quaternion algebras and their maximal orders).  
We now
explore some connections with non-commutative algebras of a particularly nice kind (known as ``central simple algebras''), and describe their basic structure.  Good references for central simple algebras are \cite[\S4.6]{Jacobson:1989fu}, 
\cite[\S1--2]{Gille:2006tz}, \cite[Ch III--IV]{Lam:2005kl} and \cite[Ch IV]{Shimura:2010vn}.

\bigskip

We begin by defining a {\bf central simple algebra} $A$ as a finite-dimensional (possibly non-commutative) algebra over a field $k$ whose center is $k$ and which contains no proper non-zero two-sided ideals.   To make the dependence on $k$ explicit, we sometimes write $A$ as $A/k$.  We say that the {\bf dimension} of $A/k$ is the dimension of $A$ as a vector space over $k$.  

\begin{thm}
Suppose that $A_{1}$ and $A_{2}$ are central simple algebras over $k$.  Then the tensor product $A_{1} \otimes_{k} A_{2}$ is also a central simple algebra over $k$.
\end{thm}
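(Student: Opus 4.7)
The plan is to verify the two defining conditions of a central simple algebra separately: centrality, meaning $Z(A_1 \otimes_k A_2) = k$, and simplicity, meaning no proper nonzero two-sided ideals. Both will be established by the same ``minimal expression'' trick, which uses linear independence in one tensor factor to extract information about the other factor.

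For centrality, I would take $z \in Z(A_1 \otimes_k A_2)$ and write $z = \sum_{i=1}^{n} a_i \otimes b_i$ with the $b_i$ linearly independent over $k$ and $n$ minimal. Commuting $z$ with $a \otimes 1$ for arbitrary $a \in A_1$ gives $\sum_i (a a_i - a_i a) \otimes b_i = 0$, and linear independence of the $b_i$ forces $a a_i = a_i a$ for every $a \in A_1$, hence $a_i \in Z(A_1) = k$. Pulling these scalars into the second factor then writes $z = 1 \otimes b$ for some $b \in A_2$; commuting with $1 \otimes A_2$ places $b \in Z(A_2) = k$, so $z \in k$.

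For simplicity, let $I$ be a nonzero two-sided ideal and pick $z = \sum_{i=1}^{n} a_i \otimes b_i \in I \setminus \{0\}$ with $n$ minimal among all nonzero elements of $I$; again I arrange the $b_i$ to be linearly independent. Since $A_1$ is simple, the two-sided ideal generated by $a_1$ is all of $A_1$, so an expression $1 = \sum_j x_j a_1 y_j$ lets me replace $z$ by $\sum_j (x_j \otimes 1)\, z\, (y_j \otimes 1) \in I$, thereby normalizing $a_1 = 1$. Now for any $a \in A_1$ the element $(a \otimes 1) z - z (a \otimes 1) \in I$ has vanishing $i=1$ term (since $a_1 = 1$ commutes with $a$) and hence is a sum of strictly fewer than $n$ nonzero tensors; minimality forces it to be zero, and linear independence of the $b_i$ then yields $a_i \in Z(A_1) = k$ for all $i$. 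Gathering scalars, $z = 1 \otimes b$ for some nonzero $b \in A_2$; simplicity of $A_2$ produces $1 = \sum_j u_j b v_j$, so that $\sum_j (1 \otimes u_j) z (1 \otimes v_j) = 1 \otimes 1 \in I$, and hence $I = A_1 \otimes_k A_2$.

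The main obstacle is the simplicity half, and specifically the normalization step that promotes $a_1$ to $1$: this is exactly where it is essential that $A_1$ is simple and not merely that its center equals $k$. Once that reduction is available, the commutator trick collapses the minimal expression into a single elementary tensor $1 \otimes b$, and the symmetric simplicity of $A_2$ delivers the identity into $I$. The centrality half, by comparison, is a clean bookkeeping exercise using the same linear-independence principle, with no need to invoke either side's simplicity.
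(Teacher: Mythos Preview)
Your argument is correct: the minimal-length expression trick, with linear independence in one tensor factor forcing constraints in the other, is exactly the standard proof of this result. The paper itself does not give a proof but merely cites Jacobson's \emph{Basic Algebra II} (Corollary~3, p.~219); your write-up supplies precisely the details one finds there, so there is nothing to contrast.
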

\begin{proof}
See \cite[Cor 3, p219]{Jacobson:1989fu}.
\end{proof}

Another nice property of central simple algebras is that we can freely extend the base field $k$ and preserve the property of being central simple (though now with a larger center!):
\begin{thm}
Suppose that $A/k$ is a central simple algebra
 and $K$ is a field containing $k$, then $A/K := A \otimes_{k} K$ is a central simple algebra over $K$ of the same dimension as $A/k$.
\end{thm}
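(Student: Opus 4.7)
The plan is to verify the three assertions separately: (i) $\dim_K(A\otimes_k K)=\dim_k(A)$, (ii) the center of $A\otimes_k K$ is $K$ (identified with $1\otimes K$), and (iii) $A\otimes_k K$ has no proper nonzero two-sided ideals. The dimension claim is immediate from the standard fact that if $\{e_i\}$ is a $k$-basis of $A$ then $\{e_i\otimes 1\}$ is a $K$-basis of $A\otimes_k K$, so I would dispense with it in one sentence.

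For centrality, I would fix a $k$-basis $\{f_\alpha\}$ of $K$ and write every element of $A\otimes_k K$ uniquely as $\sum_\alpha a_\alpha\otimes f_\alpha$ with $a_\alpha\in A$. If such an element commutes with $b\otimes 1$ for every $b\in A$, then $\sum_\alpha(ba_\alpha-a_\alpha b)\otimes f_\alpha=0$, and the uniqueness of the expansion forces $ba_\alpha=a_\alpha b$ for every $b$, so each $a_\alpha$ lies in $Z(A)=k$. Hence the element already lies in $1\otimes K$, and conversely $1\otimes K$ is clearly contained in the center, so $Z(A\otimes_k K)=K$.

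The main obstacle is the simplicity assertion, and this is where I would spend the bulk of the argument. Let $I$ be a nonzero two-sided ideal of $A\otimes_k K$. Among the nonzero elements of $I$, choose one of shortest length $n$, i.e.\ $x=\sum_{i=1}^n a_i\otimes \lambda_i$ with the $a_i\in A\setminus\{0\}$ and the $\lambda_i\in K$ $k$-linearly independent. Since $A$ is simple, the two-sided ideal generated by $a_1$ is all of $A$, so there exist $b_j,c_j\in A$ with $\sum_j b_j a_1 c_j=1$. Replacing $x$ by $\sum_j(b_j\otimes 1)\,x\,(c_j\otimes 1)\in I$, I may assume $a_1=1$. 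Now for any $b\in A$, the commutator
\[
(b\otimes 1)x-x(b\otimes 1)=\sum_{i=2}^n (ba_i-a_ib)\otimes \lambda_i
\]
lies in $I$ and has fewer than $n$ terms, so by minimality it must vanish. Therefore $a_i\in Z(A)=k$ for $i\geq 2$, and $x=1\otimes\mu$ where $\mu:=\lambda_1+\sum_{i\geq 2}a_i\lambda_i\in K$. Because the $\lambda_i$ are $k$-linearly independent and $\lambda_1$ appears with coefficient $1$, $\mu\neq 0$, so multiplying by $1\otimes\mu^{-1}\in A\otimes_k K$ shows $1\otimes 1\in I$, whence $I=A\otimes_k K$.

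The hard part, as the sketch above shows, is orchestrating the length-minimization and the central-simple hypothesis on $A$ so that the commutator trick collapses the ideal to something of the form $1\otimes\mu$; once that is done, invertibility in $K$ finishes the job. The whole argument uses only that $A$ is central simple over $k$ and that $K$ is a field, so it adapts the standard ``descent'' proof rather than invoking the more general ideal-correspondence lemma for $A\otimes_k B$.
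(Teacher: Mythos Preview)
Your argument is correct and complete: the dimension statement is immediate, the centrality computation via a $k$-basis of $K$ is clean, and the shortest-length/commutator argument for simplicity is the standard one and is carried out without gaps (in particular, the replacement step preserves nonzeroness since the first coefficient becomes $1$, so minimality forces the length to remain $n$).

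The paper itself does not give a proof at all---it simply cites Jacobson's \emph{Basic Algebra II} (Corollary~2, p.~219)---so there is no in-paper argument to compare against. Your write-up is essentially the classical direct proof one finds in such references, and it supplies strictly more than the paper does. If anything, you could streamline by noting that the simplicity argument is a special case of the more general fact that if $A$ is central simple over $k$ and $B$ is any $k$-algebra, then every two-sided ideal of $A\otimes_k B$ has the form $A\otimes_k J$ for a two-sided ideal $J$ of $B$; but the self-contained version you wrote is entirely appropriate here.
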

\begin{proof}
See \cite[Cor 2, p219]{Jacobson:1989fu} and the discussion on the top of p220.
\end{proof}

The simplest examples of central simple algebras are the matrix algebras $M_{n}(k)$ (which have dimension $n^{2}$).  Notice that any central simple algebra over $k$ which is commutative must be just $k$ itself, so in general central simple algebras are non-commutative.  
The next simplest example of a central simple algebra which is not a field (i.e. non-commutative) is called a {\bf quaternion algebra}, and can be defined in terms of a basis $\mathcal{B}  = \{1, i, j, \kappa \}$ satisfying the relations $i^{2} = a$, $j^{2} = b$, $\kappa := ij = -ji$ for some fixed $a,b\in k^{\times}$ (where we always assume that $\Char(k) \neq 2$).  This quaternion algebra is often referred to by the symbol $\quatalg{a}{b}{k}$, though various different choices of $a$ and $b$ may give rise to isomorphic quaternion algebras (e.g. $\quatalg{1}{-1}{k} \cong \quatalg{4}{-1}{k}$).

If $A/k \cong M_{n}(k)$ for some $n$, we say that $A$ is {\bf split}.  If it happens that $A \otimes_{k} K$ is split for some extension $K$ of $k$, we say that $A/{k}$ is {\bf split by $K$}, or that $K$ is a {\bf splitting field} for $A/k$.  The following theorem (and proof) shows that it is not too difficult to find a splitting field for any $A/k$:
%
%
\begin{thm} \label{Thm:Splitting_fields}
If $A/{k}$ is a central simple algebra over $k$, then $A/{k}$ is split by some finite separable extension $K/k$.  
\end{thm}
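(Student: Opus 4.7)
My plan follows the classical three-step route via Wedderburn's theorem and maximal subfields.

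First, I would reduce to the case where $A$ itself is a central division algebra. Wedderburn's structure theorem gives $A \cong M_r(D)$ for some central division algebra $D/k$, and since $A \otimes_k K \cong M_r(D \otimes_k K)$ and $M_r$ of a matrix algebra over $K$ is again a matrix algebra over $K$, an extension $K/k$ splits $A$ if and only if it splits $D$. So it suffices to produce a finite separable splitting field for any central division algebra $D/k$.

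Next, I would show that \emph{every} maximal subfield $K \subset D$ already splits $D$. Writing $\dim_k D = n^2$, the double centralizer theorem gives $[K:k] = n$ and that the centralizer of $K$ in $D$ equals $K$. The action of $D \otimes_k K$ on $D$ defined by $(a \otimes b) \cdot x := a x b$ is well-defined (because $K$ is commutative) and realizes $D$ as a faithful $D \otimes_k K$-module of $K$-dimension $n$. Comparing $k$-dimensions on both sides — both $D \otimes_k K$ and $\mathrm{End}_K(D)$ have $k$-dimension $n^3$ — yields $D \otimes_k K \cong \mathrm{End}_K(D) \cong M_n(K)$, so $K$ is a splitting field of degree $n$.

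Finally, I would produce a \emph{separable} maximal subfield of $D$. In characteristic zero this is automatic, so assume $\Char(k) = p > 0$. The crucial sub-step is to exhibit a single element of $D \setminus k$ which is separable over $k$. Suppose for contradiction that every $\alpha \in D \setminus k$ is purely inseparable over $k$, and choose $\alpha \in D \setminus k$ with $\alpha^p \in k$. The inner derivation $\partial := \mathrm{ad}_\alpha : D \to D$ given by $x \mapsto \alpha x - x\alpha$ is nonzero since $\alpha \notin Z(D) = k$, yet it satisfies $\partial^p = \mathrm{ad}_{\alpha^p} = 0$ because $\alpha^p$ is central. Thus $\partial$ is a nonzero nilpotent derivation, so there exist $\beta \in D$ and $\gamma := \partial(\beta) \neq 0$ with $\partial(\gamma) = 0$; a short manipulation of the commutation relations between $\alpha, \beta, \gamma$ produces an element of $D$ whose minimal polynomial over $k$ is separable, contradicting our assumption. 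Once one separable element $\alpha_0 \in D \setminus k$ is available, one iterates this argument inside the centralizer of each successive separable subfield to build a maximal separable subfield $K \subset D$; its maximality among commutative subalgebras of $D$ makes it a maximal subfield of $D$, and the second step then gives $D \otimes_k K \cong M_n(K)$.

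The principal obstacle is the characteristic $p$ manipulation of the inner derivation $\mathrm{ad}_\alpha$ to extract a separable element; the other steps are essentially formal consequences of the Wedderburn and double centralizer theorems.
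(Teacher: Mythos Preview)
Your three-step outline is the standard proof and is correct. Step 1 (reduction to a division algebra via Wedderburn) and step 2 (any maximal subfield splits $D$, by making $D$ into a $D\otimes_k K$-module and counting dimensions against $\mathrm{End}_K(D)$) are clean. Step 3 is the Noether--Jacobson theorem; your derivation sketch is the right idea, though the phrase ``a short manipulation of the commutation relations'' hides the actual work: from $[\alpha,\delta]=1$ one passes to $w=\alpha\delta$, observes that conjugation by $\alpha$ sends $w\mapsto w+1$, and deduces that the minimal polynomial of $w$ over $k$ has at least two distinct roots, hence a separable factor. The inductive passage to a \emph{maximal} separable subfield via centralizers is fine, using that separability is transitive in towers.

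By contrast, the paper does not give a self-contained argument at all: its ``proof'' consists entirely of pointers to Jacobson and Gille--Szamuely for the existence of a finite splitting field and for separability, respectively. Those references in fact carry out essentially the argument you wrote, so your route is the one the paper is implicitly invoking. Note also that the paper's ambient setting is characteristic zero (quadratic forms over $\Q$ and its completions), where your step 3 is vacuous and every maximal subfield is automatically separable; the characteristic-$p$ derivation trick, while correct, is more than the paper needs.
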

\begin{proof}
The existence of a finite extension splitting $A$ follows from \cite[Thrm 4.8, p221]{Jacobson:1989fu} and the discussion on the top of p220.  To see that they are not hard to construct explicitly, see \cite[Thrm 4.12, p224]{Jacobson:1989fu}.  Finally separability of the extension follows from (the proof of) \cite[Prop 2.2.5, p22]{Gille:2006tz}.
\end{proof}

Since base change doesn't change the dimension of a central simple algebra, and we can always enlarge our base field so that $A$ splits, we have the following useful corollary and definition:
\begin{cor} 
The dimension of a central simple algebra is always a square.
\end{cor}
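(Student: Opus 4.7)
The plan is to chain together the two theorems stated immediately above the corollary. First I would invoke Theorem \ref{Thm:Splitting_fields} to produce a finite separable extension $K/k$ which splits $A$, so that $A \otimes_k K \cong M_n(K)$ for some positive integer $n$. The matrix algebra $M_n(K)$ has dimension $n^2$ as a vector space over $K$, essentially by definition.

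Next I would apply the preceding theorem (the base change result) which guarantees that $A \otimes_k K$ is a central simple algebra over $K$ of the same dimension as $A/k$. The key point of that theorem for us is the dimension-preservation statement: $\dim_K(A \otimes_k K) = \dim_k(A)$. Combining this with the previous paragraph gives
\[
\dim_k(A) = \dim_K(A \otimes_k K) = \dim_K(M_n(K)) = n^2,
\]
which is the desired conclusion.

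There is essentially no obstacle here; the corollary is a direct bookkeeping consequence of the two theorems just proved. The only thing worth a sentence of commentary is the dimension-preservation claim, which rests on the standard fact that if $V$ is a $k$-vector space of dimension $d$, then $V \otimes_k K$ is a $K$-vector space of dimension $d$ (any $k$-basis of $V$ becomes a $K$-basis of $V \otimes_k K$). Applied with $V = A$, this makes the chain of equalities above rigorous.
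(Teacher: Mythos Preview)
Your proof is correct and follows essentially the same approach as the paper: the paper's justification is the single sentence preceding the corollary, which invokes exactly the two theorems you cite (splitting over a finite extension, and dimension preservation under base change). You have simply written out the chain of equalities explicitly.
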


\begin{defn}
If $A/k$ has dimension $n^2$, then we say that $A$ has {\bf degree} $n$.
\end{defn}

We can use this idea to define a norm map 
$N_{A/k}:A \ra k$
by extending scalars to the separable closure $k^\text{sep}$, which splits $A$ by Theorem \ref{Thm:Splitting_fields}, giving an isomorphism $A/k^\text{sep} \cong M_{n}(k^\text{sep})$.
%
%
We then define the {\bf norm} $N_{A/k}(x)$ as the determinant of $x$ under this isomorphism.  Since $\det(x)$ is constant on conjugacy classes, the norm is independent of the choice of isomorphism, and is invariant under the Galois action as well, hence is in $k$.  Since the determinant is multiplicative, we see that 
$$
N_{A/k}(\al\beta) = N_{A/k}(\al) N_{A/k}(\beta) \qquad \text{for all $\al, \beta \in A$.}
$$

If it happens that every non-zero element of $A$ is invertible (i.e. $\al\in A- \{0\} \implies$ there is some $\al' \in A$ so that $\al \al' = 1$ and $\al' \al=1$) then we say that $A$ is a {\bf division algebra}. One can think of division algebras as natural non-commutative generalizations of (finite degree) field extensions $K/k$. In fact any non-zero element $\al$ of a central simple algebra $A$ of degree $n$ generates a commutative subalgebra $k[\al]\subseteq A$ of degree $[k[\al]:k]$ dividing $n$.
In the case of a quaternion algebra one can realize the norm map in terms of a conjugation operation explicitly on the basis (by taking $\al = a + bi + cj + d \kappa \mapsto \bar\al := a - bi -cj -d \kappa$), giving the norm as $N_{A/k}(\al) = \al \bar\al$.  
The property of being a division algebra can be easily characterized in terms of the norm map.
\begin{thm} \label{Thm:division_alg_by_norm}
A central simple algebra $A$ over $k$ is a division algebra iff the condition $N_{A/k}(\al) = 0 \iff \al = 0$ holds.
\end{thm}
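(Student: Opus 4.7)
The plan is to exploit the multiplicativity of the norm together with the fact that $A$, being finite-dimensional over $k$, is a division algebra as soon as it has no zero divisors. I would first record the elementary consequences of the definition of $N_{A/k}$ via the splitting $A \otimes_k k^{\text{sep}} \cong M_n(k^{\text{sep}})$: namely $N_{A/k}(0)=0$, $N_{A/k}(1)=1$, and the multiplicativity $N_{A/k}(\alpha\beta) = N_{A/k}(\alpha)\,N_{A/k}(\beta)$, each of which falls out of the corresponding identity for $\det$ on $M_n(k^{\text{sep}})$ and the Galois-invariance argument already sketched in the text.

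For the forward direction, I would assume $A$ is a division algebra and let $\alpha\in A$ be nonzero; then $\alpha$ has an inverse $\alpha^{-1}\in A$, and applying multiplicativity to $\alpha\alpha^{-1}=1$ gives $N_{A/k}(\alpha)\,N_{A/k}(\alpha^{-1})=1$, forcing $N_{A/k}(\alpha)\neq 0$. Combined with $N_{A/k}(0)=0$ this yields the stated equivalence.

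For the converse, suppose the equivalence $N_{A/k}(\alpha)=0\iff\alpha=0$ holds. First I would show $A$ has no zero divisors: if $\alpha\beta=0$ in $A$ then $N_{A/k}(\alpha)N_{A/k}(\beta)=N_{A/k}(\alpha\beta)=N_{A/k}(0)=0$, so by hypothesis one of $\alpha,\beta$ must vanish. Then I would finish by the standard finite-dimensional trick: for any nonzero $\alpha\in A$, left multiplication $L_\alpha\colon A\to A$ is $k$-linear with trivial kernel (no zero divisors), hence surjective, yielding $\beta\in A$ with $\alpha\beta=1$; a short manipulation $\alpha(\beta\alpha-1)=(\alpha\beta)\alpha-\alpha=0$ together with $\alpha$ not being a zero divisor gives $\beta\alpha=1$ as well, so $\alpha$ is two-sidedly invertible.

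The only genuinely nontrivial ingredient is the multiplicativity of $N_{A/k}$, which I would regard as the main technical point to verify carefully (in particular, well-definedness of $N_{A/k}$ independent of the choice of splitting, and its Galois-invariance so that its values actually lie in $k$); everything after that is a two-line formal argument. Since the text has already introduced $N_{A/k}$ via the splitting field and noted its multiplicativity, the proof itself becomes a short bookkeeping exercise.
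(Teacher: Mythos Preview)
Your argument is correct. It differs from the paper's proof, which establishes the single pointwise equivalence ``$\alpha$ is invertible in $A \iff N_{A/k}(\alpha)\neq 0$'' all at once by passing to the left-multiplication map $L_\alpha$ on $A$, extending scalars to $k^{\text{sep}}$, and using the identity $\det(L_\alpha^{\text{sep}}) = N_{A/k}(\alpha)^n$ (the determinant of the regular representation is the $n$th power of the reduced norm). That gives both directions simultaneously without separating into cases. Your route instead uses only the multiplicativity of $N_{A/k}$: the forward direction is immediate, and for the converse you first deduce the absence of zero divisors and then invoke the finite-dimensional ``injective implies bijective'' trick for $L_\alpha$. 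What the paper's approach buys is economy and the slightly sharper statement at the level of individual elements; what yours buys is that it avoids the (not entirely obvious) formula $\det(L_\alpha^{\text{sep}}) = N_{A/k}(\alpha)^n$ and relies solely on facts already recorded in the text.
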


\begin{proof}
Notice 
that $\al$ is invertible in $A$ $\iff$ the left multiplication map $L_\al:A/k \ra A/k$ is an invertible linear map (by taking $\al^{-1}:= L_\al^{-1}(1)$).  However $L_\al$ is invertible $\iff$ 
its linear extension $L_\al^\text{sep}:A/k^\text{sep} \ra A/k^\text{sep}$ over the separable closure $k^\text{sep}$ of $k$ is invertible, 
which happens 
iff $\det(L_\al^\text{sep}) = N_{A/k}(\al)^n  \neq 0$,
where $n$ is the degree of $A$ over $k$.  
(See also \cite[\textsection16.3, Cor. a, p300]{Pierce:1982fk}.)
%
%
%
%

In the special case where $A$ is quaternion algebra this follows more directly by noticing that if  
$\al$ is invertible then its unique two-sided inverse has the form $\al^{-1} = {\bar{\al}} \cdot ({N_{A/k}(\al)})^{-1}$, which exists iff $N_{A/k}(\al)\neq 0$.
\end{proof}

The following important structural result of Wedderburn shows that division algebras play a crucial role in the study of central simple algebras.  It is also the starting point for defining the Brauer group, which 
we will not discuss here, but 
is discussed in Parimala's lecture notes \cite{Parimala} in this volume.
\begin{thm}[Wedderburn] \label{Thm:Wedderburn}
Every central simple algebra $A$ over $k$ is isomorphic to a matrix ring over a division algebra, i.e.
$$
A \cong  M_{n}(D)
$$
where $D$ is a (unique) division algebra over $k$, and $n \in \N$ is the degree of $A/k$.
\end{thm}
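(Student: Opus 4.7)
The plan is to follow the classical Wedderburn structure argument via minimal left ideals, Schur's Lemma, and a density/dimension-counting step. First, since $A$ is finite-dimensional over $k$, the descending chain condition on left ideals guarantees the existence of a minimal nonzero left ideal $I \subseteq A$. This $I$ will serve as the ``column space'' on which $A$ acts.

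Second, I would apply Schur's Lemma to the simple left $A$-module $I$: the endomorphism ring $D := \mathrm{End}_A(I)$ is a division ring, because any nonzero $A$-linear map $f:I\to I$ has kernel and image that are $A$-submodules of $I$, forcing $f$ to be an isomorphism by minimality. Viewing $I$ as a right $D$-module (so that $A$ acts on the left, $D$ on the right, and the two actions commute), left multiplication provides a natural $k$-algebra homomorphism
$$
\phi : A \longrightarrow \mathrm{End}_D(I), \qquad \phi(a)(x) = ax.
$$

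Third, I would verify that $\phi$ is injective using the ``simple'' hypothesis: $\ker\phi$ is easily checked to be a two-sided ideal of $A$, and since $\phi(1) = \mathrm{id}_I \neq 0$, simplicity forces $\ker \phi = 0$. So $A$ embeds into $\mathrm{End}_D(I)$. The key remaining task, and the main obstacle, is showing $\phi$ is surjective. I would handle this by first establishing that $A$ is semisimple as a left $A$-module and decomposes as a finite direct sum $A \cong I^{\oplus n}$ of copies of $I$. One way is to consider the sum $J$ of all minimal left ideals of $A$: $J$ is a nonzero two-sided ideal (stable under right multiplication since $Ib$ is again a minimal left ideal or zero), hence $J = A$ by simplicity; then a dimension argument cuts down to a finite direct sum, and the isomorphism type must be the single simple module $I$ (else the decomposition would yield a nontrivial two-sided ideal). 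Once this is known, $D \cong \mathrm{End}_A(A)^{\mathrm{op}}$ restricted via the isomorphism gives $\mathrm{End}_A(I^{\oplus n}) \cong M_n(D)^{\mathrm{op}}$, and $\mathrm{End}_D(I)$ is an $n^2$-dimensional $D$-module matching the dimension of $A$ over $D$. A dimension count over $k$ then forces $\phi$ to be an isomorphism, yielding $A \cong \mathrm{End}_D(I) \cong M_n(D^{\mathrm{op}})$; relabeling $D^{\mathrm{op}}$ as the division algebra of the statement completes existence.

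For uniqueness, I would argue that any two decompositions $A \cong M_n(D) \cong M_{n'}(D')$ determine $D$ up to isomorphism intrinsically: the simple left $A$-module is unique up to isomorphism (as shown above), and $\mathrm{End}_A$ of this simple module recovers $D^{\mathrm{op}}$ on each side. The genuinely hard part of the whole argument is the semisimplicity/surjectivity step; everything else is Schur's Lemma and bookkeeping. Since the paper states the theorem as a citation to Jacobson, I would simply reference the relevant sections for the details of the density or semisimplicity argument rather than reprove them in full.
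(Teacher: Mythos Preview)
Your sketch is the standard Wedderburn argument and is essentially correct, but there is nothing to compare it against: the paper's entire proof is the single line ``See \cite[Thrm 2.1.3, p18]{Gille:2006tz}'' (Gille--Szamuely, not Jacobson as you guessed). You have simply supplied what the paper omits.

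Two small remarks on the write-up itself. First, you blend two routes to the isomorphism: the dimension count for $\phi:A\to\mathrm{End}_D(I)$ and the computation $A^{\mathrm{op}}\cong\mathrm{End}_A(A)\cong\mathrm{End}_A(I^{\oplus n})\cong M_n(D)$. The second route is self-contained and already yields $A\cong M_n(D^{\mathrm{op}})$ outright, so the separate dimension-matching step is redundant once you have $A\cong I^{\oplus n}$. Second, note that the statement as printed in the paper says $n$ is the degree of $A/k$; this is only true when $D=k$, since in general $\deg(A)=n\cdot\deg(D)$. Your argument correctly produces \emph{some} $n$ (namely the multiplicity of $I$ in $A$) without tying it to the degree, so this is a defect in the paper's statement rather than in your proof.
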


\begin{proof}
See \cite[Thrm 2.1.3, p18]{Gille:2006tz}.
\end{proof}

We now specialize to consider quaternion algebras, which are very closely related to quadratic spaces and questions about quadratic forms.  One important connection is given by considering the {\bf associated quadratic space} $(V,Q) := (A/k, N_{A/k})$ of the quaternion algebra $A/k$.

\begin{lem} \label{Lem:quat_assoc_quad_space}
A quaternion algebra $A/k$ is uniquely determined (up to isomorphism) by its associated quadratic space.
\end{lem}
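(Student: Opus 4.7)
My plan is to recover the quaternion algebra structure on $A$ intrinsically from its norm form, so that any isometry of associated quadratic spaces promotes to an algebra isomorphism. The first observation is that $(A,N_A)$ is non-degenerate: in the standard basis $\{1,i,j,\kappa\}$ of $\quatalg{a}{b}{k}$, the norm form is diagonal $\langle 1,-a,-b,ab\rangle$ with discriminant $a^2b^2\in (k^\times)^2$. Next I would identify the pure quaternion subspace from purely quadratic data: using $N(\alpha)=\alpha\bar\alpha$ and $\alpha+\bar\alpha=\mathrm{tr}(\alpha)$, a polarization computation gives $2B(1,\alpha)=\mathrm{tr}(\alpha)$, so
$$
A^{0}:=\{\alpha\in A:\mathrm{tr}(\alpha)=0\}=\{1\}^{\perp}
$$
with respect to the Gram bilinear form $B$ of $N_A$.

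The second step is to extract the multiplication from $N_A$. Every $\alpha\in A$ satisfies $\alpha^{2}-\mathrm{tr}(\alpha)\alpha+N(\alpha)=0$, so for $\alpha\in A^{0}$ one has $\alpha^{2}=-N(\alpha)\cdot 1\in k\cdot 1$. Expanding $(\alpha+\beta)^{2}$ for $\alpha,\beta\in A^{0}$ then yields $\alpha\beta+\beta\alpha=-2B(\alpha,\beta)\cdot 1\in k\cdot 1$; in particular, pure quaternions that are orthogonal in $N_A$ anti-commute. Consequently, an orthogonal basis $\{i,j\}$ of $A^{0}$ (supplied by Theorem~\ref{Thm:orthogonal_splitting}) with $N_A(i)=-a$ and $N_A(j)=-b$ automatically satisfies $i^{2}=a$, $j^{2}=b$, $ij=-ji$, recovering $A\cong \quatalg{a}{b}{k}$ from the quadratic data together with the distinguished identity element.

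For the final step, given an isometry $\phi\colon(A_{1},N_{A_{1}})\to(A_{2},N_{A_{2}})$, we have $N_{A_{2}}(\phi(1_{A_{1}}))=1=N_{A_{2}}(1_{A_{2}})$, so Witt's theorem supplies an isometry $\psi$ of $(A_{2},N_{A_{2}})$ with $\psi(\phi(1_{A_{1}}))=1_{A_{2}}$. Replacing $\phi$ by $\psi\circ\phi$, we may assume $\phi(1_{A_{1}})=1_{A_{2}}$, so that $\phi$ restricts to an isometry $\phi_{0}\colon A_{1}^{0}\to A_{2}^{0}$. Pick an orthogonal basis $\{i_{1},j_{1}\}$ of $A_{1}^{0}$, set $a:=-N_{A_{1}}(i_{1})$ and $b:=-N_{A_{1}}(j_{1})$ in $k^\times$, and let $i_{2}:=\phi_{0}(i_{1})$, $j_{2}:=\phi_{0}(j_{1})$. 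By the previous paragraph applied on each side, $i_{2}^{2}=a$, $j_{2}^{2}=b$, $i_{2}j_{2}=-j_{2}i_{2}$, and $\{1,i_{2},j_{2},i_{2}j_{2}\}$ spans $A_{2}$. Hence both $A_{1}$ and $A_{2}$ are isomorphic to $\quatalg{a}{b}{k}$, and the assignment $i_{1}\mapsto i_{2}$, $j_{1}\mapsto j_{2}$ extends to the desired algebra isomorphism.

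The main obstacle is the Witt normalization step, where one must arrange that $\phi$ actually sends $1\mapsto 1$; without this one cannot identify pure quaternion subspaces and the reconstruction of Step~2 does not apply on the nose. A subsidiary check is that $\{1,i_{2},j_{2},i_{2}j_{2}\}$ really is a $k$-basis of $A_{2}$: this follows because $i_{2}j_{2}$ is itself a pure quaternion (its trace is $i_{2}j_{2}+\overline{i_{2}j_{2}}=i_{2}j_{2}+j_{2}i_{2}=0$), is orthogonal to both $i_{2}$ and $j_{2}$ by the anti-commutation relations, and has nonzero norm $ab$, so the four vectors are linearly independent in a $4$-dimensional space.
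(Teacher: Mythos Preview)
Your argument is correct and in fact supplies the content that the paper omits: the paper's own proof is just a citation to Lam \cite[Thrm 2.5]{Lam:2005kl} and Knus, whereas you actually carry out the reconstruction of the algebra from its norm form. The strategy you use --- identify $A^0=\{1\}^\perp$, show that orthogonal pure quaternions anti-commute via $\alpha\beta+\beta\alpha=-2B(\alpha,\beta)$, then use Witt's theorem to normalize $\phi(1)=1$ --- is exactly the standard argument behind those citations, so there is no divergence in approach.

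One phrasing slip to fix: you write ``an orthogonal basis $\{i,j\}$ of $A^0$'', but $A^0$ is $3$-dimensional, not $2$-dimensional. What you want is any two members $i,j$ of an orthogonal basis of $A^0$ (equivalently, any orthogonal anisotropic pair in $A^0$); your subsidiary check then correctly shows that $ij$ completes $\{1,i,j\}$ to a basis of $A$. The non-degeneracy of $A^0$ (needed to guarantee $a,b\in k^\times$) follows since $A^0$ is the orthogonal complement of the anisotropic vector $1$ in the non-degenerate space $(A,N_A)$, which you should perhaps make explicit. With that cosmetic correction the proof stands.
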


\begin{proof}
When $\Char(k) \neq 2$ this is \cite[Thrm 2.5(a)-(b), pp57--8]{Lam:2005kl}, and more generally this follows from \cite[Ch V, Prop 2.4.1, p256]{Knus:1991qa}.
\end{proof}

%
\noindent
In this language we have the following useful corollary of Theorem \ref{Thm:division_alg_by_norm}.
\begin{cor} 
A quaternion algebra $A/k$ is a division algebra iff  its associated (4-dimensional) quadratic space 
 is anisotropic.
\end{cor}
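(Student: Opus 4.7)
The plan is to deduce this corollary directly from Theorem \ref{Thm:division_alg_by_norm}, which already does the substantive work. First I would unwind the definition of anisotropic: by the conventions of Section 1.5, the quadratic space $(V,Q) = (A/k, N_{A/k})$ is anisotropic precisely when every non-zero $\alpha \in A$ satisfies $Q(\alpha) = N_{A/k}(\alpha) \neq 0$. Combined with the tautological implication $\alpha = 0 \implies N_{A/k}(\alpha) = 0$, this is literally the biconditional $N_{A/k}(\alpha) = 0 \iff \alpha = 0$ appearing in the statement of Theorem \ref{Thm:division_alg_by_norm}. Since a quaternion algebra is by definition a $4$-dimensional central simple algebra (with basis $\{1,i,j,\kappa\}$), that theorem applies and the equivalence follows immediately.

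A second, more hands-on approach avoids appealing to the Wedderburn/separable-closure machinery behind Theorem \ref{Thm:division_alg_by_norm} and uses the conjugation $\alpha \mapsto \bar{\alpha}$ available on quaternions. The key identity is $\alpha\bar{\alpha} = N_{A/k}(\alpha) \in k$. If $N_{A/k}(\alpha) \neq 0$ whenever $\alpha \neq 0$, then $\alpha^{-1} = \bar{\alpha}/N_{A/k}(\alpha)$ provides a two-sided inverse, showing $A$ is a division algebra. Conversely, if some non-zero $\alpha$ satisfies $N_{A/k}(\alpha) = 0$, then $\bar{\alpha} \neq 0$ (since conjugation is a $k$-linear bijection) yet $\alpha\bar{\alpha} = 0$, exhibiting $\alpha$ as a zero divisor and preventing $A$ from being a division algebra. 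This argument in fact appears in the last paragraph of the proof of Theorem \ref{Thm:division_alg_by_norm}.

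There is no real obstacle here; the whole content is a matching of definitions (anisotropy of $Q$ with the norm-vanishing condition of Theorem \ref{Thm:division_alg_by_norm}), so the corollary is essentially a restatement. The only point worth being careful about is verifying that the quadratic space structure genuinely is the norm form, which is built into the definition of \emph{associated quadratic space} introduced just before Lemma \ref{Lem:quat_assoc_quad_space}.
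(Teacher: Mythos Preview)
Your proposal is correct and matches the paper's approach exactly: the paper does not even write out a proof, simply presenting the statement as a ``useful corollary of Theorem \ref{Thm:division_alg_by_norm}'', which is precisely your first paragraph. Your second, hands-on argument via the conjugation $\alpha \mapsto \bar{\alpha}$ is also fine and, as you note, already appears in the proof of Theorem \ref{Thm:division_alg_by_norm} itself.
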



%
%

%
%
\noindent
By combining this with the
theory of local invariants of quadratic spaces in \S\ref{Sec:Local_quadratic_spaces}, we have the following uniqueness result:

\begin{thm}
There is a unique quaternion division algebra over each of the local fields $\Q_{p}$ and $\R$.
\end{thm}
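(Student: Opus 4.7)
My plan is to reduce the classification of division quaternion algebras over $k = \R$ or $\Q_p$ to the (already established) local classification of anisotropic $4$-dimensional quadratic spaces over $k$ with a prescribed determinant, and then invoke the local theorems from \S\ref{Sec:Local_quadratic_spaces}.

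First I would compute the norm form of a general $A = \quatalg{a}{b}{k}$ on the basis $\{1, i, j, \kappa\}$, using $N(\alpha) = \alpha \bar\alpha$, to get $N(x + yi + zj + w\kappa) = x^2 - ay^2 - bz^2 + abw^2$, whose Gram matrix $\mathrm{diag}(1, -a, -b, ab)$ has determinant $(ab)^2 \in (k^\times)^2$. Hence every quaternion algebra has norm form with trivial determinant squareclass. Combining this observation with the preceding corollary (division $\iff$ anisotropic norm form) and Lemma \ref{Lem:quat_assoc_quad_space} (quaternion algebra determined up to isomorphism by its norm form), the problem reduces to showing there is a unique isometry class of anisotropic $4$-dimensional quadratic space over $k$ of trivial determinant.

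Over $\R$ this is immediate: an anisotropic $4$-dimensional form is definite, and because the norm form represents $N(1) = 1 > 0$ it must be positive definite, so by the real classification theorem it is $x^2 + y^2 + z^2 + w^2$, realized by Hamilton's quaternions $\quatalg{-1}{-1}{\R}$. Over $\Q_p$ the classification theorem says non-degenerate $4$-dimensional spaces with $d = 1$ are parameterized by the Hasse invariant $c \in \{\pm 1\}$, giving exactly two isometry classes. One of these is the hyperbolic space $H_4 = H_2 \oplus H_2$, which is the norm (determinant) form of $M_2(\Q_p) \cong \quatalg{1}{1}{\Q_p}$ and is isotropic. The other class must therefore be the anisotropic one, supplying the unique division quaternion algebra; existence of such a class is guaranteed by the $u$-invariant theorem, which provides an anisotropic $4$-dimensional form over $\Q_p$ arising as a quaternion norm form (and hence having $d = 1$ by the first step).

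The main point to watch is that the two candidate Hasse classes at $\Q_p$ genuinely split as one isotropic plus one anisotropic, rather than both being of the same type; this is exactly what lets the classification yield uniqueness, and it follows from separately exhibiting $M_2(\Q_p)$ (isotropic) and some non-split quaternion algebra (anisotropic). The conceptual work is carried entirely by Lemma \ref{Lem:quat_assoc_quad_space}, its corollary, and the local classification and $u$-invariant theorems, so the remaining task is just bookkeeping of invariants.
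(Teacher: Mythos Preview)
Your proposal is correct and follows essentially the same route as the paper: both reduce the question via Lemma~\ref{Lem:quat_assoc_quad_space} and its corollary to the uniqueness of the anisotropic $4$-dimensional quadratic space over the local field, then appeal to the local classification theorems of \S\ref{Sec:Local_quadratic_spaces}. The only cosmetic differences are that over $\R$ the paper argues directly from the signs of $a,b$ in $\quatalg{a}{b}{\R}$ rather than through the signature classification, and over $\Q_p$ the paper cites the slightly stronger fact (from Cassels) that there is a unique $4$-dimensional anisotropic space outright, whereas you first restrict to $d=1$ and then separate the two Hasse-invariant classes---your version is a touch more self-contained given what is actually proved in \S\ref{Sec:Local_quadratic_spaces}.
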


\begin{proof}
Over $\R$ we see that $\quatalg{a}{b}{\R}$ is determined by the signs of $a$ and $b$, and that this is split iff at least one of them is $>0$.  The remaining case gives $a=b=-1$, which gives the Hamiltonian quaternions $\H$ and is the unique division algebra over $\R$.

Over $\Q_p$ this follows from Lemma \ref{Lem:quat_assoc_quad_space} and the fact that there is a unique 4-dimensional anisotropic quadratic space over $\Q_p$ (characterized by the Hilbert symbol relation $c_p = (-1, -d_p)_p$) \cite[Lemma 2.6, p59]{Cassels:1978aa}.
\end{proof}

Therefore, since every non-split quaternion algebra is a division algebra we see that

\begin{thm} \label{Thm:two_local_quat_algs}
There are exactly two quaternion algebras (up to isomorphism) over each of the local fields $k = \Q_{p}$ or $\R$:
 the split algebra $M_{2}(k)$, and a division algebra $D$.
\end{thm}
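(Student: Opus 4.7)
The plan is to deduce this essentially as a direct corollary of Wedderburn's structure theorem (Theorem \ref{Thm:Wedderburn}) together with the uniqueness of the quaternion division algebra proved in the immediately preceding theorem. The key observation is that a quaternion algebra $A/k$ has degree $n = 2$, so dimension $4$, and Wedderburn pins down the possible Wedderburn decompositions very tightly.

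First I would apply Theorem \ref{Thm:Wedderburn} to write $A \cong M_m(D)$ for a uniquely determined division algebra $D/k$, where $m^2 \cdot \dim_k(D) = \dim_k(A) = 4$. Since $\dim_k(D)$ is itself a perfect square (by the corollary following Theorem \ref{Thm:Splitting_fields}), the only possibilities are $(m, \dim_k D) = (2,1)$ or $(1,4)$. In the first case $D = k$ and $A \cong M_2(k)$ is split; in the second case $A = D$ is itself a quaternion division algebra over $k$.

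Second, I would invoke the preceding theorem to conclude that the second case contributes exactly one isomorphism class over each of $k = \Q_p$ or $\R$ (namely the Hamiltonians $\mathbb{H}$ for $k=\R$, and the unique ramified quaternion division algebra over $\Q_p$). Finally, the split algebra $M_2(k)$ is not isomorphic to a division algebra $D$ because $M_2(k)$ contains zero divisors (e.g.\ the matrix units $E_{11}$ and $E_{22}$ multiply to zero), so the two cases give genuinely distinct isomorphism classes. This exhausts the possibilities and yields exactly two quaternion algebras up to isomorphism.

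There is no real obstacle here; the theorem is a bookkeeping consequence of results already in hand. The only point that requires a moment of thought is the dimension-counting step, which relies on the fact that every central simple algebra has square dimension, and this is precisely the corollary recorded right after Theorem \ref{Thm:Splitting_fields}. No further input (and in particular no new computation with Hilbert symbols or explicit bases) is needed beyond the preceding theorem.
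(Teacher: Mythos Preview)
Your proof is correct and follows the same approach as the paper, which simply observes (just before the theorem statement) that ``every non-split quaternion algebra is a division algebra'' and combines this with the preceding uniqueness theorem. You have made explicit the Wedderburn dimension-counting that the paper leaves implicit, which is exactly the right justification for that observation.
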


When $k = \Q_{p}$ or $\R$, the dichotomy of Theorem \ref{Thm:two_local_quat_algs} is often referred to as saying that a quaternion algebra $A/k$ is either {\bf split} or {\bf ramified} (when it is a division algebra).  The term ``ramified'' is used here because in the associated valuation theory of local division algebras (which is discussed in \cite[\S21, particularly Thrm 21.17, p108]{Shimura:2010vn}), the division quaternion algebra $D$ has a valuation ring with maximal ideal $\p$ satisfying $\p^2 = (p) := p\Z$, which agrees with the usual notion of ramification in algebraic number theory.

To decide whether the local quaternion algebra $A/k$ above is split or ramified, one can use the easily computable {\bf (local) Hilbert symbol}
\begin{equation} \label{Eq:Hilbert_symbol_defn}
(\cdot, \cdot)_v : \Q_v^\times / (\Q_v^\times)^2 \times  \Q_v^\times / (\Q_v^\times)^2 \longrightarrow \{\pm1\}
\end{equation}
which is a non-degenerate multiplicative symmetric bilinear form on the (non-zero) squareclasses of $\Q_v$.  The Hilbert symbol arises naturally in the study of Class Field Theory \cite[Ch V, \S3, with $n=2$]{Neukirch:1999mi}, and is defined by the (not obviously symmetric) relation $(a,b)_v = 1 \iff a \in N_{K_v/\Q_v}(K_v^\times)$ where $K_v := \Q_v(\sqrt{b})$.  The Hilbert symbol has many interesting properties:

\begin{thm}
The local Hilbert symbol defined in (\ref{Eq:Hilbert_symbol_defn}) satisfies the following properties:
\begin{enumerate}
\item $(a,b)_v = (b, a)_v$,  (symmetry)
\item $(a_1 a_2, b)_v = (a_1, b)_v (a_2, b)_v$, (bilinearity)
\item $(a, b)_v = 1$ for all $b \in \Q_v^\times / (\Q_v^\times)^2 \implies a \in (\Q_v^\times)^2$, (non-degeneracy)
\item $(a, -a)_v = (a, 1-a)_v =1$,  (symbol)
\item $(a,b)_p = 1$ if $p\neq 2$ and $\ord_p(a), \ord_p(b) \in 2\Z$.
\end{enumerate}
\end{thm}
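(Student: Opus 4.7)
The plan is to first replace the asymmetric defining condition by the symmetric criterion that $(a,b)_v = 1$ if and only if the ternary equation $z^2 = ax^2 + by^2$ admits a nontrivial zero in $\Q_v^3$. Writing an arbitrary element of $K_v = \Q_v(\sqrt b)$ (for $b$ a non-square) as $x + y\sqrt{b}$ shows that $a \in N_{K_v/\Q_v}(K_v^\times)$ is equivalent to $a = x^2 - by^2$ having a solution, which upon rearranging and renaming variables becomes the stated isotropy condition; the case when $b$ is already a square is trivial since then $K_v = \Q_v$ and every nonzero $a$ is a norm. Symmetry (1) is then immediate from the visible symmetry of the ternary equation in $a$ and $b$.

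For bilinearity (2), observe that $N_{K_v/\Q_v}(K_v^\times)$ is by construction a subgroup of $\Q_v^\times$, so the map $a \mapsto (a,b)_v$ is a homomorphism $\Q_v^\times \to \{\pm 1\}$; symmetry then transfers the resulting bilinearity to the second argument. The fact that the target is $\{\pm 1\}$ rather than a larger quotient amounts to the index bound $[\Q_v^\times : N(K_v^\times)] \leq 2$, which can be invoked from local class field theory \cite[Ch V]{Neukirch:1999mi} or verified by a direct computation on the finite group $\Q_v^\times / (\Q_v^\times)^2$. The symbol relations (4) follow by exhibiting explicit norms: when $-a$ is not a square, $\sqrt{-a}$ has norm $-(\sqrt{-a})^2 = a$ in $\Q_v(\sqrt{-a})$, giving $(a,-a)_v = 1$; when $1-a$ is not a square, $1 + \sqrt{1-a}$ has norm $1 - (1-a) = a$ in $\Q_v(\sqrt{1-a})$, giving $(a,1-a)_v = 1$; the cases where the second argument is already a square are immediate.

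I would prove (5) next since it enters the proof of (3). The tame case (5) is a standard Hensel's lemma argument: after reducing $a,b$ modulo squares we may take $a,b \in \Z_p^\times$ with $p$ odd, and an elementary counting argument (or Chevalley--Warning) produces a solution to $ax^2 + by^2 \equiv z^2 \pmod p$ with at least one coordinate a unit; evaluating the appropriate partial derivative at that coordinate gives a unit, so Hensel's lemma lifts the solution to $\Q_p$, and the symmetric criterion yields $(a,b)_p = 1$. For non-degeneracy (3), I would exploit the known finite structure of $\Q_v^\times / (\Q_v^\times)^2$ (order 4 for odd $p$ with representatives $\{1, u, p, up\}$ where $u$ is a non-residue unit, order 8 for $p=2$, and order 1 or 2 for $v = \infty$), and check by direct computation that the Hilbert symbol pairing on these representatives has trivial left kernel. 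Part (5) disposes of most entries immediately, and only a few nontrivial symbols (such as $(u,p)_p = \leg{u}{p}$, computed from the criterion in (1)) require explicit evaluation.

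The main obstacle is the sub-claim $[\Q_v^\times : N(K_v^\times)] \leq 2$ underlying bilinearity, which requires either the machinery of local class field theory or a careful hands-on verification on the squareclass group; every other step is either formal or a direct computation on the small finite group $\Q_v^\times/(\Q_v^\times)^2$.
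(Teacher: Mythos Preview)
Your sketch is correct and considerably more detailed than what the paper actually does. The paper's proof of this theorem consists almost entirely of a citation to Cassels \cite[Lem 2.1, p.~42]{Cassels:1978aa} and Neukirch \cite[Ch V, Prop 3.2]{Neukirch:1999mi}, with only the relation $(a,1-a)_v = 1$ spelled out: there the author observes that $1-a = N_{\Q_v(\sqrt{a})/\Q_v}(1+\sqrt{a})$, which gives $(1-a,a)_v = 1$ and hence $(a,1-a)_v = 1$ via symmetry. Your computation of the same relation is slightly different and more direct---you exhibit $a$ itself as the norm $N_{\Q_v(\sqrt{1-a})/\Q_v}(1+\sqrt{1-a})$, avoiding the appeal to symmetry---but the idea is the same.

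The substantive content you supply (the ternary-form reformulation for symmetry, the Hensel argument for the tame case, the squareclass-table verification of non-degeneracy) is the standard route and is essentially what lies behind the Cassels reference, so there is no genuine divergence in strategy. You are also right to flag the index bound $[\Q_v^\times : N(K_v^\times)] \le 2$ as the one step where real input is needed; the paper simply absorbs this into the citation.
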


\begin{proof}
This follows from \cite[Lem 2.1, 42]{Cassels:1978aa} except for $(a, 1-a)_v =1$, which follows since $1-a  =  N_{\Q_v(\sqrt{a})/\Q_v}(1 + \sqrt{a}).$  See also \cite[Ch V, Prop 3.2, p334]{Neukirch:1999mi} for the anaogous proofs over number fields.
\end{proof}

\noindent
Hilbert symbols are
also 
an example of a ``symbol'' in the sense of $K$-theory (see \cite[Ch V, \S6 and Ch X, \S6, p362]{Lam:2005kl} and \cite[Ch VI, \S4, p356]{Neukirch:2008pi}), but for our purposes it is enough to be able to explicitly compute them, which can be done with the tables on \cite[pp43--44]{Cassels:1978aa}.  The question of computing Hilbert symbols (and splitting of quaternion algebras) over number fields is discussed in Voight's paper \cite{VOIGHT} in this volume.

\medskip
Finally, the Hilbert symbol also satisfies the global ``reciprocity'' relation, from which quadratic reciprocity can be easily proved.
\begin{thm}  For all $a,b \in \Q^{\times}$, we have the product formula
$$
\prod_{v} (a,b)_{v} = 1, 
$$
and all but finitely many factors are one.
\end{thm}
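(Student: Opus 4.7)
The plan is to combine three ingredients: a finiteness argument based on property 5, a bilinearity reduction to a short list of generating pairs, and an explicit verification on those pairs that amounts to quadratic reciprocity together with its two supplements.

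For the finiteness claim, fix $a,b \in \Q^\times$. Only finitely many primes divide the numerator or denominator of $ab$. For every odd prime $p$ at which both $a$ and $b$ are $p$-adic units, property 5 gives $(a,b)_p = 1$. So at most finitely many factors (those at $\infty$, at $2$, and at the odd primes dividing $ab$) can be nontrivial, and the product is a genuine finite product.

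For the product formula, note that $\Phi(a,b) := \prod_v (a,b)_v$ descends to a symmetric bilinear map on $\Q^\times/(\Q^\times)^2 \times \Q^\times/(\Q^\times)^2$ by properties 1 and 2. The $\mathbb{F}_2$-vector space $\Q^\times/(\Q^\times)^2$ is generated by the classes of $-1$ and of rational primes, so it suffices to check $\Phi \equiv 1$ on pairs drawn from these generators. Using $(\ell, -\ell)_v = 1$ from property 4, the diagonal pair $\Phi(\ell,\ell)$ reduces to $\Phi(\ell,-1)$, so the cases to verify are exactly
\[
\Phi(-1,-1),\quad \Phi(-1,\ell),\quad \Phi(2,\ell),\quad \Phi(\ell_1,\ell_2),
\]
with $\ell,\ell_1,\ell_2$ distinct odd primes.

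The final step is explicit evaluation. Each such product has at most three nontrivial factors (at $\infty$, at $2$, and at the odd primes in play), and each local symbol can be read off from the tables cited above. One finds $\Phi(-1,-1) = (-1)(-1) = 1$, while the cases $\Phi(-1,p)$ and $\Phi(2,p)$ reorganize into the standard formulas for $\leg{-1}{p}$ and $\leg{2}{p}$ in terms of $p \bmod 8$, i.e.\ the first and second supplements of quadratic reciprocity. The case $\Phi(p,q)$ for distinct odd primes rearranges into $\leg{p}{q}\leg{q}{p} = (-1)^{\frac{p-1}{2}\frac{q-1}{2}}$, which is the main law of quadratic reciprocity itself. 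This last case is the main obstacle: at face value we have merely repackaged quadratic reciprocity, so to avoid circularity one must either import a proof of reciprocity from elsewhere (the elementary route, carried out in Cassels), or prove the product formula uniformly from a global source, for example via the functional equation of $L(s,\chi)$ for the relevant quadratic Dirichlet character, or via the global reciprocity law of class field theory applied to the norm-residue symbol, from which the Hilbert-symbol product formula falls out as the degree-$2$ case.
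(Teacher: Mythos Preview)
Your proposal is correct and follows the standard elementary route (bilinear reduction to generators plus explicit local computation, amounting to quadratic reciprocity and its supplements), which is exactly the argument in the paper's primary reference \cite[Lem 3.4, p46]{Cassels:1978aa}; the paper itself gives no self-contained proof, only this citation together with the class-field-theoretic alternative in Neukirch that you also mention. Your discussion of the potential circularity with quadratic reciprocity is a fair and accurate caveat.
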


\begin{proof}
See \cite[Lem 3.4, 46]{Cassels:1978aa} or \cite[Ch VI, Thrm 8.1, p414]{Neukirch:1999mi} for the analogous result over number fields.
\end{proof}

\noindent
This theorem has the following important parity consequence for quaternion algebras $A/\Q$.
\begin{cor}
Given any quaternion algebra $A/\Q$, the set of places $v$ where $A/\Q_v$ is ramified has even cardinality.
\end{cor}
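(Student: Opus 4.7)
The plan is to reduce the problem directly to the Hilbert reciprocity law via an explicit presentation of $A/\Q$.

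First, I would write $A \cong \quatalg{a}{b}{\Q}$ for some $a, b \in \Q^\times$, which is possible by the definition of quaternion algebras given in the chapter. For each place $v$ of $\Q$, the localization is $A \otimes_\Q \Q_v \cong \quatalg{a}{b}{\Q_v}$, and by Theorem \ref{Thm:two_local_quat_algs} this is either split or ramified. The crux of the argument is to identify this dichotomy with the value of the Hilbert symbol: I claim $A/\Q_v$ is split if and only if $(a,b)_v = 1$.

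To establish this identification, the natural route is via the associated quadratic space of Lemma \ref{Lem:quat_assoc_quad_space}. The norm form on $\quatalg{a}{b}{\Q_v}$ is (in the basis $\{1, i, j, \kappa\}$) the diagonal quadratic form $\langle 1, -a, -b, ab\rangle$. By the corollary to Theorem \ref{Thm:division_alg_by_norm}, $A/\Q_v$ is a division algebra iff this form is anisotropic. But anisotropy of $\langle 1, -a, -b, ab\rangle$ over $\Q_v$ is precisely the condition that $a$ is not a norm from $\Q_v(\sqrt{b})$ (when $b \notin (\Q_v^\times)^2$, and both conditions are automatic when $b$ is a square), which by definition of the Hilbert symbol is equivalent to $(a,b)_v = -1$. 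Thus $A/\Q_v$ is ramified iff $(a,b)_v = -1$.

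With this translation in hand, the set of ramified places for $A/\Q$ equals the set of places where $(a,b)_v = -1$. The Hilbert product formula just proved gives
\[
\prod_v (a,b)_v = 1,
\]
with all but finitely many factors equal to $1$. Since each factor is $\pm 1$ and their product is $+1$, the number of $v$ contributing $-1$ must be even, which is exactly the claim.

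The main obstacle is the identification of the Hilbert symbol value with the splitting behavior of the quaternion algebra, i.e.\ verifying that anisotropy of the norm form matches the norm-from-$\Q_v(\sqrt{b})$ definition of $(a,b)_v$; once this is in place, the rest of the argument is a one-line parity count from the product formula. Everything else (choice of presentation, finiteness of ramified places) is either routine or explicitly established earlier in the chapter.
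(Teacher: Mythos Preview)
Your proposal is correct and follows essentially the same approach as the paper: write $A \cong \quatalg{a}{b}{\Q}$, identify ramification at $v$ with $(a,b)_v = -1$, and invoke the product formula to get even parity. You supply more detail on the identification step (via anisotropy of the norm form $\langle 1,-a,-b,ab\rangle$) than the paper, which simply asserts it, but the argument is the same.
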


\begin{proof}
By writing $A/\Q$ as $\quatalg{a}{b}{\Q}$ for some $a,b\in \Q^\times$, we see that $A/\Q_v$ is ramified $\iff (a,b)_v = -1$, and the product formula guarantees this happens an even number of times.
\end{proof}

\section{The Clifford Algebra}

Good basic references for Clifford algebras over fields of characteristic $\neq 2$ are \cite[\S4.8]{Jacobson:1989fu}, \cite[Ch V]{Lam:2005kl}, \cite[Ch 10]{Cassels:1978aa}.  The valuation theory of central simple algebras over number fields can be found in \cite{Shimura:2010vn}, and Clifford algebras over general rings are discussed thoroughly in \cite[Ch IV--V]{Knus:1991qa}.  A very in-depth treatment of Clifford algebras as well as automorphic forms on their associated Spin groups can be found in the recent book of Shimura \cite{Shimura:2004qe}.

\bigskip

Given a quadratic space $(V,Q)$ over a field $K$ (of characteristic $\neq 2$) of dimension $n$, we define its {\bf Clifford algebra} $C(V)$ as the $K$-algebra generated by all formal multiplications of scalars $k \in K$ and vectors $\v \in V$ subject to the family of ``squaring relations'' that $\v\,^2 = \v \cdot \v = Q(\v)$ for all $\v \in V$.  More formally, we can construct the Clifford algebra as a quotient $C(V) := T(V)/I(V)$ of the tensor algebra $T(V) = \oplus_{i=0}^\infty (\otimes^i V)$ by the ideal of relations 
$$
I(V) := 
\begin{matrix}
\text{the ideal of $T(V)$ generated by the set } \\ 
\{\v\,{}^2 - Q(\v),\,\,  k\cdot \v - k\v
 \,\,\text{ for all } \v \in V, k \in K\}.
\end{matrix}
$$
This shows that $C(V)$ is well-defined (and we will soon see that it is non-zero!).

One useful observation is that 
there is also a
nice relationship between multiplication in $C(V)$ and the inner product $B(\v,\w)$.  We see this by expanding out 
\begin{align}
Q(\v+\w) 
&= (\v + \w)^2 \\
&= \v\,^2 + \v \cdot \w + \w \cdot \v + \w\,^2 \\
&= Q(\v) + (\v \cdot \w + \w \cdot \v) + Q(\w)
\end{align}
and comparing this with the polarization identity (\ref{Eq:Polarization_id}), which shows that  
\begin{equation}
\v \cdot \w + \w \cdot \v = 2B(\v, \w).
\end{equation}
This relation can be used to give a unique presentation of any element $\al \in C(V)$ in terms of a given choice of basis $\B = \{\v_1, \dots, \v_n\}$ of $V$, since we can reverse the order of adjacent elements to present them in terms of the basis of all possible products of distinct vectors $\v_i \in \B$ with indices $i$ in increasing order.
Because these products are indexed by the $2^n$ subsets $I$ of $\{1, \dots, n\}$, we see that 
\begin{thm}
The dimension of the Clifford algebra is  $\dim_K(C_0(V)) = 2^{n}$.
\end{thm}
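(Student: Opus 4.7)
The plan is to combine the spanning argument already sketched just before the theorem with a linear independence argument via an explicit faithful representation. I will assume the notation $C_0(V)$ in the displayed statement is a typo for $C(V)$, since the preceding paragraph indexes the $2^n$ products by all subsets of $\{1,\dots,n\}$.

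First I would invoke Theorem \ref{Thm:orthogonal_splitting} to replace an arbitrary basis by an orthogonal basis $\mathcal{B} = \{\v_1,\dots,\v_n\}$, so that $B(\v_i,\v_j)=0$ for $i\neq j$. With such a basis the two fundamental relations in $C(V)$ take the very clean form
\[
\v_i\,^2 = Q(\v_i)\in K, \qquad \v_i \v_j = -\v_j \v_i \text{ for } i\neq j.
\]
Using these two rules, every monomial in the $\v_i$'s can be rewritten as a scalar multiple of a product $\v_{i_1}\v_{i_2}\cdots\v_{i_k}$ with strictly increasing indices: anticommutation moves equal indices next to each other (up to a sign), the squaring relation then replaces such a pair by a scalar, and anticommutation again sorts the remaining distinct indices into increasing order. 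Since $T(V)$ is spanned by monomials in the $\v_i$'s, this proves $C(V)$ is spanned over $K$ by the $2^n$ elements $\v_I := \v_{i_1}\cdots\v_{i_k}$ indexed by subsets $I = \{i_1<\cdots<i_k\} \subseteq \{1,\dots,n\}$ (with $\v_\emptyset := 1$). Hence $\dim_K C(V) \leq 2^n$.

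The main obstacle is the reverse inequality, i.e.\ linear independence of the $\v_I$. I would establish this by constructing a faithful $2^n$-dimensional representation. Let $W$ be a $K$-vector space with basis $\{e_I\}$ indexed by subsets $I\subseteq\{1,\dots,n\}$. For each $i$, define endomorphisms $\epsilon_i,\iota_i \in \operatorname{End}_K(W)$ by
\[
\epsilon_i(e_I) = \begin{cases} \operatorname{sgn}(i,I)\, e_{I\cup\{i\}} & i\notin I,\\ 0 & i\in I, \end{cases}
\qquad
\iota_i(e_I) = \begin{cases} \operatorname{sgn}(i,I\setminus\{i\})\, e_{I\setminus\{i\}} & i\in I,\\ 0 & i\notin I, \end{cases}
\]
where $\operatorname{sgn}(i,J) = (-1)^{\#\{j\in J : j<i\}}$. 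A direct check on basis vectors shows that $\epsilon_i \iota_i + \iota_i \epsilon_i = \mathrm{id}$, $\epsilon_i^2 = \iota_i^2 = 0$, and the operators with different indices anticommute. Setting
\[
\rho(\v_i) := \epsilon_i + Q(\v_i)\,\iota_i
\]
one verifies $\rho(\v_i)^2 = Q(\v_i)\cdot\mathrm{id}$ and $\rho(\v_i)\rho(\v_j) + \rho(\v_j)\rho(\v_i)=0$ for $i\neq j$. By the universal property of $C(V)$ (built into the construction $T(V)/I(V)$) these formulas extend uniquely to a $K$-algebra homomorphism $\rho: C(V) \to \operatorname{End}_K(W)$.

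Finally I would evaluate $\rho(\v_I)$ on the basis vector $e_\emptyset \in W$: an induction on $|I|$ shows that $\rho(\v_I)(e_\emptyset) = e_I$, because each $\iota_i$ kills $e_\emptyset$ (and kills any subset not containing $i$ built up so far), leaving only the $\epsilon_i$-contribution, which simply builds up $e_I$ with the standard sign. Since the $\{e_I\}_{I\subseteq\{1,\dots,n\}}$ are a basis of $W$, any $K$-linear relation $\sum_I c_I \v_I = 0$ in $C(V)$ would push forward to $\sum_I c_I e_I = 0$ in $W$, forcing all $c_I=0$. This gives $\dim_K C(V)\geq 2^n$, and combined with the spanning bound yields the equality $\dim_K C(V) = 2^n$.
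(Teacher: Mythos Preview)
Your proof is correct, and your identification of the typo ($C_0(V)$ should read $C(V)$) is right---the very next theorem in the paper gives $\dim_K C_0(V)=2^{n-1}$.

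The paper's own ``proof'' is just the paragraph preceding the theorem: it observes that the commutation relation $\v\w+\w\v=2B(\v,\w)$ lets one rewrite any monomial as a scalar multiple of an ordered product $\v_I$, asserts this gives a \emph{unique} presentation, and concludes. In other words, the paper sketches spanning and simply asserts linear independence without justification. Your argument reproduces the spanning half (with the simplification of choosing an orthogonal basis so the cross terms vanish outright), and then supplies what the paper omits: an honest linear-independence proof via the standard faithful representation on $\bigwedge V$ deformed by $Q$. The evaluation $\rho(\v_I)(e_\emptyset)=e_I$ is the clean way to see injectivity, and your sign bookkeeping is correct because you apply the $\v_{i_j}$ in decreasing order of $j$, so each new index is smaller than everything already present. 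This is the textbook approach (e.g.\ Lam or Jacobson), and it genuinely fills a gap in the paper's exposition rather than taking a different route to the same place.
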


An interesting special case of the Clifford algebra is when the quadratic form $Q$ is identically zero.  In this case, by taking a basis $\B$ for $V$ and applying the relations above we see that $\v_i \cdot \v_j = -\v_j \cdot \v_i$ when $i \neq j$ and $\v_i\,^2 = 0$. This shows that $C(V)$ is just the exterior algebra $\oplus_i (\bigwedge^i V)$.  When $Q$ is not identically zero we can think of $C(V)$ as a deformation of the exterior algebra that encodes the arithmetic of $Q$.

\medskip
Another interesting fact about the Clifford algebra is that it has a natural $(\Z/2\Z)$-grading (called the {\bf parity}) coming from the $\Z$-grading on the tensor algebra $T(V)$ and the fact that the relations in $I(V)$ only involve relations among elements of the same parity.  We say that an element of $C(V)$ is said to be {\bf even} or {\bf odd} if it can be written as a sum of elements of $T(V)$ of even or odd degree respectively.  Given this, we can consider the subalgebra $C_0(V)$ of even elements in $C(V)$, called the {\bf even Clifford algebra} of $V$.  It follows from our basis description of $C(V)$ above and simple facts about binomial coefficients that 
%
\begin{thm}
The dimension of the even Clifford algebra is  $\dim_K(C_0(V)) = 2^{n-1}$.
\end{thm}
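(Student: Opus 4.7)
The plan is to reuse the explicit basis description obtained for $C(V)$ in the previous theorem and then restrict to the even part. Specifically, fix an orthogonal basis $\B = \{\v_1, \dots, \v_n\}$ of $V$ (which exists by Theorem \ref{Thm:orthogonal_splitting}); the argument sketched before the previous theorem shows that $C(V)$ has a $K$-basis consisting of the $2^n$ ordered products $\v_{i_1} \v_{i_2} \cdots \v_{i_k}$ with $i_1 < i_2 < \cdots < i_k$, indexed by subsets $I = \{i_1, \dots, i_k\} \subseteq \{1, \dots, n\}$ (with the empty product corresponding to $1 \in K$).

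Next, I would use the $(\Z/2\Z)$-grading. Since each $\v_i$ is odd (it comes from the degree $1$ part of $T(V)$), and the relations in $I(V)$ are homogeneous with respect to parity, the basis element indexed by $I$ lies in $C_0(V)$ if and only if $|I|$ is even. In particular, these even-indexed basis vectors span $C_0(V)$ and are linearly independent (being part of a basis for $C(V)$), so they form a basis for $C_0(V)$.

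Finally, the count is a standard binomial identity: the number of even-sized subsets of an $n$-element set is
\begin{equation}
\sum_{\substack{0 \leq k \leq n \\ k \text{ even}}} \binom{n}{k} = \tfrac{1}{2}\bigl((1+1)^n + (1-1)^n\bigr) = 2^{n-1}
\end{equation}
for $n \geq 1$, giving $\dim_K C_0(V) = 2^{n-1}$. (The degenerate case $n = 0$ gives $C(V) = C_0(V) = K$, which one might handle separately, though the formula $2^{n-1}$ fails there; presumably the statement implicitly assumes $n \geq 1$.) No real obstacle arises here — the work was already done in establishing the basis of $C(V)$; the only mild subtlety is verifying that parity is well-defined on $C(V)$, which follows because the generating relations $\v^2 - Q(\v)$ and $k \cdot \v - k\v$ each pair terms of equal parity, so $I(V)$ is a homogeneous ideal in the $\Z/2\Z$-graded tensor algebra.
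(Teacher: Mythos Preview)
Your proof is correct and follows exactly the approach the paper indicates: the paper simply states that the result ``follows from our basis description of $C(V)$ above and simple facts about binomial coefficients,'' and your argument is a faithful elaboration of precisely that sketch. Your additional remarks on why the $\Z/2\Z$-grading is well-defined and on the edge case $n=0$ are welcome clarifications beyond what the paper provides.
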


Both $C(V)$ and $C_0(V)$ have a {\bf canonical involution} $\al \mapsto \widetilde{\al}$ defined (on the pure tensor elements) by reversing the order of every product of vectors, i.e.
$$
\al := \v_1 \cdots \v_k \longmapsto  \v_k \cdots \v_1 =: \widetilde{\al},
$$
and then extending this map linearly to the entire algebra.  
We can use this involution to define a multiplicative {\bf norm function} 
$N:C(V)\ra K$ by the product
$$
N(\al) := \al \cdot \widetilde{\al}.
$$
To see that $N(\al)\in K$ notice that if $\al := \v_1 \cdots \v_k$ then 
\begin{equation} \label{Eq:norm_of_vector_product}
N(\al) 
= \al \cdot \widetilde{\al} 
= (\v_1 \cdots \v_k) \cdot (\v_k \cdots \v_1) 
= Q(\v_k) \cdots Q(\v_1) \in K,
\end{equation}
by repeatedly using the relation $\v_i\,^2 = Q(\v_i)$.  This also shows that $N(\al) = \widetilde{\al} \cdot \al$.

\medskip
We will be interested in multiplicative subgroups of $C(V)$, so we say that an element $\al \in C(V)$ is {\bf invertible} if there is some $\al^{-1} \in C(V)$ so that $\al \cdot \al^{-1} = 1$.  
Notice that if $\al$ is invertible, then 
%
$$
\al^{-1} = \frac{\widetilde{\al}}{N(\al)}
$$
and we also have $\al^{-1} \cdot \al = 1$, so our inverses are ``two-sided''.
%
It is also useful to notice that
we already have a good understanding of what vectors $\v \in V$ are invertible.

\begin{lem}
Suppose that $\v \in V \subset C(V)$.  Then $\v$ is invertible $\iff \v$ is anisotropic. 
\end{lem}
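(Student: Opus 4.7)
The plan is to use the defining Clifford relation $\v^{\,2} = Q(\v)$, which holds in $C(V)$ for every $\v \in V$. This single identity delivers both implications almost immediately, so I expect no serious obstacle---only a little care with the degenerate case $\v = 0$.

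First I would handle the direction ``anisotropic $\Rightarrow$ invertible.'' If $\v$ is anisotropic then $Q(\v) \in K^{\times}$, so the element $\al := Q(\v)^{-1} \v \in C(V)$ satisfies
$$
\v \cdot \al = Q(\v)^{-1} \v^{\,2} = Q(\v)^{-1} Q(\v) = 1,
$$
and the same computation on the left gives $\al \cdot \v = 1$. Hence $\v$ is invertible with $\v^{-1} = \v / Q(\v)$. This is consistent with the general formula $\al^{-1} = \widetilde{\al}/N(\al)$ mentioned just before the lemma, since $\widetilde{\v} = \v$ and $N(\v) = \v \cdot \widetilde{\v} = \v^{\,2} = Q(\v)$ by (\ref{Eq:norm_of_vector_product}).

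Next I would handle ``invertible $\Rightarrow$ anisotropic'' by contraposition. Recall that, in the paper's conventions, $\v$ being anisotropic requires both $\v \neq 0$ and $Q(\v) \neq 0$. If $\v = 0$ then $\v$ is certainly not invertible (the algebra $C(V)$ is nonzero, as $\dim_K C(V) = 2^n$ was computed just above, so $1 \neq 0$ in $C(V)$). If instead $\v \neq 0$ but $Q(\v) = 0$, then $\v^{\,2} = 0$ in $C(V)$; any (even one-sided) inverse $\beta$ with $\v \cdot \beta = 1$ would then give
$$
0 = \v^{\,2} \cdot \beta = \v \cdot (\v \cdot \beta) = \v \cdot 1 = \v,
$$
contradicting $\v \neq 0$. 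Therefore an invertible $\v$ must be anisotropic.

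The only point requiring any care is making sure that $\v^{\,2} = Q(\v)$ is a genuine equality in $C(V)$ and not a vacuous one in a collapsed quotient; this is exactly what the dimension count $\dim_K C(V) = 2^n$ recorded in the preceding theorem guarantees. With that in hand, the argument reduces to the two one-line manipulations above, and no deeper structural fact about $C(V)$ is needed.
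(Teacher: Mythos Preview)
Your proof is correct and follows essentially the same approach as the paper, which gives the one-line justification ``$\v$ is invertible $\iff N(\v) = \v\cdot\v = Q(\v) \neq 0$.'' You have simply unpacked this more carefully, making the nilpotent argument for the converse explicit and handling the $\v = 0$ case separately; the underlying idea in both is exactly the Clifford relation $\v^{\,2} = Q(\v)$.
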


\begin{proof}
This follows since $\v$ is invertible $\iff N(\v) = \v\cdot \v = Q(\v) \neq 0$.
\end{proof}


\medskip
Finally, we mention the following theorem that explains the structure of the Clifford algebra as a central simple algebra.

\begin{thm}
Suppose that $V$ is a non-degenerate quadratic space of dimension $n$.  Then $C(V)$ is a central simple algebra if $n$ is even and $C_0(V)$ is a central simple algebra when $n$ is odd.
\end{thm}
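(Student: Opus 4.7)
The plan is to fix an orthogonal basis $\{v_1, \ldots, v_n\}$ of $V$ with $Q(v_i) = a_i \in K^\times$ (non-degeneracy, via Theorem~\ref{Thm:orthogonal_splitting}), so that the $2^n$ ordered products $v_I := v_{i_1} \cdots v_{i_k}$ for $I = \{i_1 < \cdots < i_k\} \subseteq \{1,\ldots,n\}$ form a $K$-basis of $C(V)$, with $C_0(V)$ spanned by those $v_I$ of even $|I|$. The relations $v_i v_j = -v_j v_i$ (for $i \ne j$) together with $v_i^{\,2} = a_i$ give $v_I \widetilde{v_I} = \prod_{i\in I} a_i \in K^\times$, so every $v_I$ is a unit. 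The argument then splits into identifying the center and separately showing that any nonzero two-sided ideal is the whole algebra.

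For the center, I consider the inner automorphism $\alpha_j(x) := v_j x v_j^{-1}$. Anticommuting $v_j$ through the factors of $v_I$ yields
$$
\alpha_j(v_I) = (-1)^{|I| + \chi_I(j)}\, v_I,
$$
where $\chi_I(j) = 1$ if $j \in I$ and $0$ otherwise. Requiring $v_I$ to commute with every $v_j$ forces either $I = \emptyset$ or $I = \{1,\ldots,n\}$ with $n$ odd; hence $Z(C(V)) = K$ exactly when $n$ is even. For $C_0(V)$, the commuting inner automorphisms $\alpha_i \alpha_j$ (for $i < j$) act by $(-1)^{\chi_I(i) + \chi_I(j)}$ on $v_I$ with $|I|$ even, and the only simultaneous fixed basis vectors are $v_\emptyset$ and $v_{\{1,\ldots,n\}}$; the latter lies in $C_0(V)$ iff $n$ is even, so $Z(C_0(V)) = K$ exactly when $n$ is odd.

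For simplicity, given a nonzero two-sided ideal $J$ I would pick $x = \sum c_I v_I \in J \setminus \{0\}$ whose support $S := \{I : c_I \ne 0\}$ has minimal cardinality. For any $j$ (resp.\ pair $i<j$ in the $C_0$ case) and any fixed $I_0 \in S$, writing $\lambda \in \{\pm 1\}$ for the $\alpha_j$-eigenvalue (resp.\ $\alpha_i\alpha_j$-eigenvalue) of $v_{I_0}$, the element $\lambda x - \alpha_j(x)$ lies in $J$, has zero coefficient at $v_{I_0}$, and so has strictly smaller support than $x$; minimality then forces every $I \in S$ to share the joint eigencharacter of $I_0$. Since $v_{I_0}$ is a unit, once $S = \{I_0\}$ is established I conclude $J = C(V)$ (resp.\ $C_0(V)$).

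The main obstacle is the final combinatorial check that under the parity hypothesis this joint eigencharacter actually separates basis vectors. A direct calculation shows that two index sets $I, I'$ produce the same $(\alpha_j)_j$-character iff $I = I'$ or $I' = I^c$ with $|I|, |I^c|$ of opposite parity (which requires $n$ odd), and similarly the $(\alpha_i\alpha_j)_{i<j}$-character on even-$|I|$ sets fails to separate only when $I' = I^c$ with $n$ even. Thus under the claimed parities the joint eigencharacters are perfect invariants of $I$, and the ideal argument closes — fittingly, since the opposite parities are exactly the ones in which the center grows beyond $K$ (so the algebras cannot be central, let alone central simple).
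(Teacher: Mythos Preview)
Your proof is correct and complete. The eigenvalue computation $\alpha_j(v_I) = (-1)^{|I|+\chi_I(j)} v_I$ is right, the center calculations check out in both parities, and the minimal-support argument for simplicity is clean; the final combinatorial separation is exactly the point where the parity hypothesis is used, and you have it right (the failure of separation occurs precisely via $I \leftrightarrow I^c$, which forces $n$ odd in the $C(V)$ case and $n$ even in the $C_0(V)$ case).

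There is nothing substantive to compare, since the paper does not give its own argument here but simply cites Jacobson, Lam, and Shimura. Your proof is essentially the standard one found in those references (and is in the same spirit as the paper's own sketch of Lemma~\ref{Lem:scalar_center}, which uses the orthogonal-basis commutation relations). If anything, your write-up is more explicit than what one finds in some of those sources, since you carry the simultaneous-eigenspace bookkeeping through to the end rather than appealing to a tensor-product induction on $\dim V$.
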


\begin{proof}
See  \cite[Thrm 4.14, p237]{Jacobson:1989fu}, \cite[Thrm 2.4 and 2.5, p110]{Lam:2005kl},  \cite[Thrm 23.8, p125]{Shimura:2010vn}.  A more general version of this holds over rings, where the word ``Azumaya'' replaces ``central simple''.  (See \cite[Ch IV, Thrm 2.2.3, p203 and  Thrm 3.2.4(1), p210]{Knus:1991qa} for proofs, and \cite[Ch 2]{Saltman:1999or} for a discussion of Azumaya.)
\end{proof}

\section{Connecting algebra and geometry in the orthogonal group}

One important feature of the orthogonal group $O(V)$ is that can be used to describe equivalences of quadratic spaces and quadratic lattices, however this is not very useful unless one can somehow describe the elements of $O(V)$.  One approach for doing this is to try to use the geometry of $V$ to construct explicit transformations in $O(V)$.  We now describe how this is done (over fields $K$ of characteristic $\Char(K) \neq 2$).

Given some $\v \in V$ 
with
$Q(\v) \neq 0$, we can define the {\bf reflection symmetry} $\tau_{\v} \in O(V)$ defined by sending $\v \mapsto -\v$ and pointwise fixing all vectors in the orthogonal complement $(K\v)^\perp$ of the line spanned by $\v$.  Using the standard projection formulas of linear algebra (e.g. \cite[\textsection4.2]{Strang_linear_algebra_4}) we see that $\tau_{\v}$ can be written explicitly as 
\begin{equation} \label{Eq:reflection_defn}
\tau_{\v}(\w) = \w - 2\frac{B(\v, \w)}{B(\v,\v)} \v,
\end{equation}
which is only defined if $B(\v,\v) = Q(\v) \neq 0$.  Notice that since we are reversing the direction of a line, and stabilizing its complement, we know $\det(\tau_{\v}) = -1$.
One useful property of these reflection symmetries is that they can be explicitly seen to act transitively on vectors of a given non-zero length.  More precisely, 
\begin{lem}
Suppose that $\v, \w \in V$ satisfy $Q(\v) = Q(\w)$.
\begin{enumerate}
\item[a)]
If $Q(\v) = Q(\w)\neq 0$.  Then  $\al\v = \w$ where $\al$ is a product of at most two reflection symmetries.
\item[b)]
If $Q(\v-\w)\neq 0$, then
$$
\tau_{\v- \w}(\v) = \w.
$$
%
%
\end{enumerate}
\end{lem}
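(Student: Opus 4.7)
The plan is to prove (b) first by direct substitution into the definition of $\tau_{\v-\w}$, and then use (b) together with an auxiliary reflection to handle (a).

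For (b), I would apply the reflection formula (\ref{Eq:reflection_defn}) with $\v$ replaced by $\v-\w$, acting on the vector $\v$:
$$
\tau_{\v-\w}(\v) = \v - 2\,\frac{B(\v-\w,\v)}{B(\v-\w,\v-\w)}(\v-\w).
$$
By bilinearity and the hypothesis $Q(\v)=Q(\w)$, the numerator expands as $B(\v-\w,\v) = Q(\v)-B(\v,\w)$, while the denominator is $Q(\v-\w) = Q(\v)-2B(\v,\w)+Q(\w) = 2\bigl(Q(\v)-B(\v,\w)\bigr)$. The hypothesis $Q(\v-\w)\neq 0$ ensures $Q(\v)-B(\v,\w)\neq 0$, so the ratio collapses to $\tfrac{1}{2}$ and we obtain $\tau_{\v-\w}(\v) = \v - (\v-\w) = \w$.

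For (a), the first case is when $Q(\v-\w)\neq 0$: part (b) gives $\w = \tau_{\v-\w}(\v)$, a single reflection. The interesting case is when $Q(\v-\w) = 0$. Here I would exploit the parallelogram identity
$$
Q(\v+\w) + Q(\v-\w) = 2Q(\v) + 2Q(\w) = 4Q(\v),
$$
which, combined with the hypothesis $Q(\v)\neq 0$, forces $Q(\v+\w) \neq 0$. A computation entirely parallel to the one above, with $\v-\w$ replaced by $\v+\w$, gives $\tau_{\v+\w}(\v) = -\w$. Since $Q(\w) = Q(\v) \neq 0$, the reflection $\tau_{\w}$ is defined and sends $-\w\mapsto \w$, so the composition $\tau_{\w}\circ \tau_{\v+\w}$ sends $\v$ to $\w$, realizing the desired map as a product of two reflections.

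There is essentially no serious obstacle: the calculations are forced by the polarization identity, and the only subtlety is noticing that the parallelogram identity guarantees that at least one of $\v\pm\w$ is anisotropic whenever $Q(\v)\neq 0$, so the construction never breaks down. The main conceptual content is the clever choice of reflection axes $\v-\w$ and $\v+\w$; the rest is routine bilinear algebra.
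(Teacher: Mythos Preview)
Your proof is correct and follows essentially the same approach as the paper: prove (b) by direct computation with the reflection formula, then deduce (a) by using (b) when $Q(\v-\w)\neq 0$, and otherwise invoking the polarization (parallelogram) identity to ensure $Q(\v+\w)\neq 0$ so that $\tau_{\w}\circ\tau_{\v+\w}$ does the job. The paper is terser but the argument is identical.
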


\begin{proof}
This can be found in \cite[pp19--20]{Cassels:1978aa} among other places, though we give the argument here.
Part b) follows from a direct computation with (\ref{Eq:reflection_defn}).  Part a) follows from b) if $Q(\v-\w) \neq 0$, otherwise the polarization identity (\ref{Eq:Polarization_id}) ensures that $Q(\v + \w) \neq 0$, and so part b) allows us to find a symmetry interchanging $\v$ and $-\w$.      From here the symmetry $\tau_{\w}$ interchanges $\w$ and $-\w$, giving $\al = \tau_{\w} \cdot \tau_{\v + \w}$.
\end{proof}

This transitive action of reflection symmetries shows that they generate the full orthogonal group $O(V)$ for any non-degenerate quadratic space.


\begin{thm} \label{Thm:generation_by_symmetries}
If $(V,Q)$ is a non-degenerate quadratic space, then every element $\beta \in O(V)$ can be written as a product of reflection symmetries, i.e. 
$$
\beta = \tau_{\v_1} \cdots \tau_{\v_k}
$$
for some vectors $\v_i \in V$ with $Q(\v_i) \neq 0$.  
%
\end{thm}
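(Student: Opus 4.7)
The plan is to prove this by induction on $n := \dim(V)$, using the preceding lemma as the mechanism that lets us move an anisotropic vector to any other vector of the same length via at most two reflections.

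The base case $n=1$ is trivial: by non-degeneracy $V = K\v$ with $Q(\v) \neq 0$, and any $\beta \in O(V)$ acts as $\pm 1$, so $\beta$ is either the empty product or $\tau_{\v}$. For the inductive step, assume the result for all non-degenerate quadratic spaces of dimension $<n$. First, pick an anisotropic vector $\v \in V$; such a vector must exist, because if every vector were isotropic the polarization identity (\ref{Eq:Polarization_id}) would force the Gram form $B$ to vanish identically, contradicting non-degeneracy. Set $\w := \beta(\v)$; since $\beta$ is an isometry, $Q(\w) = Q(\v) \neq 0$. By part (a) of the preceding lemma, there is $\al \in O(V)$ expressible as a product of at most two reflection symmetries such that $\al(\w) = \v$, hence $(\al\beta)(\v) = \v$.

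Now $\al\beta$ fixes $\v$, so it preserves its orthogonal complement $W := (K\v)^\perp$. Because $Q(\v) \neq 0$ we have the orthogonal splitting $V = K\v \oplus W$, and the restriction of $Q$ to $W$ is non-degenerate (the Gram determinant factors as $Q(\v) \cdot \det(Q|_W)$ up to squares). Thus $(\al\beta)|_W \in O(W)$, and by the inductive hypothesis this restriction is a product of reflection symmetries $\tau_{\u_1} \cdots \tau_{\u_k}$ with $\u_i \in W$ anisotropic. Each such $\tau_{\u_i}$ extends to a reflection on $V$ (which still fixes $\v$, since $\v \perp \u_i$), and the product of these extensions agrees with $\al\beta$ on both summands $K\v$ and $W$, hence equals $\al\beta$ on all of $V$. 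Since each reflection symmetry is an involution, $\al^{-1}$ is itself a product of (at most two) reflection symmetries, and multiplying on the left exhibits $\beta = \al^{-1}(\al\beta)$ as a product of reflection symmetries.

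The only substantive points to verify are (i) the existence of an anisotropic vector, which is immediate from non-degeneracy via polarization, and (ii) that restriction of $Q$ to $\v^\perp$ remains non-degenerate, which is immediate from the orthogonal splitting off of the anisotropic line $K\v$. The main obstacle I expected a priori was the possibility that $\beta(\v)=-\v$ for every anisotropic choice of $\v$, forcing the reflection-count to blow up, but the lemma's bound of ``at most two reflections'' absorbs precisely this case (when $Q(\v-\w) = 0$ one falls back on $\tau_{\v+\w}$ followed by $\tau_\w$), so no additional argument is needed. A byproduct of the induction is the explicit bound that $\beta$ can be written as a product of at most $2n$ reflection symmetries, though the statement here asks only for existence.
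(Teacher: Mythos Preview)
Your proof is correct and follows essentially the same approach as the paper's: induction on $\dim(V)$, using the preceding lemma to find a product of reflections $\al$ carrying $\beta(\v)$ back to $\v$ for some anisotropic $\v$, then applying the inductive hypothesis to the restriction to $(K\v)^\perp$. The only cosmetic difference is that the paper phrases it as finding $\al$ with $\al\v = \beta\v$ and then reducing $\al^{-1}\beta$, whereas you find $\al$ with $\al\beta\v = \v$ and reduce $\al\beta$; these are equivalent.
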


\begin{proof}
This is proved \cite[Lemma 4.3, pp20--21]{Cassels:1978aa}.  This follows by induction on the dimension on $V$, since for any $\v$ with $Q(\v)\neq 0$ we can find some product of symmetries $\al$ so that $\al \v = \beta \v$.  Therefore $\al^{-1} \beta$ fixes $K\v$ and also $W := (K\v)^\perp$, and we are reduced to showing that $\al^{-1} \beta$ is a product of symmetries on $W$.  When $\dim(W) = 1$, this holds because $\beta:\v \mapsto \pm \v$, completing the proof.
\end{proof}

There is a particularly interesting map called the {\bf spinor norm map}, denoted $\text{sn}(\al),$ from $O^+(V)$ to the squareclasses $K^\times/(K^\times)^2$ that can be defined easily by using 
the reflection symmetry description of $O(V)$.  To do this we write $\al \in O^+(V)$ as a product of symmetries $\tau_{\v}$ and define
\begin{align}
\al = \, &\tau_{\v_1} \cdots \tau_{\v_k} \longmapsto Q(\v_1) \cdots Q(\v_k) =: \text{sn}(\al).
\end{align}
This gives a squareclass because we could have rescaled any of the $\v_i$ without changing its associated symmetry $\tau_{\v_i}$, but we would change $Q(\v_i)$ by a non-zero square.  However it is more work to show that $\text{sn}(\al)$ is independent of our particular presentation of $\al$ as a product of symmetries.

\begin{lem} \label{Lem:scalar_center}
Suppose that $(V,Q)$ is a non-degenerate quadratic space.  Then any even element in the center of $C(V)$ is a scalar.
\end{lem}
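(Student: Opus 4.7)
The plan is to reduce the claim to a direct sign calculation in the standard basis of $C(V)$ induced by an orthogonal basis of $V$.

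First I would fix an orthogonal basis $\{e_1, \ldots, e_n\}$ of $V$ with $Q(e_i) \neq 0$ for each $i$; this is possible by Theorem~\ref{Thm:orthogonal_splitting} together with non-degeneracy of $Q$. The Clifford relations then take the simple form $e_i^2 = Q(e_i)$ and $e_i e_j = -e_j e_i$ for $i \neq j$, and (by the presentation described earlier in the chapter) the $2^n$ ordered products $e_I := e_{i_1} \cdots e_{i_k}$ indexed by subsets $I = \{i_1 < \cdots < i_k\}$ form a $K$-basis of $C(V)$, with the even-sized subsets indexing a basis of $C_0(V)$. Any $\alpha \in C_0(V)$ may be written uniquely as $\alpha = \sum_{|I|\text{ even}} c_I e_I$.

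Next I would exploit that $V$ generates $C(V)$ as a $K$-algebra, so $\alpha$ is central in $C(V)$ iff $[\alpha, e_j] = 0$ for every $j$. A sign count using $e_i e_j = -e_j e_i$ gives the following dichotomy when $|I|$ is even: if $j \notin I$, then $e_j e_I = (-1)^{|I|} e_I e_j = e_I e_j$, so these commute; if $j \in I$, moving $e_j$ to the matching slot in $e_I$ from either side produces signs differing by $(-1)^{|I|-1} = -1$, so $e_j e_I = -e_I e_j$. Therefore
$$
[e_j, \alpha] = \sum_{\substack{|I|\text{ even}\\ j \in I}} c_I (e_j e_I - e_I e_j) = -2\!\!\sum_{\substack{|I|\text{ even}\\ j \in I}} c_I\, e_I e_j.
$$

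Finally, for each such $I$ I would write $e_I e_j = \pm Q(e_j)\, e_{I \setminus \{j\}}$ with $Q(e_j) \in K^\times$, and note that as $I$ varies over even-sized subsets containing $j$, the complements $I \setminus \{j\}$ are distinct odd-sized subsets, hence give linearly independent standard basis elements of $C(V)$. The vanishing of $[e_j, \alpha]$ then forces $c_I = 0$ for every even $I$ with $j \in I$; letting $j$ range over $\{1,\ldots,n\}$ kills $c_I$ for every nonempty even $I$, so $\alpha = c_\emptyset \cdot 1$ is a scalar. The only point that needs care is the bookkeeping of signs that distinguishes the cases $j \in I$ and $j \notin I$; once one commits to an ordered-product basis this is mechanical, and the use of non-degeneracy enters only to guarantee $Q(e_j) \neq 0$ in the orthogonal basis.
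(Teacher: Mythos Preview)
Your proof is correct and follows exactly the approach the paper sketches: take an orthogonal basis $\{\vec e_i\}$ satisfying $\vec e_i \vec e_j = -\vec e_j \vec e_i$, and impose the commutation relations to force all nontrivial coefficients to vanish. The paper's own proof is only a one-line sketch with references, so you have simply filled in the details the paper omits; the one implicit point you might make explicit is that $\Char(K) \neq 2$ is needed for the factor $-2$ to be nonzero.
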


\begin{proof}
This can be shown by taking an orthogonal basis $\{\vec e_i\}$ for $V$, which necessarily satisfies $\vec e_i \vec e_j =  -\vec e_j \vec e_i$, and imposing the commutation relation.  This is done explicitly in \cite[Lemma 2.3, p174]{Cassels:1978aa}, \cite[\S54:4, pp135--6]{OMeara:1971zr}, \cite[Cor 23.9, p126]{Shimura:2010vn} and somewhat indirectly in \cite[Thrm 3.4, p92 and Thrm 2.2, p109]{Lam:2005kl}. 
\end{proof}

\begin{thm}
Suppose that $(V,Q)$ is a non-degenerate quadratic space.  Then the spinor norm map $\text{sn}:O(V) \ra \K^\times/(\K^\times)^2$ is a well-defined group homomorphism.
\end{thm}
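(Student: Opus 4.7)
The plan is to lift presentations $\alpha = \tau_{\v_1} \cdots \tau_{\v_k}$ into the Clifford algebra and exploit the Clifford norm, which on pure tensors computes exactly $Q(\v_1) \cdots Q(\v_k)$ by equation (\ref{Eq:norm_of_vector_product}). Theorem \ref{Thm:generation_by_symmetries} guarantees that every $\alpha \in O(V)$ admits at least one such presentation, so $\text{sn}$ is defined on all of $O(V)$; the job is to show the output in $K^\times/(K^\times)^2$ is independent of the chosen presentation, after which the homomorphism property is immediate because concatenation of symmetry strings multiplies the corresponding scalars.

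The first step I would carry out is the key Clifford identity: for any anisotropic $\v \in V$, direct expansion using $\v \w + \w \v = 2B(\v,\w)$ and $\v^{-1} = \v/Q(\v)$ gives
\[
\v \, \w \, \v^{-1} \;=\; \frac{2B(\v,\w)}{Q(\v)}\v - \w \;=\; -\tau_{\v}(\w).
\]
Thus for a product $\beta := \v_1 \cdots \v_k \in C(V)^\times$ lifting $\alpha = \tau_{\v_1}\cdots \tau_{\v_k}$, conjugation satisfies $\beta \, \w \, \beta^{-1} = (-1)^k \alpha(\w)$ for all $\w \in V$. Note that $(-1)^k = \det(\alpha)$, so the sign depends only on $\alpha$, not on the chosen presentation.

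Now suppose $\alpha$ has two presentations yielding lifts $\beta_1 := \v_1 \cdots \v_k$ and $\beta_2 := \w_1 \cdots \w_\ell$. Because $(-1)^k = \det(\alpha) = (-1)^\ell$, the signs agree and $\beta_1 \w \beta_1^{-1} = \beta_2 \w \beta_2^{-1}$ for every $\w \in V$. Equivalently, $\beta_2^{-1}\beta_1$ commutes with all of $V$, and since $V$ generates $C(V)$ as a $K$-algebra, $\beta_2^{-1}\beta_1$ lies in the center of $C(V)$. Moreover $\beta_2^{-1}\beta_1$ is an even element (both $\beta_i$ have parity $\equiv \det(\alpha) \pmod 2$), so Lemma \ref{Lem:scalar_center} forces $\beta_1 = c\, \beta_2$ for some $c \in K^\times$. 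Applying the Clifford norm $N$ and using (\ref{Eq:norm_of_vector_product}) along with $N(c) = c\,\widetilde{c} = c^2$ and the multiplicativity $N(c\beta_2) = N(c)N(\beta_2)$ (which holds because $N(\beta_2)$ is scalar), one obtains
\[
Q(\v_1) \cdots Q(\v_k) \;=\; c^2 \, Q(\w_1) \cdots Q(\w_\ell),
\]
so the two products agree modulo $(K^\times)^2$. This establishes well-definedness.

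The homomorphism property is then essentially free: given $\alpha = \tau_{\v_1}\cdots \tau_{\v_k}$ and $\alpha' = \tau_{\w_1}\cdots \tau_{\w_\ell}$, the concatenation is a valid symmetry presentation of $\alpha \alpha'$, so $\text{sn}(\alpha\alpha') = Q(\v_1)\cdots Q(\v_k)\,Q(\w_1)\cdots Q(\w_\ell) = \text{sn}(\alpha)\,\text{sn}(\alpha') \pmod{(K^\times)^2}$. The main obstacle is the middle step, specifically the leap from ``$\beta_2^{-1}\beta_1$ centralizes $V$'' to ``$\beta_2^{-1}\beta_1$ is a scalar.'' This requires both the fact that $V$ generates $C(V)$ (to upgrade centralizing $V$ to being central in $C(V)$) and the parity bookkeeping that lets us invoke Lemma \ref{Lem:scalar_center}, which is stated only for even central elements; the parity matching $k \equiv \ell \pmod 2$ is what makes the whole argument go through.
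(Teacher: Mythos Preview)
Your proof is correct and follows essentially the same route as the paper: lift symmetry presentations into the Clifford algebra via Lemma \ref{Lem:clifford_conjugation_symmetry}, use Lemma \ref{Lem:scalar_center} to force the discrepancy between two lifts to be a scalar, and read off the square relation from the Clifford norm via (\ref{Eq:norm_of_vector_product}). The only cosmetic difference is that the paper first concatenates the two presentations into a single presentation of the identity (so that the lift $u$ itself is central and even), whereas you compare $\beta_1$ and $\beta_2$ directly; the underlying mechanism is identical.
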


\begin{proof}
To see that $\text{sn}(\al)$ is well-defined, notice that any two expressions for $\al$ as a product of transpositions gives rise to an expression for the identity map as a product of an even number of reflection symmetries
$$
\prod_i \tau_{\v_i} = \id \in SO(V),
$$
and $\text{sn}(\al)$ is well-defined iff $\prod_i Q(\v_i) \in (\K^\times)^2$.  Lemma \ref{Lem:clifford_conjugation_symmetry} allows us to interpret  $\tau_{\v}$ as conjugation by $\v$ in $C(V)$ and  letting $u := \prod_i \v_i \in C_0(V)$ we see that $u\w u^{-1} = \w$ for all $\w \in V$.  Therefore by Lemma \ref{Lem:scalar_center} we know that $u \in K^\times$, and so 
$\prod_i Q(\v_i) = u \widetilde{u} = u^2 \in (K^\times)^2$.
%
%
%
%
%
%
%
(This argument also appears in \cite[Cor 3, p178]{Cassels:1978aa}, \cite[\S24.8, p131]{Shimura:2010vn}, \cite[\S55, p137]{OMeara:1971zr} and \cite[Thrm 1.13, p108]{Lam:2005kl}.)
\end{proof}

%
%
%
%
%
%

\section{The Spin Group}

Now that we understand some basic properties of the Clifford algebra and the orthogonal group, we are ready to construct a very useful ``two-fold cover'' 
of the special orthogonal group $SO(V)$ called the spin group.  Aside from being interesting in its own right, the spin group plays a very important role in the theory of indefinite quadratic forms.

\medskip





As a first step, we notice that conjugation in the Clifford algebra is a very interesting operation because it naturally produces elements of the orthogonal group.  
For example, if it happens that $u\in C(V)^\times$ satisfies $u^{-1} V u \subseteq V$ then we claim that this conjugation gives an isometry of $V$, and so it is an element of the orthogonal group $O(V)$.  To see this, for any $\x \in V$  we compute 
\begin{align*}
Q(u^{-1} \x u) 
= (u^{-1} \x u) (u^{-1} \x u)
= u^{-1} \cdot \x \cdot \x \cdot u
= Q(\x).
\end{align*}
Amazingly, we can even identify exactly 
which element of $O(V)$ this conjugation gives us.
\begin{lem} \label{Lem:clifford_conjugation_symmetry}
Suppose that $u\in V$ satisfies $Q(u) \neq 0$ and that for all $\x\in V$ the conjugation $\varphi_u: \x \mapsto u^{-1} \x u \in V$.  Then $\varphi_u \in O(V)$ and $\varphi_u$ gives the negative reflection symmetry $-\tau_u$.
\end{lem}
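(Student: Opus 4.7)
The plan is to reduce both claims to a short direct calculation in $C(V)$ using the fundamental squaring relation $\v^2 = Q(\v)$ and its polarized form.

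First I would set up the inverse of $u$ explicitly. Since $u \in V$ satisfies $Q(u) \neq 0$, the relation $u^2 = Q(u)$ shows that $u \cdot \frac{u}{Q(u)} = 1$, so $u^{-1} = u/Q(u)$ inside $C(V)$. (This agrees with the general formula $u^{-1} = \widetilde{u}/N(u)$ from the previous section, since $\widetilde{u} = u$ and $N(u) = Q(u)$ for a single vector.) The fact that $\varphi_u$ is an isometry of $V$ is then the two-line computation already indicated in the excerpt: for any $\x \in V$,
\[
Q(u^{-1}\x u) = (u^{-1}\x u)(u^{-1}\x u) = u^{-1} \x^2 u = u^{-1} Q(\x) u = Q(\x),
\]
so $\varphi_u \in O(V)$ (assuming $\varphi_u$ really does preserve $V$, which will drop out of the next step).

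Next I would identify $\varphi_u$ explicitly. The key tool is the anticommutation relation derived earlier from polarization, namely $\v \cdot \w + \w \cdot \v = 2B(\v,\w)$ for $\v,\w \in V$. Applied with $\v = u$ and $\w = \x$, this gives $u\x = 2B(u,\x) - \x u$. Multiplying on the right by $u$ and using $u^2 = Q(u)$ yields
\[
u \x u = 2B(u,\x)\, u - \x\, u^2 = 2B(u,\x)\, u - Q(u)\, \x.
\]
Dividing by $Q(u)$ then gives
\[
\varphi_u(\x) = u^{-1} \x u = \frac{2B(u,\x)}{Q(u)}\, u - \x.
\]
This expression lies in $V$, confirming the hypothesis $u^{-1}Vu \subseteq V$, and comparing with the definition $\tau_u(\x) = \x - \frac{2B(u,\x)}{B(u,u)}\, u$ from \eqref{Eq:reflection_defn} (using $B(u,u) = Q(u)$) shows immediately that $\varphi_u(\x) = -\tau_u(\x)$.

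There is no real obstacle here; the whole proof is a two-step manipulation of the defining Clifford relation, and the only thing to be careful about is the sign bookkeeping when one rewrites $ux$ as $2B(u,\x) - \x u$ and tracks the minus sign that ultimately produces $-\tau_u$ rather than $\tau_u$.
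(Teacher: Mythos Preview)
Your proof is correct and follows essentially the same route as the paper: both use $u^{-1} = u/Q(u)$ and the polarized relation $u\x + \x u = 2B(u,\x)$ to rewrite $u\x u$ as $2B(u,\x)\,u - Q(u)\,\x$, then divide by $Q(u)$ and recognize $-\tau_u(\x)$. The only cosmetic difference is that the paper writes $u\x u = (u\x + \x u)u - \x u^2$ in one step rather than first solving for $u\x$ and then right-multiplying by $u$.
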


\begin{proof}  We have already seen that $\varphi_u \in O(V)$, so we only need to identify $\varphi_u$ explicitly as 
\begin{align*}
\varphi_u 
& = u^{-1} \x u 
= \frac{u}{Q(u)} \x u \\
&= \frac{1}{Q(u)} \[(u\x + \x u)u - xu^2\] \\
&= \frac{1}{Q(u)} \[2B(x,u) u - xQ(u)\] \\
&= -x + \frac{2B(x,u)}{Q(u)}u \\
&= - \tau_u(\x).
\end{align*}
\end{proof}

This leads us to define the multiplicative subgroup 
$$
U_0 := \{u \in (C_0(V))^\times \mid u^{-1} V u \subseteq V\} \subseteq C_0(V)
$$
on which we have a natural {\bf conjugation map} $\varphi: U_0 \ra O(V)$ defined by sending $u \mapsto (\varphi_u:\x \mapsto u^{-1}\x u)$.  It takes a little work to see that the image of this map is in $SO(V)$.

\begin{lem} \label{Lem:conjugation_into_SO}
The conjugation map above gives a homomorphism $\varphi: U_0 \ra SO(V)$.
\end{lem}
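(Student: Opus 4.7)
The lemma combines two assertions: that $\varphi$ is a group homomorphism, and that its image actually sits inside $SO(V)$ rather than merely $O(V)$. The first I would handle by the immediate calculation $\varphi_{u_1 u_2}(\x) = (u_1 u_2)^{-1}\x (u_1 u_2) = \varphi_{u_2}(\varphi_{u_1}(\x))$, so $\varphi$ is indeed a group map (up to the usual left/right convention). The genuine content is showing $\det \varphi_u = +1$ for every $u \in U_0$, which I expect to be where all the work lies.

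My plan is to reduce to the vector case already handled by Lemma \ref{Lem:clifford_conjugation_symmetry}. Given $u \in U_0$, Theorem \ref{Thm:generation_by_symmetries} lets me write $\varphi_u = \tau_{w_1} \cdots \tau_{w_k}$ for some anisotropic $w_i \in V$, so $\det \varphi_u = (-1)^k$ and I need to force $k$ even. The idea is to introduce the companion Clifford element $u^* := w_k w_{k-1} \cdots w_1 \in C(V)^\times$. A direct unwinding using $\varphi_{w_i} = -\tau_{w_i}$ and the order-reversal in the composition of the $\varphi_{w_i}$'s gives the key identity $\varphi_{u^*} = (-1)^k \varphi_u$. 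When $k$ is even, $u^*$ is itself even, $\varphi_{u^*} = \varphi_u$, and so $u(u^*)^{-1}$ is an even element that centralizes $V$ under conjugation; since $V$ generates $C(V)$ as an algebra, this element lies in $Z(C(V))$, and Lemma \ref{Lem:scalar_center} forces it to be a scalar in $K^\times$. Then $\varphi_u$ coincides with the product of an even number of reflection symmetries, hence lies in $SO(V)$.

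The main obstacle I anticipate is ruling out the case $k$ odd. Here $u^* \in C_1(V)^\times$ is odd, $\varphi_{u^*} = -\varphi_u$, and I would focus on the element $y := u(u^*)^{-1} \in C_1(V)^\times$, which satisfies $\varphi_y = -\id_V$, i.e.\ $y^{-1}\x y = -\x$ for every $\x \in V$. My plan is to exploit the fact that both inner conjugation by $y$ and the canonical parity automorphism $\alpha$ of $C(V)$ (which negates each vector) are algebra automorphisms agreeing on the generating set $V$, hence must agree on all of $C(V)$. Applying this identity of automorphisms to $y$ itself yields $y = y^{-1}y y = \alpha(y) = -y$, and since $\Char(K) \neq 2$ this forces $y = 0$, contradicting $y \in C(V)^\times$. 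This rules out $k$ odd and completes the reduction.
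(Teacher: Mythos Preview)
Your argument is correct, and shares the paper's core strategy: realize $\varphi_u$ as a product of reflections $\tau_{w_1}\cdots\tau_{w_k}$, pass to the Clifford-algebra product of the $w_i$, and exploit the $\Z/2\Z$-grading to force $k$ even. The difference is in how the parity obstruction is extracted. The paper forms $\beta := (\prod_i w_i)\cdot u^{-1}$, observes that $\varphi_\beta = \id$ means $\beta$ commutes with every $\v\in V$, and then checks directly (via an orthogonal basis $\{\vec e_j\}$ and the relations $\vec e_i\vec e_j = -\vec e_j\vec e_i$) that any element of $C(V)$ commuting with all of $V$ must be even; since $u\in C_0(V)$, this forces $\prod_i w_i$ even and hence $k$ even. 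You instead assume $k$ odd, produce the odd element $y = u(u^*)^{-1}$ anticommuting with $V$, identify conjugation by $y$ with the parity automorphism $\alpha$, and apply $\alpha$ to $y$ itself to get $y = -y$. Both routes are short and rest on the same grading structure; yours packages the combinatorics of the orthogonal-basis check into the single observation that $\alpha(y)=-y$, while the paper's version avoids the case split and the need to name $\alpha$ at all.

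One minor point: your ``$k$ even'' paragraph is redundant. Once you have written $\varphi_u = \tau_{w_1}\cdots\tau_{w_k}$ and noted $\det\varphi_u = (-1)^k$, the case $k$ even is already finished; the scalar-center argument you give there re-proves what you already assumed. The entire content lies in ruling out $k$ odd, which you do correctly.
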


\begin{proof}
If $u\in U_0$ then
by Theorem \ref{Thm:generation_by_symmetries} we can find $r$ anisotropic vectors $\v_i$ so that $\al := \prod_i\v_i$ gives $\varphi(\al) = \varphi(u)$, and so $\beta := \al \cdot u^{-1}$ has $\varphi(\beta) = \id \in O(V)$.  This is equivalent to the commutation relation $\beta\v = \v \beta$ for all $\v \in V$.  However by expressing $\beta$ as a unique linear combination of ordered monomials with respect to some fixed orthogonal basis $\{\w_i\}$for $V$, this commutation relation for $\w_i$ says that each monomial containing $\w_i$ must have even degree, and so $\beta \in C_0(V)$.  Therefore $\al \in C_0(V)$, $r$ is even and $\varphi(u) = \varphi(\al) \in SO(V)$.

This argument can be found in \cite[Thrm 3.1, pp176--7]{Cassels:1978aa}, \cite[Thrm 24.6, pp129--130]{Shimura:2010vn}, and there Shimura points out that this result is originally due to Lipschitz \cite{:1959fk}, though a special case was shown by Clifford.
\end{proof}

%
This map is very useful for connecting the Clifford algebra and the special orthogonal group, as it provides a natural and explicit covering.

\begin{lem} \label{Lem:spin_covering_lemma}
Suppose that $(V, Q)$ is non-degenerate quadratic space.  Then the conjugation map $\varphi: U_0 \ra SO(V)$ is surjective with kernel $K^\times$, and so $U_0/(K^\times) \xrightarrow{\sim} SO(V)$.
\end{lem}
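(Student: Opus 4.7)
The plan is to establish surjectivity and compute the kernel separately, then conclude via the first isomorphism theorem. The two ingredients already available are Lemma \ref{Lem:clifford_conjugation_symmetry} (conjugation by an anisotropic vector $u \in V$ realizes the \emph{negative} reflection $-\tau_u$) and Lemma \ref{Lem:scalar_center} (an even central element of $C(V)$ is a scalar); everything comes down to chaining these with Theorem \ref{Thm:generation_by_symmetries}.

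\medskip\noindent\textbf{Surjectivity.} Given $\beta \in SO(V)$, Theorem \ref{Thm:generation_by_symmetries} lets me write $\beta = \tau_{\v_1}\cdots\tau_{\v_k}$ with each $Q(\v_i)\neq 0$. Since $\det(\beta) = 1$ and each symmetry has determinant $-1$, the number of factors $k$ is even. Set $u := \v_1 \cdots \v_k \in C(V)$. Each $\v_i$ is invertible in $C(V)^\times$ (because $\v_i \cdot \v_i = Q(\v_i) \in K^\times$), so $u$ is invertible; being a product of an even number of vectors, $u \in C_0(V)$; and conjugation by $u$ maps $V$ to $V$ because each individual $\varphi_{\v_i} = -\tau_{\v_i}$ stabilizes $V$, so $u \in U_0$. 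Finally, by Lemma \ref{Lem:clifford_conjugation_symmetry} and the fact that $\varphi$ is a homomorphism (Lemma \ref{Lem:conjugation_into_SO}),
\[
\varphi(u) = \varphi_{\v_1}\cdots\varphi_{\v_k} = (-\tau_{\v_1})\cdots(-\tau_{\v_k}) = (-1)^k\,\tau_{\v_1}\cdots\tau_{\v_k} = \beta,
\]
since $k$ is even.

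\medskip\noindent\textbf{Kernel.} It is clear that $K^\times \subseteq \ker\varphi$, since scalars commute with every element of $C(V)$. Conversely, suppose $u \in U_0$ with $\varphi(u) = \id$, i.e.\ $u\v = \v u$ for all $\v \in V$. Because $V$ generates $C(V)$ as a $K$-algebra, $u$ commutes with every element of $C(V)$, so $u$ lies in the center of $C(V)$. Since $u \in U_0 \subseteq C_0(V)$ is an even central element, Lemma \ref{Lem:scalar_center} forces $u \in K$, and invertibility in $C(V)$ then gives $u \in K^\times$. Combining, $\ker\varphi = K^\times$, and the first isomorphism theorem yields the asserted isomorphism $U_0/K^\times \xrightarrow{\sim} SO(V)$.

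\medskip\noindent\textbf{Main obstacle.} The only genuinely substantive step is the kernel computation: the passage from ``$u$ centralizes $V$'' to ``$u$ is a scalar'' rests entirely on Lemma \ref{Lem:scalar_center}, whose nontrivial content is that in $C_0(V)$ (as opposed to $C(V)$, where the graded center can be larger) the even central elements collapse to $K$. The surjectivity step is essentially a bookkeeping argument once one notices that the sign discrepancy in $\varphi_u = -\tau_u$ is killed by the parity constraint coming from $\det(\beta)=1$; this parity matching is what makes the double cover land in $SO(V)$ rather than all of $O(V)$.
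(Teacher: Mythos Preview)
Your proof is correct and follows essentially the same approach as the paper: surjectivity via Theorem \ref{Thm:generation_by_symmetries} and Lemma \ref{Lem:clifford_conjugation_symmetry}, and the kernel via Lemma \ref{Lem:scalar_center}. The paper's proof is extremely terse (it merely cites these ingredients), and you have faithfully unpacked the intended argument; the only cosmetic wrinkle is the composition order in $\varphi(u) = \varphi_{\v_1}\cdots\varphi_{\v_k}$ (conjugation reverses products), but this is irrelevant for surjectivity.
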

\begin{proof}
Surjectivity follows from Theorem \ref{Thm:generation_by_symmetries} and Lemma \ref{Lem:clifford_conjugation_symmetry}.  To see that $\Ker(\varphi) = K^\times$, use Theorem \ref{Thm:generation_by_symmetries} and Lemma \ref{Lem:scalar_center}.

This can also be found in \cite[Thrm 3.1, p176]{Cassels:1978aa} and \cite[Thrm 24.6(iii), p129]{Shimura:2010vn}.
%
\end{proof}

Using this Lemma, we define the {\bf spin group} $\Spin(V)$ as the elements of $\al \in U_0$ with norm $N(\al) = 1$.  The spin group an algebraic section of the covering map $\varphi: U_0 \ra SO(V)$, which we soon show is a ``double covering'' of its image.
%
%
A helpful observation for doing this is
that the spinor norm of an element of $\Spin(V)$ under this composition can be computed fairly easily.
\begin{lem}
Suppose that $(V, Q)$ is non-degenerate quadratic space.  Then for any $u \in U_0$ we have $sn(u) = N(u) (K^\times)^2$.
\end{lem}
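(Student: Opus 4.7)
The plan is to interpret $\mathrm{sn}(u)$ as $\mathrm{sn}(\varphi(u))$ and reduce the claim to a direct comparison on a product of anisotropic vectors, using that $\varphi$ covers $SO(V)$ with kernel exactly $K^\times$.

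First I would write $\varphi(u) \in SO(V)$ (by Lemma \ref{Lem:conjugation_into_SO}) as a product of reflection symmetries using Theorem \ref{Thm:generation_by_symmetries}, say $\varphi(u) = \tau_{\v_1}\cdots\tau_{\v_k}$ with $Q(\v_i)\neq 0$. Since $\det\varphi(u)=1$ and each $\tau_{\v_i}$ has determinant $-1$, the integer $k$ must be even. By the definition of the spinor norm, $\mathrm{sn}(\varphi(u)) = Q(\v_1)\cdots Q(\v_k)\,(K^\times)^2$.

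Next I would lift this product of reflections into $C_0(V)$. Set $\alpha := \v_1\v_2\cdots \v_k$, which lies in $C_0(V)^\times$ because $k$ is even and each $\v_i$ is invertible in $C(V)$. By Lemma \ref{Lem:clifford_conjugation_symmetry}, conjugation by $\v_i$ in $C(V)$ realizes $-\tau_{\v_i}$, so conjugation by $\alpha$ realizes $(-1)^k\tau_{\v_1}\cdots\tau_{\v_k} = \tau_{\v_1}\cdots\tau_{\v_k} = \varphi(u)$. In particular $\alpha\in U_0$ and $\varphi(\alpha)=\varphi(u)$, so by Lemma \ref{Lem:spin_covering_lemma} we have $u = c\alpha$ for some $c\in K^\times$.

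Finally I would compute the Clifford norm directly. Using $\widetilde{c\alpha}=c\widetilde{\alpha}$ and equation (\ref{Eq:norm_of_vector_product}),
\[
N(u) = u\widetilde{u} = c^2\,\alpha\widetilde{\alpha} = c^2\,Q(\v_k)\cdots Q(\v_1) = c^2\prod_{i=1}^{k} Q(\v_i).
\]
Modulo $(K^\times)^2$ the factor $c^2$ disappears, yielding $N(u)(K^\times)^2 = \prod_i Q(\v_i)\,(K^\times)^2 = \mathrm{sn}(\varphi(u))$, as claimed. The only point that requires care is verifying that $k$ is even (so that $\alpha$ actually sits in $C_0(V)$ and the signs from Lemma \ref{Lem:clifford_conjugation_symmetry} cancel); once that is noted, the rest is a mechanical unwinding of the definitions of $\mathrm{sn}$ and $N$ on a product of vectors.
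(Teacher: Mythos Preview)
Your proof is correct and follows essentially the same approach as the paper: express $\varphi(u)$ as a product of reflections, lift to a product $\alpha$ of anisotropic vectors in $C_0(V)$, use that $\ker\varphi = K^\times$ to write $u = c\alpha$, and then compare $N(u)$ with the spinor norm via (\ref{Eq:norm_of_vector_product}). One minor bookkeeping slip: since $\varphi_u(\x) = u^{-1}\x u$, conjugation by $\alpha = \v_1\cdots\v_k$ actually yields $\tau_{\v_k}\cdots\tau_{\v_1}$ rather than $\tau_{\v_1}\cdots\tau_{\v_k}$, so you should take $\alpha = \v_k\cdots\v_1$ (or simply relabel) to ensure $\varphi(\alpha)=\varphi(u)$; this has no effect on the conclusion since the factors $Q(\v_i)$ commute.
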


\begin{proof}
This is shown in \cite[Cor 1, p177]{Cassels:1978aa} and \cite[\S24.8 above (24.7a), p131]{Shimura:2010vn}, but we give the argument below.

We first show that any $\al \in U_0$ can be written as a product of an even number of anisotropic vectors $\v_i \in  V$.  To see this, we use the proof of Lemma \ref{Lem:conjugation_into_SO} to see that $\al := \prod_i\v_i \in (C_0(V))^\times$ and that $\varphi(\al \cdot u^{-1}) = \id \in O(V)$.  Therefore $\al \cdot u^{-1}$ commutes with $V$, hence is in the center of $C(V)$, and applying Lemma \ref{Lem:scalar_center} shows that $u  = c \prod_i \v_i$ for some $c \in K^\times$.

Given that $u \in U_0$ can be written as a product of anisotropic vectors $u = \v_1 \cdots \v_r$, the lemma follows from computing
$N(u) = \prod_{i=1}^r Q(\v_i) = sn(u)$ 
using (\ref{Eq:norm_of_vector_product}).
\end{proof}

From this it follows that the spinor norm of the image of any element of $\Spin(V)$ under $\varphi$ must be trivial (i.e. $sn(\varphi(\Spin(V)) = (K^\times)^2$ ), and so the image of $\Spin(V)$ is contained in the {\bf spinor kernel} $\kappa(V) := \ker(sn)$.  In fact  
%

\begin{lem}
The map $\varphi: \Spin(V) \ra SO(V)$ has image $\kappa(V)$ and kernel $\{\pm 1\}$.
The image $\Im(\varphi(\Spin(V))) = \kappa(V)$ since any element $\al \in U_0$ with $sn(\al) = (K^\times)^2$ must have $N(\al) = 1$, and also $\Ker(\varphi(\Spin(V))) = \{\pm 1\}$.  
\end{lem}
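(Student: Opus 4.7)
The plan is to deduce both claims directly from the two preceding lemmas: surjectivity of $\varphi: U_0 \to SO(V)$ with kernel $K^\times$, and the identity $\mathrm{sn}(\varphi(u)) = N(u)(K^\times)^2$ for $u \in U_0$. Since $\Spin(V)$ is defined as $\{u \in U_0 : N(u) = 1\}$, both statements are essentially bookkeeping: chase an element $u \in U_0$ through the spinor norm formula and see exactly when it lies in $\Spin(V)$ and when $\varphi(u)$ is trivial.

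For the image, first I would verify $\varphi(\Spin(V)) \subseteq \kappa(V)$: if $u \in \Spin(V)$ then $N(u) = 1$, so $\mathrm{sn}(\varphi(u)) = 1 \cdot (K^\times)^2$, placing $\varphi(u)$ in the kernel of the spinor norm. For the reverse containment, take any $\sigma \in \kappa(V)$ and use surjectivity of $\varphi: U_0 \to SO(V)$ to pick a lift $u \in U_0$ with $\varphi(u) = \sigma$. Then $\mathrm{sn}(\sigma) = N(u)(K^\times)^2 = (K^\times)^2$ forces $N(u) = c^2$ for some $c \in K^\times$. The element $u' := c^{-1} u$ still lies in $U_0$ (because $K^\times$ is central and hence lies in $U_0$, and $\varphi$ is unchanged by rescaling), and its norm is $N(u') = c^{-2} N(u) = 1$, so $u' \in \Spin(V)$ with $\varphi(u') = \sigma$.

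For the kernel, suppose $u \in \Spin(V)$ satisfies $\varphi(u) = \mathrm{id}$. Then $u \in \ker(\varphi|_{U_0}) = K^\times$. Since the canonical involution reverses products of vectors, it fixes pure scalars, i.e.\ $\widetilde{c} = c$ for $c \in K^\times$. Hence $N(u) = u \cdot \widetilde{u} = u^2$, and the condition $N(u) = 1$ becomes $u^2 = 1$, giving $u = \pm 1$. Conversely $\pm 1$ clearly lie in $\Spin(V) \cap \ker(\varphi)$, so the kernel is exactly $\{\pm 1\}$.

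There is no substantial obstacle here; the entire argument is a diagram chase once the two preceding lemmas are in hand. The only subtle point to check cleanly is that $K^\times \subseteq U_0$, so that the rescaling step $u \mapsto c^{-1}u$ keeps us inside $U_0$; this is immediate because scalars commute with $V$ and thus conjugate it into itself trivially.
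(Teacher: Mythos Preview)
Your proof is correct and follows essentially the same approach as the paper: both deduce the result directly from the surjectivity of $\varphi: U_0 \to SO(V)$ with kernel $K^\times$ and from the formula $\mathrm{sn}(\varphi(u)) = N(u)(K^\times)^2$. Your treatment of the image is in fact more carefully written than the paper's, which elides the rescaling step (the paper asserts that $\mathrm{sn}(\alpha)$ trivial forces $N(\alpha)=1$, whereas you correctly note it only forces $N(\alpha)=c^2$ and then adjust by $c^{-1}$); the kernel computation is identical in both.
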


\begin{proof}
The image is $\kappa(V)$ because any $\al \in U_0$ with $sn(\al) = (K^\times)^2$ must have $N(\al) = 1$, and the kernel consists of the elements $c \in \K^\times$ with $N(c) = c^2 = 1$.
\end{proof}

We can conveniently summarize our results in the following exact commutative diagram:



$$
\xymatrix{
& \{\pm1\} \ar@{^{(}->}[d] & K^\times \ar@{^{(}->}[d] \\
1 \ar[r] & \Spin(V) \ar[r] \ar@{-->>}[d]^{\varphi} & U_0 \ar[r]^{\text{Norm}} \ar@{->>}[d]^{\varphi} & K^\times \ar[r] \ar[d]^{\text{id}} & 1\\
1 \ar[r] & \kappa(V) \ar[r] & SO(V) \ar[r]^{sn} & K^\times/(K^\times)^2 
}
$$

This shows our main result
\begin{thm}
$\Spin(V)$ is a double covering of the spinor kernel $\kappa(V) \subseteq SO(V)$, and the obstruction to this being a covering map are presence of non-trivial squareclasses of $K^\times$ in the image of the spinor norm map $sn$.
\end{thm}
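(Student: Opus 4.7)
The plan is to read off the theorem directly from the two most recently established lemmas, which have essentially already done the work; what remains is only to assemble them into the claimed ``double covering'' statement and to identify the obstruction explicitly.

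First I would invoke the preceding lemma, which shows that the restriction $\varphi\colon \Spin(V) \to SO(V)$ has image equal to the spinor kernel $\kappa(V)$ and kernel $\{\pm 1\}$. This immediately yields the short exact sequence
\[
1 \longrightarrow \{\pm 1\} \longrightarrow \Spin(V) \xrightarrow{\,\varphi\,} \kappa(V) \longrightarrow 1,
\]
which is exactly the assertion that $\Spin(V)$ is a two-to-one cover of $\kappa(V)$. The fact that $-1 \in \Spin(V)$ (since $N(-1) = 1$ and $\varphi(-1) = \id$) ensures that the fibers really do have cardinality two, so this is a nontrivial double cover and not merely a surjection.

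For the second assertion, I would point out that by definition $\kappa(V) = \ker(sn)$, so the spinor norm map yields a second short exact sequence
\[
1 \longrightarrow \kappa(V) \longrightarrow SO(V) \xrightarrow{\,sn\,} \Im(sn) \longrightarrow 1
\]
inside $K^\times/(K^\times)^2$. Splicing this with the previous sequence shows that $\varphi$ fails to cover all of $SO(V)$ precisely by the quotient $SO(V)/\kappa(V) \cong \Im(sn) \subseteq K^\times/(K^\times)^2$. Thus the obstruction to $\Spin(V) \twoheadrightarrow SO(V)$ being a (surjective) double cover is exactly the presence of nontrivial squareclasses in $\Im(sn)$, as claimed.

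I do not expect any genuine obstacle here: both exactness statements have been proved in the immediately preceding lemmas, and the diagram displayed just before the theorem already encodes the argument pictorially. The only mild subtlety is making sure the kernel in the top row really is $\{\pm 1\}$ rather than a larger subgroup of $K^\times$: this follows because $\Spin(V)$ is cut out by $N(\al) = 1$, and the norm restricted to the scalars $K^\times \subseteq U_0$ is the squaring map $c \mapsto c^2$, whose kernel over a field of characteristic $\ne 2$ is exactly $\{\pm 1\}$. With that remark in place the proof is essentially a one-line diagram chase.
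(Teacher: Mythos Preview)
Your proposal is correct and matches the paper's approach exactly: the paper gives no separate proof for this theorem but simply writes ``This shows our main result'' after displaying the exact commutative diagram, so the intended argument is precisely the assembly of the two preceding lemmas (image $=\kappa(V)$, kernel $=\{\pm1\}$, and the spinor norm exact sequence) that you carry out. Your remark about why the kernel is $\{\pm1\}$ rather than something larger is a helpful clarification that the paper leaves implicit.
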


In the special case where $K^\times$ has only one squareclass (e.g. when $K$ is algebraically closed), we have that 

\begin{cor}
If $K^\times = (K^\times)^2$, then $\Spin(V)$ is a double cover of $SO(V)$.
\end{cor}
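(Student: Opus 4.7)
The plan is to show that the hypothesis $K^\times = (K^\times)^2$ forces the spinor kernel $\kappa(V)$ to coincide with all of $SO(V)$, at which point the corollary is immediate from the theorem just proved.

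First I would unwind the definition of $\kappa(V) = \ker(\mathrm{sn})$, where the spinor norm is viewed as a homomorphism $\mathrm{sn}: SO(V) \to K^\times/(K^\times)^2$. Under the assumption $K^\times = (K^\times)^2$, the codomain $K^\times/(K^\times)^2$ is the trivial group, so $\mathrm{sn}$ is the trivial homomorphism and consequently $\kappa(V) = SO(V)$.

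Next I would invoke the theorem preceding the corollary, which asserts that the conjugation map $\varphi: \Spin(V) \to SO(V)$ surjects onto $\kappa(V)$ with kernel $\{\pm 1\}$. Combined with the equality $\kappa(V) = SO(V)$ established in the previous step, this yields a short exact sequence
\begin{equation*}
1 \longrightarrow \{\pm 1\} \longrightarrow \Spin(V) \xrightarrow{\,\varphi\,} SO(V) \longrightarrow 1,
\end{equation*}
which is exactly the statement that $\Spin(V)$ is a double cover of $SO(V)$.

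There is essentially no obstacle here, as the corollary is a direct specialization of the theorem; the only content is the observation that $K^\times = (K^\times)^2$ trivializes the square-class group that obstructs $\varphi$ from being surjective onto all of $SO(V)$. If anything needed justification in passing, it would be checking that $\{\pm 1\}$ remains a two-element subgroup of $\Spin(V)$ even when $K^\times = (K^\times)^2$; this holds as long as $\mathrm{char}(K) \neq 2$, which is the standing assumption of the chapter (so that $1 \neq -1$ in $K$).
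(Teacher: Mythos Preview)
Your proposal is correct and follows exactly the approach implicit in the paper: the preceding theorem identifies the obstruction to $\Spin(V)$ covering all of $SO(V)$ as the nontriviality of $K^\times/(K^\times)^2$, and the corollary is the immediate specialization where this obstruction vanishes. The paper does not spell out a separate proof, treating it as evident from the theorem; your additional remark about needing $\mathrm{char}(K)\neq 2$ so that $\{\pm1\}$ has two elements is a nice point of care the paper leaves unstated.
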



Another interesting special case arises when $K=\R$ and $(V,Q)$ is positive definite.  Here all spinor norms are positive, hence they are in the identity squareclass $(\R^\times)^2$, so $\kappa(V) = SO(V)$ and again $\Spin(V)$ is a double cover of $SO(V)$.

%
%
%
%

\section{Spinor Equivalence} \label{Sec:Spinor_equivalence}

In section \ref{Sec:QF_local_global} we have seen that  equivalence of quadratic forms can be viewed as the equivalence of quadratic lattices in a quadratic space $(V, Q)$ by the action of $O(V)$.  There are other more refined notions of equivalence that are useful as well.  For example, equivalence of quadratic lattices under $SO(V)$ is called {\bf proper equivalence}, and plays an essential role in the theory of binary quadratic forms.  In this section we are interested in defining a notion of equivalence called ``spinor equivalence'' that comes from the action of the spin group $\Spin(V)$ and plays an important role for understanding indefinite quadratic forms in $n \geq 3$ variables.

We say that two quadratic forms over $\Z$ are {\bf locally spinor equivalent} if for every place $v$ their associated quadratic lattices are in the same $\kappa(V_v)$-orbit, where $\kappa(V_v)$ is the local spinor kernel group at $v$.

In our definition of the genus $\Gen(Q)$ earlier, we saw that it could also be locally realized by the action of a product of local groups $O(V_v)$ giving local isometries, and in section \ref{Sec:Auto_forms_on_Orthog} we will give a precise adelic version of this statement.  One important thing to check is that the local equivalence defining the genus is weaker than the corresponding global equivalence defining classes.  This is obvious for the definition of $\Gen(Q)$, but must be forcibly imposed in the case of the spinor genus (because $O(V) \not\subseteq \prod_v \kappa(V_v)$).

Suppose that $Q$ has a corresponding quadratic lattice $L \subset (V,Q)$.  Then we define the {\bf spinor genus} of $Q$, denoted $\Spn(Q)$, to be the set of all quadratic forms $Q'$ whose corresponding lattice $L' \subset (V,Q)$ is locally spinor eqiuvalent to $L$ after performing a global isometry (in $O(V)$).  The importance of the spinor genus comes from the following beautiful observation of Eichler that the associated spin groups  have a ``strong approximation'' property, which 
essentially says that the $\Q$-rational points of the ``adelic spin group'' are dense in the adelic group $\Spin(V)_{\A}$. While we avoid a more precise statement here, this adelic formulation of algebraic groups will play a central role in Chapter \ref{Chapter:ThetaLifting}.

\begin{thm}[Eichler \cite{Eichler:1952uq}]  Suppose that $Q$ is a non-degenerate indefinite quadratic form over $\Z$ in $n\geq 3$ variables.  Then there is exactly one class of quadratic forms in its spinor genus.
\end{thm}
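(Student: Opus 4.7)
The strategy is to translate the statement into an adelic double-coset problem and invoke strong approximation for the spin group. Fix a $\Z$-lattice $L \subset (V,Q)$ associated to $Q$, and for each finite prime $p$ let $U_p \subset \Spin(V_{\Q_p})$ denote the stabilizer of $L_p := L \otimes_\Z \Z_p$ under the conjugation covering $\varphi:\Spin(V) \to \kappa(V)$; this $U_p$ is an open compact subgroup. Set $U_{\mathrm{fin}} := \prod_p U_p \subset \Spin(V_{\A_f})$. Using the definition of local spinor equivalence (and the fact that the kernel $\{\pm 1\}$ of $\Spin \to SO$ acts trivially on lattices), one identifies the set of $\Z$-classes inside $\Spn(L)$ with the finite double coset space
$$
\Spin(V_\Q) \bs \Spin(V_\A) \,/\, \bigl(\Spin(V_\R) \cdot U_{\mathrm{fin}}\bigr).
$$

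The heart of the argument is Kneser's strong approximation theorem for the spin group. Since $Q$ is non-degenerate with $n \geq 3$, $\Spin(V)$ is a semisimple, simply connected, $\Q$-almost-simple linear algebraic group over $\Q$. The hypothesis that $Q$ is indefinite forces $\Spin(V_\R) \cong \Spin(p,q)$ with $pq > 0$ and $p + q \geq 3$, which is a non-compact real Lie group. Under these two hypotheses (simply connected and noncompact at infinity), strong approximation asserts that $\Spin(V_\Q) \cdot \Spin(V_\R)$ is dense in $\Spin(V_\A)$, or equivalently that the image of $\Spin(V_\Q)$ in $\Spin(V_{\A_f})$ is dense.

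Given this density the conclusion is essentially formal: for any $g \in \Spin(V_\A)$ the open neighborhood $g \cdot \Spin(V_\R) \cdot U_{\mathrm{fin}}$ must meet $\Spin(V_\Q) \cdot \Spin(V_\R)$, producing a decomposition $g = \gamma \cdot h \cdot u$ with $\gamma \in \Spin(V_\Q)$, $h \in \Spin(V_\R)$, and $u \in U_{\mathrm{fin}}$. Hence every double coset in the space above is trivial, so $\Spn(L) = \Cls(L)$, which is the stated result.

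The main obstacle is of course the strong approximation theorem itself, whose proof requires Bruhat--Tits theory and substantial algebraic-group input; strong approximation famously fails for $SO(V)$ precisely because $SO(V)$ is not simply connected, and this failure is exactly what causes a generic genus to split into multiple classes. Both hypotheses of the theorem are essential: $n \geq 3$ is needed to make $\Spin(V)$ a sufficiently rich semisimple group, and indefiniteness is needed to ensure that $\Spin(V_\R)$ is non-compact, without which strong approximation (and hence the theorem) is false.
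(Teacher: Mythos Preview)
Your approach is correct and is exactly the standard argument; it is also the approach the paper itself alludes to in the paragraph preceding the theorem, where Eichler's insight is described as the strong approximation property of the simply connected group $\Spin(V)$. The paper does not actually prove the theorem in-text but simply cites Cassels \cite[Thrm 7.1, p186]{Cassels:1978aa} and Shimura \cite[Theorem 32.15, p192]{Shimura:2010vn}, and the proofs found there follow precisely the strong-approximation strategy you outline.

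One small point worth tightening: your claimed \emph{bijection} between $\Z$-classes in $\Spn(L)$ and the double coset space
\[
\Spin(V_\Q) \bs \Spin(V_\A) \,/\, \bigl(\Spin(V_\R) \cdot U_{\mathrm{fin}}\bigr)
\]
is, as stated, only a surjection. The paper's definition of the spinor genus allows a global isometry in $O(V_\Q)$, not merely in $\varphi(\Spin(V_\Q)) = \kappa(V_\Q)$, so two double cosets could in principle map to the same class via an element of $O(V_\Q)$ lying outside the spinor kernel. This does not harm your argument at all: once strong approximation collapses the double coset space to a point, surjectivity alone gives $|\Spn(L)/{\sim_\Z}| = 1$. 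But it would be cleaner to phrase the reduction as ``every $L' \in \Spn(L)$ has a representative in its class of the form $\varphi(g_f)L$ for some $g_f \in \Spin(V_{\A_f})$, and strong approximation lets us write $g_f = \gamma u$ with $\gamma \in \Spin(V_\Q)$ and $u \in U_{\mathrm{fin}}$, whence $\varphi(\gamma)L = L'$,'' rather than asserting a bijection you do not need and have not quite established.
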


\begin{proof}
This is proved in \cite[Thrm 7.1, p186]{Cassels:1978aa} and \cite[Theorem 32.15, p192]{Shimura:2010vn}.
\end{proof}

This theorem allows is to understand statements about indefinite forms in $n\geq 3$ variables by performing various local computations.  There is also a (somewhat modified) version of Siegel's theorem \ref{Thm:Siegel_formulas}  due to Schulze-Pillot \cite{Schulze-Pillot:1984mn} that holds for quadratic forms when one averages over  Spinor genus $\Spn(Q)$ instead of a genus $\Gen(Q)$.  From our computations in section \ref{Sec:local_densities_for_4_squares}, we see that this kind of formula gives direct access to the arithmetic of $Q$ when there is only one class in its  spinor genus.

\chapter{The Theta Lifting}\label{Chapter:ThetaLifting}




\section{Classical to Adelic modular forms for $\GL_{2}$}
It is convenient to 
understand 
the  transformation property (\ref{Eq:modular_form_tranformation_law}) of 
modular forms 
by 
viewing 
them as functions on the algebraic group $\GL_{2}$ with certain invariance properties.  
We do this in two steps, first by lifting the function $f(z)$ on $\H$ to a function $\tilde{f}$ on $\GL_{2}(\R)$, and then by further lifting this to a function $\Fcal$ on the adelic group $\GL_{2}(\A)$ whose transformation properties can be seen most simply.  
This adelic perspective will also give us a very flexible language to use to describe the lifting of modular forms via the Weil representation.  This passage from classical to adelic modular forms is described in \cite[\S3]{Gelbart:1975ab}, \cite[\S10]{Shimura:1997aa} and \cite[\S3.1.5]{Hida2000}.

\medskip
Given a modular form $f(z):\H \ra \C$ of integral weight $k\in \Z$, level $N$, Dirichlet character $\chi$ and trivial multiplier system as in  Definition \ref{defn:modular_form} (so $f \in M_k(N,\chi)$),  we can express its defining transformation property (\ref{Eq:modular_form_tranformation_law}) as the {\it invariance property} $(f|_{k, \chi}\gamma)(z) = f(z)$ for all $\gamma \in \Gamma_0(N)$ with respect to the {\bf weight-character slash operator} 
\begin{equation}
\label{eq:weight_character_slash_operator}
(f|_{k, \chi}\, \gamma)(z) := f(\gamma\cdot z)(cz+d)^{-k}\chi(d)^{-1}, 
\qquad\text{where }
\gamma = 
\[
\begin{smallmatrix}
a & b \\ c & d
\end{smallmatrix}
\] \in \Gamma_0(N).
\end{equation}

To create an invariant function on $\GL^{+}_2(\R)$, we first notice that the weight-character slash operator cannot be extended to allow $\gamma \in \GL^{+}_2(\R)$ because the character factor $\chi(d)$ does not make sense in this generality, though the weight factor $(cz+d)^k$ does make sense.  
%
%
%
%
However if we ignore the character $\chi$ in (\ref{eq:weight_character_slash_operator}) (i.e. take $\chi = 1$ there), then
we do get a well-defined {\bf weight slash operator}
$$
(f|_{k}\, g)(z) := f(g\cdot z)(cz+d)^{-k}, 
\qquad\text{where }
g = 
\[
\begin{smallmatrix}
a & b \\ c & d
\end{smallmatrix}
\] \in \GL^{+}_2(\R).
$$
For these operators the transformation property of $f \in M_k(N,\chi)$ becomes the twisted invariance $(f|_{k}\gamma)(z) = \chi(d) f(z)$ for all $\gamma \in \Gamma_0(N)$.  We can also use this to make a twisted invariant function $\tilde{f}$ on $\GL^{+}_2(\R)$ by noticing that the linear fractional transformation action of $\GL^+_{2}(\R)$ on $\H$ is transitive.  By choosing a distinguished point $i\in \H$, we can define $\tilde{f}:\GL^{+}_2(\R) \ra \C$ by 
$$
\tilde{f}(g) := (f|_{k} \,g)(i),
$$
which satisfies $\tilde{f}(\gamma g) = \chi(d) \tilde{f}(g)$ for all $\gamma \in \Gamma_0(N)$.

%

At this point we do not yet have a truly invariant function on $\GL^{+}_2(\R)$ unless the character $\chi$ is trivial.  To incorporate the character $\chi$ into our formalism, we need to find a natural place for this ``mod $N$ character'' to live.  The necessary congruence structure is provided by the groups $\GL_2(\Q_p)$ at the non-archimedean (i.e. $p$-adic) places, and the natural structure combining all of these completions $\GL_2(\Q_v)$ is the ``adelization'' $\GL_2(\A)$ of the algebraic group $\GL_2$ over $\Q$, defined in the next section.  
For our purposes in this chapter we will not be too interested in the role of the character when passing from a classical modular form to an adelic one, however we will be very interested in adelic modular forms in general as the natural ``invariant'' setting for discussing modular forms.

%

%

\section{Adelizations and Adelic modular forms}\label{Section:Adelic_Modular}

In this section we give a general definition for adelic modular forms for a general algebraic group $G$.  
This agrees with the definitions above when $G = \GL_{2}$, and in future sections we will want to consider $G$ to be either the symplectic group $\Sp_{2n}$ or the special orthogonal group $SO(Q)$ of a definite rational quadratic form $Q$.

\medskip

We first define the {\bf adelization of an affine/linear algebraic group} $G$ over the ring of integers $\O$ of a number field $F$ (defined as the zero set of an ideal of relations in a polynomial ring $\O[\x]$) to be the {\bf restricted direct product} $\prod_{v}' G(F_{v})$ of the local algebraic groups $G(F_{v})$ over all places $v$ of $F$, which is the subset of the usual direct product $\prod_{v} G(F_{v})$ satisfying the restriction that $g_{\A} = (g_{v})_{v}$ is subject to the restriction that $g_{v} \in G(\O_{v})$ for all but finitely many $v$. The restricted direct product has several advantages over the usual direct product -- it is small enough to be locally compact (since every element has all but finitely many components in the compact group $G(\O_{v})$), but it is large enough to contain all rational points $G(F)$.

For the convenience of the reader, we will consistently use subscripts (e.g. $\A, v, \a, \f$) to denote the kind of element (resp. adelic, local, archimedean, non-archimedean/finite) that the element $g_{\bullet}\in G(F_{\bullet})$ represents.  We also denote the center of $G$ by $Z$, to which the same conventions apply for $z\in Z(F_{\bullet})$.  Elements without subscripts will represent rational elements (i.e. we take $g\in G(F)$).  In most cases $Z(F) = F^{\times}$ and $Z_{\A} := Z(F_{\A})$ are the ideles of $F$.  It is also common to denote the compact groups $G(\O_{v})$ as $K_{v}$, with $K_{v}$ denoting a fixed choice of the maximal compact subgroup in $G(F_{v})$ when $v$ is archimedean.


\medskip

In the case where $G = \SL_2$ and $F=\Q$, we can use this notion of an adelic group to further lift a classical modular form $f(z)$ to an ``invariant'' function on $\SL_2(\A)$.  To do this we 
%
extend the original weight-character slash operator to an operator on $\SL_2(\A)$ by writing the Dirichlet character $\chi(d)$ as a product of prime-power characters $\chi_p: \Z_p/p^{\nu_p}\Z_p  \cong \Z/p^{\nu_p}\Z \ra \C$ where $\nu_p := \ord_p(N)$.  These $\chi_p$ can be thought of as characters on the $p$-adic congruence subgroups 
$$
K_p(N) := \{
g = 
\[
\begin{smallmatrix}
a & b \\ c & d
\end{smallmatrix}
\] 
\in \SL_2(\Z_p) \mid c \in p^{\nu_p}\Z_p 
\},
$$
and so $\chi$ can be thought of as a character on the compact product group $K_\f(N) := \prod_p K_p(N)$ by the formula
$$
\chi:(x_p)_{\{p\in\f\}} \longmapsto \prod_p \chi_p(x_p)
$$
in which all but finitely many factors $\chi_p(x_p)=1$.
%
%
%
With this reformulation of the Dirichlet character $\chi$, we define the {\bf adelic slash operator} by the formula
$$
(f|_{k, \chi, \A}\, g_\A)(z) := f(g_\infty\cdot z)(c_\infty z+d_\infty)^{-k} \prod_p\chi_p(d_p)^{-1}, 
\qquad\text{where }
g_\A = 
\[
\begin{smallmatrix}
a & b \\ c & d
\end{smallmatrix}
\] \in \SL_2(\A),
$$
and notice that for all $\gamma \in \Gamma_0(N)$ (considered as an elements of $\SL_2(\A)$ by the canonical diagonal embedding $\gamma\mapsto (\gamma, \gamma, \dots)$) we have the invariance property that 
$$(f|_{k, \chi, \A}\, \gamma)(z) = (f|_{k, \chi}\, \gamma)(z) = f(z).$$

We can now lift the classical modular form $f(z) \in M_k(N, \chi)$ to an adelic function  $\Fcal:\SL_2(\A)\ra \C$ by defining its dependence on $g_\A$ through its action on the distinguished point $i\in \H$ as 
\begin{equation} \label{Eq:F_adelic_and_classical}
\Fcal(g_\A) := (f|_{k, \chi, \A}\, g_\A)(i).
\end{equation}
One can easily verify that this adelic lift $\Fcal$ 
satisfies the following important invariance properties: 
\begin{itemize}
\item $\Fcal(g' g_\A) = \Fcal(g_\A)$ for all $g' \in \SL_2(\Q)$,
\item $\Fcal(g_\A k_\f) = \Fcal(g_\A) \cdot \chi(k_\f)$ for all $k_\f \in K_\f(N)$.
\end{itemize}
One could work a little harder to show that $\Fcal(g_\A)$ is an ``adelic automorphic form'' for the group $G = \SL_2$ in the sense defined below, but for our purposes in these notes the most important properties are the ``rational left-invariance'' and ``right $K$-finiteness'' properties just mentioned.  These are the features of adelic automorphic forms that will be most prominent as we perform our explicit theta-lift.


%





%


\medskip


Since our main goal is to move automorphic forms form one group to another, it will be important to have a definition of automorphic forms that is general enough to cover all cases of interest.  In general, one defines  an {\bf adelic automorphic form} on a linear algebraic group $G$ to be a function $\Fcal:G_{\A} \ra \C$ satisfying:
%
%
%
\begin{enumerate}

\item $\Fcal$ is left-invariant for the rational group: $\Fcal(g \cdot g_{\A}) = \Fcal(g_{\A})$ for all $g \in G(F)$ and for all $g_{\A}\in G_{\A}$.

\item $\Fcal$ has a central adelic (Hecke) character $\psi:Z_{\A}:  Z(\A)\ra \C^\times$ so that  $\Fcal(z_{\A} \cdot g_{\A}) = \psi(z_{\A}) \cdot \Fcal(g_{\A})$ for all $z_{\A}\in Z_{\A}$ and for all $g_{\A}\in G_{\A}$.

\item $\Fcal$ is right-``$K_{\A}$-finite'', meaning that the span of $\Fcal$ as a function under the action of $K_{\A}$ by the right regular representation
$k_{\A}: \Fcal \mapsto \widetilde{\Fcal}(g_{\A}) := \Fcal(g_{\A} \cdot k_{\f})$
%
%
is a finite-dimensional vector space over $F$.
%

\item $\Fcal_{\a}$ is smooth and ``$\mathfrak z_{\a}$-finite'', where $\mathfrak z_{\a}$ is the center of the universal enveloping algebra for $G_{\a}$: meaning that the image of $\Fcal$ under $\mathfrak z$ spans a finite-dimensional vector space over $F_\infty$.  We note that $\mathfrak z_{\a}$ can also be interpreted as the ring of bi-invariant differential operators on $G_{\a}$, and in the case of $\GL_{2}(\R)$ that $\mathfrak z_{\a}$ is the ring $\C[\Delta]$ where $\Delta$ is the hyperbolic Laplacian operator $-y^2\(\frac{\partial^{2}}{\partial x^{2}} + \frac{\partial^{2}}{\partial y^{2}}\)$.

\item $\Fcal$ has moderate growth: meaning that there are constants $C$ and $M \in \R>0$ so that $|\Fcal(\[\begin{smallmatrix} a & 0 \\ 0 & 1\end{smallmatrix}\] g_{\A})| \leq C |a|_\A^{M}$ for all $a \in \A^\times_F$ with $|a|_\A > c$ for some $c$, and all $g_{\A}$ in any fixed compact subset of $G_{\A}$.

\end{enumerate}


The most important conditions for us will be conditions (1)-(3).  Condition (4) is a generalization of the usual holomorphy condition for classical modular forms (since any real-analytic function is an eigenfunction of the Laplacian operator with eigenvalue zero), and condition (5) is a technical growth condition used to exclude poorly behaved functions.
In the case where $G$ is an orthogonal group of a definite quadratic form 
conditions (4) and (5) can be safely omitted because the archimedean component is already compact. 
%
%

\bigskip

{\bf References for this section:} For adelizations of algebraic groups and modular forms for an adelic group see \cite[\S8, 10, 11]{Shimura:1997aa},  for definitions of adelic modular forms on $\GL_{2}$ \cite[\S1.3, pp40-53]{Gelbart:1975ab} and \cite[\S3.1]{Hida2000}.
%
Some motivation for this reformulation of the classical language can be found in the brief Corvallis article of Piatetski-Shapiro \cite{PS1979}. See also Bump's book \cite[\S3.1--2]{Bump1997} for a discussion of the adelic approach to automorphic forms for the important groups $\GL_1$ and $\GL_2$.













\section{The Weil representation}

To see how theta functions arise in terms of representation theory, we now define the {\bf Weil representation} whose symmetries will be closely related to the Fourier transform.  We will not go through the explicit construction of the Weil representation, but instead content ourselves to list its defining properties below and go on to use the Weil representation to produce classical theta functions.  Some references that explicitly construct the Weil representation are \cite{Lion:1980aa}, or Gelbart's book \cite{Gelbart:1976aa}.  Other places where the Weil representation is used in a similar way are \cite{Prasad1993, Prasad1998, MR546603, MR546603}, Kudla's lecture notes \cite{Kudla:2008ls}, and Gelbart's book, \cite[\S7A, pp134--150]{Gelbart:1975aa}.


These considerations give rise to the adelic Weil Representation $\W: \Sp_{2}(F) \bs \Sp_{2}(\A) \ra \GL(S(V_{\A}))$ on the space of Schwartz functions on $V_{\A}$, 
 defined by the following transformation formulas:
%
%
\begin{enumerate}  

\item $\(\W\(\levi{a}\)\Phi\)(\v) = \chi_{V}(a) \cdot |a|_{\A}^{\frac{n}{2}} \cdot \Phi(a \v)$    
\\  

\item $\(\W\(\uni{x}\)\Phi\)(\v) = e_{\A}(x Q(\v)) \cdot \Phi(\v)$

\item $\(\W\(\weyl\)\Phi\)(\v) = \hat{\Phi}(-\v)$   

\end{enumerate}
Here the character $\chi_{V}(\cdot) := (\cdot, (-1)^{n/2} \det(Q))_{F}$ and $e_{\A}$ denotes the adelic exponential, defined by
$$
e_{\A}((x_v)_v) := e^{2\pi i \, x_\infty} \cdot \prod_p e^{-2\pi i \, \mathrm{Frac}_p(x_p)}
$$ 
where $\mathrm{Frac}_p(x_p)  \in \Q/\Z$ is defined as any rational number with $p$-power denominator for which $x_p  \in \mathrm{Frac}_p(x_p) + \Z_p$.  (Notice that the {\bf adelic exponential} is always given by a finite product since any adele $x_\A$ will have all but finitely many $x_p \in \Z_p$.)  The adelic exponential $e_{\A}$ is also used to define the {\bf adelic Fourier transform}
$$
\hat{\Phi}(\w) := \int_{\v\in V_{\A}} e_{\A}(H(\v, \w)) \Phi(\v) \, d_{\A}\v
$$
of any Schwartz function $\Phi \in S(V_{\A})$, where the additive Haar measure $d_{\A}\v$ is normalized so that $\vol(V_\A) = 1$.  
An explicit reference for these formulas are \cite[Thrm 2.22, p37]{Gelbart:1976aa}; also \cite[I.1.6, p3]{Kudla:2008ls} (though there is a minor typo writing $x$ for $a$ in the second formula there).

%
These formulas uniquely define the Weil representation, since any element of  $\Sp_{2}(F_{\A})$ can be expressed as a product of these elements (using the Bruhat decomposition for $\Sp_2=\SL_2$).  They are also visibly trivial on elements of $\Sp_{2}(F)$, because the adelic absolute value $|\cdot|_{\A}$, the rational Hilbert symbol $(\cdot, \cdot)_F := \prod_v (\cdot, \cdot)_{F_v}$, and the adelic exponential $e_{\A}(\cdot)$ are all trivial on rational elements.

\begin{rem}
We have not explicitly defined {\bf (adelic) Schwartz functions on $V_{\A}$}, which are just finite linear combinations of an infinite product $\prod_v \Phi_v(x_v)$ of Schwartz functions $\Phi_v$ on $F_v$ where $\Phi_v$ is the characteristic function of $\Z_p$ at all but finitely many places.  For more details, see \cite[\S3.1, pp256--7]{Bump1997}.
\end{rem}

\section{Theta kernels and theta liftings}
For convenience, we now let $W$ denote the non-degenerate $2$-dimensional symplectic vector space over $F$, and identify $\SL_2(F)$ with $\Sp(W)$.
The Weil representation restricted to our pair $G \times H := \Sp(W) \times O(V)$ incorporates both an invariance under the orthogonal group, and a Fourier transform from the Weyl element.  

To produce theta functions from this, we will need to introduce the familiar classical features of a self-dual function on a lattice.  In the adelic context, our lattice is provided by the rational points $V(F)$ and the adelic self-dual function is taken to be the local product $\phi_{\A}(\v_{\A}) := \prod_{v} \phi_{v}(\v_{v})$  of the familiar Gaussian exponential $\phi_{\infty}(\v) := e^{-\pi Q(\v)}$ at $\infty$ and the characteristic function of the completion of some fixed lattice $L$ on $V$ at all non-archimedean places $p$.  (To fix ideas we can take $F=\Q$ and take the standard lattice $L = \Z^n$, giving 
$\phi_{p}(\v) := \text{characteristic function of $\Z_{p}^{n}$}$.)


Finally we must sum the values of our function $\phi_{\A}$ over the rational lattice $V(F)$, which gives rise to a ``theta distribution''
$$\theta: \phi \mapsto \sum_{\v \in V(F)} \phi(\v)$$
on functions $\phi \in S(V)$.  We will be interested in the behavior of this distribution under the action of the Weil representation, so we define the {\bf theta kernel}
$$
\theta_{\phi}(g,h) := \sum_{\v \in V(F)} (\W(g, h) \phi)(\v) = \sum_{\v \in V(F)} (\W(g) \phi)(h^{-1}\v)
$$  
as the value of the theta distribution under this action.
This theta kernel already feels very similar to a theta series (since we are summing a quadratic Gaussian over a rational lattice, and our choice $\phi_\A$ is supported only on the integral lattice), though it depends on two variables $g \in \Sp_2(\A)$ and $h \in O_Q(\A)$.  However it is more appropriate to think of this as a part of an adelic ``theta machine'' that will allow us to produce many theta series on $\Sp_2 = \SL_2$ after eliminating the orthogonal variable $h$ in some way.


\medskip

We now study the rational invariance properties of the theta kernel, which allow us to think of our a priori ``adelic'' construction as something ``automorphic''.  The main observation is 
%
%
%
\begin{lem}
The theta kernel $\theta_{\phi}(g,h)$ is a function on $\Sp(W) \bs \Sp(W_{\A}) \times O(V) \bs O(V_{\A})$.
\end{lem}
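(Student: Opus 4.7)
The plan is to verify that the theta kernel is left-invariant separately under $O(V)(F)$ and under $\Sp(W)(F) = \SL_2(F)$, treating these two groups in turn.

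The $O(V)$-invariance is the easier half. For any $h_{0}\in O(V)(F)$,
\[
\theta_{\phi}(g, h_{0} h) = \sum_{\v\in V(F)} (\W(g)\phi)(h^{-1} h_{0}^{-1}\v),
\]
and since $h_{0}^{-1}$ is a rational isometry of $V$, the map $\v \mapsto h_{0}^{-1}\v$ is a bijection of $V(F)$ onto itself, so the sum is unchanged. First I would dispense with this observation.

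For the $\Sp(W)$-invariance, the strategy is to invoke the Bruhat decomposition, which says that $\SL_{2}(F)$ is generated by the unipotent elements $\uni{x}$ with $x\in F$, the Levi elements $\levi{a}$ with $a\in F^{\times}$, and the Weyl element $\weyl$. Checking invariance on each generator reduces to a known product formula:
\begin{itemize}
\item For $\uni{x}$ with $x\in F$: $(\W(\uni{x})\phi)(\v) = e_{\A}(xQ(\v))\phi(\v)$, and because $xQ(\v)\in F$ and the adelic exponential $e_{\A}$ is trivial on $F$ (the defining product formula splits each rational number into archimedean and finite parts that cancel modulo $\Z$), each term in the sum is unchanged.
\item For $\levi{a}$ with $a\in F^{\times}$: $(\W(\levi{a})\phi)(\v) = \chi_{V}(a)\,|a|_{\A}^{n/2}\,\phi(a\v)$. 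The product formula for the adelic absolute value gives $|a|_{\A}=1$, and Hilbert reciprocity for the global quadratic Hilbert symbol $(\cdot,\cdot)_{F} = \prod_{v}(\cdot,\cdot)_{F_{v}}$ gives $\chi_{V}(a) = (a,(-1)^{n/2}\det(Q))_{F} = 1$. The remaining substitution $\v \mapsto a\v$ is a bijection of $V(F)$.
\item For $\weyl$: $(\W(\weyl)\phi)(\v) = \hat{\phi}(-\v)$. Invariance here is exactly adelic Poisson summation: $\sum_{\v\in V(F)} \phi(\v) = \sum_{\v\in V(F)} \hat{\phi}(\v)$ for the self-dual lattice $V(F)\subset V(\A)$ under our normalization $\vol(V(F)\bs V(\A))=1$. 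Replacing $\phi$ by $\W(g)\phi$ (composed with $h^{-1}$) and using that the sum over $V(F)$ is insensitive to the sign $\v\mapsto -\v$, the Weyl element acts trivially on the theta distribution.
\end{itemize}

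The main obstacle is the $\weyl$ case, since it is the only one that is not a direct substitution or a triviality-on-$F$ statement: it genuinely requires the adelic Poisson summation formula, which is the deep input and the reason the normalizations in the Weil representation were chosen. The other two generators and the orthogonal invariance are essentially bookkeeping verifications, once one has the three product formulas (for $e_{\A}$, $|\cdot|_{\A}$, and $(\cdot,\cdot)_{F}$) and the Bruhat decomposition in hand.
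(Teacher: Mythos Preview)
Your proposal is correct and follows essentially the same approach as the paper: checking $O(V)(F)$-invariance via the permutation action on $V(F)$, and checking $\Sp(W)(F)$-invariance on the three Bruhat generators by invoking triviality of $e_{\A}$, $|\cdot|_{\A}$, and $(\cdot,\cdot)_{F}$ on rationals together with adelic Poisson summation for the Weyl element. The paper's proof is terser and does not name the Bruhat decomposition explicitly, but the substance is identical.
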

\begin{proof}
At the end of the previous section we noted that the Weil representation transformation 2 is invariant when $x \in F$.  Transformation 1 with $a \in F^\times$ performs a rational scaling of the values, which leaves the rational lattice $V(F)$ invariant, and transformation 3 preserves the theta kernel because the (adelic) Poisson summation formula tells us that the sum of a function on the standard lattice is the same as the sum using its Fourier transform.  Therefore the theta kernel has the rational lattice symmetries of being left-invariant under $\Sp(W)$.  It is also left-invariant under $O(V(F))$ because that action just permutes $V(F)$.  Therefore we have shown that the theta kernel is rationally left-invariant, and so it descends to a function on the rational left cosets as desired.
\end{proof}
%
%
This rational bi-invariance is exactly what allows us to use the theta kernel to move automorphic forms between the orthogonal and symplectic groups.   Given an automorphic form $F(h_{\A})$ on the orthogonal group $O(V_{\A})$, we can define its {\bf theta lift} by the integral 
\begin{equation} \label{Eq:theta_lift_defn}
(\Theta(F))(g_{\A}) := \int_{h_{\A} \in O(V) \bs O(V_{\A})} F(g_{\A}) \, \theta_{\phi}(g_{\A},h_{\A}) \, dh_{\A}
\end{equation}
of $F$ against the theta kernel with respect to the choice of adelic Haar measure $dh_{\A}$ on $O(V_{\A})$ giving the adelic stabilizer $\Stab_{\A}(L) \subset O(V_{\A})$ volume 1.  The theta lift $\Theta(F)(g_{\A})$ formally inherits the symplectic invariance of the theta kernel, and gives an automorphic form on $\Sp(W_{\A})$ when the integral converges.  In the next few sections we will give an explicit example of how this process can be used to produce the classical theta series 
of a positive definite integer-valued quadratic form. 



\section{Some simple automorphic forms on the orthogonal group} \label{Sec:Auto_forms_on_Orthog}

To actually use the theta lift, we must have at our disposal a supply of automorphic forms on the orthogonal quotient $O(V) \bs O(V_{\A})$.  In this section we describe the simplest of these, characteristic functions of a point, which are surprisingly useful for our purposes.  

We begin by giving a classical interpretation of the orthogonal quotient $O(V) \bs O(V_{\A})$.  
%
Given the rational quadratic quadratic space $(V, Q)$, we 
define an action of the adelic orthogonal group $O(V_{\A})$ on the set of all (quadratic) $\O_F$-lattices in $(V,Q)$ by using the following local-global statement for lattices in a rational vector space.

\begin{lem}
There is a natural bijection between lattices $L$ in an $n$-dimensional $F$-rational vector space $V$ and the tuples $(L_p)_p$ of local lattices $L_p \subset V_p := V \otimes_F F_p$ satisfying the property that all but finitely many $L_p$ are equal to $\O_p^n$.  (Here $p$ runs over the set of (non-zero) primes of $F$.)
\end{lem}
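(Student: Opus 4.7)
The plan is to build explicit mutually inverse maps and then use commensurability to control the ``all but finitely many'' condition. First, fix an $F$-basis of $V$ to pin down a reference lattice $L_0 \subset V$ with $L_{0,p} = \O_p^n$ for every prime $p$; this is what gives meaning to the phrase ``equal to $\O_p^n$.'' The forward map sends a global $\O_F$-lattice $L \subset V$ to the tuple $(L_p)_p$ with $L_p := L \otimes_{\O_F} \O_p$, viewed inside $V_p$. Since $L$ and $L_0$ are both full-rank $\O_F$-lattices in $V$, they are commensurable: there exists a nonzero $a \in \O_F$ with $aL \subseteq L_0$ and $aL_0 \subseteq L$. For any prime $p \nmid a$ the element $a$ is a unit in $\O_p$, so $L_p = L_{0,p} = \O_p^n$, which establishes the finiteness condition on the right-hand side.

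For the reverse map, given a tuple $(L_p)_p$ with $L_p = \O_p^n$ for all $p$ outside some finite set $S$, define
$$
L := V \cap \bigcap_p L_p,
$$
taking the intersection inside the finite adeles $V_{\A_{\mathrm{f}}}$. Commensurability of $L_p$ and $L_{0,p}$ at the finitely many $p \in S$ produces a nonzero $c \in \O_F$ with $cL_p \subseteq L_{0,p} \subseteq c^{-1} L_p$ for every $p$ (automatically at $p \notin S$, and by clearing denominators at $p \in S$). This sandwiches $c L_0 \subseteq L \subseteq c^{-1} L_0$, so Noetherianity of $\O_F$ makes $L$ a full-rank finitely generated $\O_F$-module, i.e. an $\O_F$-lattice in $V$.

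The main work lies in showing that the two maps are mutually inverse. The forward-then-backward direction reduces to the standard Dedekind-domain identity $L = V \cap \bigcap_p (L \otimes_{\O_F} \O_p)$, which follows from $\O_F = \bigcap_p \O_p$ inside $F$ together with the finite generation of $L$. The backward-then-forward direction requires that the global lattice $L$ recovered from $(L_p)_p$ satisfies $L \otimes_{\O_F} \O_p = L_p$ at every prime $p$; the inclusion $\subseteq$ is automatic, and the reverse inclusion is the main obstacle. The plan is to settle it by a weak approximation argument: given $v_p \in L_p$, apply the Chinese Remainder Theorem across the finite set $S$ to produce a rational vector $v \in V$ lying in $L_q$ for every $q \in S \setminus \{p\}$ and congruent to $v_p$ modulo $p^N L_p$ for $N$ as large as one pleases; outside $S$ membership in $\O_q^n$ holds automatically after clearing a bounded denominator, so $v \in L$, and letting $N \to \infty$ shows $v_p \in L \otimes_{\O_F} \O_p$ since $L_p$ is $p$-adically complete. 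The finiteness of $S$ is what makes this approximation feasible, and it is the crux of the bijection.
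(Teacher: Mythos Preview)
Your maps are exactly the ones the paper records, namely $L \mapsto (L \otimes_{\O_F} \O_p)_p$ in one direction and $(L_p)_p \mapsto \bigcap_p (V \cap L_p)$ in the other; the paper itself does not argue further but simply cites Shimura's \emph{Arithmetic of Quadratic Forms} (Lemma~21.6) and Weil's \emph{Basic Number Theory} (Theorem~2, p.~84), so your write-up already contains far more detail than the paper provides.

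One small point deserves tightening. In the backward-then-forward step you produce $v \in V$ by CRT at the primes in $S$ and then say that ``outside $S$ membership in $\O_q^n$ holds automatically after clearing a bounded denominator.'' As written this is circular: multiplying $v$ by an integer to clear denominators at primes outside $S$ will disturb the congruence conditions you just imposed at primes in $S$. The clean fix is to choose the commensurability constant $c$ to be supported on $S$ (which you may, since $L_q = \O_q^n$ for $q \notin S$), and then run CRT inside the $\O_F$-lattice $c^{-1}L_0$ rather than in $V$; an element of $c^{-1}L_0$ automatically lies in $c^{-1}L_{0,q} = \O_q^n$ for every $q \notin S$, so no post-hoc denominator clearing is needed. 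Equivalently, and perhaps more transparently, one can avoid approximation altogether by observing that the finite $\O_F$-module $c^{-1}L_0 / cL_0$ splits as $\bigoplus_q c^{-1}L_{0,q}/cL_{0,q}$ by CRT, under which $L/cL_0$ corresponds to $\bigoplus_q L_q/cL_{0,q}$; tensoring with $\O_p$ then reads off $L \otimes_{\O_F} \O_p = L_p$ directly. Either route closes the gap with no new ideas.
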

\begin{proof}
This is proved in \cite[Lem 21.6, pp102--3]{Shimura:2010vn} and \cite[Thrm 2, p84]{Weil:1967vn}.  The relevant maps in each direction are 
$$
L \mapsto (L_p := L\otimes_{\O_F} \O_p)_p
\qquad
\text{and}
\qquad
(L_p)_p \mapsto L := \bigcap_p \, (V \cap L_p).
$$
\end{proof}

With this lemma, we define an action of $h_\A \in O(V_\A)$ on the lattices in $(V,Q)$ by acting locally on the associated tuple of local lattices:
$$
h_\A : L \longmapsto  (h_p L_p)_p \stackrel{Lemma}{\longleftrightarrow} h_\A L.
$$
This action produces a new tuple of local lattices, which differs from the first tuple at only finitely many places (by the restricted direct product condition on $O(V_\A)$), and so corresponds to a unique lattice in $(V,Q)$.  Notice that this action makes no use of the non-archimedian component $h_\infty$ of $h_\A$.


We now fix a lattice $L$ in $(V,Q)$, and interpret the action of $O(V_\A)$ on $L$ classically.


\begin{lem}
The orbit of $L$ under $O(V_\A)$ is the genus of $L$.
\end{lem}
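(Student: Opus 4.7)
The plan is to verify the two inclusions separately, unpacking the definition of the genus as the set of lattices $L' \subset (V,Q)$ which are locally $\O_v$-isometric to $L$ at every place $v$, and using the lemma just proved which identifies lattices with tuples $(L_p)_p$ of local lattices agreeing with $L_p$ at all but finitely many $p$.

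For the inclusion $O(V_\A)\cdot L \subseteq \Gen(L)$, I would start with $h_\A = (h_v)_v \in O(V_\A)$ and examine $L' := h_\A L$ place by place. At each non-archimedean place $p$, the local component $h_p \in O(V_p)$ is by definition an isometry of the quadratic space $(V_p, Q_p)$, and it carries $L_p$ to $L'_p = h_p L_p$, so $L_p$ and $L'_p$ are isometric as quadratic $\O_p$-lattices. At the archimedean place the convention $\O_\infty = \R$ makes the local lattice equal to $V_\infty$, so there is nothing to check. Hence $L' \in \Gen(L)$.

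For the reverse inclusion $\Gen(L) \subseteq O(V_\A)\cdot L$, I would start with $L' \in \Gen(L)$ and construct an adele $h_\A \in O(V_\A)$ with $h_\A L = L'$ by choosing local isometries place by place. By definition of the genus, at each place $v$ there exists some $h_v \in O(V_v)$ with $h_v L_v = L'_v$ (with $h_\infty$ taken to be the identity); the main point is that this tuple actually defines an element of the restricted direct product $O(V_\A)$. For this I would invoke the preceding lemma: since both $L$ and $L'$ are lattices in $V$, we have $L_p = \O_F^n \otimes_{\O_F} \O_p$ and likewise for $L'_p$ at all but finitely many $p$, so in particular $L_p = L'_p$ at almost all $p$, and at such primes I may simply take $h_p = \mathrm{id} \in O(L_p)$. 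The adele $(h_v)_v$ then lies in $O(V_\A)$ with respect to the open compact subgroups $O(L_p)$, and satisfies $h_\A L = L'$ by the local-to-global correspondence of lattices.

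The only genuinely non-trivial step is producing the local isometries $h_p$ at the finitely many bad places: this is precisely the statement that local isometry of the quadratic lattices $L_p$ and $L'_p$ upgrades to an element of $O(V_p)$ carrying one to the other. This upgrade is automatic from the definition of ``locally $\O_v$-equivalent'' in the sense used to define $\Gen(L)$ (equivalence of the quadratic $\O_p$-modules gives, via the embedding into $V_p$, an element of $O(V_p)$), so the argument really reduces to bookkeeping with the restricted product once the correspondence between lattices and tuples is invoked. I do not expect any serious obstacle.
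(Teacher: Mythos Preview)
Your proposal is correct and follows essentially the same approach as the paper: both argue the two inclusions place-by-place, using that $h_p$ gives a local isometry for the forward direction and choosing $h_p$ to be the identity at the cofinitely many primes where $L_p = L'_p$ for the reverse direction. You are slightly more explicit than the paper in invoking the preceding lemma to justify that the tuple $(h_v)_v$ lies in the restricted direct product, but the substance is identical.
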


\begin{proof}
From the definition of the action, we see that the new lattice $L' := h_A L$ is locally isometric to the lattice $L$ at all primes $p$, so it is in the genus of $L$.  Since $L' \subset (V,Q)$ we see they are also isometric at the archimedian place $\infty$, hence $L' \in \Gen(L)$.

Similarly, any lattice in $\Gen(L)$ can be realized as $h_\A L$ by taking $h_p$ to be the element of the orthogonal group carrying $L_p$ to $L'_p$ at the finitely many primes where $L_p \neq L'_p$, and taking all other components $h_v$ as the identity.
\end{proof}

This interpretation can be extended a little further, by trying to describe the classes in the genus $\Gen(L)$ adelically.  If we define the adelic stabilizer 
$$
K_\A := \Stab_\A(L) := \{ h_\A \in O(V_{\A}) \mid h_A L = L\}
$$ 
then we have a bijection
\begin{align*}
O(V_\A) / \Stab_\A(L) \quad &\stackrel{1-1}{\longleftrightarrow} \quad \Gen(L) \\
h_A \qquad\qquad&\longmapsto\qquad h_A L
\end{align*}
Taking this one step further, we have the important bijection 
$$
O(V) \bs O(V_\A) / \Stab_\A(L) \quad \stackrel{1-1}{\longleftrightarrow} \quad \text{classes in $\Gen(L)$}
$$
because two lattices in $(V,Q)$ are in the same class if they are isometric, hence they differ by the action of an element of $O(V)$.  

This finite quotient $O(V) \bs O(V_\A) / \Stab_\A(L)$ corresponding to the classes in the genus can also be thought of as the analogue of the usual upper half-plane $\H$ (for $\SL_2$) for the orthogonal group $O(V)$.  Under this analogy, we see that the analogue of modular functions for $O(V)$ are just functions on this finite set of points (labelled by the classes $L_i$ in $\Gen(L)$).  The simplest of these are the characteristic functions $\Phi_{L_i}$ of each point, and the constant function $1$, both of which we will see play a special role the theory of theta series.  Since the non-archimedian part of $\Stab_\A(L)$ is a compact group, we can easily verify that both of these functions are adelic automorphic forms in the sense of \S\ref{Section:Adelic_Modular}.


%


\section{Realizing classical  theta functions as theta lifts}

We now compute the theta lifting of the characteristic function $\Phi_{L_j}$ of the double coset of $O(V_{\A})$ corresponding to a chosen quadratic lattice $L_j \in \Gen(L)$, with respect to the fixed choice of function $\phi_{\A}(\v_{\A})$ described above.  We will see that this an adelic automorphic form that classically corresponds to a certain multiple of the familiar theta series 
$$
\Theta_{L_j}(z)  
:= \sum_{\v \in L_j} e^{2\pi i Q(\v) z}
= \sum_{m \in \Z \geq 0} r_{L_j}(m) \,e^{2\pi i m z}.
$$

By our definition of the theta lift in (\ref{Eq:theta_lift_defn}), we are interested in computing 
\begin{align}
\Theta(\Phi_{L_j})(g_{\A}) 
& = \int_{O(V) \bs O(V_{\A})} \Phi_{L_j}(h_{\A}) \, \theta_{\phi}(g_{\A},h_{\A}) \, dh_{\A} 
\end{align}
as an automorphic form on $\SL_2 = \Sp_2$.  To do this we first decompose $O(V_{\A})$ as a union of double cosets corresponding to the classes in the genus of $L$ (i.e. with respect to the adelic stabilizer $K_{\A}$), giving
$$
O(V_{\A}) = \bigsqcup_{i \in I} O(V_{F}) \, \al_{i, \A} \,  K_{\A} 
$$
for some fixed choice of representatives $\al_i := \al_{i,\A} \in O(V_\A)$ 
where $\Gen(L) = \bigsqcup_{i\in I} \Cls(L_{i})$ and $L_{i} = \al_{i} L$.  Since the action of $O(V_\A)$ on lattices only depends on the non-archimedean components of $\al_{i, \A}$, to simplify our lives we choose the $\al_{i, \A}$ to have trivial archimedean components $\al_{i, \a} = 1 \in O(V_\a)$.  It will also be convenient to 
define the adelic stabilizers $K_{i,\A} := \Stab_\A(L_i)$ of the other lattices $L_i \in \Gen(L)$.  With this we compute
\begin{align}
\Theta(\Phi_{L_j})(g_{\A}) 
& = \int_{O(V_{F}) \bs O(V_{\A})} \Phi_{L_j}(h_{\A}) \, \theta_{\phi}(g_{\A},h_{\A}) \, dh_{\A}  \\
& = \int_{O(V_{F}) \bs   \bigsqcup_{i \in I} O(V_{F}) \al_i K_{\A}} \Phi_{L_j}(h_{\A}) \, \theta_{\phi}(g_{\A},h_{\A}) \, dh_{\A}  \\
& = \sum_{i\in I} \int_{O(V_{F}) \bs  O(V_{F}) \al_i K_{\A}} \Phi_{L_j}(h_{\A}) \, \theta_{\phi}(g_{\A},h_{\A}) \, dh_{\A}  \\
& = \sum_{i\in I} \int_{O(V_{F}) \bs  O(V_{F}) \al_i K_{\A}\al_{i}^{-1}} \Phi_{L_j}(h_{\A}\al_{i}) \, \theta_{\phi}(g_{\A},h_{\A}\al_{i}) \, dh_{\A}  \\
& = \sum_{i\in I} \int_{O(V_{F}) \bs  O(V_{F})K_{i,\A}} \Phi_{L_j}(h_{\A}\al_{i}) \, \theta_{\phi}(g_{\A},h_{\A}\al_{i}) \, dh_{\A}  \\
& = \sum_{i\in I} \int_{(O(V_{F}) \cap K_{i,\A}) \bs  K_{i,\A}} \Phi_{L_j}(h_{\A}\al_{i}) \, \theta_{\phi}(g_{\A},h_{\A}\al_{i}) \, dh_{\A}  \\
& = \sum_{i\in I} \frac{1}{|\Aut(L_{i})|} \int_{K_{i,\A}} \Phi_{L_j}(h_{\A}\al_{i}) \, \theta_{\phi}(g_{\A},h_{\A}\al_{i}) \, dh_{\A},
\end{align}
where the last step follows because $O(V_{F}) \cap K_{i,\A}$ is the finite group of rational automorphisms $\Aut(L_{i})$, since $K_{i,\A}$ is the adelic stabilizer of $L_{i}$.  (Note: We are also implicitly using the invariance of the left Haar measure $dh_{\A}$ under right multiplication, because the orthogonal group is unimodular.  See \cite[p23]{Weil:1982aa}, \cite[\S14.4, p137]{Voskresenskiui:1998zr} and \cite[\S2, p123]{Ono:1966ys} for a justification of this bi-invariance.)
%

At this point we have ``unfolded'' our integral to the point where it factors as a product of local integrals (since $K_{i,\A}$ is the product of the local stabilizers $K_{i,v}$ over all places $v$), each of which we can try to evaluate separately.  We first notice that for each summand we have an integral over $K_{i,\A} = \al_i K_{\A}\al_{i}^{-1}$, giving 
\begin{align}
\Theta(\Phi_{L_j})(g_{\A}) 
&= \sum_{i\in I} \frac{1}{|\Aut(L_{i})|} \int_{K_{i,\A}} \Phi_{L_j}(h_{\A}\al_{i}) \, \theta_{\phi}(g_{\A},h_{\A}\al_{i}) \, dh_{\A} \\
&= \sum_{i\in I} \frac{1}{|\Aut(L_{i})|} \int_{K_{\A}} \Phi_{L_j}(\al_{i}h_{\A}) \, \theta_{\phi}(g_{\A},\al_{i}h_{\A}) \, dh_{\A} 
\end{align}
whose integrals do not depend on $i\in I$.  To analyze the internal integral, notice that $h_{\A} \in K_{\A}$, giving that 
\begin{align}
\Phi_{L_j}(\al_{i}h_{\A}) \neq 0 
&\iff \al_{i}h_{\A} \in O(V_{F}) \cdot \al_{j} \cdot K_{\A} \\
&\iff \al_{i} \in O(V_{F}) \cdot \al_{j} \cdot K_{\A} \\
&\iff \al_{i} = \al_{j} 
\end{align}
and so all terms with $\al_{i} \neq \al_{j}$ vanish.  Thus
\begin{align}
\Theta(\Phi_{L_j})(g_{\A}) 
&= \frac{1}{|\Aut(L_{j})|} \int_{h_{\A} \in K_{\A}} \theta_{\phi}(g_{\A}, \al_{j} h_{\A}) \, dh_{\A} 
\end{align}

At this point we have to unwind the theta kernel to evaluate the integral, now heavily using the fact that this is a product of local integrals.  We can simplify the non-archimedean orthogonal action with 
the observation that 
\begin{align}
h_p^{-1} \al_{j}^{-1} \v \in L_p \text{ for all primes $p$} 
&\iff \v \in \al_{j} h_p L_p = L_{j,p} \text{ for all $p$} \\
&\iff \v \in L_j.
\end{align}
This together with our choice that $\al_{j,\infty} = 1 \in O(V_\infty)$ gives  
\begin{align}
\Theta(\Phi_{L_j})(g_{\A}) 
&= \frac{1}{|\Aut(L_{j})|} \int_{h_{\A} \in K_{\A}} \theta_{\phi}(g_{\A}, \al_{j} h_{\A}) \, dh_{\A} \\
&= \frac{1}{|\Aut(L_{j})|} \int_{h_{\A} \in K_{\A}} \sum_{\v \in V_{F}} (\W(g_{\A}) \phi_{\A})(h_{\A}^{-1} \al_{j}^{-1} \v) \, dh_{\A} \\
&= \frac{1}{|\Aut(L_{j})|} \int_{h_{\A} \in K_{\A}} \sum_{\v \in L_j} (\W(g_{\A}) \phi_{\A})(h_{\a}^{-1} \v) \, dh_{\A}.
\end{align}

To understand the non-archimedean symplectic action we take advantage of the invariance of the theta lift under $\Sp(W_{F})$ by invoking the ``strong approximation'' property of symplectic groups, (a special case of) which states that
$$
\Sp(W_\A) = \Sp(W) \cdot \Sp(F_\infty)\prod_p \Sp(\O_p).
$$
This means that we can adjust the element $g_{\A}$ (by left-multiplying with some element $g_{F} \in \Sp(W)$) so that its new local components $g_p$ live in $\Sp(\O_p)$ for all primes $p$.  By using the transformation formulas of the Weil representation we see that each component $g_p$ acts trivially on the  
%
%
characteristic function $\phi_{p}(\v)$ of $L_{p}$.  Thus we can express our theta lift as depending only on the archimedean component $g_\infty$ of $g_\A$, giving
%
%
\begin{align}
\Theta(\Phi_{L_j})(g_{\A}) 
&= \frac{1}{|\Aut(L_{j})|} \int_{h_{\A} \in K_{\A}} \sum_{\v \in L_j} (\W(g_{\infty}) \phi_{\infty})(h_{\a}^{-1} \v) \, dh_{\A}. \label{Eq:theta_lift_almost_done}
\end{align}

Since we are interested in the classical modular form $f(z)$ on $\H$ associated the adelic modular form $\Theta(\Phi_L)$, we need only evaluate this on elements $g_\infty \in \SL_2(\R)$ for which $g_\infty \cdot i = z \in \H$.  We notice that when $x,y \in \R$ with $y > 0$, the elements
%
$$
g_{\a, z} := \uni{x} \levi{\sqrt{y}} 
$$
satisfy $g_{\a, z} \cdot i = x+iy \in \H$.  
For these elements $g_{\a, z}$, the action of the Weil representation in (\ref{Eq:theta_lift_almost_done}) 
can be written more explicitly as
\begin{align}
(\W(g_{\a, z}) \phi_{\a}) (h_{\a}^{-1} \v)
&=  \(\W\(\uni{x}\) \W\(\levi{\sqrt{y}}\) \phi_{\a}\) (h_{\a}^{-1} \v)\\
&=  y^\frac{n}{4} \(\W\(\uni{x}\) \phi_{\a}\) (\sqrt{y}\, h_{\a}^{-1} \v)\\
&=  y^\frac{n}{4} e^{2\pi i x Q(\v)} \phi_{\a} (\sqrt{y}\, h_{\a}^{-1} \v)\\
&=  y^\frac{n}{4} e^{2\pi i x Q(\v)} e^{-2\pi Q(\sqrt{y}\, h_{\a}^{-1} \v)}\\
&=  y^\frac{n}{4} e^{2\pi i x Q(\v)} e^{2\pi i \cdot iy Q(h_{\a}^{-1} \v)}\\
&=  y^\frac{n}{4} e^{2\pi i x Q(\v)} e^{2\pi i \cdot iy Q(\v)}\\
&=  y^\frac{n}{4} e^{2\pi i z Q(\v)}.
\end{align}
Substituting this back into (\ref{Eq:theta_lift_almost_done}) gives
\begin{align}
\Theta(\Phi_{L_j})(g_{\a, z}) 
&= \frac{1}{|\Aut(L_{j})|} \int_{h_{\A} \in K_{\A}} \sum_{\v \in L_j} (\W(g_{\a, z}) \phi_{\a})(h_{\a}^{-1}\v) \, dh_{\A} \\
&= \frac{1}{|\Aut(L_{j})|} \int_{h_{\A} \in K_{\A}} \sum_{\v \in L_j} y^\frac{n}{4} e^{2\pi i Q(\v) z} \, dh_{\A} \\
&= \frac{\vol_{\A}(K_\A)}{|\Aut(L_{j})|} \sum_{\v \in L_j} y^\frac{n}{4} e^{2\pi i Q(\v) z} \\
&= \frac{1}{|\Aut(L_{j})|} \sum_{\v \in L_j} y^\frac{n}{4} e^{2\pi i Q(\v) z}. 
\end{align}
Now using the relation (\ref{Eq:F_adelic_and_classical}) with $k = n/2$ and trivial Dirichlet character $\chi$ we have $g_{\a, z}$ has $(cz + d)^{k} = y^{-k/2}$ and can see that $\Theta(\Phi_{L_j})$ corresponds to the classical weight $k$ modular form 
\begin{align}
f(z) :&= \chi(d) (cz+d)^k \cdot \Theta(\Phi_{L_j})(g_{\a, z}) \\
&= y^{-n/4} \cdot \Theta(\Phi_{L_j})(g_{\a, z}) \\
&= \frac{1}{|\Aut(L_{j})|} \sum_{\v \in L_j} e^{2\pi i Q(\v) z}.
\end{align}
But this is just the usual theta series $\Theta_{L_j}(z)$ weighed by the rational factor $\frac{1}{|\Aut(L_{j})|}$, so we have indeed recovered the classical theta function as the theta lift of the characteristic function of the double coset $O(V) \,\al_{j, \A} \, K_\A$ of the adelic orthogonal group corresponding to the lattice $L_j \in \Gen(L)$.

\bibliographystyle{alpha}	
\bibliography{AllReferences__as_of_2009-09-20_copy,more_refs}		
\addcontentsline{toc}{chapter}{Bibliography}

\printindex

\end{document}